\documentclass{article}
\usepackage[utf8]{inputenc}
\usepackage{amsmath, amssymb,amsthm}
\usepackage{mathrsfs}
\usepackage{enumitem}
\usepackage{fullpage}
\usepackage{xcolor}
\usepackage{dsfont}
\usepackage{graphicx}
\usepackage[font=small,skip=0pt]{caption}
\theoremstyle{plain}
\usepackage[hidelinks]{hyperref}
\hypersetup{
    colorlinks,
    linkcolor={blue!80!black},
    citecolor={green!50!black},
}
\colorlet{linkequation}{blue}

\newcommand{\E}{\mathbb{E}}

\newcommand{\N}{\mathbb{N}}
\renewcommand{\P}{\mathbb{P}}

\newcommand{\R}{\mathbb{R}}

\newcommand{\Z}{\mathbb{Z}}

\renewcommand{\AA}{\mathcal{A}}

\newcommand{\CC}{\mathcal{C}}
\newcommand{\DD}{\mathcal{D}}

\newcommand{\FF}{\mathcal{F}}
\newcommand{\GG}{\mathcal{G}}

\newcommand{\MM}{\mathcal{M}}
\newcommand{\NN}{\mathcal{N}}

\newcommand{\RR}{\mathcal{R}}

\newcommand{\TT}{\mathcal{T}}


\newcommand{\given}{\,|\,}
\newcommand{\bgiven}{\,\Big|\,}
\newcommand{\bbgiven}{\,\,\bigg|\,\,}

\newcommand{\eps}{\varepsilon}
\newcommand{\one}{\mathds{1}}
\newcommand{\la}{\lambda}
\newcommand{\normal}{\mathsf{N}}

\newcommand{\sig}{\sigma}
\newcommand{\pto}{\stackrel{p}{\longrightarrow}}
\newcommand{\GW}{\textsf{GW}}

\newcommand{\ga}{\gamma}
\newcommand{\sumz}{\sum_{i=1}^{q}Z_i}
\newcommand{\sumi}{\sum_{i=1}^{q}}
\newcommand{\sumj}{\sum_{j=1}^{\gamma}}
\newcommand{\norm}[1]{\left\|{#1}\right\|}
\newcommand{\beq}{\begin{equation}}
\newcommand{\eeq}{\end{equation}}
\newcommand{\beqn}{\begin{equation*}}
\newcommand{\eeqn}{\end{equation*}}

\DeclareMathOperator{\Var}{Var}
\DeclareMathOperator{\Cov}{Cov}

\DeclareMathOperator{\tr}{tr}

\DeclareMathOperator*{\argmax}{arg\,max}







\newtheorem{thm}{Theorem}[section]
\newtheorem{prop}[thm]{Proposition}
\newtheorem{cor}[thm]{Corollary}
\newtheorem{lemma}[thm]{Lemma}
\newtheorem{claim}[thm]{Claim}
\newtheorem{conj}[thm]{Conjecture}

\theoremstyle{definition}
\newtheorem{defn}[thm]{Definition}

\newtheorem{remark}[thm]{Remark}
\newtheorem{assumption}[thm]{Assumption}
\newcommand{\rev}[1]{#1}
\title{Exact Phase Transitions for Stochastic Block Models and Reconstruction on Trees}
\author{
Elchanan Mossel\thanks{Department of Mathematics and IDSS, Massachusetts Institute of Technology. Email: \textup{\tt elmos@mit.edu}} \and 
Allan Sly\thanks{Department of Mathematics, Princeton University. Email: \textup{\tt asly@math.princeton.edu}}\and 
Youngtak Sohn \thanks{Department of Mathematics, Massachusetts Institute of Technology. Email: \textup{\tt youngtak@mit.edu}}}
\begin{document}

\maketitle
\begin{abstract}
In this paper we continue to rigorously establish the predictions in 
ground breaking work in statistical physics by Decelle, Krzakala, Moore, Zdeborov\'a (2011) regarding the block model,
in particular in the case of $q=3$ and $q=4$ communities. 

We prove that 
 for $q=3$ and $q=4$ there is no computational-statistical gap if the average degree is above some constant by showing it is information theoretically impossible to detect below the Kesten-Stigum bound.

The proof is based on showing that for the broadcast process on Galton-Watson trees, reconstruction is impossible for $q=3$ and $q=4$ if the average degree is sufficiently large. This improves on the result of Sly (2009), who proved similar results for regular trees for $q=3$. Our analysis of the critical case $q=4$ provides a detailed picture 
showing that the tightness of the Kesten-Stigum bound in the antiferromagnetic case depends on the average degree of the tree.

Our results prove conjectures of 
Decelle, Krzakala, Moore, Zdeborov\'a (2011), Moore (2017), 
Abbe and Sandon (2018) and Ricci-Tersenghi, Semerjian, and Zdeborov{\'a} (2019). 
Our proofs are based on a new general coupling of the tree and graph processes and on a refined analysis of the broadcast process on the tree. 
\end{abstract}
\maketitle

\section{Introduction}
In this paper we study the block model and establish a number of exact phase transitions that were conjectured in the case of more than two communities. 

The block model is a random graph model generalizing the famous Erdos-Renyi random graph~\cite{ErdosRenyi:60} and is a special case of 
inhomegnuous random graphs, see e.g.~\cite{BoJaRi:07}. 
It has been studied extensively in statistics as a model of communities
\cite{HoLaLe:83}, see e.g.~\cite{SnijdersNowicki:97,BickelChen:09,RoChYu:11} 
 and in computer science as a model to study the average case behavior of clustering algorithms, see e.g.~\cite{DyerFrieze:89,JerrumSorkin:98,CondonKarp:01,McSherry:01,CojaOghlan:10}.  The papers above mostly concentrate on cases where the average degree is at least of order $\log n$, where $n$ is the number of nodes in the graph.  We focus on the sparse regime where the average degree is constant.
The model is defined as follows:  

\begin{defn} \label{def:SBM}
(\textit{The Stochastic Block Model}) For $n\geq 1$, $q\geq 2$, and $p_{\textnormal{in}},p_{\textnormal{out}}\in (0,1)$, let $\GG(n,q,p_{\textnormal{in}},p_{\textnormal{out}})$ denote the model of random graphs with $q$ colors in which each vertex $u$ \rev{belonging to the vertex set $\{1,2,\ldots, n\}$} is assigned a label $\sig_u\in \{1,2,...,q\}$ uniformly at random, and then each possible edge $(u,v)$ is included with probability $p_{\textnormal{in}}$ if $\sig_u=\sig_v$ and with probability $p_{\textnormal{out}}$ if $\sig_u \neq \sig_v$.
\end{defn}

We are interested in detection (see Defintion \ref{def:detection} for a formal definition) of the \rev{Stochastic} Block Model (SBM) in the sparse regime $G\sim \GG(n,q,\frac{a}{n},\frac{b}{n})$. Denote
\begin{equation}\label{eq:ab.dlambda}
    d=\frac{a+(q-1)b}{q},\quad \la= \frac{a-b}{a+(q-1)b}.
\end{equation}
\rev{Here, $d$ and $\la$ respectively encode the average degree and the second eigenvalue of the transition matrix of the broadcast process associated with the block model (see Definition~\ref{def:broadcast}).} The block model $G\sim \GG(n,q,\frac{a}{n},\frac{b}{n})$ became a major object of research 
 due to a landmark paper in statistical physics~\cite{DKMZ:11}  
where the authors predicted:

\begin{conj}[] \label{conj:block}
For the block model \rev{with $a,b,q$ fixed and $n \to \infty$:} 
\begin{enumerate}
\item[I.] 
For all $q$, it is possible to detect communities better than random if $d \la^2 > 1$.
\item[II.] 
For $q \leq 4$ it is information theoretically impossible to predict better than random if $\la \geq 0$ and $d \la^2 < 1$.
\item[III.] For $q \geq 5$, it is information theoretically possible to predict better than random for some $\la$ with $d\la^2 < 1$, but not in a computationally efficient way. 
\end{enumerate} 
\end{conj}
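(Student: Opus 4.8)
I would aim to establish the parts of Conjecture~\ref{conj:block} that are within reach of current techniques: part~I for all $q$, part~II for $q\in\{3,4\}$ once the average degree $d$ exceeds an absolute constant, and part~III for every $q\ge5$. Everything is routed through the classical dictionary between the sparse block model and the broadcast (``reconstruction'') process on a Galton--Watson tree: for fixed $R$ the depth-$R$ neighbourhood of a uniformly chosen vertex of $G\sim\GG(n,q,\tfrac an,\tfrac bn)$ looks like the first $R$ generations of a tree $T$ with $\GW(\mathrm{Poisson}(d))$ offspring on which colours propagate through the $q$-state symmetric channel whose non-trivial eigenvalue is $\la$. Part~I then requires no new idea: above the Kesten--Stigum line $d\la^2>1$ the census of leaf colours already carries $\Omega(1)$ information about the root, and the belief-propagation / non-backtracking estimator of Mossel, Neeman and Sly lifts this to a genuine detector on $G$. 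The real content lies in the equivalence, for parts~II and~III, between \emph{detectability of $G$} and \emph{solvability of the tree reconstruction problem}. First I would prove this equivalence using the paper's new coupling: couple the union of the depth-$R$ neighbourhoods of \emph{all} vertices of $G$ with suitably correlated copies of $(T,\text{channel})$, control the total-variation error created by short cycles and overlaps, and feed the result into an interpolation/cavity evaluation of $I(\sig;G)$; the outcome should be that $I(\sig;G)=o(n)$ precisely when the root--leaves mutual information on $T$ tends to $0$, and conversely that a robust tree reconstruction scheme produces a detector on $G$.

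\textbf{The tree analysis for $q\le4$.} Write $X$ for the random posterior law of the root colour given $R$ generations together with the leaf colours; non-reconstruction means $X$ converges in distribution to the uniform law $u$, and $X$ satisfies a recursive distributional equation (RDE) coming from one belief-propagation step over its $\mathrm{Poisson}(d)$ children. The plan is to show that for $q\in\{3,4\}$, $\la\ge0$ and $d\la^2<1$ with $d$ large, the \emph{only} solution of this RDE is the point mass at $u$. Linearising the BP map at $u$ gives spectral radius exactly $d\la^2<1$, so linear stability is free; the work is ruling out a second, symmetry-broken branch of solutions. I would collapse the problem to a single scalar order parameter, for instance the $\chi^2$-type quantity $y=\E\|X-u\|_2^2$ (equivalently an overlap), derive a closed inequality $y'\le d\la^2\,y+(\text{correction})$, and dominate the correction using (i) the large-$d$ limit, in which the Poisson sum of log-likelihood increments becomes a tractable Gaussian integral, and (ii) convexity and sign features of the BP map that are special to $q\le4$ and fail for $q\ge5$. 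The case $q=4$ is the delicate one and is the source of the ``detailed picture'' of the abstract: for $\la\ge0$ one still expects only the trivial branch at all large $d$, but for $q=4$ in the antiferromagnetic regime $\la<0$ a non-trivial branch of RDE solutions appears once $d$ exceeds a threshold $d_c(\la)$ that \emph{crosses} the line $d\la^2=1$ at a finite value of $d$, so there one must analyse the RDE fixed-point equation as a function of $d$ to decide tightness of the Kesten--Stigum bound.

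\textbf{The case $q\ge5$.} Here the goal is the opposite: exhibit a non-trivial RDE solution strictly below the Kesten--Stigum threshold, and moreover one that is thermodynamically dominant. I would use the ``biased'' ansatz in which $X$ puts an extra mass on one distinguished colour, substitute it into the RDE, and compare the Bethe free energy of the resulting fixed point against that of $u$ (equivalently run a first/second-moment computation): for $q\ge5$ the extra combinatorial room of the competing colours makes the symmetry-broken fixed point both present and dominant already at some $d\la^2<1$, whereas for $q\le4$ it does not. A positive Bethe-free-energy gap then forces $I(\sig;G)=\Theta(n)$ through the interpolation bound above, and (by running BP from a slightly biased boundary and showing the bias persists) gives an explicit reconstruction scheme, establishing part~III. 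The computational hardness asserted in part~III is not delivered by this argument; it is inherited from the known failure of low-degree-polynomial and spectral methods below the Kesten--Stigum threshold.

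\textbf{Main obstacle.} The crux is the below-KS tree analysis for $q\le4$ --- proving the RDE has \emph{no} non-trivial solution when $d\la^2<1$, which linearisation cannot detect. For $q=4$ near the $d$ at which the non-trivial branch meets the KS line the two branches nearly touch, so I expect to need a bespoke Lyapunov/potential functional for the BP operator together with a contraction estimate in a carefully weighted norm, leaning heavily on the large-$d$ Gaussian approximation to make the Poisson recursion analytically tractable while keeping all error terms summable. The secondary difficulty is obtaining the graph--tree coupling with enough quantitative control for the impossibility direction: local weak convergence handles one neighbourhood at a time, whereas bounding $I(\sig;G)$ requires treating many overlapping neighbourhoods --- and the rare short cycles --- simultaneously, which is precisely the job of the new general coupling.
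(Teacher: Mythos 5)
The statement you are proving is a conjecture that the paper itself does not (and cannot) establish in full; what the paper actually proves is part~II for $q\in\{3,4\}$ at large constant degree (Theorem~\ref{thm:SBM}) plus a tree-level theorem (Theorem~\ref{thm:four:antiferro}) that is only \emph{evidence} for part~III, with part~I inherited from prior work. Your outline matches this division of labor, but it overclaims in exactly the place the paper is careful: you route part~III for $q\ge5$ through an asserted equivalence ``$I(\sig;G)=o(n)$ iff the root--leaves information on $T$ vanishes'' and through the converse direction (a dominant symmetry-broken RDE fixed point on the tree forces $I(\sig;G)=\Theta(n)$ and yields a detector on $G$). That converse implication is not known and is not in the paper; the paper proves only the forward direction (Theorem~\ref{t:nonrecon.to.sbm}) and cites \cite{BMNN:16,AbbeSandon:18} for below-KS detection on the graph in special regimes. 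Moreover, the mechanism you propose for the forward direction --- coupling the depth-$R$ neighbourhoods of \emph{all} vertices, controlling overlaps and short cycles, and then evaluating $I(\sig;G)$ by interpolation --- is not how the paper argues and would be very hard to execute: one cannot couple all $n$ overlapping neighbourhoods to independent trees. The paper instead samples only $m=n^{1/5}$ random vertices, couples their (whp disjoint) neighbourhoods with i.i.d.\ $\GW(\textsf{Poi}(d))$ trees carrying the broadcast process (Lemma~\ref{lem:coup_trees_g}), and then uses a conditional-exchangeability argument: given the graph, the tree topologies and the boundary spins, the spins inside each partition class are exchangeable, so the MAP estimator succeeds on only a $\tfrac1q+o(1)$ fraction of the sampled vertices, which bounds the global success rate directly without any mutual-information or cavity computation.

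On the tree side, your scalar scheme ``$y'\le d\la^2 y+\text{correction}$, then dominate the correction'' cannot decide the critical case $q=4$: the quadratic coefficient in the magnetization recursion is proportional to $q(q-4)$ and vanishes, so sharpness of the KS bound is decided by the \emph{sign} of the cubic coefficient, which depends on $d$ and on the sign of $\la$ (this is precisely where $d^\star\approx22.27$ for Poisson offspring comes from, and why the antiferromagnetic branch below KS exists for $d<d^\star$, i.e.\ small degrees). Extracting that sign requires an exact third-order expansion of $x_{n+1}$ in $x_n$, which in turn needs second-order control of the auxiliary moments $u_n,v_n,w_n$ and a new expansion scheme for expectations of polynomials in the $Z_i$'s that exploits polynomial factorization (Propositions~\ref{prop:crucial} and~\ref{prop:crucial:product}, Lemmas~\ref{lem:exact:u:w} and~\ref{lem:xi2}); a one-sided inequality or a contraction in a weighted norm is structurally incapable of producing it, and at the KS point $d\la^2=1$ there is no linear contraction at all --- the decay of $x_n$ must come from the strictly negative cubic term, with the large-$d$ normal approximation (Propositions~\ref{prop:clt} and~\ref{prop:large:d}, Lemma~\ref{lem:g4}) used only to drive $x_n$ into the perturbative regime first. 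So while your proposal correctly identifies the coupling-plus-tree strategy, the large-$d$ Gaussian step, and $q=4$ as the crux, the two load-bearing ideas of the paper --- the sampled-neighbourhood exchangeability argument on the graph and the exact third-order coefficient (with its $d$- and $\mathrm{sign}(\la)$-dependent sign) on the tree --- are missing rather than merely unelaborated.
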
 
The threshold $d\la^2=1$ is called the \textit{Kesten-Stigum} (KS) bound. 
It is a natural threshold for the trade off between noise and duplication. It was first discovered in the context of the broadcast process on trees in work of Kesten and Stigum~\cite{KestenStigum:66}, and has later played important \rev{role} in understanding the broadcast process ~\cite{BlRuZa:95,Ioffe:96a,Ioffe:96b,EvKePeSc:00,MosselPeres:03,JansonMossel:04} as we discuss below, in Phylogenetic reconstruction~\cite{Mossel:04,DaMoRo:11,MoRoSl:11,RochSly:17} and the block model which we discuss next. We note that, in the case of $q=2$, it is known to be tight for reconstruction in all of these models~\cite{BlRuZa:95,Ioffe:96a,Ioffe:96b,EvKePeSc:00,Mossel:04,Massoulie:14,MoNeSl:18,BoLeMa:15}

For the block model, it was also conjectured in~\cite{DKMZ:11} that the optimal algorithm for detection is Belief-Propagation. 
While we are still unable to analyze Belief-Propagation for detecting block models, it is possible to analyze related algorithms based on non-backtracking or self-avoiding walks (as was proposed by 
\cite{Krzakala_etal:13}). 
Thus in a series of works~\cite{Massoulie:14,MoNeSl:18,BoLeMa:15,AbbeSandon:15,AbbeSandon:18} part I. of Conjecture \ref{conj:block} was established. \rev{In particular, it was established in \cite{AbbeSandon:18} that whenever $d\la^2>1$, there exists a polynomial time algorithm to detect communities.}

Parts II and III predict the regime where one can expect a computational-statistical gap, \rev{meaning a gap between what is achievable information theoretically and what is achievable with known computationally efficient algorithms.} 
Some special cases of part III of the conjecture are given rigorous support 
in~\cite{BMNN:16, AbbeSandon:18}. In particular~\cite{BMNN:16} find regimes where detection is information-theoretically possible below the Kesten-Stigum threshold when $\la < 0$ and $q \geq 5$ and when $\la > 0$ and $q \geq 11$. The paper~\cite{AbbeSandon:18} find such regimes for small $d$ when $q=4$, $\la < 0$.

After the work~\cite{DKMZ:11}, a number of refined conjectures were made in~\cite{Moore17,AbbeSandon:18,RiSeZd:19} regarding the regimes where computational-statistical gap exist\rev{s} for $q=3,4$. In particular~\cite{RiSeZd:19} conjectures that:
\begin{conj} \label{conj:RSZ} For the block model:

\begin{itemize}
\item[I.] {${\bf q=3}$}. When $q=3$ it is information theoretically impossible to detect communities whenever $d \la^2 < 1$.
\item[II.] ${\bf q=4, \la > 0}$.  When $q=4, \la > 0$, 
it is information theoretically impossible to detect communities whenever $d\la^2<1$.
\item[III.] ${\bf q=4, \la < 0}$. When $q=4, \la < 0 $, there is a critical degree $d^{\ast}$ such that if $d > d^{\ast}$ then it is impossible to detect when $d \la ^2 < 1$ but if $d < d^{\ast}$ it is possible to detect by a (not necessarily efficient) algorithm in a regime where $d \la^2 < 1$.
\end{itemize}
\end{conj}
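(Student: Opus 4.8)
The plan is to prove Conjecture~\ref{conj:RSZ} (for $q=3,4$, at least above a constant degree) by reducing the detection problem for $\GG(n,q,\tfrac{a}{n},\tfrac{b}{n})$ to a reconstruction problem on a Galton--Watson tree and then proving non-reconstruction below the Kesten--Stigum bound. In the local limit the depth-$n$ neighborhood of a typical vertex of the block model converges to the $\GW(\mathrm{Poisson}(d))$ tree $T$ whose root $\rho$ carries a uniform label in $[q]$ and along each edge the label is transmitted by the symmetric channel $M$ with $M_{ii}=\tfrac{1+(q-1)\la}{q}$ and $M_{ij}=\tfrac{1-\la}{q}$ for $i\neq j$, so that $\la$ is precisely the nontrivial eigenvalue of $M$. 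Detection in the graph is possible iff $\sigma_\rho$ can be reconstructed nontrivially from the leaf labels $\sigma_{L_n}$, i.e. iff $\lim_n I(\sigma_\rho;\sigma_{L_n})>0$. Making this equivalence rigorous requires a coupling of the tree and graph processes robust enough to (i) accommodate the Poisson (rather than regular) degree profile and (ii) transfer non-reconstruction to non-detection without degrading the threshold; this is the general coupling the abstract refers to, and it reduces everything to a statement about the broadcast process on $T$.

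On the tree, write the Bayes posterior at the root, conditioned on $\sigma_\rho=1$, as a random probability vector $X=(X_1,\dots,X_q)$ on the simplex satisfying the standard recursive distributional equation obtained by fusing the posteriors of the (Poisson-many) children through $M$ and renormalizing. Linearizing around the uniform fixed point $X\equiv(1/q,\dots,1/q)$ shows that the second moment of the bias $X_1-1/q$ contracts exactly when $d\la^2<1$, recovering the Kesten--Stigum computation and the local stability of the uniform solution below KS. The actual content of Conjecture~\ref{conj:RSZ} is the \emph{global} claim that, in the relevant regimes, the fixed-point equation has \emph{only} the trivial solution.

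To obtain global uniqueness I would exploit the large-degree structure: when $d$ is large, a Poisson-$d$ number of weakly informative (size $\sim 1/\sqrt d$) child-signals combines by a central-limit effect, so the infinite-dimensional BP recursion is approximated by a recursion on a finite-dimensional order parameter (essentially the overlap, equivalently the replica-symmetric free energy as a function of a single variable). For $q=3$, and for $q=4$ with $\la>0$, one then shows that whenever $d\la^2<1$ the paramagnetic point is the unique stationary point of this limiting functional---a convexity/monotonicity analysis of a one-variable function---with an error term controlled uniformly in the tree depth and vanishing as $d\to\infty$; this gives non-reconstruction, hence non-detection, for all $d$ above an explicit constant, proving parts I and II in that range. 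For $q=4$ with $\la<0$, the same analysis shows the limiting free energy develops a second stationary point once $d$ falls below a critical value $d^{\ast}$: for $d<d^{\ast}$ this yields a non-uniform fixed point with strictly positive overlap, hence reconstruction and detection by the associated (not necessarily efficient) estimator in a window with $d\la^2<1$, while for $d>d^{\ast}$ uniqueness persists and detection is impossible---this is part III. Running the same mechanism for $q\geq 5$ produces such a non-uniform fixed point for some $\la$ with $d\la^2<1$, showing the KS bound is not sharp there.

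The main obstacle is precisely this global uniqueness: ruling out any non-uniform solution of a genuinely infinite-dimensional (random-measure-valued) fixed-point equation for all $d\la^2<1$. Local linear stability is routine; global control is not. My approach trades exactness for tractability by passing to the large-$d$ regime where the equation becomes finite dimensional, but this forces two delicate tasks. First, one needs quantitative error bounds showing the finite-dimensional reduction is faithful uniformly in the tree depth, so that a fixed-point statement genuinely transfers to the $\lim_n I(\sigma_\rho;\sigma_{L_n})$ statement. Second, in the critical case $q=4$, $\la<0$, one needs a sharp second-order expansion of the limiting free energy at the paramagnetic point whose leading correction changes sign exactly at $d=d^{\ast}$---here asymptotic-in-$d$ estimates do not suffice and the constants must be pinned down. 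Combining this with a coupling precise enough not to lose the KS threshold in returning from the tree to the graph is what makes the argument work for $q=3,4$ above a constant degree; I would expect that removing that constant-degree restriction, to get the full strength of parts I and II, requires a genuinely different, non-perturbative contraction argument directly on the tree recursion.
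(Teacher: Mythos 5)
Your overall architecture is the same as the paper's: couple the sparse SBM locally to the broadcast process on a $\GW(\textsf{Poi}(d))$ tree, transfer nonreconstruction to nondetection, use a large-degree normal approximation to force the root magnetization into a perturbative regime, and then decide tightness of the Kesten--Stigum bound from the sign of a higher-order coefficient in the recursion, with the sign change at $d^\ast$ in the $q=4$, $\lambda<0$ case. However, two of your claims go beyond what this strategy (or the paper) delivers. First, you assert that detection on the graph is possible \emph{iff} the root can be reconstructed on the tree. Only one direction is available: nonreconstruction on the tree implies nondetection on the graph (this is Theorem~\ref{t:nonrecon.to.sbm}, proved via the exchangeability/coupling argument of Section~\ref{s:coupling}). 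The converse---that a nontrivial tree fixed point yields a graph estimator beating $1/q$---is not established by any such coupling and is genuinely open; consequently your conclusion that for $d<d^\ast$ one obtains ``detection by the associated estimator'' in a window with $d\lambda^2<1$ does not follow. The paper proves only reconstruction on the tree below KS for $d<d^\ast$ (Theorem~\ref{thm:four:antiferro}) and explicitly presents this as \emph{evidence} for part III, not a proof of it.

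Second, your claim that for all $d>d^\ast$ ``uniqueness persists and detection is impossible'' cannot come out of the large-$d$ reduction you describe. The CLT collapse to the one-variable map $g_q$ carries errors that vanish only as $d\to\infty$, so it yields impossibility for $d$ above some large constant $d_0$, not down to $d^\ast\approx 22$--$24$; sharpness of KS for all $d>d^\ast$ is left open in the paper as well. Relatedly, $d^\ast$ is invisible in the scaled large-$d$ limit: it arises from an \emph{exact finite-$d$} third-order expansion of the recursion $x_{n+1}$ in $x_n$ (equation \eqref{eq:exact:third:order}), whose cubic coefficient involves $\E\binom{\gamma}{2}$, $\E\binom{\gamma}{3}$ and, crucially, resummed contributions of the subleading order parameters $u_n,w_n=\Theta(x_n^2)$ accumulated over all previous generations (Lemmas \ref{lem:u:second:order}--\ref{lem:exact:u:w}). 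A ``second-order expansion of the limiting free energy at the paramagnetic point'' would miss these terms; one needs the fixed-$d$ expansion with uniform-in-$n$ control of $u_n,w_n$, \emph{and}, separately, the CLT step together with the global inequality $g_4(s)<s$ (Lemma \ref{lem:g4}) to guarantee the magnetization ever enters the regime where that expansion applies. Your closing caveat that constants must be pinned down points in the right direction, but as written the plan conflates the two steps and claims both sides of part III plus the graph-level statement, none of which the outlined tools produce.
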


The critical number of communities $q=4$ is the most interesting one. We note that~\cite{AbbeSandon:18} made a different conjecture in the case $q=4, \la < 0$, as they conjectured that for all degrees $d > 1$, it is possible to detect in a regime where $d \la^2 < 1$.

Prior work has \rev{only} shown that there is no statistical-computational gap when $q=2$~\cite{MoNeSl:15}, 
where it is shown that for $q=2$ it is information theoretically impossible to detect better than random if $d \la^2 \leq 1$ based on coupling of the graph and tree processes. 
It is natural to ask if the coupling with the tree process can be carried out in further generality. Also, can we prove nonreconstruction in the tree process in the case when $q=3$ and in some cases where $q=4$? These are natural questions that were asked before, see e.g.~\cite{Moore17}.

\subsection{Main Results for the block model}

We use the following definition of detection. 
\begin{defn} \label{def:detection}
We say that {\em detection is possible} 
for $\GG(n,q,\frac{a}{n},\frac{b}{n})$ if
there exists an $\epsilon > 0$ and an estimator $\hat{\sigma}(G)$ 
that takes as an input the graph $G \sim \GG(n,q,\frac{a}{n},\frac{b}{n})$ and returns 
$\hat{\sigma} : [n] \to [q]$, such that
\[
\limsup_{n \to \infty}
\frac{1}{n}
\E\Big[
\max_{\Gamma \in S_q}
\big|\{\rev{u}: \hat{\sigma}\rev{_u} = \Gamma(\sigma\rev{_u})\}\big|
\Big] \geq \frac{1}{q} + \epsilon. 
\]
\end{defn}

We remark that 
\begin{itemize}
\item In the definition above there are no computational requirements for $\hat{\sigma}$.
Thus, our results showing that detection is impossible prove an information theoretical impossibility result.
\item The symmetry between the different communities means that we can only detect communities up to a \rev{permutation} of $q$. This is the reason for taking the maximum of $\Gamma \in S_q$.
\item The definition above is a little weaker than some earlier definitions, e.g.~\cite{AbbeSandon:18}, where the algorithm is required to produce $1/q + \epsilon$ overlap with probability $1-o(1)$ over the graph. Since our main results prove that detection is impossible, they also imply the impossibility of detection under these stronger definitions.
\item \rev{Detection is also called ``weak recovery'' or ``partial recovery'' in the literature. For different terminologies of various statistical inference tasks, we refer to Section 2.3 of the survey~\cite{abbe18survey}.} 
\end{itemize}

In our first main result, \rev{for large enough average degree $d$}, we establish part II of Conjecture~\ref{conj:block} and Conjecture~\ref{conj:RSZ}.

\begin{thm}\label{thm:SBM}
For $q\in\{3,4\}$, there exists a universal constant $d_{0}<\infty$ such that if $d\geq d_0$ and $d\la^2\leq 1$, i.e. $(a-b)^2\leq q(a+(q-1)b)$, then it is information-theoretically impossible to detect for $G\sim \GG(n,q,\frac{a}{n},\frac{b}{n})$. 
That is, for any estimator $\hat{\sigma}$ that takes as input the graph $G$ (is $G$\rev{-}measurable) it holds that:
\begin{equation} \label{eq:SBM.defn}
\limsup_{n \to \infty}
\frac{1}{n}
\E\Big[
\max_{\Gamma \in S_q}
\big|\{\rev{u}: \hat{\sigma}\rev{_u} = \Gamma(\sigma\rev{_u})\big|
\Big] =\frac{1}{q}. 
\end{equation}
\end{thm}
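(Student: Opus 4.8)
The plan is to reduce the non-detection statement on the graph to a \emph{non-reconstruction} statement for the broadcast process on a $\mathrm{Poisson}(d)$ Galton--Watson tree, and then to establish that non-reconstruction statement for $q\in\{3,4\}$ once $d$ is large.

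\textbf{Step 1: from detection to vertex-wise posterior concentration.} By the community symmetry and an averaging argument over a uniformly random vertex $\rho$, together with the finiteness of $S_q$ and concentration (the same soft reduction as in \cite{MoNeSl:15}), detection is possible if and only if the posterior marginal $\mu_\rho:=\P(\sigma_\rho\in\cdot\mid G)$ fails to concentrate at the uniform law $u_q=(1/q,\dots,1/q)$, i.e. $\limsup_{n}\E\big[\|\mu_\rho-u_q\|_2^2\big]>0$; equivalently $\liminf_n \E[\max_k \mu_\rho(k)]>1/q$. Hence it suffices to prove $\E\big[\|\mu_\rho-u_q\|_2^2\big]\to 0$, which is \eqref{eq:SBM.defn}.

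\textbf{Step 2: coupling with the Galton--Watson broadcast tree.} For fixed radius $r$, the labelled ball $B_r(\rho)$ in $G\sim\GG(n,q,\tfrac an,\tfrac bn)$ is, up to total-variation error $o_n(1)$, distributed as the depth-$r$ truncation of the broadcast process on a $\mathrm{Poisson}(d)$ Galton--Watson tree with the $q$-state symmetric channel $M_{k\ell}=\tfrac{1-\la}{q}+\la\,\one\{k=\ell\}$, whose second eigenvalue is exactly $\la$. The remaining information $G$ carries about $\sigma_\rho$ enters only through a message that is conditionally independent of $B_r(\rho)$ given $\sigma_{\partial B_r(\rho)}$, so a data-processing argument bounds $\E\|\mu_\rho-u_q\|_2^2$ (up to $o_n(1)$, then sending $r\to\infty$) by the $L^2$-distance from $u_q$ of the posterior of the tree root given the depth-$r$ boundary. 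Non-reconstruction on the tree is exactly the vanishing of this quantity as $r\to\infty$. Making this coupling work uniformly in $q$, handling the rare short cycles, and bookkeeping the $S_q$ symmetry is the content of the general coupling advertised in the abstract, but conceptually it is the $q=2$ argument of \cite{MoNeSl:15}.

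\textbf{Step 3: non-reconstruction on the tree for $q\in\{3,4\}$, $d\ge d_0$.} Write $X^{(r)}$ for the (random) posterior vector of the root given the depth-$r$ subtree and $Y^{(r)}=X^{(r)}-u_q$; since the prior is uniform, $X^{(r)}$ is a sufficient statistic, and a short computation gives the Bayes recursion $X^{(r+1)}_k\propto\prod_{j\le D}\big(1+q\la\,Y^{(r),j}_k\big)$ with $D\sim\mathrm{Poisson}(d)$ and $Y^{(r),j}$ i.i.d.\ copies of $Y^{(r)}$, and also the identity $\E\big[X^{(r)}_k\mid\sigma_\rho=k\big]=\tfrac1q+\E\|Y^{(r)}\|_2^2$, so reconstruction fails iff the scalar order parameter $\nu^{(r)}:=\E\|Y^{(r)}\|_2^2\to 0$. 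In the regime $d\to\infty$ with $\theta:=d\la^2\le 1$ fixed (so $\la\to0$), a central-limit analysis of the product recursion shows that the (suitably rescaled) law of $Y^{(r)}$ becomes Gaussian, hence is governed by its covariance, i.e.\ by $\nu^{(r)}$, and one obtains a closed scalar recursion $\nu^{(r+1)}=F_{q,\theta}(\nu^{(r)})+o_d(1)$ with $F_{q,\theta}(0)=0$ and $F_{q,\theta}'(0)=\theta$ (the Kesten--Stigum eigenvalue). The key analytic input is then: for $q\in\{3,4\}$ and $\theta\le 1$ the only fixed point of $F_{q,\theta}$ in the relevant range is $\nu=0$ and it is attracting, whereas for $q\ge5$ the subleading behaviour of $F_{q,\theta}$ makes it rise above the diagonal already for some $\theta<1$, producing a non-trivial attracting fixed point. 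Quantitative error control in the central-limit step then upgrades this to the existence of $d_0<\infty$ such that for all $d\ge d_0$ with $d\la^2\le1$ the finite-$d$ recursion still has only the trivial fixed point, whence $\nu^{(r)}\to0$; together with Step~2 this proves Theorem~\ref{thm:SBM}. In the antiferromagnetic case $q=4,\ \la<0$ the would-be destabilizing term of $F_{q,\theta}$ is odd in $\la$ and thus $O(\la)=O(\sqrt{\theta/d})$; it vanishes as $d\to\infty$ but not for small $d$, which is exactly why the theorem asserts only $d\ge d_0$ and why (as in Conjecture~\ref{conj:RSZ}III) detection below KS can survive at small degree.

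\textbf{Main obstacle.} Steps~1--2 are technical but follow the $q=2$ template; the genuinely new and hard work is Step~3, in two parts. First, the quantitative large-$d$ reduction of the high-dimensional distributional recursion for $X^{(r)}$ to the scalar map $F_{q,\theta}$: one must establish the Gaussian limit with explicit, uniform error bounds strong enough to transfer conclusions back to finite $d$, and in particular rule out ``rogue'' directions of the posterior vector not seen by the scalar order parameter. Second, the non-robust, $q$-specific fact that $F_{q,\theta}$ has no non-trivial fixed point for $q\le4$ when $\theta\le1$ but does for $q\ge5$ --- this is the ``exact'' part of the phase transition, and the $q=4$ antiferromagnetic case sits precisely at the boundary of this dichotomy, which is the reason the analysis there must be carried out with care and yields only a large-degree statement.
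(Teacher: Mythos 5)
Your reduction in Step 1 does not get off the ground: by the global color symmetry of the block model, the one-point posterior $\P(\sigma_\rho = i \mid G)$ is \emph{exactly} $1/q$ for every graph $G$ and every $n$, so the quantity $\E\big[\|\mu_\rho - u_q\|_2^2\big]$ you propose to control is identically zero, in every regime of parameters --- including above the Kesten--Stigum bound where detection is possible. Hence ``$\E\|\mu_\rho-u_q\|_2^2\to 0$'' neither characterizes non-detection nor implies \eqref{eq:SBM.defn}; the $S_q$-maximum in Definition \ref{def:detection} cannot be absorbed by ``averaging over a random vertex.'' One must break the symmetry before any posterior-concentration statement has content, e.g.\ via the two-point quantity $\P(\sigma_u=i\mid G,\sigma_v=1)\to 1/q$ of Corollary \ref{cor:two.point}, or as the paper does in Section \ref{s:coupling}: couple the depth-$\ell$ neighbourhoods of $m=n^{1/5}$ random vertices with i.i.d.\ Galton--Watson broadcast trees, condition on the tree shapes and boundary spins, and use conditional exchangeability within each class to show the MAP estimator (which is allowed to see the boundary spins and the class sizes) succeeds on at most a $\tfrac1q+o(1)$ fraction. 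Your Step 2 data-processing bound, being a bound on the same vacuous one-point quantity, inherits this problem; it is not merely a bookkeeping issue with short cycles.

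Step 3 also has a genuine gap at the critical point. The normal approximation gives $x_{n+1}=g_q(d\la^2 x_n)+\eps_d$ with an additive error $\eps_d$ that is small for large $d$ but \emph{not} $o(x_n^3)$ uniformly as $x_n\to 0$; since at $d\la^2=1$ and $q=4$ the linear coefficient is exactly $1$ and the quadratic coefficient $\propto(q-4)$ vanishes, whether $x_n\to 0$ is decided by the sign of the cubic coefficient of the \emph{exact finite-$d$} recursion, which no quantitative CLT error control can resolve. This is why the paper uses the Gaussian step (Proposition \ref{prop:large:d} plus Lemma \ref{lem:g4}) only to force $x_n$ below a small constant $\delta$, and then switches to the algebraic expansion of the Bayes recursion --- the a priori bounds $|u_n|,|v_n|,|w_n|=O_q(x_n^2)$, their refined second-order forms, and Proposition \ref{prop:final}, yielding $x_{n+1}\le x_n+\tfrac12 g_4(d,\pm d^{-1/2})x_n^3$ with a strictly negative cubic coefficient for $d$ large. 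Note also that this cubic coefficient contains terms of order $(\E\binom{\gamma}{2})^2\la^6/(d(d\la-1))$ whose sign (for $\la<0$) depends on $d$ --- the $d^\star$ phenomenon --- so it is a finite-$d$ effect invisible in the $d\to\infty$ Gaussian limit; your closing remark acknowledges this, but it is precisely why ``upgrading the CLT'' cannot substitute for the exact third-order analysis.
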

We remark that the condition that $d$ is large enough is necessary for the case $q=4$, because~\cite{AbbeSandon:18} shows that there exist regimes where $d$ is low enough and the KS bound can be beaten for $q=4$. Also note that the $\hat{\sigma}$ that returns $1$ for all vertices has 
\[
\frac{1}{n}
\E\Big[
\big|\{\rev{u}: \hat{\sigma}\rev{_u} = \Gamma(\sigma\rev{_u})\}\big|
\Big] =\frac{1}{q}, 
\]
for all $n$, so the interest in~(\ref{eq:SBM.defn}) is in proving a $1/q$ upper bound. Theorem \ref{thm:SBM} can also be formulated in terms of the \rev{two point correlation} of spins.
\begin{cor}\label{cor:two.point}
Under the same hypothesis as Theorem~\ref{eq:SBM.defn} for any fixed vertices $u,v$ and a color $i\in\{1,2,..,q\}$,
\begin{equation}\label{eq:SBM.2.point.defn}
    \P(\sig_u=i \given G,\sig_v=1) \pto \frac{1}{q}.
\end{equation}
\end{cor}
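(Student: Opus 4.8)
The plan is to deduce Corollary~\ref{cor:two.point} from Theorem~\ref{thm:SBM}: triviality of the two–point function $\P(\sig_u=\sig_v\given G)$ is a consequence of nondetectability, essentially because revealing a single vertex's label cannot create the ability to detect. I would start from two elementary consequences of the color–symmetry of $\GG(n,q,\frac{a}{n},\frac{b}{n})$, whose law is invariant under relabeling the $q$ communities by any permutation. First, $\P(\sig_v=j\given G)=\frac1q$ almost surely for every $j$, because no permutation of the colors changes $G$. Second, conditioning additionally on $\sig_v=k$ and applying the permutations that fix $k$ shows that, given $G$ and $\sig_v=k$, the posterior of $\sig_u$ puts mass $\alpha_{uv}:=\P(\sig_u=\sig_v\given G)$ on color $k$ and $\frac{1-\alpha_{uv}}{q-1}$ on each other color, with $\alpha_{uv}$ not depending on $k$. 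Consequently, for distinct $u,v$ the conclusion~\eqref{eq:SBM.2.point.defn} is equivalent to the single assertion $\alpha_{uv}\pto\frac1q$, and since the block model is vertex–exchangeable it is enough to prove this for one fixed pair, equivalently that $\frac1n\sum_{u\neq v}\E\big|\alpha_{uv}-\frac1q\big|\to 0$.

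To bring in Theorem~\ref{thm:SBM}, I would formalize the statement that a single revealed label cannot help detection, since it only resolves the global relabeling ambiguity that Definition~\ref{def:detection} already quotients out. Fix $v$, and let $\hat\sig^{\mathrm{aug}}(g,k)$ be the MAP estimate of $\sig$ given $G=g$ and $\sig_v=k$, with ties broken in favor of $k$. By the two–valued description above, $\hat\sig^{\mathrm{aug}}(g,k)(u)$ equals $k$ when $\alpha_{uv}\geq\frac1q$ and $\min([q]\setminus\{k\})$ when $\alpha_{uv}<\frac1q$; hence for each $k$ there is a permutation $\Gamma_k\in S_q$ \emph{depending only on $k$} with $\hat\sig^{\mathrm{aug}}(G,k)=\Gamma_k\circ\hat\sig$, where $\hat\sig:=\hat\sig^{\mathrm{aug}}(\,\cdot\,,1)$ is a legitimate $G$–measurable estimator. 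Therefore, on the event $\{\sig_v=k\}$, the number of vertices on which $\hat\sig^{\mathrm{aug}}(G,k)$ agrees with $\sig$ is at most $\max_{\Gamma\in S_q}|\{u:\hat\sig(u)=\Gamma(\sig_u)\}|$. Averaging over $\sig_v$ (uniform) and using color symmetry to replace the revealed value by $1$, the left–hand side becomes the Bayes per–vertex accuracy of the one–extra–label problem, namely $\frac1n\sum_u\E\big[\max_i\P(\sig_u=i\given G,\sig_v=1)\big]$, while the right–hand side is exactly the quantity that Theorem~\ref{thm:SBM} pins to $\frac1q$ for the estimator $\hat\sig$. Hence $\limsup_n\frac1n\sum_u\E\big[\max_i\P(\sig_u=i\given G,\sig_v=1)\big]\leq\frac1q$.

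To conclude, observe that $\max_i\P(\sig_u=i\given G,\sig_v=1)=\max\!\big(\alpha_{uv},\frac{1-\alpha_{uv}}{q-1}\big)\geq\frac1q$ for every $u\neq v$ (and the $u=v$ term contributes only $\frac1n$), so the $\limsup$ bound above is in fact a limit and forces $\frac1n\sum_{u\neq v}\big(\E[\max_i\P(\sig_u=i\given G,\sig_v=1)]-\frac1q\big)\to0$; since the summands are nonnegative and, by exchangeability, all equal, each converges to $0$. As $\max\!\big(\alpha,\frac{1-\alpha}{q-1}\big)-\frac1q\geq\frac{1}{q-1}\big|\alpha-\frac1q\big|\geq 0$, this gives $L^1$ convergence of $\big|\alpha_{uv}-\frac1q\big|$ to $0$, hence $\alpha_{uv}\pto\frac1q$, which by the first paragraph is~\eqref{eq:SBM.2.point.defn}. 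I expect the only genuinely fiddly step to be the bookkeeping in the second paragraph: checking, uniformly over $u$, that the MAP estimator with one revealed label $k$ is a $k$–dependent but $G$–independent relabeling of a single bona fide $G$–measurable estimator, so that the overlap ceiling of Theorem~\ref{thm:SBM} transfers cleanly. (Alternatively, Corollary~\ref{cor:two.point} is essentially already produced en route to Theorem~\ref{thm:SBM}, whose underlying tree–graph coupling controls the two–point function of the block model directly.)
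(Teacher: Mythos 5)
Your proposal is correct and follows essentially the same route as the paper: both deduce the corollary from Theorem~\ref{thm:SBM} by turning the conditional law $\P(\sigma_u=\cdot\given G,\sigma_v=1)$ into a $G$-measurable MAP estimator and using color symmetry to argue that the revealed label only resolves the global permutation ambiguity, so the overlap bound \eqref{eq:SBM.defn} applies. The differences are presentational: you prove the needed implication directly, making explicit the two-valued posterior and the $\Gamma_k$ relabeling bookkeeping that the paper compresses into ``conditioning on $\sigma_v=1$ provides no extra information by symmetry,'' while the paper argues by contradiction and also proves the converse direction to obtain the full equivalence of \eqref{eq:SBM.defn} and \eqref{eq:SBM.2.point.defn}.
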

We indeed show in Section \ref{s:coupling} that \eqref{eq:SBM.defn} and \eqref{eq:SBM.2.point.defn} are equivalent.

It was shown in \cite{AbbeSandon:18} that whenever $d\la^2>1$, there exists a polynomial time algorithm to detect communities, thus Theorem \ref{thm:SBM} and the result of \cite{AbbeSandon:18} altogether imply that there exists \textit{no} statistical-computational gap in the block model with $3$ or $4$ communities whose degree is a large enough constant.

The seminal work \cite{CKPZ:18} used the so-called ``quiet planting'' method to show that in the \textit{anti-ferromagnetic regime} $\la <0$, i.e. $a<b$, there exists the so-called \textit{condensation} threshold $d_{\textnormal{cond}}(q,\la)$ from physics \cite{MezardMontanari:09},  such that detection is impossible if $d<d_{\textnormal{cond}}(q,\la)$ while there exists an (not necessarily efficient) algorithm that detects the communities if $d>d_{\textnormal{cond}}(q,\la)$. The work \cite{CKPZ:18} deals with a variety of models beyond the block model, and verifies much of the conjecture from physics \cite{KMRSZ:07}. However, it is far from clear \rev{whether to check if} $d_{\textnormal{cond}}(q,\la)$, which is expressed in terms of an infinite dimensional stochastic optimization, matches the Kesten-Stigum bound \rev{or not}.

More importantly, the method of \cite{CKPZ:18} falls short in the \textit{ferromagnetic regime} $\la>0$. Indeed, the recent work \cite{DominiguezMourrat22} conjectured that even when $q=2$, the formula for the asymptotic mutual information \rev{in the case $\la>0$} has a completely different form than in the case $\la<0$. Our results show that the information theoretic threshold equals the Kesten-Stigum bound regardless of $\la>0$ or $\la<0$ when $q\in \{3,4\}$ and $d$ large enough.

Furthermore, \cite{CEJKK:18} combined a delicate second moment method and a small graph conditioning method to show that when $d<d_{\textnormal{cond}}(q,\la)$ and $\la<0$, the block model and the corresponding \textit{null} model \rev{are} contiguous (see Theorem 2.6 of \cite{CEJKK:18}). Since \rev{by combining} Theorem~\ref{thm:SBM} and the result of \cite{AbbeSandon:18}, \rev{we have} that $d_{\textnormal{cond}}(q,\la)=\frac{1}{\la^2}$ for $q\in \{3,4\}$ and $-\frac{1}{\rev{\sqrt{d_0}}}<\la<0$, we have the following corollary.

\begin{cor}\label{cor:contig}

Suppose that $q\in \{3,4\}$, and $d\geq d_0$, where $d_0$ is a universal constant. If $a\leq b$ and $(a-b)^2< q(a+(q-1)b)$ hold, then the block model $\mathcal{G}(n,q,\frac{a}{n},\frac{b}{n})$ is contiguous to the Erdos-Renyi graph $\mathcal{G}(n,\frac{d}{n})$. Thus, if $a\leq b$ and $(a-b)^2< q(a+(q-1)b)$, there is no statistical test that distinguishes with probability $\rev{1-o(1)}$ if a sample is sampled from $\mathcal{G}(n,q,\frac{a}{n},\frac{b}{n})$ or from the Erdos-Renyi graph $\mathcal{G}(n,\frac{d}{n})$. Moreover, there is no consistent estimator for $a$ and $b$ in this regime.
\end{cor}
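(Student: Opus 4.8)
The strategy is entirely soft once Theorem~\ref{thm:SBM} is in hand: the corollary follows by combining it with \cite{AbbeSandon:18} (polynomial-time detection above the Kesten--Stigum bound), \cite{CKPZ:18} (the condensation threshold $d_{\textnormal{cond}}(q,\la)$ is a genuine information-theoretic detection threshold when $\la<0$), and \cite{CEJKK:18}, whose Theorem~2.6 already gives mutual contiguity of the block model and the Erd\H{o}s--R\'enyi graph $\GG(n,\frac{d}{n})$ in the regime $\la<0$, $d<d_{\textnormal{cond}}(q,\la)$. The one genuinely nontrivial input — and the reason the corollary is new — is that a priori $d_{\textnormal{cond}}(q,\la)$ is known only through an infinite-dimensional variational problem and it was unclear whether it coincides with the Kesten--Stigum value; Theorem~\ref{thm:SBM} is precisely what closes this gap for $q\in\{3,4\}$.

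First I would identify $d_{\textnormal{cond}}(q,\la)=1/\la^2$ for $q\in\{3,4\}$ on the $\la$-window relevant here. By \cite{AbbeSandon:18}, detection is possible whenever $d\la^2>1$, so $d_{\textnormal{cond}}(q,\la)\le 1/\la^2$ (otherwise, choosing $d'\in(1/\la^2,d_{\textnormal{cond}}(q,\la))$ would make detection both possible, since $d'\la^2>1$, and impossible, since $d'<d_{\textnormal{cond}}(q,\la)$, by \cite{CKPZ:18}). Conversely, under the hypotheses of the corollary we have $d\ge d_0$ and $d\la^2<1$, hence $\la^2<1/d\le 1/d_0$, so the interval $(d,1/\la^2)$ has elements $\ge d_0$; for any such $d'$ Theorem~\ref{thm:SBM} applies (as $d'\ge d_0$ and $d'\la^2<1$) and shows detection is impossible at degree $d'$, whence $d'\le d_{\textnormal{cond}}(q,\la)$ by the threshold characterization of \cite{CKPZ:18}. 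Since such $d'$ can be taken arbitrarily close to $1/\la^2$, this gives $d_{\textnormal{cond}}(q,\la)=1/\la^2$ and in particular $d<1/\la^2=d_{\textnormal{cond}}(q,\la)$. (The case $a=b$ is trivial, as then $\GG(n,q,\frac{a}{n},\frac{b}{n})=\GG(n,\frac{d}{n})$.) Feeding $\la<0$ and $d<d_{\textnormal{cond}}(q,\la)$ into \cite[Theorem~2.6]{CEJKK:18} yields the mutual contiguity of $\GG(n,q,\frac{a}{n},\frac{b}{n})$ and $\GG(n,\frac{d}{n})$ asserted in the corollary.

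The two remaining conclusions are standard consequences of mutual contiguity. If some test distinguished the two models with probability $1-o(1)$, then the event ``the test outputs \textsf{block model}'' would have probability tending to $1$ under $\GG(n,q,\frac{a}{n},\frac{b}{n})$ and to $0$ under $\GG(n,\frac{d}{n})$, contradicting contiguity; hence no such test exists. Similarly, since $\GG(n,\frac{d}{n})=\GG(n,q,\frac{d}{n},\frac{d}{n})$ lies in the same parameter family and $a\ne d$ (because $a<b$ forces $a<d<b$), a consistent estimator $(\hat a_n,\hat b_n)$ would make $\{|\hat a_n-a|>\eps\}$ have probability $o(1)$ under the block model but $1-o(1)$ under $\GG(n,\frac{d}{n})$ for small $\eps$, again contradicting contiguity. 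The only step requiring genuine care — the ``main obstacle,'' modest as it is — is the middle paragraph: pinning the cavity-method threshold to $1/\la^2$ throughout the relevant $\la$-window and checking that the regime $\{\la<0,\ d\ge d_0,\ d\la^2<1\}$ is nonempty and sits inside the hypotheses of \cite[Theorem~2.6]{CEJKK:18} and of \cite{CKPZ:18}; everything else is either quoted verbatim or a routine contiguity argument.
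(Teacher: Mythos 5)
Your proposal is correct and follows essentially the same route as the paper: the paper also deduces the corollary by combining Theorem~\ref{thm:SBM} with \cite{AbbeSandon:18} to pin $d_{\textnormal{cond}}(q,\la)=1/\la^2$ for $q\in\{3,4\}$ in the relevant antiferromagnetic window, and then invokes Theorem~2.6 of \cite{CEJKK:18} for contiguity, with the testing and estimation statements as standard consequences. Your write-up simply makes explicit the threshold-sandwiching and contiguity arguments that the paper leaves implicit.
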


\rev{Note that unlike most of the results in this paper the last statement regarding consistent estimator in Corollary~\ref{cor:contig} assumes that we do not know the values of $a,b$. Other results of the paper regarding the impossibility of detection are stated assuming the values $a$ and $b$ are known.} Also, we note that the constraint $a\leq b$, or equivalently $\lambda \leq 0$, in Corollary \ref{cor:contig} is necessary for the results of \cite{CKPZ:18, CEJKK:18} to hold, and it would be interesting to show the analogous result for $a>b$.

\subsection{Main results for the broadcast model}

The major step of the proof of Theorem~\ref{thm:SBM} is to show that the reconstruction problem for a corresponding broadcast model is not solvable as illustrated in the impossibility result for $q=2$ in \cite{MoNeSl:15}. 
We now define these notions.

\begin{defn}\label{def:broadcast}
(\textit{The broadcast model on a tree}) Given a (possibly infinite) tree $T$ with root $\rho$, the (symmetric) broadcast model with state space $\CC= \{1,2,...,q\}$ and second eigenvalue $\la\in [-\frac{1}{q-1},1)$ is a probability distribution on $\CC^{V(T)}$ defined as follows. The spin at the root $\sig_{\rho}\in \CC$ is chosen uniformly at random and is then propagated along the edges of $T$ according to the following rule: if the vertex $v$ is the parent of $u$ in the tree, then 
\begin{equation}\label{eq:transition:matrix}
    \P(\sig_u=j\given \sig_{\rev{v}}=i)= 
    \begin{cases}
    \la + \frac{1-\la}{q} &\text{if $i=j$}\\
    \frac{1-\la}{q} &\text{otherwise}.
    \end{cases}
\end{equation}
\end{defn}
Given $T$, let $\sig(n)$ denote the spins at distance $n$ from the root $\rho$ and let $\sig^{i}(n)$ denote $\sig(n)$ conditioned on $\sig_{\rho}=i$.
\begin{defn}(\textit{Reconstruction on trees})
We say that the reconstruction problem is solvable for a broadcast model on a tree $T$ with second eigenvalue $\la$ if there exists $i,j \in \CC$ such that
\begin{equation*}
    \limsup_{n\to\infty} d_{\textnormal{TV}}(\sig^{i}(n),\sig^{j}(n))>0.
\end{equation*}
Otherwise, we say that the model has nonreconstruction at $\la$ for $T$.
\end{defn}

The broadcast process on the trees was first studied in the context of multi-type branching processes, where Kesten and Stigum~\cite{KestenStigum:66} identified the bound $d \la^2 = 1$ as a critical threshold for the behavior of the fluctuations of this model.
Some related models were studied in trying to understand spin-glasses~\cite{CCST:86,CCST:90,CCCST:90}.
The interest in this model grew significantly \rev{when} it was established that for $q=2$ the Kesten-Stigum (KS) bound is tight for reconstruction. 
This was established first for regular trees
~\cite{BlRuZa:95,Ioffe:96a} and then on general trees~\cite{EvKePeSc:00,Ioffe:96b} where the degree is replaced by the branching number~\cite{Lyons:89}. 

The fact that the KS bound is not tight for reconstruction on trees \rev{(i.e. reconstruction is solvable for a broadcast model at some $\lambda$ and $d$-ary tree satisfying $d\la^2<1$ and $q\geq 3$ states)} was first established 
in~\cite{Mossel:01} with much tighter results obtained by~\cite{Sly:09,Sly:11} as we discuss below. Interestingly, the KS bound is tight for all $q$ for count-reconstruction~\cite{MosselPeres:03} and for robust reconstruction~\cite{JansonMossel:04}.
There are also interesting conjectures relating the computational complexity of estimating the root from the leaves to the KS bound~\cite{MoMoSa:20,KoehlerMossel:22}.
To read more about the broadcast model see~\cite{Mossel:04,Mossel:23}.

\rev{Recall that the Galton-Watson tree is a \textit{random} tree, where each node in the tree has i.i.d. number of children drawn from some offspring distribution $\mu_d$ supported on $\Z_{\geq 0}$ (see e.g. \cite[Section 4.3.4]{Durrett_2019}).} Denote \rev{by} $\GW(\mu_d)$ the law of the Galton-Watson tree with offspring distribution $\mu_d$ with average degree $d$, \rev{where $d$ is not necessarily an integer}. We then analyze the reconstruction problem for the broadcast model on a \textit{random} tree $T\sim \GW(\mu_d)$. We consider the following mild assumptions on the offspring distribution $\{\mu_d\}_{d>1}$.
\begin{assumption}(\textit{Uniform tail for the offspring distribution})\label{assumption:unif:tail}
The distributions $\{\mu_d\}_{d\rev{>}1}$ satisfy $\E_{\mu_{d}}[X_d]=d$, where $\E_{\mu_{d}}$ denotes the expectation with respect to $X_d\sim \mu_d$, and for every $\theta\geq 0$, we have
\begin{equation}\label{eq:def:K}
    K(\theta):= \sup_{d>1}\E_{\mu_d}\left[\exp\left(\frac{\theta X_d}{d}\right)\right]<\infty.
\end{equation}
\end{assumption}
\begin{assumption}(\textit{Tightness of the offspring distribution})
\label{assumption:tight}
For any $\eps>0$, there exist positive constants $C_1(\eps), C_2(\eps)>0$, and $d_0(\eps)>1$, which only depend on $\eps$, such that for every $d\geq d_0(\eps)$,
\begin{equation*}
    \P_{\mu_d}\left(\frac{X_d}{d}\in \Big[C_1(\eps),C_2(\eps)\Big]\right)\geq 1-\eps,
\end{equation*}
where $\P_{\mu_d}$ denotes the probability with respect to $X_d\sim \mu_d$.
\end{assumption}

\begin{thm}\label{thm:KS:tight}
Suppose $\{\mu_d\}_{d>1}$ satisfy Assumptions \ref{assumption:unif:tail} and \ref{assumption:tight}.
Then, for $q=3,4$, there exists a constant $d_0<\infty$, which only depends on the function $K(\cdot)$ in \eqref{eq:def:K}, such that for $d\geq d_0$, the Kesten-Stigum bound is sharp. Moreover, there is nonreconstruction at the Kesten-Stigum bound. That is, there is nonreconstruction almost surely over $T\sim \GW(\mu_d)$, whenever $d\la^2\leq 1$ and $d\geq d_0$.
\end{thm}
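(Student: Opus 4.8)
The plan is to derive Theorem~\ref{thm:KS:tight} from a refined analysis of the belief--propagation recursion for the root posterior on the Galton--Watson tree: reduce nonreconstruction to the vanishing of a scalar \emph{magnetization}, and show that one step of the tree recursion contracts it to $0$ whenever $d\la^2\le 1$, $q\in\{3,4\}$, and $d$ is large. Concretely, conditioning on $\sig_\rho=1$, write the depth-$n$ root posterior $X^{(n)}=(X^{(n)}_1,\dots,X^{(n)}_q)$; via the change of measure relating the $+$ and unconditioned laws, nonreconstruction is equivalent to $X^{(n)}\to(1/q,\dots,1/q)$, and the natural tracked quantity is $x_n:=\E_+[X^{(n)}_1]-1/q$ (together with the analogous statistics built from the exchangeable coordinates $X^{(n)}_2,\dots,X^{(n)}_q$). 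The recursion expresses $X^{(n+1)}$ through $L\sim\mu_d$ i.i.d.\ copies of $X^{(n)}$ by the Bayes update $X^{(n+1)}_j\propto\prod_{i=1}^{L}\big(\tfrac{1-\la}{q}+\la X^{(n),i}_j\big)$, whose induced recursion on $x_n$ linearizes at the uniform fixed point to $x_{n+1}\approx d\la^2 x_n$, the Kesten--Stigum eigenvalue; thus below and at the KS bound the linear part is nonexpanding, and everything hinges on the sign of the correction.

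The core is a second--order (and, in the case $q=4$, higher--order) expansion of this one--step map. Using Assumptions~\ref{assumption:unif:tail} and~\ref{assumption:tight} together with the largeness of $d$, one first reduces to a tractable regime: $L$ concentrates around $d$, the rescaled messages coming up from the $\approx d$ subtrees are approximately jointly Gaussian by a central limit theorem, and the law of $X^{(n)}$ near uniform is governed to leading order by $x_n$ and a few auxiliary second moments, so the recursion collapses to a finite--dimensional dynamical system. Parametrizing each incoming posterior as $X^{(n),i}=\bar u+t^{(i)}(e_1-\bar u)+\eta^{(i)}$ with $\eta^{(i)}$ ranging over the $(q-2)$--dimensional complement of $e_1-\bar u$ in the tangent space at $\bar u$, one finds that the quadratic correction to $x_{n+1}$ receives a destabilizing contribution from the cross terms $\sum_{i\neq i'}t^{(i)}t^{(i')}$ and a stabilizing contribution from the orthogonal fluctuations $\eta^{(i)}$; their balance depends on $q$ and is favorable precisely when $q\le 4$ (with $q=4$ the critical case, and $q=2$ degenerate because there are then no orthogonal directions). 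This gives an estimate of the shape $x_{n+1}\le d\la^2 x_n-c(q)\,x_n^2+\text{error}$ with $c(q)>0$ for $q\in\{3,4\}$, and bounding the error uniformly in $d$, in the random $L$, and over the law of $X^{(n)}$ is what fixes the constant $d_0$ in terms of the moment bound $K(\cdot)$.

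To conclude, a bootstrap --- valid for $q\le 4$ and $d$ large, and exactly the step that fails for $q\ge 5$ --- successively improves a crude a priori bound on $x_n$ into the perturbative range, where $x_{n+1}\le d\la^2 x_n-c(q)x_n^2+\text{error}<x_n$ for $x_n>0$ forces $x_n\to 0$; equivalently, the recursive distributional equation has no fixed point other than the uniform one, which is the substance of nonreconstruction at and below the Kesten--Stigum bound. The almost--sure statement for $T\sim\GW(\mu_d)$ then follows from this in--expectation statement by the monotonicity of the relevant information--theoretic functional along a fixed tree, a zero--one law, and dominated convergence.

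The step I expect to be the main obstacle is the critical case $q=4$: there $c(q)$ is not separated from $0$ by a margin independent of the error terms, so the second--order expansion does not close by itself and one must push it to the next order and verify that a genuinely stabilizing term survives against every competing contribution. This is exactly where the large--degree hypothesis is indispensable (it is provably false for $q=4$ when $d$ is small), and it is also the reason the method, even for $q=3$, only yields the conclusion for $d\ge d_0$ rather than in the full conjectured range.
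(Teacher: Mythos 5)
Your overall strategy is the one the paper follows (reduce almost-sure nonreconstruction to the decay of the averaged magnetization $x_n$, expand the Bayes recursion around the uniform fixed point with linear rate $d\la^2$, and use large $d$ plus a CLT/concentration reduction to get into the perturbative regime), but there is a genuine gap at exactly the point your plan leans on. Your key claimed estimate, $x_{n+1}\le d\la^2 x_n-c(q)x_n^2+\text{error}$ with $c(q)>0$ for $q\in\{3,4\}$, is false for $q=4$: the quadratic coefficient in the expansion is proportional to $\frac{q(q-4)}{q-1}\,\E\binom{\gamma}{2}\la^4$, which is strictly negative for $q=3$ but vanishes \emph{identically} for $q=4$ (and is positive for $q\ge 5$). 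So for $q=4$ there is no second-order margin at all, not merely a small one, and the whole content of the theorem in that case is the \emph{exact} third-order term. You flag this as the "main obstacle" and say one should "push to the next order," but that is precisely where the work lies: the cubic coefficient involves not only $\E\binom{\gamma}{3}$-terms but also contributions from the second-order corrections to the auxiliary statistics $u_n=z_n-x_n/q$ and $w_n$, which must themselves be computed to precision $O(x_n^3)$ (the paper's Sections 3--4, including the polynomial-factorization approximation scheme of Proposition \ref{prop:crucial:product} and Lemmas \ref{lem:u:second:order}--\ref{lem:exact:u:w}). Moreover the sign of that cubic coefficient at the KS point is \emph{not} unconditionally stabilizing: in the antiferromagnetic case it is destabilizing for $d<d^\star$ (this is Theorem \ref{thm:four:antiferro}, i.e.\ reconstruction below KS for small $d$), and only becomes negative for $d$ large. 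A proof must therefore exhibit this $d$-dependence rather than a uniform "stabilizing term survives" argument.

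A second, related gap is how you enter the perturbative regime. For $q=3$ a crude bootstrap can ride on the strictly negative quadratic term, but for $q=4$ there is nothing quadratic to bootstrap on, and the paper instead proves a global statement: via a conditional normal approximation, $x_{n+1}\approx g_q(d\la^2x_n)$ for a deterministic $g_q$, and then shows $g_4(s)<s$ on all of $(0,\tfrac34]$ (Lemma \ref{lem:g4}). This is delicate because $g_4(s)=s-\tfrac{112}{27}s^3+O(s^4)$, so the gap is only cubic near $0$ and the verification requires a careful rigorous-numerics argument; your proposal does not address how the "crude a priori bound" would be improved in the absence of any quadratic contraction. (Minor points: the almost-sure reduction needs no zero--one law, only the backwards-martingale monotonicity of $x_n(T)$ and monotone convergence; and the constant $d_0$ ultimately depends on both the CLT step and the sign condition on the cubic term, not just on uniform error control in the second-order expansion.)
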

We will establish Theorem \ref{thm:SBM} from Theorem \ref{thm:four:antiferro} with $\mu_d$ being the Poisson distribution with mean $d$. In this case, \rev{we have $K(\theta)=\sup_{d>1}\exp\big(d(e^{\frac{\theta}{d}}-1))\big)<\infty$} and $\frac{X_d}{d}\pto 1$ as $d\to\infty$ for $X_d\sim \textsf{Poi}(d)$.

It was shown in \cite{Sly:11} that for $q=3$, the Kesten-Stigum bound is tight for reconstruction on the $d$-ary tree $T_d$ with large enough $d$. Thus, \rev{the} $q=3$ case in Theorem \ref{thm:KS:tight} is a generalization of \cite{Sly:11} to the case of Galton-Watson trees. The critical case $q=4$ however, is much more challenging to analyze as one might expect by Conjecture \ref{conj:RSZ}. Indeed, $q=4$ was open up to date even for the $d$-ary tree, and there were conflicting conjectures in \cite{AbbeSandon:18} and \cite{RiSeZd:19} whether the KS bound is tight for $q=4$. This is partly because for $q=4$, the tightness of the KS bound depends on the magnitude of $d$ and the sign of $\la$, as shown in the next theorem, and we give significant new insights into the reasons why this is the case. Define

\begin{equation*}
\begin{split}
    \la^{+}(q,d)&:=\sup\left\{\la>0: \textnormal{there is nonreconstruction at $\la$ for $T\sim \GW(\mu_d)$ a.s.}\right\}\,,\\
    \la^{-}(q,d)&:=\inf\left\{\la<0: \textnormal{there is nonreconstruction at $\la$ for $T\sim \GW(\mu_d)$ a.s.}\right\}\,.
\end{split}
\end{equation*}
\begin{thm}\label{thm:four:antiferro}
Given $\{\mu_d\}_{d>1}$ which satisfies Assumption \ref{assumption:unif:tail}, let $d^\star\equiv d^\star(\{\mu_d\}_{d>1})$ be
\begin{equation}\label{eq:def:d:star}
    d^\star :=\inf\left\{d>1: \frac{15+36(\sqrt{d}-1)}{7d(d-1)}\leq \frac{\E_{\mu_d}\left[X(X-1)(X-2)\right]}{\left(\E_{\mu_d}\left[X(X-1)\right]\right)^2}\right\}.
\end{equation}
Here, we take the convention that the infimum over an empty set is $\infty$. If $d^\star>1$ holds, then for all $d\in (1,d^\star)$, the Kesten-Stigum bound is not sharp in the antiferromagnetic regime for $q=4$, i.e. $\la^{-}(4,d)>-d^{-1/2}$.
Moreover, the Kesten-Stigum bound is not sharp both in the ferromagnetic and the antiferromagnetic regime for $q\geq 5$, i.e. $\la^{+}(q,d)<d^{-1/2}$ and $\la^{-}(q,d)>-d^{-1/2}$.
\end{thm}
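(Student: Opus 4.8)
The plan is to establish the \emph{reconstruction} (positive) side in the stated regimes by exhibiting a belief-propagation fixed point that survives strictly below the Kesten--Stigum line. By the equivalence recalled before Theorem~\ref{thm:KS:tight}, it suffices to show that the Bayes posterior of $\sig_\rho$ given the level-$n$ spins does not converge to the barycenter of $\Delta_q$. Write $X^{(n)}\in\Delta_q$ for this (random, over the broadcast realization) posterior and $Z^{(n)}:=X^{(n)}-(\tfrac1q,\dots,\tfrac1q)$. I would track two scalar statistics: the overlap $x_n:=\E\|Z^{(n)}\|_2^2$, which by an elementary Bayes-optimality identity equals the conditional magnetization $\E[X^{(n)}_1\mid\sig_\rho=1]-\tfrac1q$, and an auxiliary second-order quantity $w_n:=\E[(Z^{(n)}_1)^2\mid\sig_\rho=1]$; nonreconstruction is exactly $x_n\to 0$.

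First I would write down the exact belief-propagation recursion: if $v$ has children $u_1,\dots,u_k$ with subtree posteriors $X^{(1)},\dots,X^{(k)}$, then $X^{(v)}_i\propto\prod_{j\le k}\bigl(\tfrac1q+\la Z^{(j)}_i\bigr)$. Expanding the normalized product in the $Z^{(j)}$'s, using that the $Z^{(j)}$ are conditionally independent through the broadcast channel given $\sig_v$, and averaging over $k\sim\mu_d$, I would extract the induced maps $x_{n+1}=F(x_n,w_n)+(\text{remainder})$ and $w_{n+1}=G(x_n,w_n)+(\text{remainder})$. Because each factor of the product is linear in $Z^{(j)}$, only squarefree monomials survive, so the coefficients are offspring \emph{factorial} moments: a monomial on $m$ distinct children carries a factor $\E_{\mu_d}[X(X-1)\cdots(X-m+1)]$, hence pairs bring in $\E_{\mu_d}[X(X-1)]$ (which governs the quadratic terms) and triples bring in $\E_{\mu_d}[X(X-1)(X-2)]$ (which governs the cubic terms), while the normalizing denominator contributes the competing $\bigl(\E_{\mu_d}[X(X-1)]\bigr)^2$. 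This is exactly where Assumption~\ref{assumption:unif:tail} is used: the uniform exponential moment $K(\theta)$ controls the truncation remainder and the contribution of atypically high-degree vertices, uniformly in $d$ (this is why $d^\star$ depends only on the offspring law, and why the threshold in the companion Theorem~\ref{thm:KS:tight} depends only on $K$).

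Next comes the bifurcation analysis at $d\la^2=1$. The Jacobian of $(F,G)$ at the trivial fixed point $(0,0)$ is lower triangular with eigenvalues $d\la^2$ and $d\la^3$, so on the Kesten--Stigum line the $x$-direction is marginal (eigenvalue $1$) and $w$ is contracting and slaved, $w_n=\Theta(x_n)$. Writing the reduced scalar map on the KS line as $x_{n+1}=x_n+c_2(q,d)\,x_n^2+c_3(q,d)\,x_n^3+o(x_n^3)$, a nontrivial regime just below KS exists as soon as the leading nonzero coefficient is positive: if $c_2>0$ the barrier fixed point $x^\star=\Theta(1-d\la^2)$ lies below the (large) leaf initialization $x_0=1-1/q$, so the iterates cannot descend to $0$; if $c_2=0$ the same conclusion follows from $c_3>0$. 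I would then compute these coefficients in closed form. I anticipate that the reduced quadratic coefficient is negative for $q=3$ (compatible with tightness there), \emph{vanishes identically} for $q=4$, and is positive for all $q\ge5$ and both signs of $\la$; the $q\ge5$ case then immediately gives $\la^{+}(q,d)<d^{-1/2}$ and $\la^{-}(q,d)>-d^{-1/2}$ for every $d>1$. For $q=4$ the cubic coefficient decides, and after assembling its pieces (the triple-children term proportional to $\E_{\mu_d}[X(X-1)(X-2)]$ against the $\bigl(\E_{\mu_d}[X(X-1)]\bigr)^2$ normalization contribution and the $\la$-odd feedback through $w_n$) the inequality $c_3(4,d)>0$ in the antiferromagnetic regime rearranges precisely into
\[
\frac{15+36(\sqrt d-1)}{7d(d-1)}\ \le\ \frac{\E_{\mu_d}\!\left[X(X-1)(X-2)\right]}{\bigl(\E_{\mu_d}\!\left[X(X-1)\right]\bigr)^2},
\]
i.e.\ into $d\in(1,d^\star)$, which yields $\la^{-}(4,d)>-d^{-1/2}$ on that range.

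Finally I would upgrade ``the reduced recursion has a barrier fixed point $x^\star>0$ below KS'' into an honest proof that $x_n\not\to0$. The point is that there is no autonomous recursion for $x_n$, so I would instead track a short vector of moment statistics of $Z^{(n)}$, derive one-step inequalities of the form $x_{n+1}\ge d\la^2 x_n+c_2 x_n^2+c_3 x_n^3-(\text{remainder})$ with the remainder dominated by higher moments, bound those higher moments in terms of $x_n,w_n$ using $\|Z^{(n)}\|_\infty\le1$ and the contractive/slaving structure, and exhibit an invariant region (roughly $\{x\ge x^\star+\delta,\ |w-(\text{slaved value})|\le C x^2\}$) that the true iterates, started from the fully informative leaves, enter and never leave; this gives $\liminf x_n>0$, hence nonreconstruction a.s.\ over $T\sim\GW(\mu_d)$ and the failure of the KS bound. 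The main obstacle is exactly this last step together with the non-closure of the statistics: the bifurcation is \emph{local}, valid only while $x_n$ is small, so one must check that the invariant region can be taken inside the range where the truncated finite system genuinely dominates the infinite hierarchy of moments — delicate precisely for $q=4$, where the transition is nearly continuous, the sub-KS window is narrow, and the specific constants $15,36,7$ must fall on the right side; keeping all of this quantitative in $d$ is what forces the use of the uniform moment bound $K(\theta)$ to neutralize high-degree vertices.
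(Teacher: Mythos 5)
Your expansion strategy is essentially the paper's: track the magnetization $x_n$ together with slaved higher statistics, expand the Bayes recursion so that coefficients are offspring factorial moments, observe that the quadratic coefficient is proportional to $q-4$ (so $q\geq 5$ is settled at second order for both signs of $\la$), and for $q=4$ let the cubic coefficient decide, its positivity at $\la=-d^{-1/2}$ rearranging into the inequality defining $d^\star$. This matches Lemmas \ref{lem:recurse:first:order}, \ref{lem:exact:u:w} and Proposition \ref{prop:final} (indeed the eigenvalues $d\la^2,d\la^3$ you cite are exactly the linearizations of $x_n,u_n$ there, and the $\frac{1}{d(d\la-1)}$, $\frac{1}{d(d-1)}$ feedback terms arise from iterating the slaved recursions for $u_n,w_n$), so on this part your outline is correct, though you assert rather than derive the closed-form coefficients, and the paper needs substantial machinery (the apriori bounds $|u_n|,|v_n|,|w_n|=O_q(x_n^2)$ and the polynomial-factorization scheme of Propositions \ref{prop:crucial}--\ref{prop:crucial:product}) to justify the third-order expansion with controlled remainders.

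The genuine gap is in your last step, and you flag it yourself without resolving it: your invariant-region argument requires showing the true iterates, started from $x_0=\frac{q-1}{q}$, ``never leave'' a region of the form $\{x\geq x^\star+\delta\}$, but the truncated recursion controls the dynamics only where $x_n$ is small, so you have no tool to forbid the sequence from crossing the barrier while $x_n$ is still of order one, nor to show that the slaving relation $w_n\approx(\text{slaved value})$ holds at the moment the orbit first enters the perturbative window. The paper's resolution is different and is the missing ingredient: it argues by contradiction — assuming nonreconstruction gives $\limsup_n x_n\leq\delta$, which is exactly the hypothesis legitimizing the local expansion and the iterated estimates of $u_n,w_n$ — and it supplements this with the unconditional Lemma \ref{lem:drop:small} (from $x_{n+1}(T)\geq\la^4x_n(T_1)^2$ plus Cauchy--Schwarz), which gives $x_{n}\geq \eta:=\frac34 c^{N}$ up to the time $N$ at which the expansion applies; choosing $\la_0$ with $d\la_0^2+(g_4(d,\la_0)-\eps)\eta^2\geq1$ then makes $x_n\geq\eta$ propagate forever, contradicting $x_n\to0$. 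Without this (or an equivalent device exploiting the monotonicity of $x_n$ from Lemma \ref{lem:equiv:xn:nonreconstruct} and a one-step lower bound valid at all scales), your plan does not close; with it, it becomes the paper's proof.
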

Theorem \ref{thm:four:antiferro}, which establishes reconstruction below the KS bound with positive probability, can be viewed as evidence for part III of Conjecture~\ref{conj:block} and Conjecture~\ref{conj:RSZ}. Moreover, Theorem~\ref{thm:four:antiferro} confirms one side of part III of Conjecture~\ref{conj:RSZ} for the broadcast model, and the specific value $d=d^{\ast}$ in \eqref{eq:def:d:star} matches the prediction from \cite{RiSeZd:19}.

\rev{We note that the definition of $d^\star$ in \eqref{eq:def:d:star} stems from certain recursion~\eqref{eq:exact:third:order} of a quantity $x_n$ called \textit{average magnetization}~~\eqref{eq:def:avg:magnet}.} The notable cases for $d^\star$ are for the Poission offspring distribution $d^\star_{\textsf{Poi}}\equiv d^\star\left((\textsf{Poi}(d))_{d>1}\right)$ and for the $d-$regular tree case $d^\star_{\textnormal{Reg}}\equiv d^\star\left((\delta_d)_{d>1}\right)$: from \eqref{eq:def:d:star}, it is straightforward to calculate
\begin{equation*}
    d^\star_{\textsf{Poi}}=\left(\frac{18+\sqrt{226}}{7}\right)^2\approx 22.2694\quad\quad  d^\star_{\textnormal{Reg}}=\left(\frac{18+\sqrt{275}}{7}\right)^2\approx 24.408.
\end{equation*}
We note that the value $d^\star_{\textsf{Poi}}\approx 22.2694$ matches the predictions of \cite{RiSeZd:19}.

{\em Open Problems.}
We note that while \rev{this paper} provide\rev{s} some sharp result\rev{s} showing that there is no computational-statistical gap for block models when $q=3$ and in some cases where $q=4$, some interesting challenges remain, including: 
\begin{itemize}
\item Can one prove Theorem~\ref{thm:SBM} for $q=3$ and small $d$? Can this be done for $q=4$ and $\la \geq 0$?
\item Can we improve on~\cite{BMNN:16,AbbeSandon:18} and chart more accurately the regimes below the KS bound, where it is information theoretically possible to detect? 
\item \rev{The tightness of KS bound for reconstruction on general trees was established by~\cite{EvKePeSc:00} for $q=2$ case, where the degree is replaced by the branching number~\cite{Lyons:89}. It is thus natural to try to extend Theorem~\ref{thm:KS:tight} from Galton-Watson trees to general trees.} 
\item It was conjectured in~\cite{RiSeZd:19} that in the setting of Theorem~\ref{thm:four:antiferro} for $q=4$ and the antiferromagnetic case, when $d > d^{\ast}$ the KS bound is sharp. It is natural to try and prove this. By the coupling argument in our paper this would imply that detection is impossible for the block model for the same parameters. 

\end{itemize}

\section{Proof overview}
In this section, we introduce the main ideas to prove our main results. We first explain how Theorem \ref{thm:KS:tight} implies Theorem \ref{thm:SBM} by coupling the block model to multiple independent broadcast models. Then, we introduce the high level ideas to establish Theorem \ref{thm:KS:tight} and Theorem \ref{thm:four:antiferro}.
\subsection{Coupling SBM to multiple broadcast processes}

We prove Theorem~\ref{thm:SBM} by establishing that nonreconstruction on the tree implies nonreconstruction on the corresponding stochastic block model.
\begin{thm}\label{t:nonrecon.to.sbm}
If there is nonreconstruction on the Poisson(d) Galton-Watson tree for \rev{the broadcast process} with second eigenvalue $\lambda$ then detection is information theoretically impossible on the stochastic block model $\GG(n,q,\frac{a}{n},\frac{b}{n})$ for $(a,b)$ corresponding to $(d,\la)$ by equation~\eqref{eq:ab.dlambda}.  
\end{thm}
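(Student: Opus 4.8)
The plan is to generalize the two-colour coupling of \cite{MoNeSl:15} to $q$ colours and to a Poisson Galton--Watson local limit, taking nonreconstruction on the tree as input. By Corollary~\ref{cor:two.point} (whose equivalence with \eqref{eq:SBM.defn} is proved in Section~\ref{s:coupling}) it suffices to fix two vertices $u,v$ and show $\P(\sigma_u=i\given G,\sigma_v=1)\pto\tfrac1q$ for each $i$. Colour symmetry of the model forces the one-vertex posterior $\P(\sigma_w=\cdot\given G)$ to be exactly uniform for every fixed $w$ (permuting all colours fixes the joint law of $(\sigma,G)$), so this is equivalent to $\sigma_u$ and $\sigma_v$ becoming asymptotically conditionally independent given $G$, i.e. $\P(\sigma_v=1\given G,\sigma_u=i)\pto\tfrac1q$.

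First I would fix a radius $R$ (a large constant; one lets $R\to\infty$ after $n\to\infty$) and record the local geometry. With probability $1-o_n(1)$ the ball $B_R(v)$ is a tree, $u\notin B_R(v)$, and the joint law of the rooted tree $B_R(v)$ together with its spins, conditioned on $\sigma_v=1$, is within total variation $o_n(1)$ of the depth-$R$ broadcast process with second eigenvalue $\la$ on a Poisson$(d)$ Galton--Watson tree rooted at $v$ with root spin $1$; this is the usual local weak convergence of sparse $\GG(n,q,\frac an,\frac bn)$ together with the channel identity $\la+\tfrac{1-\la}{q}=\tfrac{a}{a+(q-1)b}$ matching the broadcast transition probabilities \eqref{eq:transition:matrix}. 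Let $S=\partial B_R(v)$ be the distance-$R$ sphere; it is a vertex cut separating $v$ from $u$ and from the rest of $G$.

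The core is a Markov/cut-set decomposition. Conditionally on $G$, the posterior on spins is a pairwise Gibbs measure in which \emph{every} pair of vertices interacts --- edges of $G$ with weight $\propto p_{\textnormal{in}}$ or $p_{\textnormal{out}}$, non-edges with weight $\propto 1-p_{\textnormal{in}}$ or $1-p_{\textnormal{out}}$. Since the non-edge weights differ from $1$ by $O(1/n)$ and the colour histogram concentrates, $n_i=\tfrac nq+O_{\P}(\sqrt n)$, one shows that conditionally on $G$ the restriction of this posterior to $B_R(v)$ is within $o_n(1)$ of the free broadcast-tree Gibbs measure on $B_R(v)$ with boundary condition $\sigma_S$; consequently, conditionally on $(G,\sigma_S)$ the spin $\sigma_v$ is, up to $o_n(1)$, independent of $\sigma_u$ and of the exterior of $B_R(v)$. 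Writing $f_i(\tau)=\P(\sigma_S=\tau\given\sigma_v=i)$ in the tree $B_R(v)$, and using $\P(\sigma_v=1\given G)=\tfrac1q$ and $\sum_\tau[\P(\sigma_S=\tau\given G,\sigma_u=i)-\P(\sigma_S=\tau\given G)]=0$, this yields
\[
\P(\sigma_v=1\given G,\sigma_u=i)-\tfrac1q=\sum_{\tau}\Big(\tfrac{f_1(\tau)}{\sum_k f_k(\tau)}-\tfrac1q\Big)\big[\P(\sigma_S=\tau\given G,\sigma_u=i)-\P(\sigma_S=\tau\given G)\big]+o_n(1).
\]
Nonreconstruction of the Poisson$(d)$ broadcast process is exactly the statement that $\sum_\tau|f_i(\tau)-f_j(\tau)|\to0$ as $R\to\infty$ (a.s.\ over the tree, hence w.h.p.\ over $B_R(v)$), which makes $f_1(\tau)/\sum_k f_k(\tau)\to\tfrac1q$ in $L^1$ with respect to the broadcast leaf law.

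The main obstacle is closing this last step: the factor $f_1(\tau)/\sum_k f_k(\tau)-\tfrac1q$ is bounded by $1$ but small only \emph{on average against the broadcast leaf law}, whereas it is paired with the signed measure $\P(\sigma_S=\cdot\given G,\sigma_u=i)-\P(\sigma_S=\cdot\given G)$ produced by conditioning on the rest of the graph, which need not have bounded density against the broadcast law. Handling this requires quantitative control of how much the exterior of $B_R(v)$ can reweight and correlate the spins on $S$: for instance a truncation isolating the configurations $\tau$ whose exterior likelihood ratio is bounded, together with a (uniform in $n$) second-moment/$\chi^2$ bound showing the remaining mass is negligible; or, more robustly, an exploration of $\GG(n,q,\frac an,\frac bn)$ that reveals the exterior \emph{before} the spins on $S$, so that conditionally on what has been revealed the law of $\sigma_S$ is coupled to the broadcast leaf law and the nonreconstruction hypothesis applies with no reweighting at all. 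This coupling, together with the bookkeeping of the $O(1/n)$ non-edge interactions in the Gibbs reduction above, is precisely the ``general coupling of the tree and graph processes'' the proof must supply; once it is in place, letting $n\to\infty$ and then $R\to\infty$ gives $\P(\sigma_u=i\given G,\sigma_v=1)\pto\tfrac1q$, hence \eqref{eq:SBM.defn}.
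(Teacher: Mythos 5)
Your reduction to the two-point statement, the local coupling of a single ball $B_R(v)$ with a Poisson Galton--Watson broadcast tree, and the cut-set decomposition of the posterior are all reasonable and follow the template of \cite{MoNeSl:15}. But the argument stops at precisely the place where the real work must happen. You pair the quantity $f_1(\tau)/\sum_k f_k(\tau)-\tfrac1q$, which nonreconstruction controls only in $L^1$ against the broadcast leaf law, with the signed measure $\P(\sigma_S=\cdot\given G,\sigma_u=i)-\P(\sigma_S=\cdot\given G)$ coming from conditioning on the exterior; you correctly flag that this measure need not have bounded density with respect to the broadcast leaf law, and then you sketch two possible remedies (a truncation/$\chi^2$ bound on the exterior likelihood ratio, or an exploration revealing the exterior before $\sigma_S$) without carrying out either. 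That missing step is the entire substance of the theorem, and the proposal does not supply it.

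The paper in fact takes a genuinely different route that sidesteps the reweighting problem altogether. Rather than analyzing the posterior at two fixed vertices, it samples $m=n^{1/5}$ vertices $u_1,\dots,u_m$ uniformly and bounds the success rate of \emph{any} estimator on this random subset, which suffices by exchangeability of the $u_j$. It then couples all $m$ local balls $B_\ell(u_j)$ simultaneously with $m$ i.i.d.\ $\GW(\textsf{Poi}(d))$ trees carrying broadcast spins (this is Lemma~\ref{lem:coup_trees_g}, a multi-vertex version of the coupling you describe), and partitions the $u_j$ by the rooted tree shape $t$ and boundary spin configuration $s(\ell)$. The crucial replacement for your missing $\chi^2$ control is an exchangeability claim: conditional on $G$, all class sizes $|\MM(t,s,s(\ell))|$, and the boundary spins, the spin configurations inside the balls are conditionally exchangeable within each class $\MM(t,s(\ell))$ (a direct consequence of the permutation-invariance of the SBM measure under swapping indistinguishable local configurations). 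Hence the optimal estimator measurable with respect to this $\sigma$-algebra can at best guess the majority color in each class, so its success rate is $\frac1m\sum_{(t,s(\ell))}\max_i|\MM(t,i,s(\ell))|$, which by the coupling and the law of large numbers concentrates around $\E[\max_i\P(\sigma_\rho=i\mid T_\ell,\sigma(\ell))]\le\tfrac1q+\eps/4$ by the nonreconstruction hypothesis \eqref{eq:ell.defn}. Because the estimator's information is symmetrized across each class, there is simply no exterior likelihood ratio to control; this is what makes the coupling robust and is the new idea the proposal is missing.
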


Results of~\cite{MoNeSl:15} proved the analogue of Theorem~\ref{t:nonrecon.to.sbm} in the case of the symmetric 2-state SBM \rev{by establishing an approximate Markov random field property. Namely, given the observed graph, the conditional distribution of a neighborhood of $u$, conditional on all the spins outside the neighborhood of $u$, is approximately the same if we only condition on the spins on the boundary of the neighborhood of $u$.} This is more subtle than it looks at first, as to establish it, one has to account for the information contained in the absence of edges from the neighborhood to the remainder of the graph.  In Section~\ref{s:coupling} we present a new approach to prove Theorem~\ref{t:nonrecon.to.sbm} for the general stochastic block model $\mathcal{G}(n,q,\frac{a}{n},\frac{b}{n})$. 

The connection between these two notions of nonreconstruction follows from the fact that the local weak limit of the SBM and its spins is the  Poisson(d) Galton-Watson tree with the symmetric channel broadcast model.  This means that we can couple the local neighborhood of a vertex in the SBM with Galton-Watson tree including the spins.  In fact we show that we can simultaneously couple  the neighborhoods of $\rev{\lfloor n^{1/5}\rfloor}$ randomly chosen vertices $u_i$ at a time.  

We show that asymptotically the Maximum a Posteriori (MAP) estimator cannot correctly assign vertices with rate greater than $\frac1{q}$ asymptotically.  We do so by partitioning the $\rev{\lfloor n^{1/5} \rfloor}$ local neighborhoods according to their tree topology and the spin configuration on their boundary configuration.  By the symmetry of the model, the spin configurations within the neighborhoods are exchangeable within each partition class.  By our coupling and the fact that we are in the 
nonreconstruction regime on the tree, in most partition classes the root has each given color in about $\frac1{q}$ fraction of $u_i$.  It follows that even with the additional information of the boundary configurations, the MAP estimator correctly assigns only $\frac1{q}+o(1)$ spins asymptotically.  Since this holds for a randomly chosen set, we also cannot do better than a $\frac1{q}+o(1)$ fraction on the whole graph which implies~\eqref{eq:SBM.defn}. We believe that this new approach applies to a wide range of settings.

\begin{proof}[Proof of Theorem~\ref{thm:SBM}]
The result follows from Theorem~\ref{t:nonrecon.to.sbm} because if $(a-b)^2\leq q(a+(q-1)b)$ then $d\lambda^2\leq 1$ and so for large enough $d$ and $q\in\{3,4\}$ by Theorem~\ref{thm:KS:tight} we have nonreconstruction on the Poisson(d) Galton-Watson tree.
\end{proof}

\subsection{Analysis of the broadcast model on Galton-Watson trees}
\label{subsec:overview:broadcast}
In this subsection, we give an outline of the proof of Theorem \ref{thm:KS:tight} and Theorem \ref{thm:four:antiferro}. In particular, we describe the main ideas in dealing with random trees $T\sim \GW(\mu_d)$ and the critical case $q=4$.

We start by introducing the necessary notations. Given a (random) tree $T$ with root $\rho$, let $\sig$ be the spin configurations sampled from the broadcast model and let $\sig(n)$ be the spins at distance $n$ from the root $\rho$. The posterior probability of the root given the spins at depth $n$ is 
\begin{equation}\label{eq:def:posterior}
    f_{n,T}(i,\tau)=\P(\sig_{\rho}=i\given \sig(n)=\tau).
\end{equation}
Recall that $\sig^i$ is the random configuration $\sig$ conditioned on $\sig_{\rho}=i$. Then, define
\begin{equation*}
   X^{+}=X^{+}(n,T)=f_{n,T}(1,\sig^1(n))\quad\textnormal{and}\quad X^{-}=X^{-}(n,T)=f_{n,T}(2,\sig^1(n)).
\end{equation*}
That is, $X^{+}$ is the probability of being correct when we guess $\sig_{\rho}$ according to the posterior distribution. Note that there are two sources of randomness in $X^{+}$, namely $T$ and $\sig$. To this end, we \rev{write} $\E_{\sig}$ \rev{for} the expectation with respect to the randomness of $\sig$, i.e. the conditional expectation given a tree $T$. Define
\begin{equation*}
x_n(T)=\E_{\sig}X^{+}(n)-\frac{1}{q}.
\end{equation*}
Borrowing the terminology from statistical physics, we often call the quantity $x_n(T)$ the \textit{magnetization} of \rev{$\sig(n)$}. From the inferential perspective, $x_n(T)$ encodes the amount of information that the spins at depth $n$ has on the root $\sig_{\rho}$. As noted in \cite{CCST:86,BCMR:06, Sly:11}, $x_n(T)$ is closely related to nonreconstruction.
 In particular, Corollary 2.4 of \cite{Sly:11} have proved for the $d$-ary tree $T_d$, we have $x_n(T_d)\geq 0$ and $\lim_{n\to\infty} x_n(T_d)= 0$ is equivalent to nonreconstruction.

We prove that almost sure nonreconstruction for $T\sim \GW(\mu_d)$ is equivalent to vanishing \rev{\textit{average magnetization}}, defined by
\begin{equation}\label{eq:def:avg:magnet}
    x_n=\E[x_n(T)] = \E X^{+}(n)-\frac{1}{q},
\end{equation}
A key observation is that the amount of information $x_n(T)$ deteriorates as the depth $n$ increases for \textit{arbitrary} tree $T$. The proof of Lemma \ref{lem:equiv:xn:nonreconstruct} is given in Section \ref{subsec:basic:id}.
\begin{lemma}\label{lem:equiv:xn:nonreconstruct}
$(x_n)_{n\geq 1}$ is a monotonically decreasing sequence with $x_n\geq 0$. Moreover, we have nonreconstruction almost surely over $T\sim \GW(\mu_d)$ if and only if $\lim_{n\to\infty} x_n=0$.
\end{lemma}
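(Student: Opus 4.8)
The plan is to first establish the pointwise (per-tree) properties $x_n(T) \ge 0$ and monotonicity in $n$, and then to upgrade from the per-tree statement to the averaged statement $x_n = \E[x_n(T)]$, finally connecting $\lim x_n = 0$ with almost-sure nonreconstruction. For the nonnegativity $x_n(T) \ge 0$: by symmetry of the channel, $X^+(n,T) = f_{n,T}(1,\sig^1(n))$ is the posterior mass the true root color receives, so $\E_\sig X^+(n) = \E_\sig\big[\max\text{-type average}\big]$; more precisely, averaging over the uniformly chosen root and using the channel symmetry, $\E_\sig X^+(n) = \sum_\tau \P(\sig(n)=\tau)\sum_i \P(\sig_\rho = i \mid \sig(n)=\tau)^2 = \E_\sig \sum_i f_{n,T}(i,\sig(n))^2 \ge \sum_i (\E_\sig f_{n,T}(i,\sig(n)))^2 = q\cdot (1/q)^2 = 1/q$ by Cauchy--Schwarz (or Jensen), since $\E_\sig f_{n,T}(i,\sig(n)) = \P(\sig_\rho = i) = 1/q$. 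This gives $x_n(T) \ge 0$ for every tree $T$, hence $x_n \ge 0$.

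For monotonicity, the key point is that $\sig(n+1)$ is a (randomized) refinement of $\sig(n)$ only in the reversed sense: actually $\sig(n)$ is \emph{not} a function of $\sig(n+1)$, so one must argue via the data-processing/martingale structure of the posterior. The cleanest route: condition on $\sig(n)$ and note that the broadcast from the root to level $n+1$ factors through level $n$, i.e. $\sig_\rho \to \sig(n) \to \sig(n+1)$ is a Markov chain. Therefore $\P(\sig_\rho = i \mid \sig(n+1))$ and $\P(\sig_\rho = i \mid \sig(n))$ are related by the tower property, and the ``quadratic magnetization'' $\E_\sig \sum_i f_{n,T}(i,\sig(n))^2 = \E_\sig\|\P(\sig_\rho = \cdot \mid \sig(n))\|_2^2$ is precisely the $L^2$ norm of a reverse martingale in $n$ — wait, it is increasing in the amount of conditioning, so since level $n$ reveals ``less'' than level $n+1$ would if levels were nested, but they are not nested. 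The correct statement is that $M_n := \P(\sig_\rho = \cdot \mid \sig(n))$ is NOT a martingale in $n$; instead one uses that $\sig(n)$ is obtained from $\sig(n+1)$ by forgetting, so conditioning on $\sig(n)$ is coarser, giving $\E\|M_n\|_2^2 \le \E\|M_{n+1}\|_2^2$? That would make $x_n$ \emph{increasing}, contradicting the claim. The resolution — and the main obstacle — is that this reasoning is backwards: $\sig(n)$ is a deterministic function of $\sig(n-1)$'s descendants, but the filtrations go the other way. One must instead use the recursive structure: $x_{n+1}(T)$ is built from $x_n$ of the subtrees hanging at depth $1$, and a convexity/contraction estimate on the Bayes recursion shows the magnetization cannot increase when an extra layer is prepended at the root. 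I would formalize this by writing the one-step recursion for the posterior (the standard BP recursion on the tree) and invoking that averaging the root's signal through one more noisy channel layer is a contraction in the relevant quadratic functional; this is essentially the ``$x_n$ deteriorates with depth for arbitrary $T$'' claim highlighted in the text, and I expect it to require the most care.

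For the passage to averages: almost-sure nonreconstruction means $x_n(T) \to 0$ for $\GW(\mu_d)$-a.e. $T$; since $0 \le x_n(T) \le 1 - 1/q$ is bounded and monotone decreasing in $n$ for each $T$, the monotone (or dominated) convergence theorem gives $x_n = \E[x_n(T)] \to \E[\lim_n x_n(T)]$, so $\lim x_n = 0$ iff $x_n(T) \to 0$ a.s. Conversely if $\lim x_n = 0$, then since $x_n(T) \ge 0$ and is decreasing, $\lim_n x_n(T)$ exists a.s. and is a nonnegative random variable with zero expectation, hence $x_n(T) \to 0$ a.s.; finally one must check this equals nonreconstruction, i.e. that $\lim_n x_n(T) = 0$ is equivalent to $\limsup_n d_{\mathrm{TV}}(\sig^i(n),\sig^j(n)) = 0$ for the fixed tree $T$ — this is exactly the content of Corollary 2.4 of \cite{Sly:11} adapted from $T_d$ to general $T$, using $d_{\mathrm{TV}}(\sig^i(n),\sig^j(n)) \le C\sqrt{x_n(T)}$ in one direction and $x_n(T) \le C' \max_{i,j} d_{\mathrm{TV}}(\sig^i(n),\sig^j(n))$ in the other (both following from expanding the TV distance and the magnetization in terms of the posterior and using symmetry). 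I would state these two inequalities as a sub-claim and verify them by a short computation with the definitions \eqref{eq:def:posterior}.
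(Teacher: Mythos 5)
Your argument has one genuine gap: the monotonicity of $x_n(T)$, which you yourself flag as the step you could not finish. You correctly note that $\sig_\rho\to\sig(n)\to\sig(n+1)$ is a Markov chain, but you then talk yourself into a contradiction about whether $\sig(n)$ is ``coarser'' or ``finer'' than $\sig(n+1)$ and abandon the argument in favor of an unspecified ``BP contraction'' idea. The confusion comes from treating $\sigma(\sig(n))$ and $\sigma(\sig(n+1))$ as if they should be nested; they are not, since levels $n$ and $n+1$ are disjoint sets of vertices. Two clean resolutions exist. The paper's route uses the Markov Random Field property: $X_i(n)=\P(\sig_\rho=i\mid\sig(n))=\P(\sig_\rho=i\mid\tau(n))$, where $\tau(n)$ is the entire configuration at depth $\geq n$; since the $\sigma$-algebras $\sigma(\tau(n))$ \emph{are} nested and decreasing, $(X_i(n))_n$ is a backward martingale and Jensen gives $\E(X_i(n+1)-1/q)^2\leq\E(X_i(n)-1/q)^2$, which combined with Lemma~\ref{lem:basic} yields $x_{n+1}(T)\leq x_n(T)$. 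Alternatively, the Markov chain you wrote down already gives, via a short computation, $X_i(n+1)=\E\big[X_i(n)\mid\sig(n+1)\big]$ (a genuine conditional-expectation identity even though the $\sigma$-algebras are not nested), and Jensen applied to this identity gives the same second-moment inequality. You were one line away from this and did not take it.

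The rest of your proposal is sound and closely parallels the paper. Your Cauchy--Schwarz argument for $x_n(T)\geq 0$, via $\E_\sig X^+(n)=\sum_i\E X_i(n)^2\geq q\cdot(1/q)^2=1/q$, is correct and is essentially the $k=1$ case of the identity in the paper's Lemma~\ref{lem:basic}; the paper simply reads nonnegativity off the explicit decomposition $x_n(T)=\sum_i\E(X_i(n)-1/q)^2$. Your passage from $x_n(T)\to 0$ almost surely to $x_n\to 0$ (monotone/dominated convergence plus nonnegativity of the a.s.\ limit) matches the paper. For the per-tree equivalence of $x_n(T)\to 0$ with nonreconstruction, the paper goes through the optimal-reconstruction probability $p_n(T)$ and cites Proposition~14 of \cite{Mossel:01} together with Lemma~2.3 of \cite{Sly:11} (namely $x_n(T)\leq p_n(T)-1/q\leq x_n(T)^{1/2}$); your sketch via two-sided comparison with $d_{\mathrm{TV}}(\sig^i(n),\sig^j(n))$ is an equivalent route but, as you acknowledge, would still need to be carried out. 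If you patch the monotonicity step as above, your proof becomes complete and is essentially the paper's.
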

Having Lemma \ref{lem:equiv:xn:nonreconstruct} in hand, we analyze $x_n$ by taking advantage of the recursive nature of the Galton-Watson trees. Denote \rev{by} $\gamma$ the degree of the root $\rho$, so we have $\gamma\sim \mu_d$. Let $u_1,...,u_{\gamma}$ be the children of $\rho$ in $T$, and denote $T_{j}$ by the subtree of descendents of $u_j$ including $u_j$. Further, \rev{let $L(n)$ be the vertices at depth $n$} and denote \rev{by} $\sig_j(n)$ the spins in $L(n)\cap T_j$. For a configuration $\tau$ on $L(n+1)\cap T_{j}$, the posterior function at $u_j$ is
\begin{equation*}
    f_{n,T_j}(i,\tau)=\P(\sig_{u_j}=i\given \sig_j(n+1)=\tau).
\end{equation*}
Observe that $f_{n+1,T}$ can be written in terms of $(f_{n,T_j})_{j\leq \gamma}$ by  Bayes rule: given a configuration $\tau$ on $L(n+1)$, let $\tau_j$ be its restriction to $L(n+1)\cap T_j$. Recalling the transition matrix from \eqref{eq:transition:matrix}, Bayes rule gives for $i_0\in \{1,...,q\}$,
\begin{equation}
\label{eq:bayes}
\begin{split}
    f_{n+1,T}(i_0, \tau)
    &= \frac{\prod_{j=1}^{\gamma}\left(\frac{1+(q-1)\la}{q}f_{n,T_j}(i_0,\tau_j)+\sum_{\ell \neq i_0} \frac{1-\la}{q}f_{n,T_j}(\ell,\tau_j)\right)}{\sum_{i=1}^{q}\prod_{j=1}^{\gamma}\left(\frac{1+(q-1)\la}{q}f_{n,T_j}(i,\tau_j)+\sum_{\ell \neq i} \frac{1-\la}{q}f_{n,T_j}(\ell,\tau_j)\right)}\\
    &=\frac{\prod_{j=1}^{\gamma}\left(1+\la q (f_{n,T_j}(i_0,\tau_j)-1/q)\right)}{\sum_{i=1}^{q}\prod_{j=1}^{\gamma}\left(1+\la q (f_{n,T_j}(i,\tau_j)-1/q)\right)},
\end{split}
\end{equation}
where the last equality is due to $\sum_{\ell=1}^{q}f_{n,T_j}(\ell,\tau_j)=1$. We take the convention that the product over an empty set is $1$, e.g. $\prod_{j=1}^{0}f(j)=1$, so the equality above still holds when $\gamma=0$ (the posterior probability is always equal to $1/q$ when $T$ is a single node). Having \eqref{eq:bayes} in hand, define the posterior probability that $\rev{\sigma_{u_j}}=i$ given $\sig_j^1(n+1)$ by
\begin{equation*}
    Y_{ij}=Y_{ij}(n)=f_{n,T_j}(i,\sig^1_j(n+1)).
\end{equation*}
We take the convention that $Y_{i1}$ is defined to be $\frac{1}{q}$ when $\gamma=0$ so that $Y_{i1}$ is always well-defined. Then, plugging in $\tau=\sig^{1}(n+1)$ and $i_0=1$ to \eqref{eq:bayes} shows
\begin{equation}\label{eq:key:recursion}
    X^{+}(n+1)=\frac{Z_1}{\sum_{i=1}^{q}Z_i},\quad\textnormal{where}\quad Z_i\equiv \prod_{j=1}^{\gamma}\left(1+\la q\left(Y_{ij}-\frac{1}{q}\right)\right).
\end{equation}
Of course, the analogue $X^{-}(n+1)=\frac{Z_2}{\sumz}$ holds as well.

Equation \eqref{eq:key:recursion} plays a key role in relating $X^{+}(n+1)$ from $X^{+}(n)$. Observe that unlike in the case of \textit{non-random} $d$-ary trees, the subtree $T_j$ is not the same as the whole tree $T$ in general, so the distribution of $Y_{ij}$ cannot be directly written in terms of $X^{+}(n)$ and $X^{-}(n)$. 

However, $T_j$ has the same \textit{distribution} as $T\sim \GW(\mu_d)$, so we can leverage this fact to relate the distribution of $Y_{ij}$ and $(X^{+}(n),X^{-}(n))$ \textit{after} taking expectation with respect to $\sigma$. In particular, we have the following Proposition which follows from the definition of Galton-Watson trees, Markov random field property, and the symmetries of the model.  Recall that $Y_{ij}$ are defined so that the root is conditioned on $\sig_{\rho}=1$.
\begin{prop}\label{prop:Y:dist}
Denote $Y_j=(Y_{ij})_{i\leq q}\in \R^q$ for $j \leq \gamma$. Then, we have the following properties.
\begin{enumerate}[label=(\alph*)]
    \item Given $T$, $Y_j$'s for $j=1,2,..,\gamma$ are conditionally independent.
    \item Let $f:\R^q \to \R$ be a continuous function. Then, conditional on $\gamma$, $\Big\{\E_{\sig}f(Y_j)\Big\}_{j\leq \gamma}$ are independent and identically distributed. Moreover, $\E_{\sig}f(Y_1)$ is independent of $\gamma$.
    \item Let $g:\R^2 \to \R$ be a continuous function and $i\neq \ell\in \{1,...,q\}$ be $2$ distinct colors. Then, the distribution of $\E_{\sig}\Big[g(Y_{i1},Y_{\ell 1})\given \sig^{1}_{u_1}=i\Big]$ is the same as the distribution of $\E_{\sig}g\Big(X^{+}(n),X^{-}(n)\Big)$.
    \item Given $T$, $\sig^1_{u_1}$, and $Y_{\sig^1_{u_1} 1}$, the distribution of $(Y_{ij})_{i\neq \sig^1_{u_1}}$ is conditionally exchangeable.
\end{enumerate}
\end{prop}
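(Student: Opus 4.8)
The plan is to derive all four items from three structural facts, with essentially no computation: \emph{(i)} the branching recursion for Galton--Watson trees---conditionally on $\gamma\sim\mu_d$, the descendant subtrees $T_1,\dots,T_\gamma$ are i.i.d.\ copies of $\GW(\mu_d)$, and this conditional law does not depend on the value of $\gamma\ge1$; \emph{(ii)} the Markov random field property of the broadcast model on a tree---conditionally on the spin at a vertex $v$, the spin configurations on the connected components of $T\setminus\{v\}$ are mutually independent; and \emph{(iii)} the permutation symmetry of the channel \eqref{eq:transition:matrix}---for every $\pi\in S_q$, relabeling every spin by $\pi$ maps the broadcast model with root color $c$ to the broadcast model with root color $\pi(c)$, since $\P(\sig_u=j\mid\sig_v=i)$ depends only on whether $i=j$. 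A consequence of \emph{(iii)} that will be used repeatedly is the posterior identity $f_{n,T}(\pi(c),\pi\cdot\tau)=f_{n,T}(c,\tau)$ for every $\pi\in S_q$, color $c$, and boundary configuration $\tau$ (here $\pi\cdot\tau$ is the coordinatewise relabeling); it follows from Bayes' rule together with \emph{(iii)} and the fact that the prior on $\sig_\rho$ is uniform. Throughout, the $Y_{ij}$ are defined with respect to $\sig^1$, and $\E_\sig$ is the conditional expectation given $T$.

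For \emph{(a)}: by \emph{(ii)} applied at $v=\rho$, conditionally on $T$ and $\sig_\rho=1$ the configurations $(\sig_w)_{w\in T_j}$, $j=1,\dots,\gamma$, are mutually independent; since $Y_j$ is a measurable function of $T_j$ and of $(\sig_w)_{w\in L(n+1)\cap T_j}$, the $Y_j$ are conditionally independent given $T$. For \emph{(b)}: given $\sig_\rho=1$ and $T$, the channel turns $\sig_{u_j}$ into the fixed distribution of row $1$ of \eqref{eq:transition:matrix}, after which the spins propagate within $T_j$; hence the conditional law of $Y_j$ depends only on $T_j$, so $\E_\sig f(Y_j)=h_f(T_j)$ for one measurable map $h_f$ not depending on $j$. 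By \emph{(i)}, $\{h_f(T_j)\}_{j\le\gamma}$ are i.i.d.\ conditionally on $\gamma$, and since each $T_j$ is a copy of $\GW(\mu_d)$ for every $\gamma\ge1$, the law of $\E_\sig f(Y_1)=h_f(T_1)$ does not depend on $\gamma$ (on $\{\gamma=0\}$ the convention $Y_{i1}=1/q$ is consistent, a single-node tree having positive $\GW(\mu_d)$-probability).

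For \emph{(c)}: condition on $\sig^1_{u_1}=i$. By \emph{(ii)} at $v=u_1$, the pair $(T_1,(\sig_w)_{w\in T_1})$ is then distributed exactly as a fresh $T_1\sim\GW(\mu_d)$ carrying the broadcast model with root color $i$ at $u_1$, independently of the event $\sig_\rho=1$; also $\sig_1(n+1)$ lies at distance $n$ from $u_1$. Choose $\pi\in S_q$ with $\pi(i)=1$ and $\pi(\ell)=2$ (possible as $i\ne\ell$) and relabel every spin of $T_1$ by $\pi$: by \emph{(iii)} this produces the broadcast model on $\GW(\mu_d)$ with root color $1$, in which $\pi\cdot\sig_1(n+1)$ is the depth-$n$ boundary, and by the posterior identity $Y_{i1}=f_{n,T_1}(1,\pi\cdot\sig_1(n+1))$ and $Y_{\ell1}=f_{n,T_1}(2,\pi\cdot\sig_1(n+1))$. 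Hence the pair $(Y_{i1},Y_{\ell1})$, conditionally on $\sig^1_{u_1}=i$, has the same joint law (over the fresh tree) as $(X^+(n),X^-(n))$, so $\E_\sig[g(Y_{i1},Y_{\ell1})\mid\sig^1_{u_1}=i]$ and $\E_\sig g(X^+(n),X^-(n))$ have the same distribution. For \emph{(d)}: write $s=\sig^1_{u_1}$. Every $\pi\in S_q$ fixing $s$ is, by \emph{(iii)}, a symmetry of the broadcast model on $T_1$ with root color $s$; relabeling the spins of $T_1$ by such a $\pi$ leaves $T$, $s$, and $Y_{s1}=f_{n,T_1}(s,\cdot)$ unchanged while permuting $(Y_{i1})_{i\ne s}$ according to $\pi$ (again via the posterior identity). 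Therefore the conditional law of $(Y_{i1})_{i\ne s}$ given $(T,\sig^1_{u_1}=s,Y_{s1})$ is invariant under the symmetric group on the coordinates $i\ne s$, i.e.\ exchangeable; the identical argument applies to any $u_j$ in place of $u_1$.

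The arguments are ``soft,'' so the only real work is bookkeeping. The step most prone to error is \emph{(c)}: one must verify both that conditioning on $\sig^1_{u_1}=i$ genuinely decouples $(\sig_w)_{w\in T_1}$ from the conditioning $\sig_\rho=1$ and from the rest of $T$---this is exactly the Markov random field property at $u_1$, and is what endows the subtree with the clean law of an independent $\GW(\mu_d)$---and that under the relabeling $\pi$ the specific posterior coordinates $Y_{i1},Y_{\ell1}$ land on $X^+(n),X^-(n)$ rather than on some other pair, which is why the posterior identity $f_{n,T}(\pi(c),\pi\cdot\tau)=f_{n,T}(c,\tau)$ needs to be established first. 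Once that identity is in hand, \emph{(c)} and \emph{(d)} are immediate, and \emph{(a)}, \emph{(b)} follow directly from the tree Markov property and the branching recursion.
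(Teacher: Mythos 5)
The paper states this proposition without proof, attributing it in one sentence to the Galton--Watson branching structure, the tree Markov random field property, and the symmetries of the model. Your proof is correct and expands exactly those three ingredients into a complete argument; in particular the posterior equivariance identity $f_{n,T}(\pi(c),\pi\cdot\tau)=f_{n,T}(c,\tau)$ that you isolate up front is precisely the (unstated) technical lemma the paper relies on to make parts (c) and (d) go through, and your use of the MRF property at $u_1$ to decouple the subtree law from the root conditioning is the right way to see that $(Y_{i1},Y_{\ell1})$ inherits the fresh $\GW(\mu_d)$ distribution.
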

 Using \eqref{eq:key:recursion} and Proposition \ref{prop:Y:dist}, we estimate $x_{n+1}$ in terms of $x_n$ when $x_n$ is small. Indeed, \cite{Sly:11} used a similar strategy to show that when $T=T_d$ and $x_n(T_d)$ small enough,
\begin{equation}\label{eq:expand:sec:order:sly}
    x_{n+1}(T_d)=d\la^2 x_n(T_d)+(1+o(1))\frac{d(d-1)}{2}\frac{q(q-4)}{q-1}\la^4 x_n(T_d)^2.
\end{equation}
The approach of \cite{Sly:11} can be summarized as follows. Define
    \begin{equation}\label{eq:def:q}
        z_n(T):=\E_{\sig}\left(X_{+}(n)-\frac{1}{q}\right)^2,\quad\quad z_n:=\E[z_n(T)]=\E\left(X_{+}(n)-\frac{1}{q}\right)^2.
    \end{equation}
\begin{enumerate}
    \item First, Taylor expand the $(\sum_{i=1}^{q}Z_i)^{-1}$ in \eqref{eq:key:recursion} around $q^{-1}$ up to second order term. In particular, using $\frac{1}{1+x}=1-x+\frac{x^2}{1+x}$ for $x=\frac{\sumz -q}{q}$, it follows that
    \begin{equation}\label{eq:expand:Z:sly}
    x_{n+1}(T_d)=\E_{\sig}\frac{Z_1\rev{-1}}{q}-\E_{\sig}\frac{Z_1(\sum_{i=1}^{q}Z_i-q)}{q^2}+\E_{\sig}\frac{Z_1}{\sum_{i=1}^{q}Z_i}\frac{\left((\sum_{i=1}^{q}Z_i)-q\right)^2}{q^2}.
    \end{equation}
    \item Show that $\E_{\sig}\prod_{i=1}^{q}Z_i^{k_i}$ for $0\leq k_i\leq 2$ can be approximated with error $O(x_n^3)$ by a polynomial of $\E_{\sig}\prod_{i=1}^{q}(Y_{i1}-\frac{1}{q})^{k_i}$, which is a linear combination of $x_n(T_d)$ and $z_n(T_d)$. Using this, approximate $\E_{\sig}Z_1, \E_{\sig}Z_1(\sumi Z_i-q)$, and $\E_{\sig}(\sumi Z_i-q)^2$ up to the second order w.r.t. $x_n(T_d)$ and $z_n(T_d)$. In particular, $\E_{\sig}(\sumi Z_i-q)^2=O(x_n^2)$ holds.
    \item By using concentration inequalities, show that when $x_n$ is small enough, $\frac{Z_1}{\sumi Z_i}$ is concentrated around $\frac{1}{q}$, and that $z_n = (1+o(1))\frac{x_n}{q}$. Plugging in these estimates to the right hand side of equation \eqref{eq:expand:Z:sly} implies the estimate \eqref{eq:expand:sec:order:sly}.
    \item In order to show that the KS bound is tight for $q=3$, we need that $x_n$ is small enough to apply equation \eqref{eq:expand:sec:order:sly}. This can be established for large enough $d$ by using a central limit theorem to show that $x_{n+1}\approx g_3(d\la^2 x_n)$ for some continuous function $g_3(\cdot)$ which can be shown to satisfy $g_3(s)<s$ for all $0<s\leq\frac{2}{3}$.
\end{enumerate}

Note that however, equation \eqref{eq:expand:sec:order:sly} alone barely gives any information for $q=4$. In order to gain information when $q=4$, we need lower order terms in the equation~\eqref{eq:expand:sec:order:sly}. To this end, we expand further in \eqref{eq:expand:Z:sly}.
\begin{equation}\label{eq:def:xi}
\begin{split}
    x_{n+1}&= \Xi_1+\Xi_2\quad\textnormal{where}\\
    \Xi_1&:= \E\frac{Z_1 -1}{q}-\E\frac{Z_1(\sumz -q)}{q^2}+\E\frac{(\sumz -q)^2}{q^3};\\
    \Xi_2&:= \E\frac{(Z_1-1)(\sumz -q)^2}{q^3}-\E\frac{Z_1(\sumz -q)^3}{q^4}+\E\frac{Z_1}{\sumz}\left(\frac{\sumz-q}{q}\right)^4.
\end{split}
\end{equation}
Then, we approximate $\Xi_1$ and $\Xi_2$ separately up to third order w.r.t. $x_n$. In such a task, the previous techniques of of \cite{Sly:11} \textit{cannot} be applied directly. We summarize the major difficulties as follows.
\begin{enumerate}
    \item Even for approximation of $\Xi_1$, we need a better estimate of $z_n$ than just $z_n=(1+o(1))\frac{x_n}{q}$. That is, we need to control how this $o(1)$ term behaves. Indeed, we will see that this lower order term in $z_n$ affects $x_{n+1}$ by a non-negligible amount $\Theta(x_n^3)$.
    \item To approximate $\E Z_1(\sumz-q)^{\ell}$ for $\ell=3,4,$ using the previous strategy, it is crucial to calculate $\E_{\sig}\prod_{i=1}^{q}(Y_{i1}-\frac{1}{q})^{k_i}$ for $\sumi k_i=3,4$ and approximate them \textit{solely} with $x_n$. This amounts to introducing new variables other than $x_n$ and $z_n$ (e.g., $\E(X^{+}(n)-\frac{1}{q})^{\ell}$ for $\ell=3,4$), and leads to a complicated chain of recursions between different variables. It is far from clear how to analyze such chain, let alone expressing the variables only with $x_n$.
\end{enumerate}
Our proof consists of three major parts.

In Section \ref{sec:apriori}, we show that the previous approximation scheme of \cite{Sly:11} can be generalized to Galton-Watson trees with offspring distribution satisfying Assumption \ref{assumption:unif:tail}. Then, we use such scheme to estimate of $z_n$ up to second order of $x_n$. A key insight is that such estimate can be \textit{iterated} to give \textit{apriori} estimate $\left|z_n-\frac{x_n}{q}\right| \leq O_q(x_n^2)$, which induces an accurate estimate of $\Xi_1$.

In Section \ref{sec:full}, we give a new approximation scheme for estimating $\E f(Z)$, where $f:\R^q \to \R$ is a polynomial (see Proposition \ref{prop:crucial:product}) by taking advantage of \textit{apriori} estimates obtained in Section \ref{sec:apriori}. The key feature of the new scheme is that it can take advantage of \textit{polynomial factorization} of the polynomial $f$. For example, it can be shown with Proposition \ref{prop:crucial:product} that $\E f(Z)=O_q(x_n^3)$ holds if $f=\prod_{\ell=1}^{L}f_{\ell}$ for polynomials $(f_{\ell})_{\ell\leq L}$ which does not have a constant term and $L\geq 5$ (see Lemma \ref{lem:useful} below). Then, we use such scheme to give a finer estimate of $z_n$, and also the exact third order estimates of $\Xi_1$ and $\Xi_2$ to show that when $x_n$ is small enough,
\begin{multline}\label{eq:exact:third:order}
    x_{n+1}=d\la^2 x_n+\E\binom{\gamma}{2}\la^4 \frac{q(q-4)}{q-1}x_n^2+\E\binom{\gamma}{3}\la^6 \frac{q^2(q^2-18q+42)}{(q-1)^2}x_n^3\\
    +\left(\E\binom{\gamma}{2}\right)^2 \la^6\left(\frac{q^2(q+1)}{(q-1)^2}\frac{1}{d(d-1)}-\frac{6q^2(q-2)}{(q-1)^2}\frac{1}{d(d\la-1)}+o(1)\right)x_n^3.
\end{multline}
When $q=4$, the coefficient in front of $x_n^3$ is always negative when $\la=d^{-1/2}$ while at $\la=-d^{-1/2}$, it is positive if and only if $d<d^\star$. This establishes non-tightness of the KS bound in the antiferromagnetic regime for $q=4$ and $d<d^\star$.

In Section \ref{sec:clt}, we establish Theorem \ref{thm:KS:tight}. A key ingredient in proving Theorem \ref{thm:KS:tight} is to use a normal approximation \textit{conditional} on the tree $T$. In particular, it follows from Assumption \ref{assumption:tight} that the degree $\gamma$ is large with high probability when $d$ is large, so we use a normal approximation to show that $x_{n+1}$ can be approximated by $x_n$ by a \textit{deterministic} function $g_q:[0,\frac{q-1}{q}] \to [0,\frac{q-1}{q}]$ regardless of whether $x_n$ is small or large. It can be shown that for $q=3,4,$, $g_q(s)<s$ holds for all $0<s\leq \frac{q-1}{q}$, which guarantees that for large enough $d$, $x_n$ eventually becomes small enough to apply the estimate \eqref{eq:exact:third:order}. Unfortunately, for small $d$, we are not able to show that $x_n$ becomes small enough to apply the estimate \eqref{eq:exact:third:order}.

\textbf{Notations:} Throughout, we fix a class of offspring distributions $\{\mu_d\}_{d>1}$ that satisfy Assumption \ref{assumption:unif:tail}. We denote $\gamma\sim \mu_d$ by the degree at the root of the Galton Watson tree $T\sim \GW(\mu_d)$. We denote $\E_{\sig}$ by the expectation with respect to the randomness of spins $\sig$, i.e. the conditional expectation given a tree $T$. We reserve the notation $C_q$ for a positive constant that depends only on $q$ and $K(\cdot)$ defined in \eqref{eq:def:K}, i.e. that does not depend on $n,d,\la$. For quantities $f=f(n,d,\la, q)$ and $g=g(n,d,\la, q)$, we write $f=O_q(g)$ if there exists a constant $C_q$ which only depends on $q$ and $K(\cdot)$ such that $|f|\leq C_q g$. 

\section*{Acknowledgements}
We thank Guilhem Semerjian for sending us the reference \cite{RiSeZd:19}. Youngtak Sohn thanks Amir Dembo for helpful discussions. Elchanan Mossel and Youngtak Sohn are supported by Simons-NSF collaboration on deep learning NSF DMS-2031883 and Vannevar Bush Faculty Fellowship award ONR-N00014-20-1-2826. Elchanan Mossel is also supported by ARO MURI W911NF1910217 and a Simons Investigator Award in Mathematics (622132). Allan Sly is supported by NSF grants DMS-1855527, DMS-1749103, a Simons Investigator grant, and a MacArthur Fellowship.
\section{Apriori estimates of the magnetization}
\label{sec:apriori}
In this section, we estimate $\Xi_1$ defined in \eqref{eq:def:xi}. Various estimates for quantities such as $z_n-\frac{x_n}{q}$ obtained in this section will be used for Section \ref{sec:full}.

To begin with, we show that the nonreconstruction is a monotone property w.r.t. $\la$.
\begin{lemma}\label{lem:la:nonreconstruct}
For arbitrary tree $T$, consider a broadcast model in Definition \ref{def:broadcast}. If there is nonreconstruction at $\la$, then for any $c\in [0,1]$, there is also nonreconstruction at $c\la$.
\end{lemma}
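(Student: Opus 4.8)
The plan is to realize the $c\la$-broadcast model as a "noisier" version of the $\la$-broadcast model by composing the channel with extra symmetric noise, and then to use a data-processing argument. Concretely, the symmetric channel $M_\la$ with transition matrix given by \eqref{eq:transition:matrix} has the semigroup property $M_\la M_{\la'} = M_{\la\la'}$: composing two symmetric channels with second eigenvalues $\la,\la'$ yields the symmetric channel with second eigenvalue $\la\la'$ (all these matrices are simultaneously diagonalized by the same Fourier basis on $\CC=\Z_q$-type symmetry, with eigenvalue $1$ on the all-ones vector and $\la$ on the orthogonal complement). For $c\in[0,1]$ and $\la\in[-\tfrac{1}{q-1},1)$, we have $c\la\in[-\tfrac{1}{q-1},1)$ as well, and we may write $M_{c\la}=M_\la M_c$ provided $c\in[-\tfrac1{q-1},1)$; since $c\in[0,1]$ this is fine (for $c=1$ the statement is trivial, and $c=0$ gives the trivial channel and nonreconstruction is immediate).

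First I would set up the coupling: sample spins $\sig$ from the $\la$-broadcast model on $T$, and then, independently for every vertex $v\ne\rho$, pass the pair $(\sig_{\mathrm{parent}(v)},\sig_v)$ — more precisely re-randomize along each edge — so that the resulting configuration $\tilde\sig$ is distributed as the $c\la$-broadcast model. The cleanest way: along edge $(\mathrm{parent}(v),v)$ the $c\la$-channel factors as first applying $M_\la$ (which is exactly how $\sig$ was generated) and then applying an independent $M_c$-noise to get $\tilde\sig_v$. Doing this recursively from the root down produces $\tilde\sig$ with the correct law, and crucially $\tilde\sig(n)$ is a (randomized) function of $\sig(n)$ alone, independent of $\sig_\rho$ given $\sig(n)$.

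Next I would invoke the data-processing inequality for total variation: since $\tilde\sig^i(n)$ is obtained from $\sig^i(n)$ by applying the same channel (the product of the per-vertex $M_c$-noises at level $n$) that does not depend on $i$, we get
\begin{equation*}
d_{\textnormal{TV}}\bigl(\tilde\sig^{i}(n),\tilde\sig^{j}(n)\bigr)\leq d_{\textnormal{TV}}\bigl(\sig^{i}(n),\sig^{j}(n)\bigr)
\end{equation*}
for every $i,j\in\CC$ and every $n$. Taking $\limsup_{n\to\infty}$ on both sides, nonreconstruction at $\la$ (the right side tends to $0$ for all $i,j$) forces nonreconstruction at $c\la$. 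For a random tree $T\sim\GW(\mu_d)$ the same argument applies conditionally on $T$, giving the almost-sure statement.

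I do not expect a serious obstacle here; the only points requiring a little care are (i) checking the channel semigroup identity $M_\la M_c = M_{c\la}$ and the fact that $c\in[0,1]$ keeps $c\la$ in the admissible range, and (ii) phrasing the recursive down-propagation so that the level-$n$ spins of the noisier model are genuinely a function of the level-$n$ spins of the original model with no leakage of $\sig_\rho$ — this is where one must be slightly careful that the extra noise is added \emph{independently} at each vertex and does not reintroduce correlation with the root. Once these are in place, data processing finishes the proof.
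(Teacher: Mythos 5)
There is a genuine gap, and it sits exactly at the step you flagged as needing care: the claim that sampling $\sig$ from the $\la$-broadcast and then applying an \emph{independent per-vertex} $M_c$-noise (``recursively from the root down'') produces the $c\la$-broadcast is false. The semigroup identity $M_\la M_c=M_{c\la}$ only composes correctly along a \emph{single} edge. If you set $\tilde\sig_v=N_v(\sig_v)$ with i.i.d.\ $M_c$-noises $N_v$, then for a vertex $w$ at depth $n$ the channel from the root to $\tilde\sig_w$ is $M_{\la^n}M_c=M_{c\la^n}$, whereas the $c\la$-broadcast requires $M_{(c\la)^n}=M_{c^n\la^n}$; already at depth $2$ these differ, and the joint law at level $n$ is likewise wrong (your $\tilde\sig(n)$ is a $\la$-broadcast noised only at the last step, which is \emph{strictly more} informative about the root than the $c\la$-process when $0<c<1$). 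Consequently, the data-processing inequality you invoke controls $d_{\textnormal{TV}}(\tilde\sig^i(n),\tilde\sig^j(n))$ for a process that is not the $c\la$-model, so nonreconstruction at $c\la$ does not follow. The alternative reading of ``recursively'' — regenerating each $\tilde\sig_v$ by a fresh $M_\la$ step from $\tilde\sig_{\mathrm{parent}(v)}$ followed by $M_c$ — does give the right law but destroys the key property that $\tilde\sig(n)$ is a root-independent randomized function of $\sig(n)$, so data processing no longer applies.

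The repair is the structured coupling used in the paper: interpret the extra factor $K_c$ as ``copy with probability $c$, otherwise re-randomize uniformly,'' and realize these coin flips as an independent edge percolation on $T$. On the percolation component of the root, set $\eta(v)=\sig(v)$; on every other component, run an independent $\la$-broadcast started from a uniform spin at the top of the component. Edge-by-edge this composes $K_c$ with $K_\la$, so $\eta$ has exactly the $c\la$-broadcast law, and $\eta(n)$ is obtained from $\sig(n)$ by a channel (percolation plus fresh broadcasts) that does not depend on the root spin; only then does $d_{\textnormal{TV}}(\eta^{i}(n),\eta^{j}(n))\leq d_{\textnormal{TV}}(\sig^{i}(n),\sig^{j}(n))$ finish the proof. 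So your high-level plan (semigroup plus data processing) is the paper's, but the per-vertex noise construction must be replaced by this percolation/restart construction for the argument to be valid.
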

\begin{proof}
In the broadcast model the transition matrix from parent to child is given by $K_{\la} = \la I_q + (1-\la) J_q$, where $I_q$ is the $q \times q$ identity matrix and $J_q$ is the rank one transition matrix where all entries are $1/q$.
Therefore we have:
\[
K_{c \la} = K_c K_{\la},
\]
and note that since $c > 0$, the markov chain with matrix $K_c$, copies with probability $c$ and randomizes uniformly, otherwise. 
This implies that if $\sigma^{i}(n)$ is a sample
of the broadcast process with $\la$ and $\eta^{i}(n)$ is the process with $c \la$, then $\eta^{i}$ can be obtain from $\sigma^{i}$ by the following random process: 
\begin{enumerate}[label=(\arabic*)]
    \item
Perform edge percolation on $T$ with probability $c$.
\item For vertices $v$ at level $n$ that are at the same component of the root, let $\eta^{(i)}(v) = \sigma^{(i)}(v)$.
\item For vertices in other components, run the broadcast process on each component with a uniformly random chosen root and parameter $\la$.
\end{enumerate}
Note that the process to generate $\eta$ from $\sigma$ does not depend on the root and therefore 
\[
d_{TV}(\eta^{i}(n), \eta^{j}(n)) \leq 
d_{TV}(\sigma^{i}(n), \sigma^{j}(n)), 
\]
which implies the statement of the lemma. 
\end{proof}
Having Lemma \ref{lem:la:nonreconstruct} in hand, we focus on the regime where $\la$ is sufficiently close to $\pm \frac{1}{\sqrt{d}}$. In particular, we assume throughout the paper that there is a universal constants $c_0>0$ such that
\begin{equation}\label{eq:dlambda:lower}
\rev{d^5|\la|^{9}>1+c_0,\quad\textnormal{and}\quad  1-c_0\leq d\la^2\leq 1.}
\end{equation}
\rev{In particular, this implies that $d^{\ell}|\la|^{2\ell-1}>1+c_0$ holds for all $1\leq \ell\leq 5$.}
\subsection{Basic identities and the proof of Lemma \ref{lem:equiv:xn:nonreconstruct}}
\label{subsec:basic:id}
Recalling the posterior probability from \eqref{eq:def:posterior},  denote
\begin{equation*}
    X_i(n)=f_{n,T}(i,\sig(n)).
\end{equation*}
In addition to $x_n(T)$ and $z_n(T)$, define
\begin{equation}\label{eq:def:v:w}
\begin{split}
        v_n(T)&:=\E_{\sig}\left(X^{+}(n)-\frac{1}{q}\right)^3,\qquad\qquad\qquad\qquad\qquad\qquad\quad\quad~~v_n:=\E[v_n(T)]
        \\
        w_n(T)&:= \E_{\sig}\left(X^{+}(n)-\frac{1}{q}\right)\left(X^{-}(n)-\frac{1}{q}\right)\left(X^{+}(n)-X^{-}(n)\right),~~ w_n = \E[w_n(T)]
\end{split}
\end{equation}
\begin{lemma}\label{lem:basic}
For any tree $T$, the following relations holds:
\begin{equation}\label{eq:lem:basic:1}
    x_n(T)= \E_{\sig}\left(X^{+}(n)-\frac{1}{q}\right)^{2}+(q-1)\E_{\sig}\left(X^{-}(n)-\frac{1}{q}\right)^{2}\geq z_n(T),
\end{equation}
and
\begin{equation}\label{eq:lem:basic:2}
   z_n(T)-\frac{x_n(T)}{q} = \E_{\sig}\left(X^{+}(n)-\frac{1}{q}\right)^{3}+(q-1)\E_{\sig}\left(X^{-}(n)-\frac{1}{q}\right)^{3},
\end{equation}
and
\begin{equation}\label{eq:lem:basic:3}
   v_n(T)-\frac{1}{q}\left(z_n(T)-\frac{x_n(T)}{q}\right) = \E_{\sig}\left(X^{+}(n)-\frac{1}{q}\right)^{4}+(q-1)\E_{\sig}\left(X^{-}(n)-\frac{1}{q}\right)^{4}.
\end{equation}
\rev{Moreover, the inequalities $|v_n(T)|\leq x_n(T)$ and $|w_n(T)|\leq x_n(T)$ hold.}
\end{lemma}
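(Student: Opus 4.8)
The plan is to reduce all three identities to two structural facts about the posterior $f_{n,T}$ and then finish with elementary algebra, keeping the tree $T$ fixed throughout so that $\E_{\sig}$ is simply the expectation over the broadcast randomness; the key inputs are the $S_q$-symmetry of the broadcast law on a fixed tree and the defining property of the posterior, $\E_{\sig}[\one\{\sig_{\rho}=j\}\given\sig(n)]=f_{n,T}(j,\sig(n))=X_j(n)$. The first fact is a symmetry reduction: for every integer $k\geq 0$,
\begin{equation}\label{eq:lembasic:symm}
\E_{\sig}\Big[\big(X^{+}(n)-\tfrac1q\big)^k\Big]+(q-1)\,\E_{\sig}\Big[\big(X^{-}(n)-\tfrac1q\big)^k\Big]=\E_{\sig}\Big[\sum_{j=1}^{q}\big(X_j(n)-\tfrac1q\big)^k\Big].
\end{equation}
To see it, fix $j$ and condition on $\sig_{\rho}$: when $\sig_{\rho}=j$, the color transposition $j\leftrightarrow 1$ identifies $\E_{\sig}[(X_j(n)-\tfrac1q)^k\given\sig_{\rho}=j]$ with $\E_{\sig}[(X^{+}(n)-\tfrac1q)^k]$, and when $\sig_{\rho}=i\neq j$, a permutation sending $i\mapsto1$, $j\mapsto2$ identifies $\E_{\sig}[(X_j(n)-\tfrac1q)^k\given\sig_{\rho}=i]$ with $\E_{\sig}[(X^{-}(n)-\tfrac1q)^k]$; averaging over the uniform $\sig_{\rho}$ and summing over $j$ gives \eqref{eq:lembasic:symm}. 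The same argument without centering yields $\E_{\sig}[X_{\sig_{\rho}}(n)^m]=\E_{\sig}[(X^{+}(n))^m]$ for all $m\geq0$, where $X_{\sig_{\rho}}(n):=f_{n,T}(\sig_{\rho},\sig(n))$ under the unconditional broadcast.

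The second fact is Bayes self-consistency: for every integer $m\geq 1$,
\begin{equation}\label{eq:lembasic:bayes}
\E_{\sig}\Big[\sum_{j=1}^{q}X_j(n)^{m}\Big]=\E_{\sig}\Big[\big(X^{+}(n)\big)^{m-1}\Big].
\end{equation}
This holds because $\E_{\sig}[\one\{\sig_{\rho}=j\}\given\sig(n)]=X_j(n)$ gives $\E_{\sig}[\sum_j X_j(n)^{m-1}\one\{\sig_{\rho}=j\}\given\sig(n)]=\sum_j X_j(n)^{m}$, so $\E_{\sig}[\sum_j X_j(n)^m]=\E_{\sig}[X_{\sig_{\rho}}(n)^{m-1}]$, and one then invokes the last identity of the previous paragraph (for $m=1$ both sides equal $1$).

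Finally I would expand $\sum_{j}(X_j(n)-\tfrac1q)^k$ by the binomial theorem and use \eqref{eq:lembasic:bayes} to rewrite $\E_{\sig}[\sum_j X_j(n)^m]$ as $\E_{\sig}[(X^{+}(n))^{m-1}]$; expanding $X^{+}(n)=(X^{+}(n)-\tfrac1q)+\tfrac1q$ expresses $\E_{\sig}[(X^{+}(n))^r]$ for $r\leq 3$ through $x_n(T),z_n(T),v_n(T)$. A short computation, in which the constant terms cancel by the binomial theorem, then yields $\E_{\sig}[\sum_j(X_j(n)-\tfrac1q)^2]=x_n(T)$, $\E_{\sig}[\sum_j(X_j(n)-\tfrac1q)^3]=z_n(T)-\tfrac{x_n(T)}{q}$, and $\E_{\sig}[\sum_j(X_j(n)-\tfrac1q)^4]=v_n(T)-\tfrac1q(z_n(T)-\tfrac{x_n(T)}{q})$; plugging these into \eqref{eq:lembasic:symm} with $k=2,3,4$ gives \eqref{eq:lem:basic:1}, \eqref{eq:lem:basic:2} and \eqref{eq:lem:basic:3}, and $x_n(T)\geq z_n(T)$ follows at once since $(q-1)\E_{\sig}(X^{-}(n)-\tfrac1q)^2\geq 0$. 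The argument carries no analytic content; the one delicate point --- which I expect to be the main obstacle --- is carrying out the color-symmetry matching behind \eqref{eq:lembasic:symm} precisely (identifying $f_{n,T}(j,\cdot)$ evaluated at configurations conditioned on the various root colors with $X^{+}(n)$ or $X^{-}(n)$), after which the binomial bookkeeping is routine.
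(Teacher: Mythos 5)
Your proposal is correct and follows essentially the same route as the paper: the Bayes self-consistency identity (the paper's $\E_\sig X^{+}(n)^k=\sum_i\E_\sig X_i(n)^{k+1}$, which you derive via $\E_\sig[\one\{\sig_\rho=j\}\given\sig(n)]=X_j(n)$ rather than summing over boundary configurations), combined with the conditioning-on-the-root symmetry decomposition of $\E_\sig\sum_j(X_j(n)-\tfrac1q)^k$ into the $X^{+}$ and $X^{-}$ contributions, followed by the same binomial bookkeeping. The computations check out, so no gap.
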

\begin{proof}
From the definition of conditional probabilities, we have for $k\geq 1$
\begin{equation*}
    \E X^{+}(n)^{k}=\sum_{\tau} f_{n,T}(1,\tau)^{k}\cdot \P(\sig(n)=\tau \given \sig_{\rho}=1)=q\sum_{\tau}f_{n,T}(1,\tau)^{k+1}\cdot \P(\sig(n)=\tau),
\end{equation*}
where the final equality holds because $\P(\sig_{\rho}=1)=1/q$. The final expression equals $q\cdot \E X_1(n)^{k+1}$ and the distribution of $(X_i(n))_{i\leq q}$ is exchangeable by symmetry, so
\begin{equation}\label{eq:lem:basic:4}
    \E X^{+}(n)^{k}=q\cdot \E X_1(n)^{k+1}=\sum_{i=1}^{q}\E X_i(n)^{k+1}.
\end{equation}
Using \eqref{eq:lem:basic:4} for $k=1$ and $\sum_{i=1}^{q}X_i(n)=1$, we have that $x_n(T)=\sum_{i=1}^{n}(X_i(n)-1/q)^2$. Thus, since conditional on $\sig_{\rho}$, $X_{\sig_{\rho}}(n)$ is distributed as $X^{+}(n)$ while $X_{i}(n)$ for $i\neq \sig_{\rho}$ is distributed as $X^{-}(n)$, \eqref{eq:lem:basic:1} holds. Using \eqref{eq:lem:basic:4} for $k=2,3$, and repeating the same argument gives \eqref{eq:lem:basic:2} and \eqref{eq:lem:basic:3}. \rev{Finally, we have $|v_n(T)|\leq \E_{\sigma}\left(X^{+}(n)-1/q\right)^2\leq x_n(T)$, and 
\[
|w_n(T)|\leq 2\E_{\sigma}\left|X^{+}(n)-\frac{1}{q}\right|\cdot\left|X^{-}(n)-\frac{1}{q}\right|\leq \E_{\sigma}\left(X^{+}(n)-\frac{1}{q}\right)^2+\E_{\sigma}\left(X^{+}(n)-\frac{1}{q}\right)^2,
\]
which is upper bounded by $x_n(T)$. Therefore, $|v_n(T)|\vee |w_n(T)|\leq x_n(T)$ holds.
}
\end{proof}

\begin{proof}[Proof of Lemma \ref{lem:equiv:xn:nonreconstruct}]
We first claim that for each fixed tree $T$, $x_n(T)$ converging to $0$ as $n\to \infty$ is equivalent to nonreconstruction. Denote \rev{by} $p_n(T)=\E \max_{1\leq i \leq q} X_i(n)$ the probability of optimal reconstruction. Then, Proposition 14 of \cite{Mossel:01} shows that $p_n(T)\to \frac{1}{q}$ is equivalent to nonreconstruction. It is shown in Lemma 2.3 of \cite{Sly:11} that 
\rev{
\begin{equation}\label{eq:MLE:x}
    x_n(T) \leq p_n(T)-\frac{1}{q}\leq x_n(T)^{1/2},
\end{equation}
}
 so $x_n(T)\to 0$ is equivalent to nonreconstruction.

Next, we claim that $\{x_n(T)\}_{n\geq 1}$ is monotonically decreasing with $n$ for each tree $T$. Note that $\{X_i(n)\}_{n\geq 1}$ is a backwards martingale. \rev{Indeed, if we let $\tau(n)$ be the spins outside of depth $n$ neighborhood of the root $\rho$, then $X_i(n)=\P(\sig_{\rho}=i\given \tau(n))$ holds since $\sig_{\rho}$ is conditionally independent of all spins at depths larger depth $n$ given $\sig(n)$.} Thus, $\E\left(X_i(n)-1/q\right)^2$ is decreasing over $n$ for each $i$, so this fact and Lemma \ref{lem:basic} imply that $\{x_n(T)\}_{n\geq 1}$ is monotonically decreasing.

Note that $x_n(T)\geq 0$ holds due to Lemma \ref{lem:basic}. Thus, $\{x_n(T)\}_{n\geq 1}$ is monotonically decreasing and bounded from below, so denote its limit by $x_{\infty}(T)=\lim_{n\to\infty} x_n(T)$. Then, by monotone convergence theorem, $x_n \to \E[x_{\infty}(T)]$, so $x_n\to 0$ is equivalent to $x_{\infty}(T)=0$ a.s., which in turn is equivalent to nonreconstruction a.s. by our first claim.
\end{proof}
\begin{remark}\label{rmk:x:positive}
Note that $x_n(T)\geq 0$ can be improved to $x_n(T)>0$ when $\la \neq 0$. Indeed, it was shown in Lemma 2.9 of \cite{Sly:11} that $x_{n+1}(T)\geq \la^4 x_n(T_j)^2$ holds by noting that the estimator that outputs $i$ with probability $\P(\sig_{\rho}=i\given \sig_1(n+1))$ correctly guesses the root with probability $\frac{1}{q}+\la^2 x_n(T_j)$, so  $\frac{1}{q}+\la^2 x_n(T_j)\leq p_n(T)$ must hold. \rev{Combining with \eqref{eq:MLE:x}, it follows that $x_{n+1}(T)\geq \la^4 x_n(T_j)^2$.} Since we assumed $\la\neq 0$ (cf. \eqref{eq:dlambda:lower}) and $x_0(T)=\frac{q-1}{q}$ holds for any $T$, we have that $x_n(T)>0$.
\end{remark}
\begin{lemma}\label{lem:Y:moment}
The first moment of $(Y_{i1})_{i\leq q}$ is given by the following: for $2\leq i\leq q$,
\begin{equation}\label{eq:Y:firstmo}
    \E\left(Y_{11}-\frac{1}{q}\right)=\la x_n,\qquad\qquad \E\left(Y_{i1}-\frac{1}{q}\right)=-\frac{\la x_n}{q-1}\quad\textnormal{for}\quad i\neq 1.
\end{equation}
The second moment of $(Y_{i1})_{i\leq q}$ is given by  the following: for all distinct pairs $(i_1,i_2)\in \{2,...,q\}^2$,
\begin{equation}\label{eq:Y:secondmo}
\begin{split}
    &\E\left(Y_{11}-\frac{1}{q}\right)^2
    =\frac{x_n}{q}+\la\left(z_n-\frac{x_n}{q}\right),\qquad\qquad \E\left(Y_{i_1 1}-\frac{1}{q}\right)^2=\frac{x_n}{q}-\frac{\la}{q-1}\left(z_n-\frac{x_n}{q}\right),\\
    &\E\left(Y_{11}-\frac{1}{q}\right)\left(Y_{i_1 1}-\frac{1}{q}\right)
    =-\frac{x_n}{q(q-1)}-\frac{\la}{q-1}\left(z_n-\frac{x_n}{q}\right),\\
    &\E\left(Y_{i_1 1}-\frac{1}{q}\right)\left(Y_{i_2 1}-\frac{1}{q} \right)=-\frac{x_n}{q(q-1)}+\frac{2\la}{(q-1)(q-2)}\left(z_n-\frac{x_n}{q}\right).
\end{split}
\end{equation}
Moreover, we have for all distinct pairs $(i_1,i_2)\in \{2,...,q\}^2$, 
\begin{equation}\label{eq:Y:thirdmo}
\begin{split}
    &\E\left(Y_{11}-\frac{1}{q}\right)^3
    =\frac{1-\la}{q}\left(z_n-\frac{x_n}{q}\right)+\la v_n,\\
    &\E\left(Y_{i_1 1}-\frac{1}{q}\right)^3=\frac{q-1+\la}{q(q-1)}\left(z_n-\frac{x_n}{q} \right)-\frac{\la}{q-1}v_n,\\
    &\E\left(Y_{11}-\frac{1}{q}\right)^2\left(Y_{i_1 1}-\frac{1}{q}\right)
    =-\frac{1-\la}{q(q-1)}\left(z_n-\frac{x_n}{q}\right)-\frac{\la}{q-1}v_n,\\
    &\E\left(Y_{11}-\frac{1}{q}\right)\left(Y_{i_1 1}-\frac{1}{q}\right)^2
    =-\frac{1-\la}{q(q-1)}\left(z_n-\frac{x_n}{q}\right)-\frac{\la}{q-1}v_n-\la w_n,\\
    &\E\left(Y_{11}-\frac{1}{q}\right)\left(Y_{i_1 1}-\frac{1}{q}\right)\left(Y_{i_2 1}-\frac{1}{q}\right)\\
    &=\frac{2(1-\la)}{q(q-1)(q-2)}\left(z_n-\frac{x_n}{q}\right)+\frac{2\la}{(q-1)(q-2)}v_n+\frac{\la}{q-2}w_n.
\end{split}
\end{equation}
\rev{
Finally, for distinct indices $(i_1,i_2,i_3)\in \{2,...,q\}^3$,
\begin{equation}\label{eq:Y:thirdmo:2}
\begin{split}
    &\E\left(Y_{i_1 1}-\frac{1}{q}\right)^2\left(Y_{i_2 1}-\frac{1}{q}\right)=-\frac{q-2+\la}{q(q-1)(q-2)}\left(z_n-\frac{x_n}{q}\right)+\frac{2\la}{(q-1)(q-2)}v_n+\frac{\la}{q-2}w_n,\\
    &\E\left(Y_{i_1 1}-\frac{1}{q}\right)\left(Y_{i_2 1}-\frac{1}{q}\right)\left(Y_{i_3 1}-\frac{1}{q}\right)
    \\
    &=-\frac{2(q-3+2\la)}{q(q-1)(q-2)(q-3)}\left(z_n-\frac{x_n}{q}\right)-\frac{6\la}{(q-1)(q-2)(q-3)}v_n+\frac{3\la}{(q-2)(q-3)}w_n.
\end{split}
\end{equation}
}
\end{lemma}
\begin{proof}
The proof of \eqref{eq:Y:firstmo} and \eqref{eq:Y:secondmo} is the same or simpler than the proof of \eqref{eq:Y:thirdmo}, and their analogues for the $d$-ary tree case was proven in Lemma 2.5 of \cite{Sly:11}, so we only provide the proof of \eqref{eq:Y:thirdmo} and \eqref{eq:Y:thirdmo:2}.

We start with the first identity of \eqref{eq:Y:thirdmo}: since $\P(\sig^1_{u_1}=i)=(1-\la)/q$ for $i\neq 1$, we have
\begin{equation*}
\begin{split}
    &\E_{\sig}\left(Y_{11}-\frac{1}{q}\right)^3\\
    &= \frac{1+(q-1)\la}{q}\E_{\sig}\left[\left(Y_{11}-\frac{1}{q}\right)^3\bbgiven\sig^1_{u_1}=1\right]+\frac{1-\la}{q}\sum_{i=2}^{q}\E_{\sig}\left[\left(Y_{11}-\frac{1}{q}\right)^3\bbgiven\sig^1_{u_1}=i\right].
\end{split}
\end{equation*}
By Proposition \ref{prop:Y:dist}-$(c)$, $\E_{\sig}[(Y_{11}-1/q)^3\given \sigma^1_{u_1}=i]$ is distributed as $\E_{\sig}(X^{+}(n)-\frac{1}{q})^3$ if $i=1$, and $\E_{\sig}(X^{-}(n)-\frac{1}{q})^3$ otherwise. It follows that
\begin{equation}\label{eq:Y:X:plus:minus}
\begin{split}
    \E \left(Y_{11}-\frac{1}{q}\right)^3
    &=\frac{1+(q-1)\la}{q}\E\left(X^{+}(n)-\frac{1}{q}\right)^3+\frac{(q-1)(1-\la)}{q}\E\left(X^{-}(n)-\frac{1}{q}\right)^3\\
    &=\frac{1-\la}{q}\left(z_n-\frac{x_n}{q}\right)+\la v_n,
\end{split}
\end{equation}
where in the second equality, we used equation Lemma \ref{lem:basic}. Similarly, we have
\begin{equation*}
\begin{split}
\E\left(Y_{i_1 1}-\frac{1}{q}\right)^3
&=\frac{1-\la}{q}\E\left(X^{+}(n)-\frac{1}{q}\right)^3+\frac{q-1+\la}{q}\E\left(X^{-}(n)-\frac{1}{q}\right)^3\\
&=\frac{q-1+\la}{q(q-1)}\left(z_n-\frac{x_n}{q} \right)-\frac{\la}{q-1}v_n.
\end{split}
\end{equation*}
By conditional exchangeability in Proposition \ref{prop:Y:dist}-$(d)$ and $\sum_{i=1}^{q}(Y_{i1}-1/q)=0$, we have
\begin{equation}\label{eq:cond:exchange}
\begin{split}
    \E\left[\E_{\sig}\left(Y_{11}-\frac{1}{q}\right)^2\left(Y_{i_1 1}-\frac{1}{q}\right)\right]
    &=-\frac{1}{q-1}\E\left[\E_{\sig}\left(Y_{11}-\frac{1}{q}\right)^3\right]\\
    &=-\frac{1-\la}{q(q-1)}\left(z_n-\frac{x_n}{q}\right)-\frac{\la}{q-1}v_n.
\end{split}
\end{equation}
For the fourth identity, note that
\begin{equation*}
\begin{split}
    &\E_{\sig}\left[\left(Y_{11}-\frac{1}{q}\right)^2\left(Y_{i_1 1}-\frac{1}{q}\right)-\left(Y_{11}-\frac{1}{q}\right)\left(Y_{i_1 1}-\frac{1}{q}\right)^2\right]\\
    &=\sum_{i=1}^{q}\E_{\sig}\left[\left(Y_{11}-\frac{1}{q}\right)\left(Y_{i_1 1}-\frac{1}{q}\right)\left(Y_{11}-Y_{i_1 1}\right)\bbgiven\sig^1_{u_1}=i\right]\cdot\P(\sig^1_{u_1}=i)
\end{split}
\end{equation*}
By Proposition \ref{prop:Y:dist}-$(c), (d)$, it follows that
\begin{equation*}
\begin{split}
   &\E_{\sig}\left[\left(Y_{11}-\frac{1}{q}\right)\left(Y_{i_1 1}-\frac{1}{q}\right)\left(Y_{11}-Y_{i_1 1}\right)\bbgiven\sig^1_{u_1}=i\right]\\
   &\stackrel{d}{=}
    \begin{cases}
    \E_{\sig}\left(X^{+}(n)-\frac{1}{q}\right)\left(X^{-}(n)-\frac{1}{q}\right)\left(X^{+}(n)-X^{-}(n)\right) &\text{if $i=1$};\\
    -\E_{\sig}\left(X^{+}(n)-\frac{1}{q}\right)\left(X^{-}(n)-\frac{1}{q}\right)\left(X^{+}(n)-X^{-}(n)\right)&\text{if $i=i_1$};\\
    0 &\text{otherwise}.
    \end{cases}
\end{split}
\end{equation*}
Since $\P(\sig^1_{u_1}=1)=(1+(q-1)\la)/q$ and $\P(\sig^1_{u_1}=i_1)=(1-\la)/q$, taking expectations in the two displays above show
\begin{equation*}
\begin{split}
    \E\left(Y_{11}-\frac{1}{q}\right)\left(Y_{i_1 1}-\frac{1}{q}\right)^2
    &=\E\left(Y_{11}-\frac{1}{q}\right)^2\left(Y_{i_1 1}-\frac{1}{q}\right)-\la w_n\\
    &=-\frac{1-\la}{q(q-1)}\left(z_n-\frac{x_n}{q}\right)-\frac{\la}{q-1}v_n-\la w_n.
\end{split}
\end{equation*}
The fifth identity \rev{of \eqref{eq:Y:thirdmo} and \eqref{eq:Y:thirdmo:2}} follow by a similar argument as in \eqref{eq:cond:exchange}, namely by using conditional exchangeability in Proposition \ref{prop:Y:dist} and the previously obtained identities, so we do not repeat the argument here.
\end{proof}

Observe that we expressed the moments of $Y_{i1}$'s in Lemma \ref{lem:Y:moment} by a linear combination of $x_n,u_n,v_n,w_n$, where we define $u_n$ by
\begin{equation}\label{eq:def:u}
u_n:=z_n-\frac{x_n}{q}=\frac{q-1}{q}\left(\E\left(X^{+}(n)-\frac{1}{q}\right)^2-\E\left(X^{-}(n)-\frac{1}{q}\right)^2\right).
\end{equation}
This linear transformation from $z_n$ to $u_n$ will be convenient especially since we prove that $u_n= O_q(x_n^2)$ in the next subsection.

We close this subsection with a useful identity which follows from Proposition \ref{prop:Y:dist}.
\begin{lemma}\label{lem:Y:indep}
Suppose we have non-negative integers $s_{ij}\geq 0$ for $i,j\in \Z_{+}$ and real numbers $a_i, b_i \in \R$ for $1\leq i \leq q$. Then,
\begin{equation*}
    \E\prod_{i=1}^{q}\prod_{j=1}^{\ga} \Big(a_i Y_{ij}+b_i\Big)^{s_{ij}}=\E\prod_{j=1}^{\ga}\left(\E\prod_{i=1}^{q}\Big(a_i Y_{i1}+b_i\Big)^{s_{ij}}\right)
\end{equation*}
where the outer expectation in the right hand side is the expectation with respect to $\gamma$.
\end{lemma}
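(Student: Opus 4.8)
The plan is to peel off the two layers of randomness in turn: first integrate out the spins $\sigma$ using the conditional independence of the subtree posteriors given the tree, and then integrate out the tree itself using the i.i.d.\ structure of the subtrees hanging off the root of a Galton--Watson tree. First I would set $f_j(y):=\prod_{i=1}^q(a_iy_i+b_i)^{s_{ij}}$ for $y=(y_i)_{i\le q}\in\R^q$; each $f_j$ is a polynomial, hence continuous, and by construction $\prod_{i=1}^q(a_iY_{ij}+b_i)^{s_{ij}}=f_j(Y_j)$. Conditioning on $T$ and invoking the conditional independence of $Y_1,\dots,Y_\gamma$ from Proposition~\ref{prop:Y:dist}(a) gives
\[
\E_\sigma\prod_{i=1}^q\prod_{j=1}^\gamma(a_iY_{ij}+b_i)^{s_{ij}}=\E_\sigma\prod_{j=1}^\gamma f_j(Y_j)=\prod_{j=1}^\gamma\E_\sigma f_j(Y_j),
\]
so that after taking the full expectation over $T$ the claim reduces to the identity $\E\prod_{j=1}^\gamma\E_\sigma f_j(Y_j)=\E_\gamma\prod_{j=1}^\gamma\E\big[f_j(Y_1)\big]$, where $\E[f_j(Y_1)]=\E\prod_{i=1}^q(a_iY_{i1}+b_i)^{s_{ij}}$ and the case $\gamma=0$ is trivial since both products are empty.

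For the second step I would observe that $\E_\sigma f_j(Y_j)$ is a deterministic functional of the single subtree $T_j$: the posterior map $f_{n,T_j}$ is determined by $T_j$, and, conditionally on $T$ and on $\sigma_\rho=1$, the law of $\sigma^1_j(n+1)$ is governed by $T_j$ alone, with spin randomness supported on $T_j$ (together with the edge $\rho u_j$) and independent across $j$ by the Markov random field property --- this is exactly the mechanism behind Proposition~\ref{prop:Y:dist}(b). Writing $\E_\sigma f_j(Y_j)=\Psi_j(T_j)$ for the corresponding deterministic function $\Psi_j$, and using that conditionally on $\gamma$ the subtrees $T_1,\dots,T_\gamma$ are i.i.d.\ copies of $T\sim\GW(\mu_d)$ independent of $\gamma$, the variables $\Psi_1(T_1),\dots,\Psi_\gamma(T_\gamma)$ are conditionally independent given $\gamma$ (distinct measurable functions of independent inputs), with $\E[\Psi_j(T_j)\mid\gamma]=\E[\Psi_j(T)]=\E[f_j(Y_1)]$. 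Hence $\E\big[\prod_{j=1}^\gamma\Psi_j(T_j)\mid\gamma\big]=\prod_{j=1}^\gamma\E[f_j(Y_1)]$, and averaging over $\gamma$ yields the identity from the first step.

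The hard part, such as it is, is purely bookkeeping: one must verify that $\E_\sigma f_j(Y_j)$ depends on $T$ only through $T_j$ and that the relevant spin randomness localizes to the subtrees, so that the i.i.d.-subtree argument behind Proposition~\ref{prop:Y:dist}(b) carries over even though here the test functions $f_j$ are allowed to vary with $j$. No new estimates are required; everything is a consequence of Proposition~\ref{prop:Y:dist} and the tower property.
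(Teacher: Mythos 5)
Your proof is correct and takes essentially the same route as the paper: condition on $T$ and factor the $\sigma$-expectation using the conditional independence of the $Y_j$ (Proposition~\ref{prop:Y:dist}(a)), then condition on $\gamma$ and factor the tree-expectation using Proposition~\ref{prop:Y:dist}(b) together with the tower property. One small point in your favor: Proposition~\ref{prop:Y:dist}(b) is stated for a single test function $f$, whereas here the functions $f_j$ genuinely vary with $j$; the paper cites (b) directly and elides this, while you correctly unpack it via the i.i.d.\ subtree structure to get conditional independence of the $\Psi_j(T_j)$ and the identity in law $\Psi_j(T_j)\stackrel{d}{=}\Psi_j(T)$ for each $j$ separately.
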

\begin{proof}
By tower property and Proposition \ref{prop:Y:dist}-$(a)$, we have that
\begin{equation*}
\begin{split}
    \E\prod_{j=1}^{\ga} \prod_{i=1}^{q}\Big(a_i Y_{ij}+b_i\Big)^{s_{ij}}
    &=\E\left[\prod_{j=1}^{\ga} \left(\E_{\sig} \prod_{i=1}^{q}\Big(a_i Y_{ij}+b_i\Big)^{s_{ij}}\right)\right]\\
    &=\E\left[\E\left[\prod_{j=1}^{\ga} \left(\E_{\sig} \prod_{i=1}^{q}\Big(a_i Y_{ij}+b_i\Big)^{s_{ij}}\right)\bbgiven \gamma \right]\right].
\end{split}
\end{equation*}
Note that by Proposition \ref{prop:Y:dist}-$(b)$, conditional on $\gamma$, $\left(\E_{\sig} \prod_{i=1}^{q}\left(a_i Y_{ij}+b_i\right)^{s_{ij}}\right)_{j\leq \ga}$ are i.i.d., so the conditional expectation w.r.t. $\ga$ of the right hand side above equals
\begin{equation*}
\begin{split}
  \E\left[\prod_{j=1}^{\ga} \left(\E_{\sig} \prod_{i=1}^{q}\Big(a_i Y_{ij}+b_i\Big)^{s_{ij}}\right)\bbgiven \gamma \right]
  &= \prod_{j=1}^{\ga}\left(\E\left[ \E_{\sig} \prod_{i=1}^{q}\Big(a_i Y_{i1}+b_i\Big)^{s_{ij}}\bbgiven \gamma \right]\right)\\
  &=\prod_{j=1}^{\ga}\left(\E\prod_{i=1}^{q}\Big(a_i Y_{i1}+b_i\Big)^{s_{ij}}\right),
\end{split}
\end{equation*}
which concludes the proof.
\end{proof}
\subsection{Apriori estimates on $u_n, v_n, w_n$}
Recall the definition of $u_n$ in \eqref{eq:def:u} and $v_n, w_n$ in \eqref{eq:def:v:w}. In this subsection, we show that $|u_n|,|v_n|, |w_n|$ is $O_q(x_n^2)$ \rev{if $n$ is large enough and $x_n\leq \delta$ holds for some \rev{small enough} $\delta\equiv \delta(q)$.}
\begin{lemma}\label{lem:est:Z:mono}
For $k\in \Z$, there exists $C=C(q,k)>0$ such that for each $0\leq k_1,...,k_q\leq k$ and $m\geq 0$,
\begin{equation*}
\left|\E \prod_{i=1}^{q}Z_i^{k_i}-\sum_{\ell=0}^{m}\E\binom{\gamma}{\ell}\left(\E \prod_{i=1}^{q}\left(1+\la q\left(Y_{i1}-\frac{1}{q}\right)\right)^{k_i}-1\right)^{\ell}\right|\leq C (d\la^2 x_n)^{m+1}. 
\end{equation*}
\end{lemma}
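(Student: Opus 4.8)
The plan is to exploit the product structure of $\prod_{i=1}^q Z_i^{k_i}$ together with Lemma~\ref{lem:Y:indep}. Write $W_j := \prod_{i=1}^q \bigl(1 + \la q (Y_{ij} - \tfrac1q)\bigr)^{k_i}$, so that $\prod_{i=1}^q Z_i^{k_i} = \prod_{j=1}^\gamma W_j$. By the tower property and Proposition~\ref{prop:Y:dist}-$(a),(b)$ (exactly as in Lemma~\ref{lem:Y:indep}), conditioning on $\gamma$ makes the quantities $\E_\sig W_j$, $j \le \gamma$, i.i.d.\ with common mean $\beta := \E\prod_{i=1}^q (1+\la q(Y_{i1}-\tfrac1q))^{k_i}$ (the mean being independent of $\gamma$). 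Hence $\E\prod_{i=1}^q Z_i^{k_i} = \E_\gamma[\,\beta^\gamma\,]$ if we could replace each $\E_\sig W_j$ by its mean — but they are not deterministic, so instead I would first condition on $\gamma$ and expand $\E\bigl[\prod_{j=1}^\gamma \E_\sig W_j \mid \gamma\bigr]$. Writing $\E_\sig W_j = \beta + (\E_\sig W_j - \beta)$ and expanding the product of $\gamma$ such terms, the i.i.d.\ structure and centering of the fluctuations collapses the expansion: only the $\beta$-factors contribute to low order, giving the multinomial-type identity
\[
\E\Bigl[\prod_{j=1}^\gamma \E_\sig W_j \,\Big|\, \gamma\Bigr]
= \sum_{\ell=0}^{\gamma} \binom{\gamma}{\ell}\,\beta^{\gamma - \ell}\, \E\bigl[(\E_\sig W_1 - \beta)^{\cdots}\bigr]\cdots,
\]
which I would reorganize so that after taking $\E_\gamma$ the first $m+1$ terms are exactly $\sum_{\ell=0}^m \E\binom{\gamma}{\ell}(\beta - 1)^\ell$ up to controllable error. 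The cleanest route is probably not to expand around $\beta$ but around $1$: set $\E_\sig W_j = 1 + (\E_\sig W_j - 1)$, note $\E[\E_\sig W_j - 1 \mid \gamma] = \beta - 1$, and expand $\prod_{j=1}^\gamma(1 + (\E_\sig W_j - 1))$ into $2^\gamma$ terms indexed by subsets $S \subseteq [\gamma]$; grouping by $|S| = \ell$ and using the conditional i.i.d.\ property gives $\E\binom{\gamma}{\ell}(\beta-1)^\ell$ for the main part of the $\ell$-th term, with a remainder that is the tail $\ell \ge m+1$.

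The key quantitative input is a bound $|\beta - 1| = O_q(d\la^2 x_n)$, so that the tail $\sum_{\ell \ge m+1}\E\binom{\gamma}{\ell}|\beta-1|^\ell$ is $O((d\la^2 x_n)^{m+1})$ once we know $\E\binom{\gamma}{\ell} \le C(q,k) \cdot (\text{const})^\ell$, i.e.\ that the factorial moments of $\gamma \sim \mu_d$ do not grow too fast relative to $d^\ell$. This last fact is precisely where Assumption~\ref{assumption:unif:tail} enters: from $K(\theta) = \sup_d \E_{\mu_d}\exp(\theta X_d/d) < \infty$ one gets $\E_{\mu_d}[X_d(X_d-1)\cdots(X_d-\ell+1)] \le \E_{\mu_d}[X_d^\ell] \le C(\theta,\ell)\, d^\ell$ for any fixed $\theta$, hence $\E\binom{\gamma}{\ell} \le C_q d^\ell/\ell!$; combined with $|\beta - 1| \le C_q \la^2 x_n$ — wait, one needs the $d$'s to pair with the $\la^2 x_n$'s — the natural pairing is $d^\ell \cdot |\beta-1|^\ell$ where in fact the sharp bound should be $|\beta-1| = O_q(\la^2 x_n)$, so that $d^\ell|\beta-1|^\ell = O_q((d\la^2 x_n)^\ell)$, matching the stated error. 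To prove $|\beta - 1| = O_q(\la^2 x_n)$ I would expand $\prod_i (1+\la q(Y_{i1}-\tfrac1q))^{k_i}$ using $\sum_i (Y_{i1}-\tfrac1q) = 0$ (so the linear term has $\la \cdot 0 = 0$ in expectation only after also using Lemma~\ref{lem:Y:moment}); more carefully, $\E[\beta] - 1$ involves $\E(Y_{i1}-\tfrac1q)$, which is $O(\la x_n)$ by \eqref{eq:Y:firstmo}, and $\E(Y_{i1}-\tfrac1q)(Y_{i'1}-\tfrac1q)$, which is $O(x_n)$ by \eqref{eq:Y:secondmo}, so each term carries a factor $\la^2 x_n$ and indeed $|\beta-1| = O_q(\la^2 x_n)$.

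The main obstacle I anticipate is bookkeeping the subset-expansion remainder uniformly in $\gamma$ and in $n$: one must show the error from truncating at level $m$ is bounded by $C(q,k)(d\la^2 x_n)^{m+1}$ with a constant independent of $d$, $\la$, and $n$, which forces the pairing of each factorial-moment factor $d^\ell$ with one factor $\la^2 x_n$ to be done with the \emph{right} (uniform) constants rather than losing a $d$-dependent factor somewhere. Concretely, the danger is that a crude bound $|\E_\sig W_j| \le (1 + q|\la|)^{k_1+\cdots+k_q}$ only gives control of the form $(1+q|\la|)^{k\gamma}$ which is not summable against $\mu_d$ unless $|\la|$ is small — but \eqref{eq:dlambda:lower} gives $d\la^2 \le 1$, hence $|\la| \le d^{-1/2}$ is small for large $d$, and more importantly Assumption~\ref{assumption:unif:tail} gives exponential control $\E\exp(\theta\gamma/d)$, so choosing $\theta$ appropriately absorbs the $(1+q|\la|)^{k\gamma} \le e^{qk|\la|\gamma} \le e^{(qk/\sqrt d)\gamma}$ factor. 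Once this uniform summability is in hand, dominated convergence and the geometric tail estimate finish the proof. I would organize the write-up as: (i) reduce to the conditional-i.i.d.\ statement via Lemma~\ref{lem:Y:indep}; (ii) the subset expansion and identification of the first $m+1$ terms; (iii) the factorial-moment bound from Assumption~\ref{assumption:unif:tail}; (iv) the bound $|\beta-1| = O_q(\la^2 x_n)$ from Lemma~\ref{lem:Y:moment}; (v) assembling the geometric-tail error bound.
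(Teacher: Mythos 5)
Your proposal is correct and follows essentially the same route as the paper: both reduce via Lemma~\ref{lem:Y:indep} (the conditional i.i.d.\ structure of Proposition~\ref{prop:Y:dist}) to the exact identity $\E\prod_{i=1}^{q}Z_i^{k_i}=\E\big[(1+y)^{\gamma}\big]$ with $y=\beta-1$, bound $|y|\leq C(q,k)\la^2 x_n$ using the first- and second-moment identities of Lemma~\ref{lem:Y:moment}, and then control the binomial tail $\sum_{\ell\geq m+1}\E\binom{\gamma}{\ell}|y|^{\ell}$ by pairing each factor of $d$ with a factor $\la^2 x_n$ (using $d\la^2 x_n<1$) and absorbing the rest into the exponential moment $K(\cdot)$ of Assumption~\ref{assumption:unif:tail}. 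The only cosmetic difference is that the paper imports the estimate $|y|\leq C'\la^2 x_n$ from Lemma~2.6 of \cite{Sly:11}, whereas you sketch its proof directly; your subset expansion around $1$ is just the binomial theorem the paper uses.
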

\begin{proof}
Denote $y=\E \prod_{i=1}^{q}\left(1+\la q\left(Y_{i1}-1/q\right)\right)^{k_i}-1$. It was shown in Lemma 2.6 of \cite{Sly:11} that for the $d$-ary tree case, $|y|\leq C^\prime\la^2 x_n$ holds for some $C^\prime=C^\prime(k,q)>0$. \rev{Their argument proceeded by expanding $\prod_{i=1}^{q}\left(1+\la q\left(Y_{i1}-1/q\right)\right)^{k_i}$ and realizing that the moments of $\lambda q(Y_{i1}-1/q)$ is bounded by $C\la^2 x_n$.} Having Lemma \ref{lem:basic} and Lemma \ref{lem:Y:moment} in hand, the same argument shows that $|y|\leq C^\prime\la^2 x_n$ holds for the Galton-Watson tree case as well. \rev{Next}, we show how such bound on $|y|$ implies our goal: by Lemma \ref{lem:Y:indep}, $\E\prod_{i=1}^{q}Z_i^{k_i}=\E\prod_{i=1}^{q}\prod_{j=1}^{\ga}\left(1+\la q(Y_{ij}-1/q)\right)^{k_i}=\E(1+y)^{\ga}$ holds, so we have that
\begin{equation*}
    \left|\E\prod_{i=1}^{q}Z_i^{k_i}-\sum_{\ell=0}^{m}\E\binom{\gamma}{\ell}y^{\ell}\right|=\left|\sum_{\ell\geq m+1}\E\binom{\gamma}{\ell}y^{\ell}\right|\leq \sum_{\ell\geq m+1}\E\binom{\gamma}{\ell}\left(C^\prime \la^2 x_n\right)^{\ell},
\end{equation*}
where we used \rev{triangle} inequality and the bound $|y|\leq C^\prime \la^2 x_n$ in the last inequality. Since $\binom{\gamma}{\ell}\leq \frac{\gamma^{\ell}}{\ell!}$ and $d\la^2 x_n\leq x_n<1$ hold, we can further bound the right hand side above by
\begin{equation*}
    \sum_{\ell\geq m+1} \E\binom{\gamma}{\ell}\left(C^\prime \la^2 x_n\right)^{\ell}\leq (d\la^2 x_n)^{m+1}\sum_{\ell\geq m+1} \E\left[\frac{1}{\ell !}\left(\frac{C^\prime \gamma}{d}\right)^{\ell}\right]\leq (d\la^2 x_n)^{m+1} \E\left[\exp\left(\frac{C^\prime \gamma}{d}\right)\right]
\end{equation*}
Now, recall the function $K(\cdot)$ in \eqref{eq:def:K} in Assumption \ref{assumption:unif:tail}. Then, we have shown that
\begin{equation*}
     \left|\E\prod_{i=1}^{q}Z_i^{k_i}-\sum_{\ell=0}^{m}\E\binom{\gamma}{\ell}y^{\ell}\right|\leq (d\la^2 x_n)^{m+1} K(C^\prime),
\end{equation*}
which concludes the proof.
\end{proof}
A key consequence of Lemma \ref{lem:est:Z:mono} is the following: for any polynomial $f(z_1,z_2,...,z_q)$ in $q$ variables,
 \begin{equation}\label{eq:observation:Z}
     \left|\E f(Z_1,Z_2,...,Z_q)-(1-d)f(1,1,...,1)-d\E f\left(\Big(1+\la q(Y_{i1}-1/q)\Big)_{i\leq q}\right)\right|\leq C_q(d\la^2 x_n)^2,
 \end{equation}
which can be seen by applying Lemma \ref{lem:est:Z:mono} for $m=1$ to every monomial of $f(Z_1,...,Z_q)$. Specializing \eqref{eq:observation:Z} to the case of $f(z_1,...,z_q)=g(z_1,...,z_q)\cdot\left(\sum_{i=1}^{q}z_i-q\right)$ for some polynomial $g$, we have a useful bound
\begin{equation}\label{eq:observation:Z:useful}
    \left|\E g(Z_1,Z_2,...,Z_q)\Big(\sum_{i=1}^{q}Z_i-q\Big)\right|\leq C_q(d\la^2 x_n)^2,
\end{equation}
where we used $\sum_{i=1}^{q}\left(Y_{i1}-1/q\right)=0$.
In Section \ref{sec:full}, we will generalize \eqref{eq:observation:Z} to higher orders, which will play a crucial role in estimating $\Xi_2$.

With \eqref{eq:observation:Z} and \eqref{eq:observation:Z:useful}, we can estimate $x_{n+1},u_{n+1},v_{n+1}$, and $w_{n+1}$ up to second order:
\begin{lemma}\label{lem:recurse:first:order}
There exists $C_q>0$ such that for every $n\geq 1$,
\begin{equation*}
    \max\left\{\left|x_{n+1}-d\la^2 x_n\right|, \left|u_{n+1}-d\la^3 u_n\right|\right\}\leq C_q (d\la^2 x_n )^2,
\end{equation*}
and
\begin{equation*}
 \max\left\{\left|v_{n+1}-\left(d\la^4 v_n+\frac{d\la^3(1-\la)}{q}u_n\right)\right|, \left|w_{n+1}-d\la^4 w_n\right|\right\}\leq C_q (d\la^2 x_n )^2.
\end{equation*}
\end{lemma}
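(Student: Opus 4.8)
\textbf{Proof proposal for Lemma \ref{lem:recurse:first:order}.}

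The plan is to use the expansion of $X^{+}(n+1)$ from \eqref{eq:key:recursion}, namely $X^{+}(n+1)=Z_1/(\sumz)$, together with the basic identities in Lemma \ref{lem:basic} and the first- and second-moment formulas for the $Y_{i1}$ in Lemma \ref{lem:Y:moment}, and then to push everything through the ``linearization'' estimate \eqref{eq:observation:Z}. Concretely, for each of the four quantities $x_{n+1},u_{n+1},v_{n+1},w_{n+1}$ I would write it as $\E f(Z_1,\dots,Z_q)$ for an appropriate polynomial $f$ (using $\sum_i X_i(n+1)=1$ to clear the denominators, at the cost of an error controlled by \eqref{eq:observation:Z:useful}), and then apply \eqref{eq:observation:Z} to replace $Z_i$ by $1+\la q(Y_{i1}-1/q)$ at the price of a $C_q(d\la^2 x_n)^2$ error and a factor $d$, plus the ``degenerate'' term $(1-d)f(1,\dots,1)$.

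First I would handle $x_{n+1}$. Using $x_{n+1}=\E_{\sig}(X^{+}(n+1)-1/q)^2+(q-1)\E_{\sig}(X^{-}(n+1)-1/q)^2$ from \eqref{eq:lem:basic:1}, or more directly $x_{n+1}=\sum_i\E(X_i(n+1)-1/q)^2$ with $X_i(n+1)=Z_i/(\sumz)$: expanding $(\sumz)^{-2}$ to leading order via $\tfrac{1}{1+x}=1-x+O(x^2)$ with $x=(\sumz-q)/q$, and using \eqref{eq:observation:Z:useful} to discard the cross terms involving $\sumz-q$, reduces $x_{n+1}$ to $q^{-2}\sum_i\E(Z_i-1)^2+O_q((d\la^2x_n)^2)$ — actually to $\sum_i\E(X_i(n+1)-1/q)^2$ which after linearization by \eqref{eq:observation:Z} becomes $d\sum_i\E(\la q(Y_{i1}-1/q))^2\cdot q^{-2}$ plus the analogous lower-order pieces. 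Here the degenerate term $(1-d)f(1,\dots,1)$ vanishes because $f$ has no constant term after centering. Plugging in $\sum_i\E(Y_{i1}-1/q)^2=x_n$ (which follows from \eqref{eq:lem:basic:1} applied one level down, i.e.\ from Proposition \ref{prop:Y:dist}(c) and Lemma \ref{lem:basic}) gives $x_{n+1}=d\la^2x_n+O_q((d\la^2x_n)^2)$. For $u_{n+1}=\tfrac{q-1}{q}(\E(X^{+}(n+1)-1/q)^2-\E(X^{-}(n+1)-1/q)^2)$ the same linearization turns the quadratic in $Z$ into a quadratic in $Y_{i1}$, and the needed input is the second-moment \emph{difference} $\E(Y_{11}-1/q)^2-\E(Y_{i1}-1/q)^2$, which by \eqref{eq:Y:secondmo} equals $\la u_n+\tfrac{\la}{q-1}u_n=\tfrac{q\la}{q-1}u_n$, producing the factor $d\la^3$; one must also check that the first-order (in $Z$) contributions to $u_{n+1}$ cancel, which they do by the first-moment symmetry $\sum_i\E(Y_{i1}-1/q)=0$ and \eqref{eq:Y:firstmo}. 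The quantities $v_{n+1}$ and $w_{n+1}$ are cubic in the $X_i(n+1)$, so after clearing denominators they are degree-$3$ polynomials in $Z$; applying \eqref{eq:observation:Z} monomial by monomial (legitimate since \eqref{eq:observation:Z} holds for any polynomial, with $C_q$ absorbing the number of monomials) reduces them to cubic expressions in the $Y_{i1}$, and the leading behaviour is read off from Lemma \ref{lem:basic} one level down together with \eqref{eq:Y:secondmo}: the cubic moments of the $Y_{i1}$ contribute $d\la^4 v_n$ and $d\la^4 w_n$ respectively, while the \emph{quadratic} moments of $Y_{i1}$ carry the $u_n$-dependent second-moment corrections from \eqref{eq:Y:secondmo}, and tracking the coefficient of the term $\E(Y_{11}-1/q)^2$ versus $\E(Y_{i1}-1/q)^2$ yields the extra summand $\tfrac{d\la^3(1-\la)}{q}u_n$ in the recursion for $v_{n+1}$, with no such correction for $w_{n+1}$ by the antisymmetry built into the definition of $w_n$.

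The main obstacle I anticipate is bookkeeping rather than anything conceptual: one must be careful that when clearing the denominator $(\sumz)^{k}$ the resulting polynomial $f$ genuinely has vanishing constant term (so that the $(1-d)f(1,\dots,1)$ term in \eqref{eq:observation:Z} drops out) and that every occurrence of $\sumz-q$ is either absorbed by \eqref{eq:observation:Z:useful} or, when it appears squared or cubed and would only contribute at order $(d\la^2x_n)^2$, safely discarded — this last point needs the a priori bound that $\E(\sumz-q)^2=O_q((d\la^2 x_n)^2)$, which itself is an instance of \eqref{eq:observation:Z:useful}/\eqref{eq:observation:Z}. A secondary subtlety is ensuring that the expansion $\tfrac{1}{1+x}=1-x+\tfrac{x^2}{1+x}$ with $x=(\sumz-q)/q$ can be used even though $x$ is not bounded: one controls the remainder $\tfrac{Z_1}{\sumz}\cdot\tfrac{(\sumz-q)^2}{q^2}$ using $0\le Z_1/\sumz\le 1$ and the second-moment bound just mentioned, exactly as in step (3) of the sketch of \cite{Sly:11}'s argument recalled above. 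Once these reductions are in place, substituting the explicit moments from Lemma \ref{lem:Y:moment} and collecting terms gives the four stated estimates; all error terms are manifestly $O_q((d\la^2x_n)^2)$ because $d\la^2x_n\le x_n<1$.
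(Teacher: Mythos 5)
Your overall strategy is the same as the paper's: expand via the identities $\tfrac{1}{(1+x)^k}=1-kx+\dots$ with $x=(\sumz-q)/q$, control the remainders using $0\le Z_i/\sumz\le 1$ together with \eqref{eq:observation:Z:useful}, then linearize with \eqref{eq:observation:Z} and substitute the moments of the $Y_{i1}$ from Lemma \ref{lem:Y:moment}. Your variants for $x_{n+1}$ (via $x_{n+1}=\sum_i\E(X_i(n+1)-1/q)^2$) and for $u_{n+1}$ (via the difference of squares, which is in fact the route the paper takes later in Lemma \ref{lem:u:second:order}) are correct and give the stated coefficients; in particular $\E(Y_{11}-1/q)^2-\E(Y_{21}-1/q)^2=\tfrac{q\la}{q-1}u_n$ does produce $d\la^3 u_n$.

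The one step that, as written, would fail is your account of the $v$- and $w$-recursions. After clearing denominators and applying \eqref{eq:observation:Z}, the leading term for $v_{n+1}$ is $\tfrac{d}{q^3}\E\big(\la q(Y_{11}-\tfrac1q)-\la\sum_i(Y_{i1}-\tfrac1q)\big)^3=d\la^3\,\E(Y_{11}-\tfrac1q)^3$, because $\sum_i(Y_{i1}-1/q)=0$ kills all cross terms; the second moments of the $Y_{i1}$ from \eqref{eq:Y:secondmo} only enter the discarded $O_q((d\la^2x_n)^2)$ pieces and cannot supply the $u_n$ correction. The summand $\tfrac{d\la^3(1-\la)}{q}u_n$ comes instead from the third-moment identity \eqref{eq:Y:thirdmo}, $\E(Y_{11}-\tfrac1q)^3=\la v_n+\tfrac{1-\la}{q}u_n$, which is obtained by conditioning on $\sig^1_{u_1}$ (Proposition \ref{prop:Y:dist}(c)) and using \eqref{eq:lem:basic:2} one level down — note the coefficient $\tfrac{1-\la}{q}$ is not reachable from \eqref{eq:Y:secondmo}, so a literal execution of your bookkeeping would give the wrong constant. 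Likewise $w_{n+1}$ reduces to $d\la^3\,\E(Y_{11}-\tfrac1q)(Y_{21}-\tfrac1q)(Y_{11}-Y_{21})=d\la^4 w_n$, i.e.\ the mixed third moments in \eqref{eq:Y:thirdmo}; your antisymmetry heuristic is the right reason this carries no $u_n$ term, but it operates at the level of those third moments, not the second moments. With this correction (i.e.\ invoking \eqref{eq:Y:thirdmo} rather than \eqref{eq:Y:secondmo} at that stage), your argument matches the paper's proof.
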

\begin{proof}
The claim for $\{x_n\}_{n\geq 1}$ is immediate from the expansion \eqref{eq:expand:Z:sly} and \eqref{eq:observation:Z:useful}: since $Z_1/\sum_{i=1}^{q}Z_i\leq 1$ in the right hand side of \eqref{eq:expand:Z:sly},
\begin{equation*}
  x_{n+1}=\E\frac{Z_1-1}{q}+O_{q}\left((d\la^2 x_n)^2\right)=d\la^2 x_n +O_{q}\left((d\la^2 x_n)^2\right),
\end{equation*}
where we used \eqref{eq:observation:Z} and $\E(Y_{11}-1/q)=\la x_n$ (cf. Lemma \ref{lem:Y:moment}) in the last equality.

For the case of $\{u_n\}_{n\geq 1}$, the identity $\frac{1}{(1+x)^2}=1-2x+\frac{x^2}{(1+x)^2}(3+2x)$ for $x=\left(\sum_{i=1}^{q}Z_i-q\right)/q$ gives
\begin{equation*}
\begin{split}
 z_{n+1}
 &=\E \frac{1}{q^2}\left(Z_1-\frac{\sum_{i=1}^{q}Z_i}{q}\right)^2-\frac{2}{q^2}\left(Z_1-\frac{\sum_{i=1}^{q}Z_i}{q}\right)\left(\frac{\sum_{i=1}^{q}Z_i-q}{q}\right)\\
 &\qquad+\left(\frac{Z_1}{\sum_{i=1}^{q}Z_i}-\frac{1}{q}\right)^2\left(\frac{\sum_{i=1}^{q}Z_i-q}{q}\right)^2\left(3+2\frac{\sum_{i=1}^{q}Z_i-q}{q}\right).
\end{split}
\end{equation*}
Note that \rev{the final term is positive since} $3+2\frac{\sum_{i=1}^{q}Z_i-q}{q}>1$ \rev{holds}. \rev{Moreover, we have the crude bound} $\left(\frac{Z_1}{\sum_{i=1}^{q}Z_i}-\frac{1}{q}\right)^2\leq 1$, thus using \eqref{eq:observation:Z:useful} in the right hand side above, we have the estimate
\begin{equation}\label{eq:z:recurse}
\begin{split}
    z_{n+1}&=\E\frac{1}{q^2}\left(Z_1-\frac{\sum_{i=1}^{q}Z_i}{q}\right)^2+O_{q}\left((d\la^2 x_n)^2\right)\\
    &=\frac{d\la^2 x_n}{q}+d\la^3\left(z_n-\frac{x_n}{q}\right)+O_{q}\left((d\la^2 x_n)^2\right),
\end{split}
\end{equation}
where we used \eqref{eq:observation:Z} and Lemma \ref{lem:Y:moment} in the last equality. Since $x_{n+1}=d\la^2 x_n+O_{q}\left((d\la^2 x_n)^2\right)$, we have
\begin{equation*}
    u_{n+1}\equiv z_{n+1}-\frac{x_{n+1}}{q}=d\la^3 u_n+O_{q}\left((d\la^2 x_n)^2\right).
\end{equation*}

For the case of $\{v_n\}_{n\geq 1}$ and $\{w_n\}_{n\geq 1}$, we use the identity $\frac{1}{(1+x)^3}=1-3x+\frac{x^2}{(1+x)^3}(6+8x+3x^2)$ for $x=(\sumi Z-q)/q$ to see
\begin{equation}\label{eq:v:recurse}
    \begin{split}
   v_{n+1}&=\E\frac{1}{q^3} \left(Z_1-\frac{\sumz}{q}\right)^3-\frac{3}{q^3}\left(Z_1-\frac{\sumz}{q}\right)^3\left(\frac{\sumz -q}{q}\right)\\
        &+\left(\frac{Z_1}{\sum_{i=1}^{q}Z_i}-\frac{1}{q}\right)^3\left(\frac{\sumz -q}{q}\right)^2\left(6+8\frac{\sumz -q}{q}+3\left(\frac{\sumz -q}{q}\right)^2\right),
    \end{split}
\end{equation}
and
\begin{equation}\label{eq:w:recurse}
    \begin{split}
   w_{n+1} &=\E\frac{1}{q^3} \left(Z_1-\frac{\sumz}{q}\right)\left(Z_2-\frac{\sumz}{q}\right)(Z_1-Z_2)\\
        &\quad-\frac{3}{q^3}\left(Z_1-\frac{\sumz}{q}\right)\left(Z_2-\frac{\sumz}{q}\right)(Z_1-Z_2)\left(\frac{\sumz -q}{q}\right)\\
        &\quad+\left(\frac{Z_1}{\sum_{i=1}^{q}Z_i}-\frac{1}{q}\right)\left(\frac{Z_2}{\sum_{i=1}^{q}Z_i}-\frac{1}{q}\right)\left(\frac{Z_1-Z_2}{\sum_{i=1}^{q}Z_i}\right)\left(\frac{\sumz -q}{q}\right)^2\\
        &\quad\quad\quad\quad\cdot\left(6+8\frac{\sumz -q}{q}+3\left(\frac{\sumz -q}{q}\right)^2\right).
    \end{split}
\end{equation}
Regarding the last terms in the right hand side of \eqref{eq:v:recurse} and \eqref{eq:w:recurse}, note that $6+8x+3x^2=3(x+4/3)^2+2/3>0$ holds for any $x\in \R$, and 
\begin{equation*}
    -1 \leq \left(\frac{Z_1}{\sum_{i=1}^{q}Z_i}-\frac{1}{q}\right)^3\leq 1, \quad -1\leq \left(\frac{Z_1}{\sum_{i=1}^{q}Z_i}-\frac{1}{q}\right)\left(\frac{Z_2}{\sum_{i=1}^{q}Z_i}-\frac{1}{q}\right)\left(\frac{Z_1-Z_2}{\sum_{i=1}^{q}Z_i}\right)\leq 1.
\end{equation*}
Thus, we can proceed as in \eqref{eq:z:recurse}. Using \eqref{eq:observation:Z}, \eqref{eq:observation:Z:useful}, and Lemma \ref{lem:Y:moment} to the equations above, it follows that
\begin{equation*}
    v_{n+1}=\E\frac{1}{q^3} \left(Z_1-\frac{\sumz}{q}\right)^3+O_q\left((d\la^2 x_n^2)\right)=d\la^4+\frac{d\la^3(1-\la)}{q}u_n+O_q\left((d\la^2 x_n^2)\right),
\end{equation*}
and
\begin{equation*}
\begin{split}
    w_{n+1}
    &=\E\frac{1}{q^3} \left(Z_1-\frac{\sumz}{q}\right)\left(Z_2-\frac{\sumz}{q}\right)(Z_1-Z_2)+O_q\left((d\la^2 x_n^2)\right)\\
    &=d\la^4 w_n+O_q\left((d\la^2 x_n^2)\right),
\end{split}
\end{equation*}
which concludes the proof.
\end{proof}
A crucial observation in the statement of Lemma \ref{lem:recurse:first:order} is that the estimates for $u_{n+1},v_{n+1}$, and $w_{n+1}$ has an extra factor of $\la$ compared to the estimate for $x_{n+1}$. \rev{By} taking advantage of this \rev{extra factor}, we \rev{will prove in Lemma~\ref{lem:u:v:w:apriori}} that $|u_n|, |v_n|,|w_n|=O_q(x_n^2)$ hold. \rev{To prove this, the following result will be useful, which is a generalization of Lemma 2.9 of \cite{Sly:11} and shows that $x_n$ cannot drop from a very large value to a very small one.}

\begin{lemma}\label{lem:drop:small}
\rev{There exists a constant $C=C_q$ such that for any $n$,
\[
x_{n+1}\geq C_{q} \la^4 x_n
\]
In particular,} for any $\kappa>0$, there exists a constant $c=c(q,\kappa,d)$ such that whenever $|\la|>\kappa$, $x_{n+1}\geq c\cdot x_n$ \rev{holds } for all $n$.

\end{lemma}
\begin{proof}
Recall from Remark \ref{rmk:x:positive} that Lemma 2.9 of \cite{Sly:11} showed that \rev{for an arbitrary tree $T$ and $j\leq \ga$}, $x_{n+1}(T)\geq \la^4 x_n(T_j)^2$ for every $n$. Letting $j=1$ and, taking expectation with respect to $T\sim \GW(\mu_d)$, 
\begin{equation*}
    x_{n+1}\geq \la^4 \E\left[x_n(T_1)^2\right]\geq \la^4 \E\left[x_n(T_1)^2\bgiven \ga \geq 1\right]\cdot \P(\ga \ge 1).
\end{equation*}
Note that for the Galton-Watson tree $T\sim \GW(\mu_d)$, the subtree $T_1$ follows the same distribution $T_1\sim \GW(\mu_d)$ conditional on the event $\ga \geq 1$.
\rev{
Thus, using Cauchy-Schwartz inequality, it follows that
\begin{equation}\label{eq:x:lower:bound}
 x_{n+1}\geq \la^4 \Big(\E[x_n(T_1)\given \ga \geq 1]\Big)^2 \cdot \P(\ga \ge 1)=\la^4 x_n^2\cdot\P(\ga \ge 1).
\end{equation}
Here, we recall that $\ga\sim \mu_d$, so $\P(\ga\geq 1)$ depends on $d$. Next, we claim that under Assumption~\ref{assumption:unif:tail}, there exists a constant $c>0$ which only depends on $K(\cdot)$ such that $\P(\ga\geq 1)\geq c$ for any $d>1$. To see this, fix a constant $C>1$ to be determined below. By definition of $K(\cdot)$ in~\eqref{eq:def:K}, we have for any $d>1$ that 
\[
K(1)\geq \E\big[e^{\ga/d}\one(\ga\geq Cd)\big]\geq \E\Big[\frac{\ga}{d}\cdot \frac{e^{C}}{C}\one(\ga\geq Cd)\Big]\,,
\]
where the last inequality holds since $x\to e^{x}/x$ is increasing for $x>1$, thus $e^{\ga/d}\geq \frac{\ga}{d}\frac{e^{C}}{C}$ holds on the event $\ga/d\geq C$. Hence, by taking $C>1$ to be sufficiently large,
\[
\E\big[\ga \one (\ga\geq Cd)\big]\leq \frac{C\cdot K(1)}{e^{C}}d\leq \frac{d}{2}.
\]
Since $\ga$ is integer valued and $\E \ga =d$, this further implies that
\[
\E\big[\ga \one (1\leq \ga\leq Cd)\big]=d-\E\big[\ga \one (\ga\geq Cd)\big]\geq \frac{d}{2}.
\]
Note that the left hand side is upper bounded by $Cd\cdot \P(\ga\geq 1)$. Thus, the inequality above implies that $\P(\ga\geq 1)\geq 1/(2C)$, which proves our claim. Consequently, combining with~\eqref{eq:x:lower:bound}, 
\[
x_{n+1}\geq c\la^4 x_n^2.
\]
Finally, if $|\la|\geq \kappa$ holds, this implies that $x_{n+1}\geq c\kappa^4 x_n$ concluding the proof.
}
\end{proof}
\begin{lemma}\label{lem:u:v:w:apriori}
There exist $\delta=\delta(q)$ and $C=C_q>0$ such that the following holds: if \rev{$x_{n_0}\leq \delta$ for some $n_0\geq 1$, then for all $n\geq n_0+5$,}
\begin{equation*}
\max\left(|u_n|,|v_n|,|w_n|\right)\leq C x_n^2
\end{equation*}
\end{lemma}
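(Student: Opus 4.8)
The plan is to exploit the monotonicity of $(x_n)$ from Lemma \ref{lem:equiv:xn:nonreconstruct} together with the observation highlighted after Lemma \ref{lem:recurse:first:order}: the recursions for $u_{n+1},v_{n+1},w_{n+1}$ each carry an extra factor of $\la$ compared with $x_{n+1}=d\la^2 x_n+O_q((d\la^2x_n)^2)$. Since $x_n\downarrow x_\infty\le\delta$, fix a small $q$-dependent threshold $\eps_q$ and split the claim into the \emph{initial segment} $\{n:x_n>\eps_q\}$ and the \emph{tail} $\{n:x_n\le\eps_q\}$. On the initial segment there is nothing to prove: $X^{\pm}(n)\in[0,1]$ forces $|u_n|,|v_n|,|w_n|\le C(q)$, hence $\max(|u_n|,|v_n|,|w_n|)\le C(q)\eps_q^{-2}x_n^2$. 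The content is the tail, and the whole argument rests on the single quantitative input that, by the standing assumption \eqref{eq:dlambda:lower}, $\frac{|d\la^3|}{(d\la^2)^2}=\frac{1}{d|\la|}\le\frac{1}{1+c_0}<1$, because $d^2|\la|^3>1+c_0$ while $d\la^2\le1$ (the same computation gives $d|\la|^3\le(d\la^2)^2$, which will absorb the $\tfrac{d\la^3(1-\la)}{q}u_n$ term in the $v$-recursion).

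For the tail I would run a forward induction on $n$, first for $u_n$ and then, using $|u_n|\le C_q x_n^2$, for $w_n$ and $v_n$ in turn. Once $x_n\le\eps_q$, Lemma \ref{lem:recurse:first:order} gives $x_{n+1}=d\la^2 x_n\,(1+O_q(\eps_q))$, so $(d\la^2 x_n)^2\le(1+O_q(\eps_q))\,x_{n+1}^2$; hence, writing $C$ for the induction constant and assuming $|u_n|\le C x_n^2$,
\[
|u_{n+1}|\ \le\ |d\la^3|\,|u_n|+C_q(d\la^2 x_n)^2\ =\ \Big(\tfrac{C}{d|\la|}+C_q\Big)(d\la^2 x_n)^2\ \le\ \Big(\tfrac{C}{1+c_0}+C_q\Big)(1+O_q(\eps_q))\,x_{n+1}^2 ,
\]
and choosing $C$ of order $\tfrac{(1+c_0)C_q}{c_0}$ and $\eps_q$ small makes the right side $\le C x_{n+1}^2$, closing the step; since $x_n$ is decreasing, once we enter the tail this propagates to all larger $n$. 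Equivalently one may unroll the recursion and dominate $\sum_{k}|d\la^3|^{\,n-1-k}(d\la^2 x_k)^2$ by a geometric series of ratio $\frac{1}{d|\la|(1-O_q(\eps_q))^2}<1$, using $x_k\le x_n/(d\la^2(1-O_q(\eps_q)))^{\,n-k}$ for tail indices, to obtain $O_q(x_n^2)$.

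The delicate point, which I expect to be the main obstacle, is the base case of the tail induction (equivalently, the boundary term of the unrolled sum): controlling $u_N,v_N,w_N$ at the first index $N$ with $x_N\le\eps_q$. One more application of Lemma \ref{lem:recurse:first:order} from $N-1$, where $x_{N-1}>\eps_q$ is bounded below so that $|u_{N-1}|\le C(q)\eps_q^{-2}x_{N-1}^2$, reduces everything to showing $(d\la^2 x_{N-1})^2=O_q(x_N^2)$, i.e.\ that the transition step does not contract $x$ by more than a $q$-dependent factor beyond $d\la^2$. When $d\la^2 x_{N-1}\le\tfrac1{2C_q}$ this is immediate, since then $x_N\ge d\la^2 x_{N-1}-C_q(d\la^2 x_{N-1})^2\ge\tfrac12 d\la^2 x_{N-1}$. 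For the complementary range I would bring in the a priori one-step lower bound $x_{n+1}\ge c_1(K)\,\la^2 x_n$, proved by estimating $X^{+}(n+1)$ from a single child subtree $T_j$ (so $\E X^{+}(n+1)\ge\tfrac1q+\la^2 x_n(T_j)$ by the data-processing/Jensen argument of Remark \ref{rmk:x:positive}) and observing that Assumption \ref{assumption:unif:tail} forces $\P(\ga\ge1)$ to be bounded below by a constant depending only on $K(\cdot)$; combined with the fact that in this range $d\la^2$ is itself bounded below (because $x_{N-1}\le\tfrac{q-1}{q}$), this pins $x_N$ from below. Making this estimate fully quantitative — so that the transient $|d\la^3|^{\,m-N}|u_N|$ is seen to be negligible relative to $x_m^2$ already at $m=N$, uniformly in $d,\la$ within \eqref{eq:dlambda:lower} — is the technical heart of the lemma.
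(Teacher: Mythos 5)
Your core mechanism is the same as the paper's: the extra factor of $\la$ in the recursions of Lemma \ref{lem:recurse:first:order} gives a contraction ratio $\tfrac{d|\la|^3}{(d\la^2)^2}=\tfrac{1}{d|\la|}\le\tfrac{1}{1+c_0}$ (valid since $d|\la|\ge d^2|\la|^3>1+c_0$ under \eqref{eq:dlambda:lower}), and in the regime where $x_{n+1}=(1+O_q(\eps_q))d\la^2x_n$ this closes either as a forward induction or as the unrolled geometric sum; your treatment of $v_n$ via $d|\la|^3\le(d\la^2)^2$ absorbing the $u_n$ term, and the trivial bound $|u_n|\vee|v_n|\vee|w_n|\le 1\le \eps_q^{-2}x_n^2$ on the indices where $x_n>\eps_q$, are exactly the paper's ingredients.

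The gap is where you yourself locate it: the base case at the first index $N$ with $x_N\le\eps_q$. You reduce it to $(d\la^2x_{N-1})^2=O_q(x_N^2)$, and in the problematic regime $d\la^2x_{N-1}>\tfrac1{2C_q}$ your proposed input $x_{n+1}\ge c_1(K)\la^2x_n$ (which is correct, via Jensen on the single-subtree posterior and $\P(\ga\ge1)\ge c(K)$) only yields $x_N\ge c_1(K)\la^2 x_{N-1}\gtrsim \eps_q/d$, because $\la^2\le 1/d$. Since $d\la^2x_{N-1}$ can be of order one, this bounds the ratio $d\la^2x_{N-1}/x_N$ only by $O(d)$, not by a constant depending on $(q,K)$, so the induction constant would blow up with $d$. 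No elementary cross-threshold lower bound uniform in $d$ is available from the lemmas you cite (Lemma \ref{lem:drop:small} gives a $d$-dependent constant; a uniform statement is essentially what the normal approximation of Section \ref{sec:clt} provides, and only for large $d$). The paper's proof avoids ever comparing across the threshold: it unrolls the recursion starting from an index $n_0$ already inside the small-magnetization regime, bounds the sum by a geometric series of ratio $\tfrac{1}{d|\la|(1-2C_q\delta)^2}\le\tfrac{1}{1+c_0/2}$, and carries the boundary term as $(d|\la|^3)^n|u_{n_0}|\le (1+\tfrac{c_0}{2})^{-n}\tfrac{|u_{n_0}|}{x_{n_0}^2}x_{n+n_0}^2$, i.e.\ a finite prefactor multiplied by a geometrically decaying factor, rather than trying to show $|u_{n_0}|/x_{n_0}^2=O_q(1)$ at the crossing time. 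Re-anchoring your induction in this way (accepting the same mild looseness at finitely many early indices that the paper also glosses over in asserting the bound ``for every $n\ge1$'') repairs the argument; the cross-threshold one-step estimate you flag as the technical heart is neither available by your means nor actually needed.
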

\begin{proof}
 We start with the bound for $u_n$: by Lemma \ref{lem:recurse:first:order}, there exists $C_q>0$ such that 
\begin{equation}\label{eq:u:upper}
    |u_{n+1}|\leq d|\la|^3|u_n|+C_q (d\la^2 x_n)^2\quad\textnormal{and}\quad x_{n+1}\geq d\la^2 x_n-C_q (d\la^2 x_n)^2.
\end{equation}
\rev{Recalling the constant $c_0$ from~\eqref{eq:dlambda:lower}, we let $\delta(q)$ to be a small enough constant that only depends on $q$ such that $(1+c_0)\left(1-C_q\delta(q)\right)^2\geq 1+\frac{c_0}{2}$ and $\delta(q)<\frac{q-1}{q}$ hold. Now, suppose there exists $n_0$ such that $x_{n_0}<\delta$. Since $(x_{n})_{n\geq 0}$ is non-increasing by Lemma~\ref{lem:equiv:xn:nonreconstruct}, w.l.o.g., we can take
$n_0=\inf\{n\geq 0:x_n\leq \delta(q)\}$. Then, note that
\[
x_n\leq \delta(q)\quad\text{for all}\quad n\geq n_0\quad\text{and}\quad x_{n_0-1}\geq \delta(q).
\]
} Iterating the first inequality \eqref{eq:u:upper} $n$ times from $n+n_0$ gives 
\begin{equation}\label{eq:u:upper:iterate}
    |u_{n+n_0}|\leq C_q d^2\la^4 \sum_{i=1}^{n}\left(d|\la|^3\right)^{i-1}x_{n+n_0-i}^2 + \left(d|\la|^3\right)^n |u_{n_0}|.
\end{equation}
Note that the second inequality in \eqref{eq:u:upper} tells us that $x_{n+1}=(1-o(1))d\la^2 x_n$, which enables us to bound the sum in \eqref{eq:u:upper:iterate}: \rev{since $x_n\leq \delta(q)$ holds for all $n\geq n_0$, we have for all $n\geq n_0$}
\begin{equation*}
    x_{n+1} \geq d\la^2(1-C_q \delta_q)x_n,
\end{equation*}
thus for $1\leq i \leq n$,
\begin{equation}\label{eq:x:square:tech}
\begin{split}
    d^2\la^4\left(d|\la|^3\right)^{i-1}x_{n+n_0-i}^2
    &\leq \frac{1}{(1-C_q\delta(q))^2}\left(\frac{1}{d|\la|(1-C_q \delta(q))^2}\right)^{i-1}x_{n+n_0}^2\\
    &\leq 2\left(\frac{1}{1+\frac{c_0}{2}}\right)^{i-1}x_{n+n_0}^2, 
\end{split}
\end{equation}
where \rev{the final inequality holds since $d|\la|>1+c_0$ (cf.~\eqref{eq:dlambda:lower}) and we have set $\delta(q)$ small enoughg so that $(1+c_0)\left(1-C_q\delta(q)\right)^2\geq 1+\frac{c_0}{2}$ holds.} Similarly, we can bound
\begin{equation}\label{eq:u:neg:1}
    \left(d|\la|^3\right)^n |u_{n_0}|=\left(d|\la|^3\right)^n x_{n_0}^2\cdot \frac{|u_{n_0}|}{x_{n_0}^2}\leq \rev{\left(\frac{1}{d|\la|(1-C_q \delta(q))^2}\right)^{n}}\frac{|u_{n_0}|}{x_{n_0}^2}x_{n+n_0}^2.
\end{equation}
\rev{We now argue that if $n\geq 5$, then the term $\left(\frac{1}{d|\la|(1-C_q \delta(q))^2}\right)^{n}\frac{|u_{n_0}|}{x_{n_0}^2}$ is $O_q(1)$. Recall that by Lemma~\ref{lem:drop:small}, the decrease from $x_{n_0-1}$ to $x_{n_0}$ cannot be too large. Namely, since $x_{n_0-1}\geq \delta(q)$, we have the lower bound $x_{n_0}\geq C_q^\prime \la^4\delta(q)$. Moreover, $|u_{n_0}|\leq x_{n_0}$ holds by Lemma~\ref{lem:basic}, so
\[
\left(\frac{1}{d|\la|(1-C_q \delta(q))^2}\right)^{5}\frac{|u_{n_0}|}{x_{n_0}^2}\leq \frac{1}{(1-C_q\delta(q))^{10}}\frac{1}{d^{5}|\la|^{5}x_{n_0}}\leq \frac{1}{C^\prime_q\delta(q)(1-C_q\delta(q))^{10}}\cdot \frac{1}{d^{5}|\la|^{9}}.
\]
Recall that $d^{5}|\la|^{9}\geq 1+c_0$ holds (cf.~\eqref{eq:dlambda:lower}) and that $(1+c_0)\left(1-C_q\delta(q)\right)^2\geq 1+\frac{c_0}{2}$ holds. Thus, plugging these estimates into the right hand side of~\eqref{eq:u:neg:1} shows that for $n\geq 5$,
\begin{equation}\label{eq:u:neg}
    \left(d|\la|^3\right)^n |u_{n_0}|\leq \left(\frac{1}{1+\frac{c_0}{2}}\right)^{n-5}\cdot O_q(x_{n+n_0}^2).
\end{equation}
}
\rev{ Hence, by \eqref{eq:u:upper:iterate}, \eqref{eq:x:square:tech}, and \eqref{eq:u:neg}, we have for $n\geq 5$ that}
\begin{equation}\label{eq:u:upper:final}
    |u_{n+n_0}|\leq \left(2C_q\sum_{i=1}^{n}\left(\frac{1}{1+\frac{c_0}{2}}\right)^{i-1}+\left(\frac{1}{1+\frac{c_0}{2}}\right)^{\rev{n-5}}\cdot \rev{C_q^{\prime\prime}}\right)x_{n+n_0}^2=O_q(x_{n+n_0}^2),
\end{equation}
\rev{which concludes the proof for $u_n$}. For the case of $v_n$ and $w_n$, Lemma \ref{lem:recurse:first:order} and \eqref{eq:u:upper:final} show that there exists $C_q>0$ such that
\begin{equation*}
    |v_{n+1}|\leq d|\la|^3|v_n|+C_q\left(d\la^2 x_n\right)^2\quad\textnormal{and}\quad |w_{n+1}|\leq d|\la|^3|w_n|+C_q\left(d\la^2 x_n\right)^2,
\end{equation*}
where we used the fact $|\la|\leq 1$ and $d|\la|>1+c_0$. Therefore, we can repeat the previous argument to conclude that $|v_{n}|=O_q(x_n^2)$ and $|w_{n}|=O_q(x_n^2)$ hold \rev{for $n\geq n_0+5$}.
\end{proof}
\rev{
We note that Lemma~\ref{lem:u:v:w:apriori} implies that if $\lim_{n\to\infty} x_n\leq \delta(q)$ holds, then for sufficiently large enough $n\geq N(q,d,\lambda)$, $\max(|u_n|,|v_n|, |w_n|)=O_q(x_n^2)$ holds. This assertion, although less general, is sufficient to prove the tightness of the KS bound stated in Theorem~\ref{thm:KS:tight}. However, to establish the non-tightness of the KS bound in Theorem~\ref{thm:four:antiferro}, it is beneficial to know that $\max\left(|u_n|,|v_n|,|w_n|\right)=O_q(x_n^2)$ holds if $n\geq n_0+N(q,d)$ and $x_{n_0}\leq \delta$, where $N(q,d)$ does not depend on $\lambda$. The detailed proof of Theorem~\ref{thm:four:antiferro} can be found at the end of Section~\ref{sec:full}. 
}

Recalling the definition of $\Xi_1$ in \eqref{eq:def:xi}, we estimate $\Xi_1$ up to third order with respect to $x_n$.
\begin{lemma}\label{lem:xi1}
There exist $\delta=\delta(q)$ and $C=C_q>0$ such that the following holds: if \rev{$x_{n_0}\leq \delta$ for some $n_0\geq 1$, then for all $n\geq n_0+5$,}
\begin{equation}\label{eq:lem:xi1}
    \bigg|\Xi_1-\left(d\la^2 x_n+\frac{q(q-4)}{q-1}\E\binom{\gamma}{2}\la^4 x_n^2+\frac{q^2(q^2-21q+33)}{(q-1)^2}\E\binom{\gamma}{3}\la^6 x_n^3\frac{4q^2}{q-1}\E\binom{\gamma}{2}\la^5 x_n u_n\right)\bigg|\leq C_q x_n^4
\end{equation}
\end{lemma}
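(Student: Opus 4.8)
The plan is to expand each of the three terms in the definition of $\Xi_1$ from \eqref{eq:def:xi} to third order in $x_n$, using the approximation scheme of Lemma \ref{lem:est:Z:mono} with $m=2$ together with the moment identities of Lemma \ref{lem:Y:moment} and the apriori bounds $|u_n|,|v_n|,|w_n|=O_q(x_n^2)$ from Lemma \ref{lem:u:v:w:apriori}. Recall that $\Xi_1 = \E\frac{Z_1-1}{q} - \E\frac{Z_1(\sumz-q)}{q^2} + \E\frac{(\sumz-q)^2}{q^3}$. Since $d\la^2 x_n \leq x_n$ and $x_n$ is bounded, powers of $d\la^2 x_n$ control the error terms; Lemma \ref{lem:est:Z:mono} with $m=2$ gives, for each monomial $\prod_i Z_i^{k_i}$, an expansion
\[
\E\prod_{i=1}^q Z_i^{k_i} = \sum_{\ell=0}^{2}\E\binom{\gamma}{\ell}\,y_{\mathbf k}^{\ell} + O_q(x_n^3),
\]
where $y_{\mathbf k} = \E\prod_i(1+\la q(Y_{i1}-1/q))^{k_i}-1$. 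The key point is that each $y_{\mathbf k}$ is itself an \emph{exact} polynomial in $x_n, u_n, v_n, w_n$ via Lemma \ref{lem:Y:moment} (the moments of $Y_{i1}$ up to order $3$ are given exactly, and by Lemma \ref{lem:Y:higher:mo} the fourth-and-higher moments are $O_q(x_n^2)$, which when multiplied by the $\la$'s and collected contribute only at order $x_n^4$ or get absorbed). So I would substitute these in, expand $y_{\mathbf k}^2$, and collect by powers of $x_n$, repeatedly using $u_n,v_n,w_n=O_q(x_n^2)$ to discard anything of total order $\geq 4$.

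First I would handle $\E\frac{Z_1-1}{q}$: here $k_1=1$, all other $k_i=0$, so $y = \la q\,\E(Y_{11}-1/q) + \binom{q}{1}$-type corrections; by \eqref{eq:Y:firstmo} the linear term is $\la q \cdot \la x_n$ divided appropriately, and the higher terms of $y$ involve second and third moments of $Y_{11}$ from \eqref{eq:Y:secondmo}–\eqref{eq:Y:thirdmo}. This produces the $d\la^2 x_n$ leading term plus $\E\binom{\gamma}{2}$- and $\E\binom{\gamma}{3}$-weighted contributions at orders $x_n^2, x_n^3$, plus $\la^5 x_n u_n$-type terms coming from the $u_n$-dependence of the second moment of $Y_{11}$. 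Next, for $\E\frac{Z_1(\sumz-q)}{q^2}$, I use the observation \eqref{eq:observation:Z:useful}-style cancellation: $\sum_i(Y_{i1}-1/q)=0$ kills the leading order, so this term is genuinely $O_q(x_n^2)$, and I need its expansion to order $x_n^3$. Here I expand $Z_1(\sumz - q) = \sum_{i} Z_1 Z_i - q Z_1$ (with the convention $Z_1 Z_1 = Z_1^2$), apply Lemma \ref{lem:est:Z:mono} with $m=2$ to each piece, and use the second/third moment identities, noting that the $\ell=1$ terms combine to give $\E\binom{\gamma}{1}=d$ times a quantity that vanishes at first order in $x_n$ and contributes $d\cdot O(u_n,z_n-x_n/q)$ type terms, and the $\ell=2$ terms give $\E\binom{\gamma}{2}$-weighted $x_n^2$ and $x_n^3$ contributions. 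Finally $\E\frac{(\sumz-q)^2}{q^3}$: since $\sumz - q = \sum_i(Z_i-1)$ and $\E(Z_i-1)$ is first order, the square is $O(x_n^2)$; expanding $(\sumz-q)^2 = \sum_{i,i'}(Z_i-1)(Z_{i'}-1)$ and applying Lemma \ref{lem:est:Z:mono}, the dominant contribution at order $x_n^2$ comes from the $\ell=1$ term $d\cdot\E\big(\prod(1+\la q(Y_{i1}-1/q)) - 1\big)^2$-type quantities and at order $x_n^3$ from both $\ell=1$ (via third moments and $u_n$) and $\ell=2$ (via $\E\binom{\gamma}{2}$).

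After assembling all three pieces, I would collect coefficients: the $x_n^1$ coefficient is $d\la^2$; the $x_n^2$ coefficient should consolidate to $\frac{q(q-4)}{q-1}\E\binom{\gamma}{2}\la^4$ (matching the known Sly expansion \eqref{eq:expand:sec:order:sly} when $\mu_d=\delta_d$, which is a useful sanity check); the $x_n^3$ coefficient should split into an $\E\binom{\gamma}{3}\la^6$ piece with combinatorial factor $\frac{q^2(q^2-21q+33)}{(q-1)^2}$ and an $\E\binom{\gamma}{2}\la^5 x_n u_n$ cross term with factor $\frac{4q^2}{q-1}$; everything else is $O_q(x_n^4)$ because it is a product of at least two of $\{u_n,v_n,w_n\}$ or an $x_n^4$ term or carries an extra $\la^2$ against $\E\binom{\gamma}{2}$, which being bounded by $d^2$ times $\la^6 x_n^4 \leq x_n$-controlled quantities, still lands at $O_q(x_n^4)$. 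The main obstacle I anticipate is \emph{not} any single expansion but the bookkeeping: tracking which terms of the $y_{\mathbf k}$ polynomials, after being raised to the first and second powers and multiplied by the prefactors $1/q, 1/q^2, 1/q^3$ and the combinatorial weights $\E\binom{\gamma}{\ell}$, actually survive to order $x_n^3$ versus order $x_n^4$, and correctly identifying that the $u_n$-dependence (which is itself only $O(x_n^2)$ but whose exact coefficient in Lemma \ref{lem:Y:moment} matters) produces precisely the stated $\la^5 x_n u_n$ term with the right constant — this requires keeping the \emph{exact} rational-function coefficients in $q$ through a fairly long computation, and the delicate point is that an apparently-$O(x_n^2)$ object like $u_n$ multiplied by an $x_n$ contributes at third order and so its coefficient cannot be coarsened. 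I would organize this by writing $z_n - x_n/q = u_n$ everywhere, keeping $u_n, v_n, w_n$ as formal symbols of size $O_q(x_n^2)$, doing all expansions symbolically, and only at the end invoking Lemma \ref{lem:u:v:w:apriori} to absorb terms containing $u_n^2, u_n v_n, v_n, w_n$ (the latter two do not appear linearly in $\Xi_1$ at third order by parity/symmetry considerations) into the $O_q(x_n^4)$ error.
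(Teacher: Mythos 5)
Your overall plan — reduce $\Xi_1$ to the degree-at-most-two monomials $\E Z_1$, $\E Z_1^2$, $\E Z_1Z_2$, $\E Z_2^2$, $\E Z_2Z_3$, expand each via Lemma \ref{lem:est:Z:mono}, substitute the exact low-order moments of Lemma \ref{lem:Y:moment} written in terms of $x_n$ and $u_n$, and only at the end absorb $u_n^2$, $x_n^2u_n$, $u_n^3$ and the like via Lemma \ref{lem:u:v:w:apriori} — is precisely the paper's route. But there is one concrete error that makes the argument as written fail: you invoke Lemma \ref{lem:est:Z:mono} with $m=2$. That truncation has error $C(d\la^2 x_n)^{3}$, which near the Kesten--Stigum bound is of the same order $x_n^3$ as the very terms the lemma must identify exactly, so an $O_q(x_n^3)$ error cannot yield the conclusion $|\Xi_1-(\cdots)|\leq C_q x_n^4$. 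Worse, truncating at $\ell\leq 2$ discards the terms $\E\binom{\gamma}{3}y_{\mathbf k}^{3}$; since $y_{\mathbf k}$ is of size up to order $\la^2x_n$, these are of order $d^3\la^6x_n^3$, i.e.\ genuinely third order, and they are the sole source of the $\E\binom{\gamma}{3}\la^6x_n^3$ contribution with coefficient $\frac{q^2(q^2-21q+33)}{(q-1)^2}$ appearing in the statement — a term your own final tally includes, inconsistently with your expansion, which you describe as receiving third-order contributions only from $\ell=1$ and $\ell=2$.

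The fix is simply to take $m=3$ in Lemma \ref{lem:est:Z:mono}, as the paper does: the error then becomes $C(d\la^2x_n)^4\leq C_qx_n^4$, and the $\ell=3$ terms supply the $\binom{\gamma}{3}$ pieces in each of the three expansions (cf.\ \eqref{eq:lem:xi1:1}). With that correction the rest of your bookkeeping goes through as you describe: because $\Xi_1$ involves only monomials of degree at most two in the $Z_i$'s, only the first and second moments of $Y_{i1}$ (hence only $x_n$ and $u_n$) enter, so $v_n$ and $w_n$ never appear; the $\la^5x_nu_n$ cross term arises from the $u_n$-dependence of the second moments inside the $\E\binom{\gamma}{2}y_{\mathbf k}^2$ terms; and every term containing $u_n^2$, $x_n^2u_n$ or $u_n^3$ is $O_q(x_n^4)$ since $\E\binom{\gamma}{\ell}=O(d^\ell)$ and $|u_n|=O_q(x_n^2)$. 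One small further remark: in the combination giving $\E Z_1(\sum_iZ_i-q)/q^2$ the $\ell=1$ (i.e.\ $d$-linear) contributions cancel completely, including their $u_n$ parts, consistent with \eqref{eq:observation:Z:useful}, rather than leaving residual $d\cdot O(u_n)$ terms as you anticipate; this is harmless but worth knowing before you start the computation.
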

\begin{proof}
Applying Lemma \ref{lem:est:Z:mono} with $m=3$ and Lemma \ref{lem:Y:moment} gives
\begin{equation}\label{eq:lem:xi1:1}
    \E\frac{Z_1-1}{q}=d\la^2 q x_n+\E \binom{\gamma}{2}\la^4 q x_n^2+\E \binom{\gamma}{3}\la^6 q^2 x_n^3+O_q(x_n^4).
\end{equation}
Similarly, we have that $\E Z_1(\sumz -q)/q^2$ equals
\begin{equation}\label{eq:lem:xi1:2}
\begin{split}
    &\frac{\E Z_1^2}{q^2}+\frac{(q-1)\E Z_1Z_2}{q^2} -\frac{\E Z_1}{q}\\
    &=\E\binom{\gamma}{2}\la^4\left((3x_n+\la q u_n)^2+\frac{1}{q-1}((q-3)x_n-\la q u_n)^2-q x_n^2\right)\\
    &\quad\quad+\E \binom{\gamma}{3}\la^6\left(q(3x_n+\la q u_n)^3+\frac{q}{(q-1)^2}((q-3)x_n-\la q u_n)^3-q^2 x_n^3\right)+O_q(x_n^4),
\end{split}
\end{equation}
and that $\E (\sumi Z_i-q)^2/q^2$ equals
\begin{equation}\label{eq:lem:xi1:3}
\begin{split}
    &\frac{\E Z_1^2}{q^3}+\frac{(q-1)\E Z_2^2}{q^3}+\frac{2(q-1)\E Z_1Z_2}{q^3}-\frac{(q-1)(q-2)\E Z_2 Z_3}{q^3}-\frac{2 \E Z_1}{q^2}-\frac{2(q-1) \E Z_2}{q^2}+q^2\\
    &=\E\binom{\gamma}{2}\la^4\bigg(\frac{1}{q}(3x_n+\la q u_n)^2+\frac{3}{q(q-1)}\left((q-3)x_n-\la q u_n\right)^2\\
    &\qquad\qquad\qquad+\frac{1}{q(q-1)(q-2)}\left((3q-6)x_n-2\la q u_n\right)^2-2x_n^2-\frac{2}{q-1}x_n^2\bigg)\\
    &\quad\E\binom{\gamma}{3}\la^6\bigg((3x_n+\la q u_n)^3+\frac{3}{(q-1)^2}\left((q-3)x_n-\la q u_n\right)^3\\
    &\qquad\qquad\qquad -\frac{1}{(q-1)^2(q-2)^2}\left((3q-6)x_n-2\la q u_n\right)^3-2q x_n^3+\frac{2q}{(q-1)^2}x_n^3\bigg)+O_q(x_n^4).
\end{split}
\end{equation}
Observe that by Lemma \ref{lem:u:v:w:apriori}, $u_n^2+x_n^2|u_n|=O_q(x_n^4)$ \rev{holds since we assumed that $n\geq n_0+5$ for some $n_0$ such that $x_{n_0}\leq \delta$ holds}. Also, Assumption \ref{assumption:unif:tail} implies that $\E\binom{\gamma}{\ell}= O(d^{\ell})$ for $\ell =2,3$, since we have
\begin{equation*}
    \E \frac{1}{d^{\ell}}\binom{\gamma}{\ell}\leq \E\frac{1}{\ell !}\left(\frac{\gamma}{d}\right)^{\ell}\leq \E \exp\left(\frac{\gamma}{d}\right).
\end{equation*}
Thus, in the equations in \eqref{eq:lem:xi1:2} and \eqref{eq:lem:xi1:3}, we can absorb all the terms which involve $u_n^2, x_n^2 u_n, x_n u_n^2,$ or $u_n^3$ into $O_q(x_n^4)$. After such simplification, subtracting \eqref{eq:lem:xi1:2} from \eqref{eq:lem:xi1:1} and adding \eqref{eq:lem:xi1:3} gives the estimate \eqref{eq:lem:xi1}.
\end{proof}

\section{Full expansion of the magnetization}
\label{sec:full}
In this section, we estimate $\Xi_2$ up to third order and establish Theorem \ref{thm:four:antiferro}.

\subsection{Expansion for $\Xi_2$}
As explained in Section \ref{subsec:overview:broadcast}, in order to estimate $\Xi_2$ accurately up to the third order with respect to $x_n$, we need a better way of estimating the expectation of polynomials of $Z_i$'s than Lemma \ref{lem:est:Z:mono}. A key observation is that the third and higher moments of $Y_{i1}$'s are of order $O_q(x_n^2)$.
\begin{lemma}\label{lem:Y:higher:mo}
There exists a $C_q>0$ such that for every for every non-negative integers $(k_{i})_{i\leq q}$ with $(k_1,...,k_q)\neq (0,...,0)$,
\begin{equation}\label{eq:lem:Y:lower:mo}
     \left|\E\prod_{i=1}^{q}\left(Y_{i1}-\frac{1}{q}\right)^{k_i}\right|\leq C_q x_n.
\end{equation}
Moreover, there exists $\delta=\delta(q)$ such that if \rev{$x_{n_0}\leq \delta$ holds for some $n_0\geq 1$, then for all $n\geq n_0+5$ and} for every non-negative integers $(k_{i})_{i\leq q}$ with $\sum_{i=1}^{q}k_i\geq 3$,
\begin{equation}\label{eq:lem:Y:higher:mo}
    \left|\E\prod_{i=1}^{q}\left(Y_{i1}-\frac{1}{q}\right)^{k_i}\right|\leq C_q x_n^2.
\end{equation}
\end{lemma}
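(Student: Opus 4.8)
The plan is to prove the two estimates separately; the first is elementary, and the second is a bootstrap on top of the apriori bounds of Lemma~\ref{lem:u:v:w:apriori}. Throughout write $W_i:=Y_{i1}-\frac1q$, so that $|W_i|\le 1-\frac1q<1$ and $\sum_{i=1}^{q}W_i=0$, and recall from Lemma~\ref{lem:basic} that $0\le z_n\le x_n$, hence $|u_n|=\bigl|z_n-\frac{x_n}{q}\bigr|\le x_n$.

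For \eqref{eq:lem:Y:lower:mo}: if the monomial $\prod_i W_i^{k_i}$ is a single linear factor $W_{i_0}$, then $|\E W_{i_0}|\le|\la|x_n\le x_n$ directly from Lemma~\ref{lem:Y:moment}. Otherwise I would pull out either one factor $W_{i_0}$ (using $|W_{i_0}|^{k_{i_0}}\le W_{i_0}^{2}$ when the support is a singleton) or two factors $W_{i_0},W_{i_1}$ from distinct indices of the support, bound the remaining factors by $1$, and apply Cauchy--Schwarz; since by \eqref{eq:Y:secondmo} each $\E W_i^{2}$ equals $\frac{x_n}{q}$ plus a bounded multiple of $\la u_n$, we have $\E W_i^2=O_q(x_n)$, and $O_q(x_n)$ follows in all cases.

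For \eqref{eq:lem:Y:higher:mo} I would work under the hypothesis $\limsup_n x_n\le\delta$ with $\delta=\delta(q)$ small enough that Lemma~\ref{lem:u:v:w:apriori} applies, so $\max(|u_n|,|v_n|,|w_n|)=O_q(x_n^{2})$. The key preliminary fact is that all fourth moments are $O_q(x_n^2)$: by \eqref{eq:lem:basic:3}, $\E(X^+(n)-\frac1q)^4+(q-1)\E(X^-(n)-\frac1q)^4=v_n-\frac1q u_n=O_q(x_n^2)$, both summands are nonnegative, and conditioning on $\sig^1_{u_1}$ via Proposition~\ref{prop:Y:dist}(c) exhibits $\E W_i^4$ as a convex combination of them, so $\E W_i^4=O_q(x_n^2)$ for every $i$. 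The case $\sum_i k_i=3$ is the only one requiring an exact computation: every third moment of the $W_i$ involving colour $1$ is, by \eqref{eq:Y:thirdmo}, an explicit linear combination of $u_n,v_n,w_n$, hence $O_q(x_n^2)$, and the remaining third moments, supported on $\{2,\dots,q\}$, are obtained from these via the colour symmetry within $\{2,\dots,q\}$ (legitimate because, conditioned on $\sig_\rho=1$, the colours $2,\dots,q$ stay exchangeable) together with the single relation $W_1=-\sum_{i\ge2}W_i$ --- a short linear elimination one checks is non-degenerate for the relevant $q$.

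Finally, for $\sum_i k_i\ge4$ I would avoid exact computations entirely. Factor the monomial as $M=M_1M_2$ with $\deg M_1\ge2$ and $\deg M_2\ge2$ (possible since $\deg M\ge4$), so that $|\E M|\le\E|M|\le\sqrt{\E M_1^2}\,\sqrt{\E M_2^2}$; each $\E M_j^2$ is the expectation of an even monomial of degree $\ge4$, which I would bound by $O_q(x_n^2)$ by discarding factors $\le1$ and, if needed, one more Cauchy--Schwarz reducing to $\sqrt{\E W_i^4\,\E W_j^4}$. This gives $|\E M|=O_q(x_n)\cdot O_q(x_n)=O_q(x_n^2)$. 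The genuinely delicate point is the degree-$3$ case: Cauchy--Schwarz alone only yields $O(x_n^{3/2})$ there, so one must go through the exact third-moment identities of Lemma~\ref{lem:Y:moment} and the symmetry/linear-constraint reduction, and verify that the resulting small linear system for the ``all-incorrect'' third moments is solvable with $O_q(x_n^2)$ output for every $q$ arising in our application.
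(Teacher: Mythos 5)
Your proposal is correct and follows essentially the same route as the paper: the second-moment identities of Lemma~\ref{lem:Y:moment} together with $0\le z_n\le x_n$ (plus boundedness of the factors) for \eqref{eq:lem:Y:lower:mo}; the exact third-moment identities combined with the apriori bounds $|u_n|\vee|v_n|\vee|w_n|=O_q(x_n^2)$ of Lemma~\ref{lem:u:v:w:apriori}, using exchangeability of colours $2,\dots,q$ and $\sum_{i}(Y_{i1}-\tfrac1q)=0$ for monomials avoiding colour $1$, for degree three; and the fourth-moment bound $v_n-\tfrac1q u_n=O_q(x_n^2)$ from \eqref{eq:lem:basic:3} for degree at least four, where the paper uses a weighted AM--GM step in place of your Cauchy--Schwarz factorization. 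The degree-three linear elimination you leave as a check is exactly what the paper carries out explicitly, and it is non-degenerate for every $q$ (the $(q-2)$ and $(q-3)$ denominators arise only when the corresponding distinct colours exist).
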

\begin{proof}
We first prove \eqref{eq:lem:Y:lower:mo}. When $\sum_{i=1}^{q}k_i\leq 2$, \eqref{eq:lem:Y:lower:mo} is immediate since $0\leq z_n\leq x_n$ holds by Lemma \ref{lem:basic}, and the second moments of $Y_{i1}$'s are expressed in terms of a linear combination of $x_n$ and $z_n$ by Lemma \ref{lem:Y:moment}. \rev{Thus, we have by AM-GM that for $i_1,i_2\in \{1,\ldots, q\}^2$,
\begin{equation*}
    \E\left|\left(Y_{i_1 1}-\frac{1}{q}\right)\left(Y_{i_2 1}-\frac{1}{q}\right)\right|\leq \frac{1}{2}\left(\E\left(Y_{i_1 1}-\frac{1}{q}\right)^2+\E\left(Y_{i_1 1}-\frac{1}{q}\right)^2\right)=O_q(x_n).
\end{equation*}
Note that the absolute value is inside the expectation in the left hand side. Moreover, $\left|Y_{i1}-1/q\right|\leq \frac{q-1}{q}$ holds by definition, thus \eqref{eq:lem:Y:lower:mo} holds when $\sum_{i=1}^{q}k_i\geq 3$.}

Next, we prove \eqref{eq:lem:Y:higher:mo} for the case where $\sum_{i=1}^{q}k_i=3$. \rev{Note that by Lemma~\ref{lem:Y:moment}, the third moments of $Y_{i1}$'s are expressed in terms of a linear combination of $u_n, v_n$, and $w_n$. Moreover, $|u_n|\vee|v_n|\vee|w_n|\leq C_q x_n^2$ holds under the stated condition by Lemma \ref{lem:u:v:w:apriori}, thus \eqref{eq:lem:Y:higher:mo} holds when $\sum_{i=1}^{q}k_i=3$.} If $\sum_{i=1}^{q}k_i\geq 4$, then we have by Lemma \ref{lem:basic} that
\begin{equation*}
  \E\left(X^{+}(n)-\frac{1}{q}\right)^{4}+(q-1)\E \left(X^{-}(n)-\frac{1}{q}\right)^{4}= v_n-\frac{1}{q}u_n\leq C_q x_n^2, 
\end{equation*}
where the final inequality is due to Lemma \ref{lem:u:v:w:apriori}. Thus, proceeding as in \eqref{eq:Y:X:plus:minus}, Proposition~\ref{prop:Y:dist}~-$(d)$ implies that we can bound for any $i\leq q$, 
\begin{equation*}
    \E\left(Y_{i1}-\frac{1}{q}\right)^4\leq \max\left(\E\left(X^{+}(n)-\frac{1}{q}\right)^{4}, \E \left(X^{-}(n)-\frac{1}{q}\right)^{4}\right)\leq C_q x_n^2.
\end{equation*}
This implies that our goal \eqref{eq:lem:Y:higher:mo} holds whenever $\sum_{i=1}^{q}k_i=4$: by the weighted AM-GM inequality, we have for $\sum_{i=1}^{q}k_i=4$,
\begin{equation}\label{eq:bound:Y:fourth}
    \E\left|\prod_{i=1}^{q}\left(Y_{i1}-\frac{1}{q}\right)^{k_i}\right|\leq \E \sum_{i=1}^{q}\frac{k_i}{4}\left(Y_{i1}-\frac{1}{q}\right)^4 \leq C_q x_n^2.
\end{equation}
Note that $\left|Y_{i1}-1/q\right|\leq \frac{q-1}{q}$ holds, so whenever $\sum_{i=1}^{q}k_i\geq 4$, \eqref{eq:bound:Y:fourth} continues to hold with the same constant $C_q>0$.
\end{proof}

With Lemma \ref{lem:Y:higher:mo} in hand, we prove the following proposition which plays a crucial role in estimating $\Xi_2$.
\begin{prop}\label{prop:crucial}
Given integers $m\geq 1, \gamma\geq 0$ and a polynomial $f\in \R[z_1,...,z_q]$, define the polynomial $\Phi_{m,f}\equiv \Phi_{m,f,\gamma}\in \R\left[(x_{ij})_{i\leq q, j\leq \gamma}\right]$ in $q\times \gamma$ variables as follows. Given $f$, let $F_{f}\equiv F_{f,\gamma}\in \R\left[(x_{ij})_{i\leq q, j\leq \gamma}\right]$ be
\begin{equation}\label{eq:def:F}
F_{f}\Big((x_{ij})_{i\leq q, j\leq \gamma}\Big):=f(z_1,...,z_q),\quad\textnormal{where}\quad z_i=\prod_{j=1}^{\gamma}\left(1+\la q x_{ij}\right).
\end{equation}
Then, we define $\Phi_{m,f}\equiv \Psi_m[F_{f,\gamma}]$, where $\Psi_m[\cdot]$ is an operator acting on the space $\R[(x_{ij})_{i\leq q, j\leq \gamma}]$ by deleting the monomials $\prod_{i=1}^{q}\prod_{j=1}^{\gamma}x_{ij}^{s_{ij}}$, where $s=(s_{ij})_{i\leq q, j\leq \gamma}$ satisfies \textit{either} of the following.
\begin{enumerate}[label=(\alph*)]
    \item There exist $m$ distinct integers $j_1,...,j_{m}\in [\gamma]$ such that $\sum_{i=1}^{q}s_{ij_{\ell}}\geq 1$ for every $1\leq \ell \leq m$.
    \item For some positive integer $r$ in the range $\frac{m}{2}\leq r\leq m$, there exist $r$ distinct integers $j_1,...,j_{r}\in [\gamma]$ such that $\sum_{i=1}^{q}s_{ij_{\ell}}\geq 3$ for every $1\leq \ell \leq m-r$ and $\sum_{i=1}^{q}s_{ij_{\ell}}\geq 1$ for every $m-r+1\leq \ell\leq r$.
\end{enumerate}
Then, there exist $C=C(q,m,f)$, which only depends on $q$, $m\geq 1$, and $f$, and $\delta=\delta(q)$ such that the following holds: if \rev{$x_{n_0}\leq \delta$ holds for some $n_0\geq 1$, then for all $n\geq n_0+5$, $m\geq 1$, and} polynomial $f(z_1,...,z_q)$ in $q$ variables, \rev{we have}
\begin{equation}\label{eq:prop:crucial}
    \left|\E f(Z_1,...,Z_q)-\E\Phi_{m,f} \bigg(\Big(Y_{ij}-\frac{1}{q}\Big)_{i\leq q, j\leq \gamma}\bigg)\right|\leq C x_n^m.
\end{equation}
\end{prop}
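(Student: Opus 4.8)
\emph{Proof strategy.} The plan is to observe that the left side of \eqref{eq:prop:crucial} is \emph{exactly} the expectation of the monomials that $\Psi_m$ deletes, and then to bound those one at a time, using the factorization of Lemma \ref{lem:Y:indep} together with the moment estimates of Lemmas \ref{lem:Y:moment} and \ref{lem:Y:higher:mo}. Precisely: by construction $F_{f,\gamma}\big((Y_{ij}-1/q)_{i\le q,\,j\le\gamma}\big)=f(Z_1,\dots,Z_q)$, and $\Phi_{m,f,\gamma}=\Psi_m[F_{f,\gamma}]$ is $F_{f,\gamma}$ with the monomials $\prod_{i,j}x_{ij}^{s_{ij}}$ indexed by exponent matrices $s$ satisfying (a) or (b) removed, so
\[
\E f(Z_1,\dots,Z_q)-\E\Phi_{m,f}\Big(\big(Y_{ij}-\tfrac1q\big)_{i,j}\Big)
=\E\Big[\sum_{s\in\mathcal D}c_s\prod_{i,j}\big(Y_{ij}-\tfrac1q\big)^{s_{ij}}\Big],
\]
where $\mathcal D$ is the (random, through $\gamma$) set of deleted exponent matrices and $c_s$ the corresponding coefficient of $F_{f,\gamma}$. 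Writing $f=\sum_\alpha c_\alpha z^\alpha$ and expanding $z^\alpha=\prod_{i,j}(1+\la q x_{ij})^{\alpha_i}$, every $c_s$ is a sum over multi-indices $\alpha$ with $s_{ij}\le\alpha_i$ of $c_\alpha\prod_{i,j}\binom{\alpha_i}{s_{ij}}(\la q)^{s_{ij}}$; after the triangle inequality it suffices to bound, for each fixed $\alpha$ with $c_\alpha\neq 0$, the sum over deleted $s$ with $s_{ij}\le\alpha_i$ of $|c_\alpha|\prod_{i,j}\binom{\alpha_i}{s_{ij}}(q|\la|)^{s_{ij}}\,\big|\E\prod_{i,j}(Y_{ij}-1/q)^{s_{ij}}\big|$.

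Next I would condition on $\gamma$ and apply Lemma \ref{lem:Y:indep} (i.e. Proposition \ref{prop:Y:dist}(a)--(b)) to factor $\E\prod_{i,j}(Y_{ij}-1/q)^{s_{ij}}=\prod_{j=1}^{\gamma}\E\prod_{i}(Y_{i1}-1/q)^{s_{ij}}$. For a column $s_j=(s_{ij})_i$ of total degree $|s_j|:=\sum_i s_{ij}$, the factor equals $1$ when $|s_j|=0$; has absolute value $\le|\la|x_n$ when $|s_j|=1$ — this extra power of $\la$, coming from $\E(Y_{i1}-1/q)=O(\la x_n)$ in Lemma \ref{lem:Y:moment}, is crucial; is $\le C_q x_n$ when $|s_j|=2$ (Lemma \ref{lem:Y:moment}, using $u_n=O_q(x_n^2)$ from Lemma \ref{lem:u:v:w:apriori}); and is $\le C_q x_n^2$ when $|s_j|\ge 3$ (Lemma \ref{lem:Y:higher:mo}, valid once $\limsup_n x_n$ is small). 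Folding in the coefficient weight $(q|\la|)^{|s_j|}\prod_i\binom{\alpha_i}{s_{ij}}$ — which, since $|\la|\le1$ and $|\alpha|\le\deg f$, is at most $B_\alpha:=(2q)^{|\alpha|}(1+C_q)$ times $\la^2$ on columns of degree $1$ or $2$ and times $|\la|^3$ on columns of degree $\ge3$ — each nonzero column contributes at most $B_\alpha\la^2 x_n$ if it has degree $\le2$ and at most $B_\alpha|\la|^3 x_n^2$ if it has degree $\ge3$.

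The combinatorial core is the following. Write $a=a(s)$ and $b=b(s)$ for the number of columns of $s$ of degree in $\{1,2\}$ and of degree $\ge3$ respectively; the point of stating conditions (a)--(b) in that form is precisely that any deleted $s$ has $a+2b\ge m$: under (a) there are $\ge m$ nonzero columns so $a+b\ge m$; under (b) the $r$ columns split into $m-r$ of degree $\ge3$ and $2r-m$ of degree $\ge1$, giving $b\ge m-r$ and $a+b\ge r$, hence $a+2b=(a+b)+b\ge m$. Thus the product of per-column bounds is at most $|c_\alpha|B_\alpha^{a+b}\la^{2a}|\la|^{3b}x_n^{a+2b}$. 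Summing over deleted $s$, the number of ways to choose the support (an $(a+b)$-subset of $[\gamma]$, its degree-$\le2$ / degree-$\ge3$ split, and the type of each column) is at most $\binom{\gamma}{a+b}2^{a+b}(|\alpha|+1)^{q(a+b)}$; taking $\E_\gamma$ and using Assumption \ref{assumption:unif:tail} in the form $\E_\gamma\binom{\gamma}{a+b}\le K(1)d^{a+b}$, the factor $d^{a+b}\la^{2a}|\la|^{3b}=(d\la^2)^a(d|\la|^3)^b\le1$ is killed by $d\la^2\le1$ (whence $d|\la|^3\le d^{-1/2}\le1$). What remains is $\sum_\alpha|c_\alpha|\sum_{a+2b\ge m}M_\alpha^{a+b}x_n^{a+2b}$ with $M_\alpha:=2(2q)^{|\alpha|}(1+C_q)(|\alpha|+1)^q$; since $a+2b\ge a+b$ and $a+2b\ge m$, this is $x_n^m$ times a convergent geometric-type series once $x_n$ lies below a threshold (depending on $q$ and $\deg f$), which gives \eqref{eq:prop:crucial} with $C=C(q,m,f)$.

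\emph{Main obstacle.} The genuinely delicate step is the final summation: because $\gamma$ is random there are unboundedly many deleted monomials, and bounding their total contribution at order $x_n^m$ needs all of the following to cooperate — the extra factor $\la$ on degree-$1$ columns (Lemma \ref{lem:Y:moment}), the $O_q(x_n^2)$ control of \emph{all} degree-$\ge3$ moments (Lemma \ref{lem:Y:higher:mo}, the source of the smallness hypothesis on $x_n$), the exponential-moment bound on $\gamma/d$ that converts $\binom{\gamma}{k}$ into $d^k$, and the elementary inequalities $d\la^2\le1$, $d|\la|^3\le1$ which exactly absorb that $d^k$. The one other point requiring care is verifying, as above, that the somewhat intricate conditions (a)--(b) defining $\Psi_m$ are calibrated so that every deleted monomial carries at least the power $x_n^{a+2b}$ with $a+2b\ge m$.
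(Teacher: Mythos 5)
Your proposal follows essentially the same route as the paper's proof: the error is exactly the expectation of the deleted monomials; you factor across the children via Lemma \ref{lem:Y:indep}; you use Lemma \ref{lem:Y:moment} for the extra factor $|\la|$ on degree-one columns, the bound $C_qx_n$ on degree-two columns, and Lemma \ref{lem:Y:higher:mo} (the source of the smallness hypothesis) for the $O_q(x_n^2)$ bound on columns of degree at least three; your verification that conditions $(a)$--$(b)$ force $a+2b\geq m$ is precisely the computation behind the paper's bound \eqref{eq:deleted:mo}; and your coefficient/counting estimate is the paper's Claim \eqref{eq:claim}, with $\E\binom{\gamma}{\ell}\lesssim d^{\ell}$ and $d\la^2\leq 1$ (hence $d|\la|^3\leq 1$) playing the same role. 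All of these steps are sound.

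The one place where your argument delivers less than the statement is the final summation. You retain the full power $x_n^{a+2b}$ and use the surplus $x_n^{a+2b-m}$ to make $\sum_{a+2b\geq m}M_\alpha^{a+b}x_n^{a+2b}$ converge, which forces $M_\alpha x_n<1$, i.e.\ a threshold $\delta$ depending on $\deg f$ (and $q$) --- as you yourself note. The proposition, however, asserts $\delta=\delta(q)$ with only $C$ depending on $f$, and this is used downstream: Lemma \ref{lem:useful} is stated with $\delta=\delta(q)$ uniformly over arbitrary polynomials $(f_\ell)$. The paper avoids the $f$-dependence by extracting only $x_n^m$ from the moment product (the remaining factors are bounded by $1$, using $|Y_{i1}-\tfrac1q|\leq 1$) and paying for the unboundedly many monomials with $\la^2$ per nonzero column rather than with $x_n$: in the split \eqref{eq:Deltas}, either the coefficient weight $(|\la|q)^{r}$ already supplies $\la^{2\ell}$ (the case $r\geq 2\ell$, giving \eqref{eq:Delta:1}), or the at least $2\ell-r$ degree-one columns supply the missing $|\la|^{2\ell-r}$ through \eqref{eq:deleted:mo:2} (giving \eqref{eq:Delta:2}); the per-column constants are then absorbed into $\sum_\ell\binom{\gamma}{\ell}(C\la^2)^{\ell}\leq\exp(C\la^2\gamma)$ and $\E\exp(C\gamma/d)=K(C)<\infty$ by Assumption \ref{assumption:unif:tail} together with $d\la^2\leq 1$, with no further condition on $x_n$. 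Your accounting already contains all the ingredients for this repair: bound $x_n^{a+2b}\leq x_n^{m}$ (legitimate since $x_n\leq\tfrac{q-1}{q}<1$) and fold the constants $M_\alpha^{a+b}$ into the exponential moment of $\gamma/d$ instead of into extra powers of $x_n$; with that adjustment your proof gives the stated $\delta=\delta(q)$.
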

\begin{proof}
We first express the polynomial $f$ as
\begin{equation*}
    f(z_1,...,z_q)=\sum_{k=(k_1,...,k_q)\in \mathcal{K}}a_k\cdot  \prod_{i=1}^{q}z_i^{k_i}, 
\end{equation*}
where $\mathcal{K}$ is a subset of $\N^{q}$ and $a_k\in \R$. Then,
\begin{equation*}
    F\Big((x_{ij})_{i\leq q, j\leq \gamma}\Big)=\sum_{k=(k_1,...,k_q)\in \mathcal{K}}a_k\cdot  \prod_{i=1}^{q}\prod_{j=1}^{\gamma}\left(\sum_{s_{ij}=0}^{k_i}\binom{k_i}{s_{ij}}(\la q)^{s_{ij}}x_{ij}^{s_{ij}}\right)=\sum_{s\in \mathcal{S}_{\gamma}}b_{s}\cdot \prod_{i=1}^{q}\prod_{j=1}^{\gamma}x_{ij}^{s_{ij}},
\end{equation*}
where $\mathcal{S}_{\gamma}$ is the subset of $s=(s_{ij})_{i\leq q, j\leq \gamma}$ such that $s_{ij}\in \{0,...,k_i\}, \forall i\leq q, \forall j\leq \gamma$ for some $(k_1,...,k_q)\in \mathcal{K}$, and $b_s\equiv b_s(f,\gamma)\in \R$ is given by 
\begin{equation}\label{eq:def:b}
    b_s \equiv \sum_{k\in \mathcal{K}}a_k\left(\la q \right)^{\norm{s}_1}\prod_{i=1}^{q}\prod_{j=1}^{\gamma}\binom{k_i}{s_{ij}}.
\end{equation}
Let $\mathcal{S}_{\gamma}(m)\subset \mathcal{S}_{\gamma}$ be the set of $s$ which satisfy either $(a)$ or $(b)$ above, i.e. the deleted indices from $F$ to $\Phi_m[f]$. Then, by definition of $Z_i$'s and tower property,
\begin{equation*}
    \E f(Z_1,...,Z_q)-\E\Phi_m \bigg(\Big(Y_{ij}-\frac{1}{q}\Big)_{i\leq q, j\leq \gamma}\bigg)=\E\left[ \E\left[\sum_{s\in \mathcal{S}_{\gamma}(m)}b_{s}\cdot \prod_{i=1}^{q}\prod_{j=1}^{\gamma}\left(Y_{ij}-\frac{1}{q}\right)^{s_{ij}}\bbgiven \gamma \right]\right].
\end{equation*}
Following the same argument as in the proof of Lemma \ref{lem:Y:indep}, it follows from Proposition \ref{prop:Y:dist}-$(a),(b)$ that
\begin{equation*}
    \E\left[ \prod_{i=1}^{q}\prod_{j=1}^{\gamma}\left(Y_{ij}-\frac{1}{q}\right)^{s_{ij}}\bbgiven \gamma \right]=\prod_{j=1}^{\gamma}\E\left[\prod_{i=1}^{q}\left(Y_{i1}-\frac{1}{q}\right)^{s_{ij}}\right].
\end{equation*}
Thus, it follows from a \rev{triangle} inequality that
\begin{equation}\label{eq:deleted:mo:ineq}
    \left|\E f(Z_1,...,Z_q)-\E\Phi_m \bigg(\Big(Y_{ij}-\frac{1}{q}\Big)_{i\leq q, j\leq \gamma}\bigg)\right|\leq \E \sum_{s\in \mathcal{S}_{\gamma}(m)}|b_s|\cdot \prod_{j=1}^{\gamma}\left|\E\left[\prod_{i=1}^{q}\left(Y_{i1}-\frac{1}{q}\right)^{s_{ij}}\right]\right|,
\end{equation}
where the outer expectation of the right hand side above is with respect to $\gamma$.

The most important observation on the deleting procedure is as follows. By Lemma~\ref{lem:Y:higher:mo}, there exists a constant $C(q,m)>0$, which only depends on $q$ and $m$, such that if \rev{$x_{n_0}\leq\delta$ and $n\geq n_0+5,$ then}
\begin{equation}\label{eq:deleted:mo}
    \prod_{j=1}^{\gamma}\left|\E\left[\prod_{i=1}^{q}\left(Y_{i1}-\frac{1}{q}\right)^{s_{ij}}\right]\right|\leq C(q,m) x_n^{m}\quad\textnormal{for any $\gamma$ and $s\in \mathcal{S}_{\gamma}(m)$.}
\end{equation}
Note that $C(q,m)$ can be chosen to \textit{not} depend on $\ga$ since $|Y_{i1}-1/q|\leq 1$ a.s.. Moreover, we claim the following bound on $(b_s)_{s\in \mathcal{S}_{\gamma}}$:
\vspace{3mm}
\newline
{\bf Claim:} Denote $\norm{f}_{1}:=\sum_{k\in \mathcal{K}}|a_k|$ by the sum of the absolute values of the coefficients of $f$. Also, given $\gamma$ and $s\in \mathcal{S}_{\gamma}$, denote $\norm{s}_{*}:=\sum_{j=1}^{\gamma}\one\left(\sum_{i=1}^{q}s_{ij}\geq 1 \right)$ by the number of index $j$ such that $s_{ij}\geq 1$ for some $1\leq i\leq q $. Then, given $f$ and $\gamma$, we have for any $\ell, r\geq 0$ that
\begin{equation}\label{eq:claim}
\sum_{\substack{s\in \mathcal{S}_{\gamma}:\\\  \norm{s}_{*}=\ell, ~\norm{s}_1=r}} \left|b_s\right|\leq \left(|\la|q\right)^{r}\cdot \norm{f}_1 \cdot 2^{\deg(f)\cdot \ell}\cdot \binom{\gamma}{\ell} 
\end{equation}
We now prove the claim: recalling the definition of coefficients $b_s$ in \eqref{eq:def:b}, we can bound
\begin{equation}\label{eq:claim:1}
    \sum_{\substack{s\in \mathcal{S}_{\gamma}:\\\  \norm{s}_{*}=\ell, ~\norm{s}_1=r}} \left|b_s\right|\leq \norm{f}_1\cdot \left(|\la|q\right)^{r}\cdot \sup_{k\in \mathcal{K}}\left\{\sum_{s\in \mathcal{S}_{\gamma}:\norm{s}_{*}=\ell}~\prod_{j=1}^{\gamma}\prod_{i=1}^{q}\binom{k_i}{s_{ij}}\right\}.
\end{equation}
The sum inside the supremum in \eqref{eq:claim:1} can be bounded as follows. Note that if $\norm{s}_{*}=\ell$, the number of choosing the location of $j$'s such that $\sum_{i=1}^{q}s_{ij}\geq 1$ is $\binom{\gamma}{\ell}$ and $\prod_{i=1}^{q}\binom{k_i}{0}=1$ holds for the other $j$'s, so for each $k\in \mathcal{K}$,
\begin{equation*}
   \sum_{s\in \mathcal{S}_{\gamma}:\norm{s}_{*}=\ell}~\prod_{j=1}^{\gamma}\prod_{i=1}^{q}\binom{k_i}{s_{ij}}\leq \binom{\gamma}{\ell}\sup_{1\leq j_1,...,j_{\ell}\leq \gamma}\sum_{s_{i j_{t}}\geq 0, 1\leq t\leq \ell}~\prod_{t=1}^{\ell}\prod_{i=1}^{q}\binom{k_i}{s_{ij_{t}}}=\binom{\gamma}{\ell} 2^{\ell\cdot \sum_{i=1}^{q}k_i}.
\end{equation*}
Note that $\sup_{k\in \mathcal{K}}\{\sum_{i=1}^{q}k_i\}=\deg(f)$, so 
\begin{equation}\label{eq:claim:2}
    \sup_{k\in \mathcal{K}}\left\{\sum_{s\in \mathcal{S}_{\gamma}:\norm{s}_{*}=\ell}~\prod_{j=1}^{\gamma}\prod_{i=1}^{q}\binom{k_i}{s_{ij}}\right\}\leq \binom{\gamma}{\ell}2^{\ell\cdot \deg(f)}.
\end{equation}
Therefore, \eqref{eq:claim:1} and \eqref{eq:claim:2} finishes the proof of our claim \eqref{eq:claim}.

Now, observe that when $\norm{s}_{*}=\ell$ and $\norm{s}_1=r$ hold with $2\ell>r$, it follows that
\begin{equation}\label{eq:tech}
    \sum_{j=1}^{\gamma}\one\Big(\sum_{i=1}^{q}s_{ij}=1\Big)\geq 2\ell-r,
\end{equation}
since if we let $x$ to be the left hand side, then we must have $\norm{s}_1=r\geq x+2(\ell-x)$. Note that when $ \sum_{j=1}^{\gamma}\one(\sum_{i=1}^{q}s_{ij}=1)=1$, $\left|\E\prod_{i=1}^{q}(Y_{i1}-1/q)^{s_{ij}}\right|\leq \la x_n$ holds by Lemma \ref{lem:Y:moment}. Thus, when $\norm{s}_{*}=\ell, \norm{s}_1=r$ holds with $2\ell>r$ and $s\in \mathcal{S}_{\gamma}(m)$, we have a slightly better bound than \eqref{eq:deleted:mo}: by Lemma \ref{lem:Y:higher:mo} and \eqref{eq:tech}, \rev{we have}
\begin{equation}\label{eq:deleted:mo:2}
    \prod_{j=1}^{\gamma}\E\left|\left[\prod_{i=1}^{q}\left(Y_{i1}-\frac{1}{q}\right)^{s_{ij}}\right]\right|\leq C(q,m)|\la|^{2\ell -r} x_n^{m}\quad\textnormal{for any $\gamma$ and $s\in \mathcal{S}_{\gamma}(m)$ with $2\ell>r$.}
\end{equation}
Having \eqref{eq:deleted:mo:2} in mind, we split the right hand side of \eqref{eq:deleted:mo:ineq} by
\begin{equation}\label{eq:Deltas}
    \begin{split}
         &\left|\E f(Z_1,...,Z_q)-\E\Phi_m \bigg(\Big(Y_{ij}-\frac{1}{q}\Big)_{i\leq q, j\leq \gamma}\bigg)\right|\leq \Delta_1+\Delta_2\quad\textnormal{where}\\
         &\Delta_1:=
         \E\sum_{\ell=0}^{\gamma}\sum_{r= 2\ell}^{ \deg(f)\cdot \ell}\sum_{\substack{s\in \mathcal{S}_{\gamma}:\\\  \norm{s}_{*}=\ell, ~\norm{s}_1=r}}|b_s|\cdot \prod_{j=1}^{\gamma}\left|\E\left[\prod_{i=1}^{q}\left(Y_{i1}-\frac{1}{q}\right)^{s_{ij}}\right]\right|;\\
         &\Delta_2:=
         \E\sum_{\ell=0}^{\gamma}\sum_{r=0}^{2\ell}\sum_{\substack{s\in \mathcal{S}_{\gamma}:\\\  \norm{s}_{*}=\ell, ~\norm{s}_1=r}}|b_s|\cdot \prod_{j=1}^{\gamma}\left|\E\left[\prod_{i=1}^{q}\left(Y_{i1}-\frac{1}{q}\right)^{s_{ij}}\right]\right|.
    \end{split}
\end{equation}
The range $r\leq \deg(f)\cdot\ell$ in the definition of $\Delta_1$ holds since for $s\in \mathcal{S}_{\gamma}$, there exists $k\in \mathcal{K}$ such that $s_{ij}\leq k_i$ for all $1\leq i\leq q$ and $1\leq j\leq \gamma$, so 
\begin{equation*}
    \frac{r}{\ell}\equiv \frac{\norm{s}_1}{\norm{s}_*}\leq \max_{1\leq j \leq \gamma}\sum_{i=1}^{q} s_{ij}\leq \sum_{i=1}^{q}k_i\leq \deg(f).
\end{equation*}
The first component $\Delta_1$ can be bounded as follows. By the bound \eqref{eq:claim} on the coefficients $b_s$ and the moment bound \eqref{eq:deleted:mo} for the deleted $s\in S_{\gamma}(m)$, 
\begin{equation*}
    \Delta_1
    \leq C(q,m)\cdot \norm{f}_1 \cdot x_n^m \cdot \E\left[\sum_{\ell=0}^{\gamma}\sum_{r= 2\ell}^{ \deg(f)\cdot \ell} \left(|\la|q\right)^{r}\cdot 2^{\deg(f)\cdot \ell}\cdot \binom{\gamma}{\ell}\right].
\end{equation*}
Note that the sum above can be bounded by
\begin{equation}\label{eq:sum:bound:tech}
   \sum_{\ell=0}^{\gamma}\sum_{r= 2\ell}^{ \deg(f)\cdot \ell} \left(|\la|q\right)^{r}\cdot 2^{\deg(f)\cdot \ell}\cdot \binom{\gamma}{\ell}\leq  \frac{q}{q-1}\sum_{\ell=0}^{\gamma} \frac{\gamma^{\ell}}{\ell!}\cdot \la^{2\ell}\cdot (2q)^{\deg(f)\cdot \ell}\leq \frac{q}{q-1} \exp\left((2q)^{\deg(f)}\la^2 \gamma \right),
\end{equation}
where we used $\binom{\gamma}{\ell}\leq \frac{\gamma^{\ell}}{\ell!}$, $|\la|\leq 1$, and $\sum_{r=2\ell}^{\deg(f)\cdot \ell}q^r \leq \frac{q}{q-1} q^{\deg(f)\cdot \ell}$ in the first inequality. Thus, recalling the function $K(\cdot)$ from \eqref{eq:def:K} in Assumption \ref{assumption:unif:tail} and using the bound $d\la^2 \leq 1$, it follows that
\begin{equation}\label{eq:Delta:1}
    \Delta_1
    \leq C q,m)\cdot \norm{f}_1 \cdot \frac{q}{q-1} K\left((2q)^{\deg(f)}\right)\cdot x_n^m\equiv C(q,m,f)x_n^m.
\end{equation}
For the second component, we use the bounds \eqref{eq:claim} and \eqref{eq:deleted:mo:2} to have
\begin{equation*}
    \Delta_2\leq  C(q,m)\cdot \norm{f}_1 \cdot x_n^m \cdot \E\left[\sum_{\ell=0}^{\gamma}\sum_{r=0}^{2\ell} \la^{2\ell} q^{r}\cdot 2^{\deg(f)\cdot \ell}\cdot \binom{\gamma}{\ell}\right].
\end{equation*}
Notice that the extra factor $\la^{2\ell-r}$ in \eqref{eq:deleted:mo:2} as opposed to \eqref{eq:deleted:mo} gave us $\la^{2\ell}$ inside the sum above. Proceeding similarly as in \eqref{eq:sum:bound:tech},
\begin{equation}\label{eq:Delta:2}
\Delta_2\leq C(q,m)\cdot \norm{f}_1 \cdot \frac{q}{q-1}K\left(q^2 2^{\deg(f)}\right)\cdot x_n^m\equiv C^\prime(q,m,f) x_n^m.
\end{equation}
Therefore, the bounds \eqref{eq:Deltas}, \eqref{eq:Delta:1}, and \eqref{eq:Delta:2} conclude the proof.
\end{proof}
\begin{remark}\label{rmk:prop:crucical}
Note that in the proof of Proposition \ref{prop:crucial}, we proved a slightly more general result than what's stated in Proposition \ref{prop:crucial}. That is, let $\widetilde{\Psi}_m[\cdot]$ be \textit{any} operator acting on $\R\left[(x_{ij})_{i\leq q, j\leq \gamma}\right]$ that deletes a \textit{subset} of monomials $\prod_{i=1}^{q}\prod_{j=1}^{\gamma}x_{ij}^{s_{ij}}$, where $s=(s_{ij})_{i\leq q, j\leq \gamma}$ satisfies \textit{either} of $(a)$ or $(b)$ in Proposition \ref{prop:crucial}. For $f\in \R[z_1,...,z_q]$, let $\widetilde{\Phi}_{m,f}\equiv \widetilde{\Phi}_m[F_{f}]$. Then, there exist\rev{s} $C=C(q,m,f)$, which only depends on $q$, $m\geq 1$, and $f$, and $\delta=\delta(q)$ such that the following holds: if \rev{$x_{n_0}\leq \delta$ holds for some $n_0\geq 1$, then for any such $\widetilde{\Psi}_m[\cdot]$ and for any $n\geq n_0+5$, $m\geq 1$, and }polynomial $f(z_1,...,z_q)$ in $q$ variables, 
\begin{equation}\label{eq:prop:crucial:tilde}
    \left|\E f(Z_1,...,Z_q)-\E\widetilde{\Phi}_{m,f} \bigg(\Big(Y_{ij}-\frac{1}{q}\Big)_{i\leq q, j\leq \gamma}\bigg)\right|\leq C x_n^m.
\end{equation}
This is because for any $\widetilde{\Psi}_m[\cdot]$, the equation \eqref{eq:deleted:mo:ineq} in the proof of Proposition \ref{prop:crucial} remains true, since the corresponding set of deleted indices $\widetilde{\mathcal{S}}_{\gamma}(m)$ is a subset of $\mathcal{S}_{\gamma}(m)$.
\end{remark}
A major strength of Proposition \ref{prop:crucial} is that we can take advantage of the polynomial factorization of $f$ when computing $\Phi_m[f]$: suppose $f_0=f_1\cdot f_2$ for polynomials $f_1,f_2\in \R[z_1,...,z_q]$ and let $F_i=F_{f_i,\gamma}, i=0,1,2,$ be the corresponding polynomials in $q\times \gamma$ variables according to \eqref{eq:def:F}. Then, $F_0=F_1\cdot F_2$ holds, so
\begin{equation}\label{eq:psi:product}
    \Phi_{m,f}\equiv \Psi_m[F_1\cdot F_2]=\Psi_m\left[\Psi_m[F_1]\cdot\Psi_m[F_2]\right],
\end{equation}
where the last equality holds because if a monomial $\prod_{i=1}^{q}\prod_{j=1}^{\gamma} x_{ij}^{s_{ij}}$ satisfies either $(a)$ or $(b)$ in Proposition \ref{prop:crucial}, then its product with any other monomial also satisfies either $(a)$ or $(b)$. Observe that computing $\Psi_m\left[\Psi_m[F_1]\cdot\Psi_m[F_2]\right]$ is simpler than computing $\Psi_m[F_1\cdot F_2]$ since many monomials might be \textit{apriori} deleted when computing $\Psi_m[F_1]$ or $\Psi_m[F_2]$.

Furthermore, observe that in the final result of Proposition \ref{prop:crucial}, i.e. equation \eqref{eq:prop:crucial}, we evaluate $\Phi_{m,f}$ at $\left(Y_{ij}-1/q\right)_{i\leq q, j\leq \gamma}$. Since $\sum_{i=1}^{q}\left(Y_{ij}-1/q\right)=0$ for any $1\leq j\leq \gamma$ a.s., we can take advantage of this when computing $\E\Psi_{mf}\left(Y_{ij}-1/q\right)_{i\leq q, j\leq \gamma}$. These observations lead to the following generalization.
\begin{prop}\label{prop:crucial:product}
Given $m\geq 1,\gamma\geq 0$, define the operator $\Psi^\star_m[\cdot]$ acting on the space $\R\left[(x_{ij})_{i\leq q, j\leq \gamma}\right]$ as follows. Given $F\in \R\left[(x_{ij})_{i\leq q, j\leq \gamma}\right]$, recall the polynomial $\Psi_m[F]$ from Proposition \ref{prop:crucial}. Define $\Psi^\star_m[F]$ as follows.
\begin{enumerate}[label=(\alph*),start=3]
    \item  Let $a_{ij}$ be the coefficient of $x_{ij}$ in $\Psi_m[F]$. For any $1\leq j\leq \gamma$, if $(a_{ij})_{i\leq q}$ are all the same, delete the monomials $(x_{ij})_{i\leq q}$ from $\Psi_m[F]$. That is, let
    \begin{equation*}
    \Psi^\star_m[F]=\Psi_m[F]-\sumj \one\left(a_{ij}=a_{i^\prime j},~1\leq i,i^\prime\leq q \right)\sum_i a_{ij}x_{ij}.
    \end{equation*}
\end{enumerate}
For a positive integer $L$ and a set of polynomials $(f_{\ell})_{1\leq \ell \leq L}\in \R[z_1,..,z_q]$, define $\Phi^\star_{m,(f_{\ell})_{\ell\leq L}}\in \R[(x_{ij})_{i\leq q, j\leq \gamma}]$ by
\begin{equation}\label{eq:def:Phi:star}
    \Phi^\star_{m,(f_{\ell})_{\ell\leq L}}:=\Psi_m\left[\prod_{\ell=1}^{L}\Psi^\star_m[F_{f_{\ell}}]\right],
\end{equation}
where $F_{f_{\ell}}\equiv F_{f_{\ell}, \gamma}\in \R[(x_{ij})_{i\leq q, j\leq \gamma}]$ is from \eqref{eq:def:F}. Then, there exist $\delta=\delta(q)>0$ and $C=C\left(q,m,(f_{\ell})_{\ell \leq L}\right)$, which only depends on $q$, $m\geq 1$, and polynomials $(f_{\ell})_{\ell \leq L}\in \R[z_1,...,z_q]$, such that the following holds: if \rev{$x_{n_0}\leq \delta$ holds for some $n_0\geq 1$, then for all $n\geq n_0+5$,}
\begin{equation*}
    \left|\E \prod_{\ell=1}^{L}f_{\ell}(Z_1,...,Z_q)-\Phi^\star_{m,(f_{\ell})_{\ell\leq L}}\bigg(\Big(Y_{ij}-\frac{1}{q}\Big)_{i\leq q, j\leq \gamma}\bigg)\right|\leq C x_n^m.
\end{equation*}
\end{prop}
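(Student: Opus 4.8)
The plan is to reduce the statement to Proposition~\ref{prop:crucial} applied to the single polynomial $f:=\prod_{\ell=1}^{L}f_\ell$, whose associated $q\times\gamma$-variable polynomial is $F_f=\prod_\ell F_{f_\ell}$, using two structural facts about $\Psi_m$. The first is the one already used in~\eqref{eq:psi:product}: the monomial conditions $(a)$ and $(b)$ defining the deletion rule of $\Psi_m$ are upward closed under multiplication, so $\Psi_m$ kills any product of polynomials one of whose factors consists only of deleted monomials. The second is a description of the extra deletion in step $(c)$: for $m\geq 2$ the operator $\Psi_m$ leaves all linear monomials in place, and the coefficient of $x_{ij}$ in $F_{f_\ell}$ (hence in $\Psi_m[F_{f_\ell}]$) equals $\la q\,\partial_{z_i}f_\ell(1,\dots,1)$, which does not depend on $j$; consequently the correction $R_\ell:=\Psi_m[F_{f_\ell}]-\Psi^\star_m[F_{f_\ell}]$ is either $0$ or a scalar multiple $c_\ell\sum_{i,j}x_{ij}$ of the fully balanced linear form. (For $m=1$ one has $\Psi^\star_1=\Psi_1$ and the claim is immediate from Proposition~\ref{prop:crucial}, so I would assume $m\geq 2$.)

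Granting this, I would write $\Psi^\star_m[F_{f_\ell}]=F_{f_\ell}-\beta_\ell-R_\ell$ with $\beta_\ell:=F_{f_\ell}-\Psi_m[F_{f_\ell}]$ the deleted part, expand $\prod_\ell(F_{f_\ell}-\beta_\ell-R_\ell)$, and apply $\Psi_m$. By upward-closedness every summand containing some $\beta_{\ell'}$ dies, so
\[
\Phi^\star_{m,(f_\ell)_{\ell\le L}}=\Psi_m\Big[\prod_{\ell}F_{f_\ell}\Big]+\sum_{\emptyset\ne T\subseteq[L]}(-1)^{|T|}\Big(\prod_{\ell\in T}c_\ell\Big)\Psi_m\Big[\Big(\sum_{i,j}x_{ij}\Big)^{|T|}\prod_{\ell\notin T}F_{f_\ell}\Big].
\]
Proposition~\ref{prop:crucial} applied to $f=\prod_\ell f_\ell$ gives $\big|\E\prod_\ell f_\ell(Z)-\E\,\Psi_m[\prod_\ell F_{f_\ell}]\big((Y_{ij}-\tfrac1q)_{i,j}\big)\big|\le C x_n^m$ with $C=C(q,m,(f_\ell))$, so the proposition follows once I show the vanishing identity: for every integer $k\ge 1$ and every polynomial $Q\in\R[(x_{ij})_{i\le q,j\le\gamma}]$,
\[
\Psi_m\Big[\Big(\textstyle\sum_{i,j}x_{ij}\Big)^{k}Q\Big]\Big(\big(Y_{ij}-\tfrac1q\big)_{i\le q,j\le\gamma}\Big)=0 \quad\text{a.s.}
\]

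To prove this I would set $E_j:=\sum_i x_{ij}$, expand $(\sum_{i,j}x_{ij})^k Q=\sum_{(j_1,\dots,j_k)\in[\gamma]^k}E_{j_1}\cdots E_{j_k}Q$, use linearity of $\Psi_m$, and treat one tuple at a time, pulling out the first factor: $\Psi_m[E_{j_1}G]=\sum_i\Psi_m[x_{ij_1}G]$ with $G:=E_{j_2}\cdots E_{j_k}Q$. The key observation is that whether $\Psi_m$ deletes a monomial $\prod_{i,j}x_{ij}^{s_{ij}}$ depends only on the per-index degree profile $\big(\sum_i s_{ij}\big)_{j\le\gamma}$, not on the colour breakdown; since multiplying by $x_{ij_1}$ raises the $j_1$-degree by one and changes no other degree regardless of $i$, a given monomial of $G$ times $x_{ij_1}$ survives $\Psi_m$ for all colours $i$ at once or for none. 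Hence $\sum_i\Psi_m[x_{ij_1}G]=E_{j_1}\widetilde G$ for a polynomial $\widetilde G$ independent of $i$, so $\Psi_m[(\sum_{i,j}x_{ij})^kQ]$ is a sum of terms each divisible by some $E_j$; since $E_j$ evaluated at $(Y_{ij}-\tfrac1q)_{i,j}$ equals $\sum_i(Y_{ij}-\tfrac1q)=0$ a.s., the whole polynomial vanishes there, which is the identity. (When $\gamma=0$ the statement is trivial.)

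The crux is the ``colour-blindness'' of the deletion rule used in the last paragraph: this is precisely what lets an entire balanced form $E_{j_1}$ be factored back out of $\Psi_m[E_{j_1}\cdot(\text{polynomial})]$ so that $\sum_i(Y_{ij_1}-\tfrac1q)=0$ can be invoked; without it $\Psi_m$ would treat the colour-$i$ monomials asymmetrically and the cancellation would fail. Everything else is routine: verifying the linear-coefficient formula for $F_{f_\ell}$, reusing the upward-closedness already exploited in~\eqref{eq:psi:product}, and checking that the constant inherited from Proposition~\ref{prop:crucial} depends only on $q$, $m$, and $(f_\ell)_{\ell\le L}$.
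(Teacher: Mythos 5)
Your proof is correct and reaches the same conclusion, but it is organized differently from the paper's argument in a way that makes one of the key mechanisms more explicit. The paper writes $\Psi^\star_m[F_{f_\ell}]=\Psi_m[F_{f_\ell}]-R_\ell$, introduces the auxiliary polynomial $\widetilde{\Phi}_{m,f}:=\Psi_m\bigl[\prod_\ell\Psi^\star_m[F_{f_\ell}]\bigr]+\prod_\ell\Psi_m[F_{f_\ell}]-\prod_\ell\Psi^\star_m[F_{f_\ell}]$, argues via Remark~\ref{rmk:prop:crucical} that $\widetilde{\Phi}_{m,f}$ differs from $\Phi_{m,f}$ only by deletable monomials (so the bound of Proposition~\ref{prop:crucial} applies to $\widetilde{\Phi}_{m,f}$), and then observes that the \emph{outside-of-$\Psi_m$} correction $\prod_\ell\Psi_m[F_{f_\ell}]-\prod_\ell\Psi^\star_m[F_{f_\ell}]$ has a $\sum_i x_{ij}$ factor in every term, so it vanishes when evaluated at $(Y_{ij}-\tfrac1q)$. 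You instead write $\Psi^\star_m[F_{f_\ell}]=F_{f_\ell}-\beta_\ell-R_\ell$, kill the $\beta_\ell$'s by upward-closedness, and get $\Phi^\star_{m,(f_\ell)}=\Phi_{m,f}+\sum_{T\neq\emptyset}\pm(\prod c_\ell)\Psi_m\bigl[(\sum_{ij}x_{ij})^{|T|}\prod_{\ell\notin T}F_{f_\ell}\bigr]$, where the extra terms now sit \emph{inside} $\Psi_m$. This forces you to prove the additional vanishing identity $\Psi_m\bigl[(\sum_{ij}x_{ij})^{k}Q\bigr]\bigl((Y_{ij}-\tfrac1q)\bigr)=0$, and the crucial ingredient is the ``colour-blindness'' of the deletion rule — that conditions $(a)$, $(b)$ of Proposition~\ref{prop:crucial} depend only on the per-$j$ total degrees $(\sum_i s_{ij})_j$ — which lets you factor $E_j=\sum_i x_{ij}$ back out of $\Psi_m[E_j\,G]$. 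The paper's route does not need this observation because it keeps the $\sum_i x_{ij}$ factors outside $\Psi_m$; your route avoids invoking Remark~\ref{rmk:prop:crucical}, which is slightly informal as used there (the polynomial $F_f-\widetilde{\Phi}_{m,f}$ is a combination of deletable monomials but its coefficients mix contributions from $F_f$, $\prod_\ell\Psi_m[F_{f_\ell}]$, and $\prod_\ell\Psi^\star_m[F_{f_\ell}]$, so it is not literally a ``subset deletion'' of $F_f$). Your direct reduction to Proposition~\ref{prop:crucial} for $f=\prod_\ell f_\ell$ sidesteps that subtlety entirely, at the cost of the extra (but short and correct) colour-blindness lemma. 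Your handling of $m=1$ and of the case $R_\ell=0$ is also correct, and the constant $C(q,m,f)$ with $f=\prod_\ell f_\ell$ is indeed determined by $(q,m,(f_\ell)_{\ell\le L})$.
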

\begin{proof}
Denote $f=\prod_{\ell\leq L}f_{\ell}$. Then, $F_{f}=\prod_{\ell \leq L}F_{f_{\ell}}$ by definition of $F_{f}$ in \eqref{eq:def:F}, and by \eqref{eq:psi:product}, we have
\begin{equation}\label{eq:psi:product:2}
    \Phi_{m,f}=\Psi_m\left[\prod_{\ell=1}^{L}\Psi_m[F_{f_{\ell}}]\right].
\end{equation}
Thus, by Proposition \ref{prop:crucial}, we have that when \rev{$x_{n_0}\leq \delta$ and $n\geq n_0+5$}, 
\begin{equation*}
    \left|\E f(Z_1,...,Z_q)-\E\Psi_m\left[\prod_{\ell=1}^{L}\Psi_m[F_{f_{\ell}}]\right] \bigg(\Big(Y_{ij}-\frac{1}{q}\Big)_{i\leq q, j\leq \gamma}\bigg)\right|\leq C x_n^m.
\end{equation*}
Having Remark \ref{rmk:prop:crucical} in mind, we now aim to find a $\widetilde{\Psi}_m$, which deletes a \textit{subset} of monomials whose degrees satisfy either $(a)$ or $(b)$ in Proposition \ref{prop:crucial} \textit{and} induces $\Phi^\star_{m,(f_{\ell})_{\ell\leq L}}$. Note that by definition of $\Psi^\star_m[\cdot]$, there exist  $(a_{j,f_{\ell}})_{j\leq \gamma}\in \R$ such that for $1\leq \ell\leq L$, 
\begin{equation}\label{eq:psi:star:express}
\Psi^\star_m[F_{f_{\ell}}]=\Psi_m[F_{f_{\ell}}]-\sum_{j=1}^{\gamma}a_{j,f_{\ell}}\sum_{i=1}^{q} x_{ij},
\end{equation}
and if $a_{j,f_{\ell}}\neq 0$ for some $j$ and $\ell$, then $\Psi^\star_m[F_{f_{\ell}}]$ does not contain monomials $x_{ij}$'s.

Now, let $\widetilde{\Psi}_m[F_f]$ be the polynomial which deletes the monomials from $\prod_{\ell=1}^{L}\Psi_m[F_{f_{\ell}}]$ which are included in $\prod_{\ell=1}^{L}\Psi^\star_m[F_{f_{\ell}}]$ \textit{and} whose degrees satisfy either $(a)$ or $(b)$ in Proposition \ref{prop:crucial}. That is, define
\begin{equation}\label{eq:new:Phi}
\widetilde{\Phi}_{m,f}\equiv \widetilde{\Psi}_m[F_f]:=\Psi_m\left[\prod_{\ell=1}^{L}\Psi^\star_m[F_{f_{\ell}}]\right]+\prod_{\ell=1}^{L}\Psi_m[F_{f_{\ell}}]-\prod_{\ell=1}^{L}\Psi^\star_m[F_{f_{\ell}}].
\end{equation}
Then, because of \eqref{eq:psi:product:2}, $\tilde{\Phi}_{m,f}$ is the addition of $\Phi_{m,f}$ and monomials whose degrees satisfies $(a)$ or $(b)$ in Proposition \ref{prop:crucial}. Thus, by Remark \ref{rmk:prop:crucical}, we have
\begin{equation}\label{eq:prop:crucial:product:1}
    \left|\E f(Z_1,...,Z_q)-\E\widetilde{\Psi}_m[F_{f}] \bigg(\Big(Y_{ij}-\frac{1}{q}\Big)_{i\leq q, j\leq \gamma}\bigg)\right|\leq C x_n^m.
\end{equation}
Note that by \eqref{eq:psi:star:express}, the term $\prod_{\ell=1}^{L}\Psi_m[F_{f_{\ell}}]-\prod_{\ell=1}^{L}\Psi^\star_m[F_{f_{\ell}}]$ in \eqref{eq:new:Phi} is a linear combination of a multiple of $\sum_{i=1}^{q}x_{ij}$. Hence, since $\sum_{i=1}^{q}(Y_{ij}-1/q)=0$ a.s., we have
\begin{equation}\label{eq:prop:crucial:product:2}
    \E\widetilde{\Psi}_m[F_{f}] \bigg(\Big(Y_{ij}-\frac{1}{q}\Big)_{i\leq q, j\leq \gamma}\bigg)=\E\Psi_m\left[\prod_{\ell=1}^{L}\Psi^\star_m[F_{f_{\ell}}]\right]\bigg(\Big(Y_{ij}-\frac{1}{q}\Big)_{i\leq q, j\leq \gamma}\bigg).
\end{equation}
Therefore, \eqref{eq:prop:crucial:product:1} and \eqref{eq:prop:crucial:product:2} concludes the proof.
\end{proof}

The strength of Proposition \ref{prop:crucial:product} is demonstrated by the following corollary.
\begin{cor}\label{cor:4th:power}
There exist $\delta=\delta(q)$ and $C=C_q>0$ such that the following holds: if \rev{$x_{n_0}\leq \delta$ holds for some $n_0\geq 1$, then for all $n\geq n_0+5$,}
\begin{equation*}
\E\left(\sumz -q\right)^4\leq C_q x_n^4.
\end{equation*}
\end{cor}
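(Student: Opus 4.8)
The plan is to apply Proposition~\ref{prop:crucial:product} with $L=4$, all four polynomials equal to $g(z_1,\dots,z_q):=\sum_{i=1}^{q}z_i-q$, and $m=4$, so that $\prod_{\ell=1}^{4}g(Z_1,\dots,Z_q)=\big(\sum_{i=1}^{q}Z_i-q\big)^4$. I claim that the associated polynomial $\Phi^\star_{4,(g)_{\ell\leq4}}$ is identically zero for every $\ga\geq0$. Granting this, the displayed bound in Proposition~\ref{prop:crucial:product} reads $\big|\E(\sum_i Z_i-q)^4\big|\leq C_q x_n^4$ for $\delta=\delta(q)$ as in that proposition and $\limsup_n x_n\leq\delta$; since the left-hand side is nonnegative, this is exactly the assertion of the corollary.

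To verify the claim, recall from \eqref{eq:def:F} that the polynomial associated to $g$ is
\[
F_{g,\ga}\big((x_{ij})_{i\leq q,\,j\leq\ga}\big)=\sum_{i=1}^{q}\Big(\prod_{j=1}^{\ga}(1+\la q x_{ij})-1\Big)=\sum_{i=1}^{q}\ \sum_{\emptyset\neq S\subseteq[\ga]}(\la q)^{|S|}\prod_{j\in S}x_{ij}.
\]
Every monomial of $F_{g,\ga}$ is supported on a single row $i$, so for it all column sums $\sum_{i}s_{ij}$ equal $1$; hence condition $(b)$ of Proposition~\ref{prop:crucial} never applies, while condition $(a)$ (with $m=4$) applies exactly when $|S|\geq4$. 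Therefore $\Psi_4[F_{g,\ga}]=\sum_{i}\sum_{1\leq|S|\leq3}(\la q)^{|S|}\prod_{j\in S}x_{ij}$. The coefficient of every linear monomial $x_{ij}$ here equals $\la q$, which is the same for all $i$, so step $(c)$ in the definition of $\Psi^\star_4$ deletes all linear monomials, leaving
\[
\Psi^\star_4[F_{g,\ga}]=\sum_{i=1}^{q}\ \sum_{2\leq|S|\leq3}(\la q)^{|S|}\prod_{j\in S}x_{ij},
\]
a sum of monomials of $x$-degree $2$ or $3$, each still supported on a single row.

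It then remains to show $\Psi_4\big[\prod_{\ell=1}^{4}\Psi^\star_4[F_{g,\ga}]\big]\equiv0$. A monomial of $\prod_{\ell=1}^{4}\Psi^\star_4[F_{g,\ga}]$ has the form $\prod_{\ell=1}^{4}\prod_{j\in S_\ell}x_{i_\ell j}$ with $2\leq|S_\ell|\leq3$; set $J=\bigcup_\ell S_\ell$ and, for $j\in J$, $c_j=\#\{\ell:j\in S_\ell\}$, which equals the column sum $\sum_i s_{ij}$. Then $\sum_{j\in J}c_j=\sum_\ell|S_\ell|\in\{8,\dots,12\}$ and $c_j\leq4$ for all $j$. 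If $|J|\geq4$, condition $(a)$ deletes the monomial. If $|J|=3$, then $\sum_{j\in J}c_j\geq8>6$ forces some column to have $c_j\geq3$ while the two others have $c_j\geq1$, so condition $(b)$ with $r=3$ deletes it. If $|J|=2$, then $\sum_{j\in J}c_j\geq8$ together with $c_j\leq4$ forces $c_j=4\geq3$ on both columns, so condition $(b)$ with $r=2$ deletes it; and $|J|\leq1$ is impossible. Hence every monomial of $\prod_{\ell=1}^{4}\Psi^\star_4[F_{g,\ga}]$ is deleted, so $\Phi^\star_{4,(g)_{\ell\leq4}}=\Psi_4\big[\prod_{\ell=1}^{4}\Psi^\star_4[F_{g,\ga}]\big]\equiv0$, as claimed.

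The only delicate point is the reduction that makes the above case analysis possible: it is step $(c)$ of $\Psi^\star_4$, deleting the common linear part of each factor $F_{g,\ga}$, that pushes $\prod_{\ell=1}^{4}\Psi^\star_4[F_{g,\ga}]$ up to total $x$-degree at least $8$, after which the elementary column count is forced. Without step $(c)$ one could only work with $\Psi_4[F_{g,\ga}]$, whose fourth power contains degree-$4$ monomials spread over only three columns that survive $\Psi_4$, and one would obtain merely $\E(\sum_i Z_i-q)^4=O_q(x_n^2)$; so the use of Proposition~\ref{prop:crucial:product} (rather than Proposition~\ref{prop:crucial} applied to $f=g^4$ directly) is essential here.
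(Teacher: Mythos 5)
Your proposal is correct and follows essentially the same route as the paper: you take $f_0=\sum_i z_i-q$, compute $\Psi^\star_4[F_{f_0}]$ (linear terms removed by step $(c)$, degree $\geq 4$ terms by $(a)$), and conclude $\Phi^\star_{4,(f_0)_{\ell\leq 4}}=\Psi_4\big[(\Psi^\star_4[F_{f_0}])^4\big]=0$, then invoke Proposition~\ref{prop:crucial:product}. The only difference is that you spell out, via the column-count case analysis on $|J|\in\{2,3\}$ versus $|J|\geq 4$, the step the paper dismisses as ``straightforward to check,'' which is a welcome but inessential elaboration.
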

\begin{proof}
Let $f_0(z_1,...,z_q)=\sum_{i=1}^{q}z_i-q$ and denote $F_0\equiv F_{f_0}$ corresponding to $f_0$ via \eqref{eq:def:F}. Then, we have
\begin{equation*}
    F_0\left((x_{ij})_{i\leq q , j\leq \gamma}\right)=\sum_{\ell=1}^{\gamma}(\la q)^{\ell}\sum_{i=1}^{q}~\sum_{1\leq j_1<...<j_{\ell}\leq \gamma}~\prod_{t=1}^{\ell}x_{ij_t}.
\end{equation*}
We now compute $\Psi^\star_4[F_0]$: note that the monomials corresponding to $\ell \geq 4$ in the outer sum above satisfies $(a)$ in Proposition \ref{prop:crucial}, so they are deleted. Similarly, $\ell=1$ in the outer sum above is also deleted because of $(c)$ in Proposition \ref{prop:crucial:product}. Thus,
\begin{equation}\label{eq:f:zero:psi:star}
    \Psi^\star_4[F_0]=(\la q)^2 \sum_{i=1}^{q} \sum_{1\leq j_1<j_2\leq \gamma}x_{ij_1}x_{ij_2}+(\la q)^3 \sum_{i=1}^{q}\sum_{1\leq j_1<j_2<j_3\leq \gamma}x_{ij_1}x_{ij_2}x_{ij_3}.
\end{equation}
Let $f(z_1,...,z_q):=(\sum_{i=1}^{q}z_i-q)^4=(f_0)^4$. Then, by definition of $\Phi^\star_{4,f}$ (cf. \eqref{eq:def:Phi:star}) and \eqref{eq:f:zero:psi:star}, it follows that
\begin{equation}\label{eq:cor:4th:power:1}
    \Phi^\star_{4,f}=\Psi_4\left[\left((\la q)^2 \sum_{i=1}^{q}\sum_{1\leq j_1<j_2\leq \gamma}x_{ij_1}x_{ij_2}+(\la q)^3 \sum_{i=1}^{q}\sum_{1\leq j_1<j_2<j_3\leq \gamma}x_{ij_1}x_{ij_2}x_{ij_3}\right)^4\right]=0,
\end{equation}
since it is straightforward to check that every monomial of the polynomial inside $\Psi_4$ above satisfies $(a)$ or $(b)$ in Proposition \ref{prop:crucial} with $m=4$. Therefore, Proposition \ref{prop:crucial:product} and \eqref{eq:cor:4th:power:1} concludes the proof.
\end{proof}
We are now ready to estimate $\Xi_2$ up to the third order with respect to $x_n$.
\begin{lemma}\label{lem:xi2}
There exist $\delta=\delta(q)$ and $C=C_q>0$ such that the following holds: if \rev{$x_{n_0}\leq \delta$ holds for some $n_0\geq 1$, then for all $n\geq n_0+5$,}
\begin{equation*}
\left|\Xi_2-\left(\frac{3q^3+9q^2}{(q-1)^2}\E\binom{\gamma}{3}\la^6 x_n^3-2q^2 \E\binom{\gamma}{2}\la^6 x_n w_n\right)\right|\leq C_q x_n^4.
\end{equation*}
\end{lemma}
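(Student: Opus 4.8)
The plan is to expand $\Xi_2$, which \eqref{eq:def:xi} exhibits as a sum of three terms, by treating each term separately. The last term $\E\frac{Z_1}{\sumz}\big(\frac{\sumz-q}{q}\big)^4$ is the easiest: since $0\le Z_1/\sumz\le 1$, it is bounded by $q^{-4}\E(\sumz-q)^4$, which is $O_q(x_n^4)$ by Corollary \ref{cor:4th:power}, hence absorbed into the error. For the first two terms $\E\frac{(Z_1-1)(\sumz-q)^2}{q^3}$ and $-\E\frac{Z_1(\sumz-q)^3}{q^4}$, the plan is to invoke Proposition \ref{prop:crucial:product} with $m=4$ applied to the natural factorizations $f=(z_1-1)\cdot f_0\cdot f_0$ and $f=z_1\cdot f_0\cdot f_0\cdot f_0$, where $f_0(z_1,\dots,z_q):=\sumi z_i-q$. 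Under the standing hypothesis $\limsup_n x_n\le\delta(q)$, this reduces each expectation, up to an error $O_q(x_n^4)$, to evaluating the explicit polynomial $\Phi^\star_{4,(f_\ell)}$ at $\big((Y_{ij}-\tfrac1q)_{i\le q,\,j\le\gamma}\big)$.

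The core of the argument is the computation of these two $\Phi^\star$'s. The building blocks are $\Psi^\star_4[F_{f_0}]=(\la q)^2\sumi\sum_{j_1<j_2}x_{ij_1}x_{ij_2}+(\la q)^3\sumi\sum_{j_1<j_2<j_3}x_{ij_1}x_{ij_2}x_{ij_3}$, already obtained in the proof of Corollary \ref{cor:4th:power}, together with the quickly checked identities $\Psi^\star_4[F_{z_1}]=1+(\la q)\sumj x_{1j}+(\la q)^2\sum_{j_1<j_2}x_{1j_1}x_{1j_2}+(\la q)^3\sum_{j_1<j_2<j_3}x_{1j_1}x_{1j_2}x_{1j_3}$ and $\Psi^\star_4[F_{z_1-1}]=\Psi^\star_4[F_{z_1}]-1$. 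One then forms the relevant products and applies the outer operator $\Psi_4$. The decisive simplification, which is exactly what Proposition \ref{prop:crucial:product} is designed to exploit, is that a monomial $\prod_{i,j}x_{ij}^{s_{ij}}$ survives $\Psi_4$ only if it involves at most three distinct indices $j$, with at most one of them having $\sumi s_{ij}\ge 3$ when exactly two are used and none having $\sumi s_{ij}\ge 3$ when three are used. Since each factor $\Psi^\star_4[F_{f_0}]$ involves at least two distinct $j$'s and contributes at least degree two, a short case analysis shows that the surviving monomials of $\Psi_4\big[\Psi^\star_4[F_{z_1-1}]\cdot(\Psi^\star_4[F_{f_0}])^2\big]$ have total degree exactly $5$ or $6$, and those of $\Psi_4\big[\Psi^\star_4[F_{z_1}]\cdot(\Psi^\star_4[F_{f_0}])^3\big]$ have total degree exactly $6$. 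For each surviving monomial one evaluates the expectation by first applying Lemma \ref{lem:Y:indep}, which factors it into a product over $j$ of single-coordinate moments $\E\prod_i(Y_{i1}-\tfrac1q)^{s_{ij}}$, and then substituting the first and second moments from Lemma \ref{lem:Y:moment}, the third moments from Lemma \ref{lem:Y:moment} and Lemma \ref{lem:Y:higher:mo}, and the a priori bounds $|u_n|\vee|v_n|\vee|w_n|=O_q(x_n^2)$ from Lemma \ref{lem:u:v:w:apriori} to discard everything of order $x_n^4$. The monomials using three distinct $j$'s, each then necessarily with $\sumi s_{ij}\le 2$, yield through three first- or second-moment factors the pure $x_n^3$ part of the answer; the degree-$5$ monomials with a single $j$ of sum three, whose moment has the form $\E(Y_{11}-\tfrac1q)(Y_{i1}-\tfrac1q)(Y_{i'1}-\tfrac1q)$ or $\E(Y_{11}-\tfrac1q)(Y_{i1}-\tfrac1q)^2$, yield through their $w_n$-part the $x_n w_n$ term; and the sums over which $j$'s are chosen reproduce $\E\binom{\gamma}{3}$ and $\E\binom{\gamma}{2}$ respectively, the latter via $\gamma(\gamma-1)=2\binom{\gamma}{2}$.

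The last step is to collect all contributions, and this bookkeeping is the main obstacle: for each surviving monomial one must run through all the ways the color indices can coincide or differ — in particular whether the distinguished color $1$ is involved — assign the correct multiplicities, and sum. A delicate point is that the relevant third moments of the $Y_{i1}$'s involve all three of $u_n,v_n,w_n$, yet after the summation the $u_n$ and $v_n$ contributions must cancel exactly, leaving only the $w_n$ term appearing in the statement; verifying this cancellation and pinning down the numerical coefficients $\frac{3q^3+9q^2}{(q-1)^2}$ and $-2q^2$ is a long but mechanical calculation. Finally, $\E\binom{\gamma}{2}=O(d^2)$ and $\E\binom{\gamma}{3}=O(d^3)$ from Assumption \ref{assumption:unif:tail} — exactly as in the proof of Lemma \ref{lem:xi1} — combined with $d\la^2\le 1$, ensure that every discarded term is genuinely $O_q(x_n^4)$.
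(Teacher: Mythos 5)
Your proposal is correct and follows essentially the same route as the paper: bound the last term of $\Xi_2$ by Corollary \ref{cor:4th:power}, and evaluate the remaining expectations via Proposition \ref{prop:crucial:product} with $m=4$, factoring the moments with Lemma \ref{lem:Y:indep} and substituting Lemmas \ref{lem:Y:moment}, \ref{lem:Y:higher:mo} and \ref{lem:u:v:w:apriori}, with the surviving monomials (the $(2,2,2)$ triangles giving the $x_n^3$ term and the degree-$5$ profiles giving the $x_nw_n$ term after the $u_n,v_n$ cancellation) identified exactly as in the paper's computation. The only immaterial difference is that the paper first merges the first two terms of $\Xi_2$ into $\frac{1}{q^4}\E\big(2qZ_1-\sum_{i=1}^{q}Z_1Z_i-q\big)\big(\sumz-q\big)^2$ and applies the proposition to the factorization $f_1\cdot f_0^2$, whereas you apply it separately to $(z_1-1)f_0^2$ and $z_1f_0^3$.
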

\begin{proof}
Throughout, we assume \rev{$x_{n_0}\leq \delta$ and $n\geq n_0+5$} so that the previous results hold. Since $0\leq \frac{Z_1}{\sumz}\leq 1$ in the definition of $\Xi_2$ in \eqref{eq:def:xi}, we have by Corollary \ref{cor:4th:power} that
\begin{equation}\label{eq:xi2:alternate}
    \Xi_2 = \frac{1}{q^4}\E\left(2q Z_1-\sum_{i=1}^{q}Z_1 Z_i-q\right)\left(\sumz -q\right)^2+O_q(x_n^4).
\end{equation}
We now aim to approximate the expectation of the right hand side above using Proposition \ref{prop:crucial:product}. To this end, let 
\begin{equation*}
    f(z_1,...,z_q)=\left(2q z_1-\sum_{i=1}^{q}z_1 z_i-q\right)\left(\sum_{i=1}^{q}z_i-q\right)^2\equiv f_1(z_1,...,z_q)\left(f_0(z_1,...,z_q)\right)^2,
\end{equation*}
where $f_0(z_1,...,z_q)=\sum_{i=1}^{q}z_i-q$ as before, and $f_1(z_1,...,z_q):=2q z_1-\sum_{i=1}^{q}z_1 z_i-q$. Also, denote $F_{\ell}\equiv F_{f_{\ell}}$ for $\ell=0,1$, which corresponds to $f_{\ell}$ via \eqref{eq:def:F}. Then, by definition of $\Phi^\star_{4,f}$ in \eqref{eq:def:Phi:star}, we have
\begin{equation}\label{eq:lem:xi2:tech:1}
    \Phi^\star_{4,f}=\Psi_4\left[\Psi^\star_{4}[F_1]\cdot\left(\Psi^\star_{4}[F_0]\right)^2\right]=\Psi_4\left[\Psi^\star_{4}[F_1]\cdot\left(\sum_{\ell=2}^{3}(\la q)^{\ell}\sum_{i=1}^{q}~\sum_{1\leq j_1<...<j_{\ell}\leq \gamma}~\prod_{t=1}^{\ell}x_{ij_t}\right)^2\right],
\end{equation}
where the last equality is due to \eqref{eq:f:zero:psi:star}. An important observation regarding \eqref{eq:lem:xi2:tech:1} is that the multiplication of $\left(\sum_{\ell=2}^{3}(\la q)^{\ell}\sum_{i=1}^{q}~\sum_{1\leq j_1<...<j_{\ell}\leq \gamma}~\prod_{t=1}^{\ell}x_{ij_t}\right)^2$ with the monomials $\prod_{i=1}^{q}\prod_{j=1}^{\gamma}x_{ij}^{s_{ij}}$ which satisfies $\norm{s}_{\star}\equiv \sum_{j=1}^{\gamma}\one\left(\sum_{i=1}^{q}s_{ij}\geq 1\right)\geq 3$ will be deleted after applying $\Psi_4$. Therefore, we calculate $\Psi^\star_4[F_1]$ as
\begin{equation}\label{eq:lem:xi2:tech:2}
    \Psi^\star_4[F_1]=\la q^2\sum_{j=1}^{\ga}x_{1j}-\la q\sumi \sumj x_{ij}+(\la q)^2\sum_{1\leq j_1<j_2\leq \ga}\left(q x_{1j_1}x_{1j_2}-\sum_{i=1}^{q}x_{ij_1}x_{ij_2}\right)+R,
\end{equation}
where $R\in \R\left[(x_{ij})_{i\leq q, j\leq \gamma}\right]$ contains monomials $\prod_{i=1}^{q}\prod_{j=1}^{\gamma}x_{ij}^{s_{ij}}$ with $\norm{s}_{*}=3$. 

\rev{Let} $G\in \R\left[(x_{ij})_{i\leq q, j\leq \gamma}\right]$ \rev{be}
\begin{equation}\label{eq:def:G}
\begin{split}
    G
    &:=
    \Psi_4\Bigg[(\la q)^2\left(\sum_{1\leq j_1<j_2\leq \ga}\left(q x_{1j_1}x_{1j_2}-\sum_{i=1}^{q}x_{ij_1}x_{ij_2}\right)\right)\left(\sum_{\ell=2}^{3}(\la q)^{\ell}\sum_{i=1}^{q}~\sum_{1\leq j_1<...<j_{\ell}\leq \gamma}~\prod_{t=1}^{\ell}x_{ij_t}\right)^2\Bigg]\\
    &=(\la q)^6\sum_{\substack{(j_1,j_2,j_3)\in [\gamma]^3:\\ \textnormal{ distinct}}}\left(q x_{1j_1}x_{1j_2}-\sum_{i=1}^{q}x_{ij_1}x_{ij_2}\right)\left(\sum_{i=1}^{q}x_{ij_1}x_{ij_3}\right)\left(\sum_{i=1}^{q}x_{ij_2}x_{ij_3}\right),
\end{split}
\end{equation}
where the last equality is due to the definition of $\Psi_4$ in Proposition \ref{prop:crucial}. Similarly, we can compute the term $\Psi_4\left[\left( \sumj x_{1j}\right)\cdot\left(\Psi^\star_{4}[F_0]\right)^2\right]$ in the equations \eqref{eq:lem:xi2:tech:1} and \eqref{eq:lem:xi2:tech:2}, and then use Lemma \ref{lem:Y:indep} to have that
\begin{equation}\label{eq:compute:expectation:Phi}
\begin{split}
    &\E \Phi^\star_{4,f}\bigg(\Big(Y_{ij}-\frac{1}{q}\Big)_{i\leq q, j\leq \gamma}\bigg)-\E G\bigg(\Big(Y_{ij}-\frac{1}{q}\Big)_{i\leq q, j\leq \gamma}\bigg)\\
    &=2\la^5 q^6 \E\binom{\gamma}{2}\sum_{i_1, i_2=1}^{q} \E\left(Y_{11}-\frac{1}{q}\right)\left(Y_{i_1 1}-\frac{1}{q}\right)\left(Y_{i_2 1}-\frac{1}{q}\right)\cdot \E\left(Y_{i_1 1}-\frac{1}{q}\right)\left(Y_{i_2 1}-\frac{1}{q}\right)\\
    &\quad+12 \la^5 q^6 \E\binom{\gamma}{3}\sum_{i_1, i_2=1}^{q}\E\left(Y_{11}-\frac{1}{q}\right)\left(Y_{i_1 1}-\frac{1}{q}\right)\cdot\E\left(Y_{i_1 1}-\frac{1}{q}\right)\left(Y_{i_2 1}-\frac{1}{q}\right)\cdot \E\left(Y_{i_2 1}-\frac{1}{q}\right)\\
    &\quad+3\la^5 q^6 \E\binom{\gamma}{3}\E\left(Y_{11}-\frac{1}{q}\right)\sum_{i_1, i_2=1}^{q}\left(\E\left(Y_{i_1 1}-\frac{1}{q}\right)\left(Y_{i_2 1}-\frac{1}{q}\right)\right)^2\\
    &\quad+6\la^6 q^6 \E\binom{\gamma}{3}\sum_{i_1, i_2=1}^{q}\E\left(Y_{11}-\frac{1}{q}\right)\left(Y_{i_1 1}-\frac{1}{q}\right)\cdot\left(\E\left(Y_{i_1 1}-\frac{1}{q}\right)\left(Y_{i_2 1}-\frac{1}{q}\right)\right)^2.
\end{split}
\end{equation}
where we used the fact that $\sumi (Y_{ij}-1/q)=0$ a.s., so that we can neglect the terms coming from $\Psi_4\left[\left(\sumi \sumj x_{ij}\right)\cdot\left(\Psi^\star_{4}[F_0]\right)^2\right]$.

We now compute each sums in \eqref{eq:compute:expectation:Phi} separately: by Lemma \ref{lem:Y:moment}, the first sum equals
\begin{equation}\label{eq:compute:expectation:Phi:1}
\begin{split}
    &\sum_{i_1, i_2=1}^{q} \E\left(Y_{11}-\frac{1}{q}\right)\left(Y_{i_1 1}-\frac{1}{q}\right)\left(Y_{i_2 1}-\frac{1}{q}\right)\cdot \E\left(Y_{i_1 1}-\frac{1}{q}\right)\left(Y_{i_2 1}-\frac{1}{q}\right)\\
    &=\left(\la v_n+\frac{1-\la}{q} u_n\right)\cdot\frac{x_n}{q}+2(q-1)\left(-\frac{\la}{q-1}v_n-\frac{1-\la}{q(q-1)}u_n\right)\cdot\left(-\frac{x_n}{q(q-1)}\right)\\
    &\quad~~+(q-1)\left(-\frac{\la}{q-1}v_n-\frac{1-\la}{q(q-1)}u_n-\la w_n\right)\cdot\frac{x_n}{q}\\
    &\quad~~+(q-1)(q-2)\left(\frac{2\la}{(q-1)(q-2)}v_n+\frac{2(1-\la}{q(q-1)(q-2)}u_n+\frac{\la}{q-2}w_n\right)\cdot\left(-\frac{x_n}{q(q-1)}\right)\\
    &\quad~~+O_q(x_n^4)\\
    &=-\la x_n w_n+O_q(x_n^4).
\end{split}
\end{equation}
In the first equality above, we used the fact that for $i_1\neq i_2\in \{1,...,q\}$,
\begin{equation}\label{eq:Y:sec:mo:approx}
    \E\left(Y_{i_1 1}-\frac{1}{q}\right)^2=\frac{x_n}{q}+O_q(x_n^2),~~\E\left(Y_{i_1 1}-\frac{1}{q}\right)\left(Y_{i_2 1}-\frac{1}{q}\right)=-\frac{x_n}{q(q-1)}+O_q(x_n^2),
\end{equation}
which follows from Lemma \ref{lem:Y:moment} and Lemma \ref{lem:u:v:w:apriori}.
Similarly, we can calculate the rest of the terms in \eqref{eq:compute:expectation:Phi} by Lemma \ref{lem:Y:moment} and Lemma \ref{lem:u:v:w:apriori}, which gives
\begin{equation}\label{eq:compute:expectation:Phi:2}
\begin{split}
    &\sum_{i_1, i_2=1}^{q}\E\left(Y_{11}-\frac{1}{q}\right)\left(Y_{i_1 1}-\frac{1}{q}\right)\cdot\E\left(Y_{i_1 1}-\frac{1}{q}\right)\left(Y_{i_2 1}-\frac{1}{q}\right)\cdot \E\left(Y_{i_2 1}-\frac{1}{q}\right)\\
    &=\frac{\la}{(q-1)^2}x_n^3+O_q(x_n)^4,\\
    &\E\left(Y_{11}-\frac{1}{q}\right)\sum_{i_1, i_2=1}^{q}\left(\E\left(Y_{i_1 1}-\frac{1}{q}\right)\left(Y_{i_2 1}-\frac{1}{q}\right)\right)^2=\frac{\la}{q-1}x_n^3+O_q(x_n^4),\\
     &\sum_{i_1, i_2=1}^{q}\E\left(Y_{11}-\frac{1}{q}\right)\left(Y_{i_1 1}-\frac{1}{q}\right)\cdot\left(\E\left(Y_{i_1 1}-\frac{1}{q}\right)\left(Y_{i_2 1}-\frac{1}{q}\right)\right)^2=O_q(x_n^4)
    \end{split}
\end{equation}
Thus, plugging in \eqref{eq:compute:expectation:Phi:1} and  \eqref{eq:compute:expectation:Phi:2} into \eqref{eq:compute:expectation:Phi} shows that $\E \Phi^\star_{4,f}\Big(\big(Y_{ij}-\frac{1}{q}\big)_{i\leq q, j\leq \gamma}\Big)$ equals
\begin{equation*}
   \E G\bigg(\Big(Y_{ij}-\frac{1}{q}\Big)_{i\leq q, j\leq \gamma}\bigg)-2q^6 \E\binom{\gamma}{2}\la^6 x_n w_n+\frac{q^6(3q+9)}{(q-1)^2}\E \binom{\gamma}{3} \la^6 x_n^3+O_q(x_n^4),
\end{equation*}
where we used $\E\binom{\gamma}{3}\la^6\leq \E \exp(\gamma \la^2)\lesssim 1$ (cf. Assumption \ref{assumption:unif:tail}) for the last term in the right hand side of \eqref{eq:compute:expectation:Phi}. Finally, we have that $\E G\Big(\big(Y_{ij}-\frac{1}{q}\big)_{i\leq q, j\leq \gamma}\Big)$ equals
\begin{equation*}
\begin{split}
&\E\binom{\ga}{3}(\la q)^6 \bigg(q\sum_{i_1, i_2=1}^{q}\E\left(Y_{11}-\frac{1}{q}\right)\left(Y_{i_1 1}-\frac{1}{q}\right)\cdot\E\left(Y_{1 1}-\frac{1}{q}\right)\left(Y_{i_1 1}-\frac{1}{q}\right)\cdot \E\left(Y_{i_1 1}-\frac{1}{q}\right)\left(Y_{i_2 1}-\frac{1}{q}\right)\\
&\quad\quad-\sum_{i_1, i_2,i_3=1}^{q}\E\left(Y_{i_3 1}-\frac{1}{q}\right)\left(Y_{i_1 1}-\frac{1}{q}\right)\cdot\E\left(Y_{i_3 1}-\frac{1}{q}\right)\left(Y_{i_1 1}-\frac{1}{q}\right)\cdot \E\left(Y_{i_1 1}-\frac{1}{q}\right)\left(Y_{i_2 1}-\frac{1}{q}\right)\bigg)\\
&=O_q(x_n^4),
\end{split}
\end{equation*}
where we used $\E\binom{\gamma}{3}\la^6\lesssim 1$ and \eqref{eq:Y:sec:mo:approx} in the last equality. Therefore, recalling \eqref{eq:xi2:alternate}, we have by Proposition \ref{prop:crucial:product} that
\begin{equation*}
\begin{split}
    \Xi_2
    &=\frac{1}{q^4} \E \Phi^\star_{4,f}\bigg(\Big(Y_{ij}-\frac{1}{q}\Big)_{i\leq q, j\leq \gamma}\bigg)+O_q(x_n^4)\\
    &=\frac{3q^3+9q^2}{(q-1)^2}\E\binom{\gamma}{3}\la^6 x_n^3-2q^2 \E\binom{\gamma}{2}\la^6 x_n w_n+O_q(x_n^4).
\end{split}
\end{equation*}
\end{proof}

\subsection{Finer estimates of $u_n, w_n$}
Note that in the third order approximation of $\Xi_1$ (resp. $\Xi_2$) in Lemma \ref{lem:xi1} (resp. Lemma \ref{lem:xi2}), the term $x_n u_n$ (resp. $x_n w_n$) appears. Thus, on top of the \textit{apriori} estimates of $|u_n|\vee |w_n|$ in Lemma \ref{lem:u:v:w:apriori}, we need an \textit{exact} second order approximation of $u_n$ and $w_n$ in order to obtain the exact third order approximation of $x_{n+1}$. In doing so, the following two lemmas which follow from Proposition \ref{prop:crucial:product} are useful throughout.
\begin{lemma}\label{lem:useful}
Let $L$ and $M$ be positive integers such that $L+2M \geq 5$. Let $(f_{\ell})_{1\leq \ell\leq L}\in \R[z_1,...,z_q]$ be polynomials such that
\begin{equation}\label{eq:f:no:constant}
    f_{\ell}(1,1,...,1)=0,\quad\textnormal{for all}\quad 1\leq \ell \leq L.
\end{equation}
Then, there exist $\delta=\delta(q)>0$ and $C=C\left(q,M,(f_{\ell})_{\ell \leq L}\right)$, which only depends on $q,M$, and polynomials $(f_{\ell})_{\ell \leq L}$, such that the following holds: if \rev{$x_{n_0}\leq \delta$ holds for some $n_0\geq 1$, then for all $n\geq n_0+5$}
\begin{equation}\label{eq:lem:useful}
    \left|\E\prod_{\ell=1}^{L}f_{\ell}(Z_1,...,Z_q)\Big(\sumz -q\Big)^{M}\right|\leq C x_n^3.
\end{equation}
\end{lemma}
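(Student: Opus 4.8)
The plan is to reduce everything to a single application of Proposition~\ref{prop:crucial:product} with $m=3$, absorbing the factor $(\sumz-q)^{M}$ into the product of polynomials. Put $f_0(z_1,\dots,z_q):=\sum_{i=1}^{q}z_i-q$, so that $f_0(1,\dots,1)=0$, and apply Proposition~\ref{prop:crucial:product} to the list of $L+M$ polynomials $g_1,\dots,g_{L+M}$ consisting of $f_1,\dots,f_L$ together with $M$ copies of $f_0$; their product is $\prod_{\ell=1}^{L}f_\ell\cdot f_0^{M}$, so the proposition yields
\[
\Big|\E\prod_{\ell=1}^{L}f_\ell(Z_1,\dots,Z_q)\big(\sumz-q\big)^{M}-\E\,\Phi^\star_{3,(g_\ell)_{\ell\le L+M}}\!\Big(\big(Y_{ij}-\tfrac1q\big)_{i\le q,\,j\le\gamma}\Big)\Big|\le C x_n^{3}
\]
for an appropriate $C=C\big(q,M,(f_\ell)_{\ell\le L}\big)$. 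It therefore suffices to prove that the polynomial $\Phi^\star_{3,(g_\ell)_{\ell\le L+M}}$ vanishes identically, for every value of $\gamma$.

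To this end I would record two structural facts about the factors $\Psi^\star_3[F_{g_\ell}]$. First, since $g_\ell(1,\dots,1)=0$ for every $\ell$ (by \eqref{eq:f:no:constant}, and trivially for $f_0$), the polynomial $F_{g_\ell}$ of \eqref{eq:def:F} has no constant term — its value at the origin equals $g_\ell(1,\dots,1)$ — and since $\Psi_3$ and $\Psi^\star_3$ only delete monomials, $\Psi^\star_3[F_{g_\ell}]$ has no constant term either; hence every monomial of $\Psi^\star_3[F_{g_\ell}]$ has total degree at least $1$. Second, for the $M$ factors equal to $f_0$ there is an explicit description: expanding
\[
F_{f_0}\big((x_{ij})\big)=\sum_{i=1}^{q}\Big(\prod_{j=1}^{\gamma}(1+\la q\,x_{ij})-1\Big)=\sum_{i=1}^{q}\sum_{\ell=1}^{\gamma}(\la q)^{\ell}\sum_{1\le j_1<\dots<j_\ell\le\gamma}\prod_{t=1}^{\ell}x_{ij_t},
\]
the monomials with $\ell\ge3$ are supported on three distinct children and are deleted by $\Psi_3$ through condition $(a)$ of Proposition~\ref{prop:crucial}, while the linear ($\ell=1$) monomials $x_{ij}$ all share the common coefficient $\la q$ and are deleted by $\Psi^\star_3$ through condition $(c)$ of Proposition~\ref{prop:crucial:product}; what survives is $\Psi^\star_3[F_{f_0}]=(\la q)^2\sum_{i=1}^{q}\sum_{1\le j_1<j_2\le\gamma}x_{ij_1}x_{ij_2}$, which is the zero polynomial when $\gamma\le1$. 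In particular every monomial of $\Psi^\star_3[F_{f_0}]$ has total degree exactly $2$ and is supported on exactly two distinct children.

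Combining these, every monomial $\prod_{i,j}x_{ij}^{s_{ij}}$ occurring in the product $\prod_{\ell=1}^{L+M}\Psi^\star_3[F_{g_\ell}]$, whenever that product is nonzero, has total degree $\norm{s}_1\ge L+2M\ge5$ and — because at least one of the $M\ge1$ factors is $\Psi^\star_3[F_{f_0}]$, which touches two distinct children — is supported on at least two distinct children. Any such monomial is annihilated by the outer operator $\Psi_3$: if it is supported on three or more children it satisfies condition $(a)$; and if it is supported on exactly two children, then since its total degree exceeds $2+2$ one of those two children carries degree at least $3$, so it satisfies condition $(b)$ with $r=2$. Hence $\Psi_3$ sends the whole product to $0$, i.e.\ $\Phi^\star_{3,(g_\ell)_{\ell\le L+M}}\equiv0$, and the displayed inequality above reduces to $\big|\E\prod_{\ell=1}^{L}f_\ell(Z)(\sumz-q)^{M}\big|\le C x_n^3$, which is \eqref{eq:lem:useful}.

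This argument is essentially bookkeeping; the only delicate points are matching the combinatorial deletion conditions $(a)$, $(b)$ of $\Psi_m$ and $(c)$ of $\Psi^\star_m$ against our monomials — in particular noticing that, for $m=3$, the $r=2$ instance of $(b)$ eliminates \emph{every} two-child monomial of total degree exceeding $4$ — and observing that it is precisely the degree-$2$ (not merely degree-$1$) lower bound coming from $\Psi^\star_3[F_{f_0}]$ that makes the hypothesis $L+2M\ge5$, rather than the weaker $L+M\ge5$, sufficient. I do not anticipate any genuine obstacle beyond keeping this combinatorics organized.
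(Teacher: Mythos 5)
Your proposal is correct and follows essentially the same route as the paper's proof: apply Proposition~\ref{prop:crucial:product} with $m=3$ to the $L+M$ factors $f_1,\dots,f_L,f_0,\dots,f_0$, use that each $\Psi^\star_3[F_{f_\ell}]$ has no constant term while $\Psi^\star_3[F_{f_0}]=(\la q)^2\sum_i\sum_{j_1<j_2}x_{ij_1}x_{ij_2}$, and conclude that every monomial of the product has $\norm{s}_1\ge L+2M\ge 5$ with support on at least two children, hence is killed by $\Psi_3$ via conditions $(a)$ or $(b)$ with $r=2$, so $\Phi^\star_{3,f}\equiv 0$. Your bookkeeping of the deletion conditions matches the paper's, so no gap.
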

\begin{proof}
Denote $f=\left(\prod_{\ell=1}^{L}f_{\ell}\right)\cdot(f_0)^{M}\in \R[z_1,...,z_q]$, where $f_0=\sumi z_i-q$. Let $F_0=F_{f_0}\in \R\left[(x_{ij})_{i\leq q, j\leq \ga}\right]$ be the polynomial corresponding to $f_0$ via \eqref{eq:def:F}. Then, analogous to $\Psi^\star_4[F_0]$ in \eqref{eq:f:zero:psi:star}, we have
\begin{equation}\label{eq:Psi:star:zero:3}
    \Psi^\star_3[F_0]=(\la q)^2 \sum_{i=1}^{q} \sum_{1\leq j_1<j_2\leq \gamma}x_{ij_1}x_{ij_2}.
\end{equation}
Now, note that $\Psi_3$ defined in Proposition \ref{prop:crucial} deletes any monomials $\prod_{i=1}^{q}\prod_{j=1}^{\ga}x_{ij}^{s_{ij}}$ where $s=(s_{ij})_{i\leq q, j\leq \ga}$ satisfies either of the following.
\begin{itemize}
    \item $\norm{s}_{*}\equiv \sum_{j=1}^{\gamma}\one\left(\sum_{i=1}^{q}s_{ij}\geq 1 \right)\geq 3$ or \item $\norm{s}_{*}=2$ and $\norm{s}_1\geq 5$.
\end{itemize}
For $\ell\leq L$, the polynomial $F_{f_{\ell}}\in \R\left[(x_{ij})_{i\leq q, j\leq \ga}\right]$ satisfies $F_{f_{\ell}}(0,0,....,0)=0$ since $f_{\ell}(1,1,...,1)=0$ holds. That is, $F_{f_{\ell}}$ does not have a constant term for $\ell \leq L$. Thus, it is straightforward to check with the expression of $\Psi^\star_3[F_0]$ above that when $L+2M\geq 5$, all the monomials $\prod_{i=1}^{q}\prod_{j=1}^{\ga}x_{ij}^{s_{ij}}$ in $\prod_{\ell=1}^{L}\Psi^\star_3[F_{f_{\ell}}]\cdot\left(\Psi^\star_3[F_0]\right)^M$ satisfy either $\norm{s}_{*}\geq 3$ or $\norm{s}_{*}=2$ and $\norm{s}_1=5$. Hence, it follows that
\begin{equation*}
    \Phi^\star_{3,f}=\Psi_3\left[\prod_{\ell=1}^{L}\Psi^\star_3[F_{f_{\ell}}]\cdot \left(\Psi^\star_3[F_0]\right)^3\right]=0.
\end{equation*}
Therefore, Proposition \ref{prop:crucial:product} concludes the proof.
\end{proof}
\begin{lemma}\label{lem:useful:2}
There exist $\delta=\delta(q)$ and $C=C_q>0$ such that the following holds: if \rev{$x_{n_0}\leq \delta$ holds for some $n_0\geq 1$, then for all $n\geq n_0+5$,}
\begin{equation*}
    \left|\E\Big(\sumz-q\Big)(Z_1+Z_2-2)(Z_1-Z_2)\right|\leq C_q x_n^3
\end{equation*}
\end{lemma}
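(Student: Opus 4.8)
The plan is to deduce this from Proposition~\ref{prop:crucial:product} with $m=3$. Put
\[
f_0(z)=\sum_{i=1}^{q}z_i-q,\qquad f_1(z)=z_1+z_2-2,\qquad f_2(z)=z_1-z_2 ,
\]
so the quantity to bound is $\E\bigl[f_0(Z)f_1(Z)f_2(Z)\bigr]$; note that $f_1f_2=(z_1-1)^2-(z_2-1)^2$, which foreshadows the cancellation used below. Here $L=3$ and $M=0$, so $L+2M=3<5$ and Lemma~\ref{lem:useful} does not apply directly. Instead I would compute the polynomial $\Phi^\star_{3,(f_0,f_1,f_2)}$ of Proposition~\ref{prop:crucial:product} explicitly and show that $\E\,\Phi^\star_{3,(f_0,f_1,f_2)}\bigl((Y_{ij}-\tfrac1q)_{i\le q,\,j\le\gamma}\bigr)=O_q(x_n^3)$; the lemma then follows from Proposition~\ref{prop:crucial:product}.

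First I would record the factors $\Psi^\star_3[F_{f_\ell}]$. For $f_0$ the $\Psi^\star$-step deletes the linear part (its $q$ coefficients all equal $\la q$) and $\Psi^\star_3[F_{f_0}]=(\la q)^2\sum_i\sum_{j_1<j_2}x_{ij_1}x_{ij_2}$ as in \eqref{eq:Psi:star:zero:3}; for $f_1,f_2$ the linear parts survive (the coefficients of $x_{1j},x_{2j}$ differ from those of $x_{ij}$, $i\ge3$), so $\Psi^\star_3[F_{f_1}]=\la q\sum_j(x_{1j}+x_{2j})+(\text{quadratic})$ and likewise with alternating signs for $f_2$. Multiplying the three factors and applying $\Psi_3$: the $\Psi^\star_3[F_{f_0}]$ factor forces every surviving monomial to be supported on exactly two coordinates $\{j_1,j_2\}$, and the $\Psi_3$ deletion rules then remove any monomial in which some coordinate has total degree $\ge 3$; since $\Psi^\star_3[F_{f_0}]$ already puts degree $1$ at each of $j_1,j_2$, the only surviving monomials are those in which the $f_1$- and $f_2$-factors each contribute their linear term, at \emph{distinct} indices of $\{j_1,j_2\}$. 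Hence
\[
\begin{aligned}
\Phi^\star_{3,(f_0,f_1,f_2)}=(\la q)^4\sum_{i=1}^{q}\sum_{1\le j_1<j_2\le\gamma}x_{ij_1}x_{ij_2}\Bigl[\,&(x_{1j_1}{+}x_{2j_1})(x_{1j_2}{-}x_{2j_2})\\
&+(x_{1j_2}{+}x_{2j_2})(x_{1j_1}{-}x_{2j_1})\Bigr].
\end{aligned}
\]

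Next I would evaluate this at $(\widetilde Y_{ij})_{i\le q,\,j\le\gamma}$, with $\widetilde Y_{ij}:=Y_{ij}-\tfrac1q$. By Lemma~\ref{lem:Y:indep} the expectation factors over $j_1,j_2$, and since every pair contributes the same value, summing over $j_1<j_2$ produces a factor $\E\binom{\gamma}{2}$. Setting $a_i:=\E[\widetilde Y_{i1}\widetilde Y_{11}]$ and $b_i:=\E[\widetilde Y_{i1}\widetilde Y_{21}]$, the per-pair contribution of color $i$ is $2(a_i+b_i)(a_i-b_i)=2(a_i^2-b_i^2)$, so
\[
\E\,\Phi^\star_{3,(f_0,f_1,f_2)}\bigl((\widetilde Y_{ij})\bigr)=2(\la q)^4\,\E\binom{\gamma}{2}\sum_{i=1}^{q}(a_i^2-b_i^2)=2(\la q)^4\,\E\binom{\gamma}{2}\bigl((M^2)_{11}-(M^2)_{22}\bigr),
\]
where $M:=\bigl(\E[\widetilde Y_{i1}\widetilde Y_{k1}]\bigr)_{i,k\le q}$ is symmetric with $M\mathbf{1}=0$, using $\sum_i M_{i1}^2=(M^2)_{11}$. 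By Lemma~\ref{lem:Y:moment} one may write $M=M_0+E$, where $M_0:=\tfrac{x_n}{q-1}\bigl(I_q-\tfrac1qJ_q\bigr)$ is the second-moment matrix with its $u_n$-corrections dropped and, by Lemma~\ref{lem:u:v:w:apriori}, $E$ has all entries $O_q(x_n^2)$; since $M_0$ commutes with all permutation matrices, $(M_0^2)_{11}=(M_0^2)_{22}$, and $M^2-M_0^2=M_0E+EM_0+E^2$ with $\|M_0\|=O_q(x_n)$ yields $(M^2)_{11}-(M^2)_{22}=O_q(x_n^3)$. As $(\la q)^4\E\binom{\gamma}{2}=O_q(1)$ by Assumption~\ref{assumption:unif:tail} and $d\la^2\le1$, this shows $\E\,\Phi^\star_{3,(f_0,f_1,f_2)}\bigl((\widetilde Y_{ij})\bigr)=O_q(x_n^3)$, and Proposition~\ref{prop:crucial:product} concludes the proof.

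The main obstacle is the estimate $(M^2)_{11}-(M^2)_{22}=O_q(x_n^3)$: the naive bound (each $a_i,b_i$ is $O_q(x_n)$) gives only $O_q(x_n^2)$, and one genuinely needs that the second-moment matrix $M$ is invariant under the transposition of colors $1$ and $2$ up to $O_q(x_n^2)$ — this asymmetry is precisely carried by $u_n=z_n-\tfrac{x_n}{q}$, which is $O_q(x_n^2)$ by Lemma~\ref{lem:u:v:w:apriori}. Equivalently, it reflects the antisymmetry of $f_1f_2=(z_1-1)^2-(z_2-1)^2$ under $z_1\leftrightarrow z_2$, which would be an exact cancellation were the $\widetilde Y$-moments exactly symmetric in the colors. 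The only other point requiring care is the combinatorial bookkeeping that reduces $\Phi^\star_{3,(f_0,f_1,f_2)}$ to the displayed two-coordinate polynomial.
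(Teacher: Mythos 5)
Your proposal is correct and follows essentially the same route as the paper: the same factorization $f_0f_1f_2$, the same application of Proposition~\ref{prop:crucial:product} with $m=3$ yielding exactly the surviving two-coordinate polynomial, evaluation via Lemma~\ref{lem:Y:indep}, and the same cancellation driven by $u_n=O_q(x_n^2)$ (Lemma~\ref{lem:u:v:w:apriori}). Phrasing the final step through the Gram matrix $M=M_0+E$ is only a cosmetic repackaging of the paper's direct use of \eqref{eq:Y:sec:mo:approx}.
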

\begin{proof}
Let $f=f_0\cdot f_1\cdot f_2\in \R[z_1,...,z_q]$, where $f_0(z_1,...,z_q)=\sumi z_i -q$ as before, $f_1(z_1,...,z_q)=z_1+z_2-2$, and $f_2(z_1,...,z_q)=z_1-z_2$. Recalling the expression of $\Psi^\star_3[F_0]$ for $F_0=F_{f_0}$ in \eqref{eq:Psi:star:zero:3}, we have that
\begin{equation*}
\begin{split}
    \Phi^\star_{3,f}
    &= (\la q)^2 \cdot \Psi_3\left[\Psi^\star_3[F_{f_1}]\cdot \Psi^\star_3[F_{f_2}]\cdot \left( \sum_{i=1}^{q} \sum_{1\leq j_1<j_2\leq \gamma}x_{ij_1}x_{ij_2}\right)\right]\\
    &=(\la q)^4\sum_{\substack{(j_1,j_2)\in [\gamma]^2:\\ \textnormal{ distinct}}}\Big(x_{1j_1}-x_{2j_1}\Big)\Big(x_{1j_2}+x_{2j_2}\Big)\left(\sumi x_{ij_1}x_{ij_2}\right).
\end{split}
\end{equation*}
We then calculate $\E \Phi^\star_{3,f}\bigg(\Big(Y_{ij}-\frac{1}{q}\Big)_{i\leq q, j\leq \gamma}\bigg)$ by using Lemma \ref{lem:Y:indep} and $\sumi (Y_{ij}-1/q)=0$:
\begin{equation*}
\begin{split}
    &\E \Phi^\star_{3,f}\bigg(\Big(Y_{ij}-\frac{1}{q}\Big)_{i\leq q, j\leq \gamma}\bigg)\\
    &=\E \binom{\ga}{2}(\la q)^4\sumi \left(2\left(\E\left(Y_{11}-\frac{1}{q}\right)\left(Y_{i 1}-\frac{1}{q}\right)\right)^2-2\left(\E\left(Y_{21}-\frac{1}{q}\right)\left(Y_{i 1}-\frac{1}{q}\right)\right)^2\right)\\
    &=O_q(x_n^3),
\end{split}
\end{equation*}
where the last equality follows from \eqref{eq:Y:sec:mo:approx} and $\E\binom{\ga}{2}\la^4\lesssim 1$ (cf. Assumption \ref{assumption:unif:tail}). Thus, Proposition \ref{prop:crucial:product} concludes the proof.
\end{proof}

The two lemmas below are generalizations of Lemma \ref{lem:recurse:first:order}.
\begin{lemma}\label{lem:u:second:order}
There exist $\delta=\delta(q)$ and $C=C_q>0$ such that the following holds: if \rev{$x_{n_0}\leq \delta$ holds for some $n_0\geq 1$, then for all $n\geq n_0+5$,}
\begin{equation}\label{eq:u:iter}
    \left|u_{n+1}-\left(d\la^3 u_n+ \frac{6(q-2)}{q-1}\E\binom{\gamma}{2}\la^4 x_n^2\right)\right|\leq C_q x_n^3.
\end{equation}
\end{lemma}
\begin{proof}
By \eqref{eq:lem:basic:1} in Lemma \ref{lem:basic} and the expression of $X^{+}(n+1)=\frac{Z_1}{\sumz}$ in \eqref{eq:key:recursion}, it follows that 
\begin{equation}\label{lem:u:second:order:tech:1}
\begin{split}
    u_{n+1}
    &=\frac{q-1}{q}\E\left[\left(\frac{Z_1}{\sumz}-\frac{1}{q}\right)^2-\left(\frac{Z_2}{\sumz}-\frac{1}{q}\right)^2\right]\\
    &=\frac{q-1}{q^4}\E\left[\frac{(Z_1-Z_2)\Big((q-2)Z_1+(q-2)Z_2-2\sum_{i=3}^{q}Z_i\Big)}{\left(1+\frac{\sumz-q}{q}\right)^2}\right],
\end{split}
\end{equation}
where we used $a^2-b^2=(a-b)(a+b)$ in the last equality. To this end, define the polynomials
\begin{equation*}
    f_1(z_1,...,z_q)\equiv z_1-z_2,\quad\quad f_2(z_1,...,z_q)\equiv (q-2)z_1+(q-2)z_2-2\sum_{i=3}^{q}z_i,
\end{equation*}
and $f_0(z_1,...,z_q)=\sum_{i=1}^{q}z_i-q$ as before. Using the identity $\frac{1}{(1+x)^2}=1-2x+3x^2-4x^3+\frac{x^4}{(1+x)^2}(6+2x)$ for $x=(\sumz-q)/q$ in the right hand side of \eqref{lem:u:second:order:tech:1}, it follows that
\begin{equation*}
\begin{split}
    &u_{n+1}
    =\frac{q-1}{q^4}\bigg(\E f_1(Z)f_2(Z)-2\E f_0(Z)f_1(Z)f_2(Z)\bigg)+R_1+R_2,\quad\textnormal{where}\\
    &R_1 :=\frac{q-1}{q^4}\bigg(3\E \big(f_0(Z)\big)^2f_1(Z)f_2(Z)-4\E \big(f_0(Z)\big)^3 f_1(Z)f_2(Z)\bigg),\\
    &R_2 := \frac{q-1}{q^2}\E\bigg[\left(\frac{Z_1-Z_2}{\sumz}\right)\left(\frac{(q-2)Z_1+(q-2)Z_2-2\sum_{i=3}^{q}Z_i}{\sumz}\right)\\
    &\qquad\qquad\qquad\qquad\cdot\left(\frac{\sumz-q}{q}\right)^4\left(6+2\frac{\sumz-q}{q}\right)\bigg].
\end{split}
\end{equation*}
In the above, we abbreviated $Z=(Z_1,...,Z_q)$. Note that $f_1$ and $f_2$ satisfy the condition \eqref{eq:f:no:constant}, so Lemma \ref{lem:useful} implies that $|R_1|=O_q(x_n^3)$. Moreover, note that $6+2\frac{\sumz-q}{q}>0$. Thus, we can use the trivial bounds $\left|\frac{Z_1-Z_2}{\sumi Z_i}\right|\leq 1$ and $\left|\frac{(q-2)Z_1+(q-2)Z_2-2\sum_{i=3}^{q}Z_i}{\sumz}\right|\leq q$ to have that
\begin{equation*}
    |R_2|\leq \frac{q-1}{q}\E\left[\left(\frac{\sumz-q}{q}\right)^4\left(6+2\frac{\sumz-q}{q}\right)\right]=O_q(x_n^3),
\end{equation*}
where the last equality holds since $\E(\sumz -q)^{M}=O_q(x_n^3)$ for $M=4,5$ by Lemma \ref{lem:useful}. Thus, it follows that
\begin{equation}\label{lem:u:second:order:tech:2}
    u_{n+1}=\frac{q-1}{q^4}\E f_1(Z)f_2(Z)-\frac{2(q-1)}{q^4}\E f_0(Z)f_1(Z)f_2(Z)+O_q(x_n^3).
\end{equation}
We estimate the first expectation in the right hand side above by Lemma \ref{lem:est:Z:mono} while we use Lemmas \ref{lem:useful} and \ref{lem:useful:2} to show that the second expectation is $O_q(x_n^3)$: by Lemma \ref{lem:Y:moment} and Lemma \ref{lem:est:Z:mono}, we have that $\E f_1(Z)f_2(Z)$ equals
\begin{equation}\label{lem:u:second:order:tech:3}
\begin{split}
    &(q-2)\E Z_1^2-(q-2)\E Z_2^2 -2\sum_{i=3}^{q}\E Z_1Z_i+2\sum_{i=3}^{q}\E Z_2 Z_i\\
    &=\frac{q^4}{q-1}d\la^3 u_n+\E \binom{\ga}{2}\la^4 (q-2)\bigg((3x_n+\la q u_n)^2-\frac{3}{(q-1)^2}\left((q-3)x_n-\la q u_n\right)^2\\
    &\qquad\qquad\qquad\qquad\qquad\qquad\qquad+\frac{2}{(q-1)^2(q-2)^2}\left((3q-6)x_n-2\la q u_n\right)^2\bigg)+O_q(x_n^3)\\
    &=\frac{q^4}{q-1}d\la^3 u_n+\frac{6q^4(q-2)}{(q-1)^2}\E \binom{\ga}{2}\la^4 x_n^2+O_q(x_n^3),
\end{split}
\end{equation}
where we used $|u_n|=O_q(x_n^2)$ from Lemma \ref{lem:u:v:w:apriori} in the last equality.

For the second expectation in \eqref{lem:u:second:order:tech:2}, we can split it as
\begin{equation*}
\E f_0(Z)f_1(Z)f_2(Z)
=q\E(Z_1+Z_2-2)(Z_1-Z_2)\Big(\sumz -q\Big)-2\E(Z_1-Z_2)\Big(\sumz-q\Big)^2.
\end{equation*}
Note that the first piece in the right hand side is $O_q(x_n^3)$ by Lemma \ref{lem:useful:2} and the second piece is also $O_q(x_n^3)$ by Lemma \ref{lem:useful}. Thus, it follows that 
\begin{equation}\label{lem:u:second:order:tech:4}
   \left|\E f_0(Z)f_1(Z)f_2(Z)\right|=O_q(x_n^3).
\end{equation}
Therefore, plugging in \eqref{lem:u:second:order:tech:3} and \eqref{lem:u:second:order:tech:4} into \eqref{lem:u:second:order:tech:2} concludes the proof.
\end{proof}

\begin{lemma}\label{lem:w:second:order}
There exist $\delta=\delta(q)$ and $C=C_q>0$ such that the following holds: if \rev{$x_{n_0}\leq \delta$ holds for some $n_0\geq 1$, then for all $n\geq n_0+5$,}
\begin{equation}\label{eq:w:iter}
    \left|w_{n+1}-\left(d\la^4 w_n- \frac{2(q+1)}{(q-1)^2}\E\binom{\gamma}{2}\la^4 x_n^2\right)\right|\leq C_q x_n^3.
\end{equation}
\end{lemma}
\begin{proof}
By definition of $w_{n+1}$ in \eqref{eq:def:v:w} and the expression of $X^{+}(n+1)=\frac{Z_1}{\sumz}$ in \eqref{eq:key:recursion}, we have
\begin{equation}\label{eq:recur:w:tech}
    w_{n+1}=\E\left(\frac{Z_1}{\sumz}-\frac{1}{q}\right)\left(\frac{Z_2}{\sumz}-\frac{1}{q}\right)\left(\frac{Z_1-Z_2}{\sumz}\right).
\end{equation}
We use the identity $\frac{1}{(1+x)^3}=1-3x+6x^2-10x^3+\frac{x^4}{(1+x)^3}(10x^2+24x+15)$ for $x=\frac{\sumz -q}{q}$ to expand the right hand side. To this end, let $f_0(z_1,...,z_q)=\sumi z_i-q \in \R[z_1,..,z_q]$ and let $f,g \in \R[z_1,...,z_q]$ be
\begin{equation*}
    f=-3 f_0+6 \left(f_0\right)^2-10 \left(f_0\right)^3,\quad\quad g= 15+24 f_0+10 \left(f_0\right)^2.
\end{equation*}
Then, we can expand the right hand side of \eqref{eq:recur:w:tech} by
\begin{equation*}
\begin{split}
    &w_{n+1}
     =\frac{1}{q^3}\E\left(Z_1-\frac{\sumz}{q}\right)\left(Z_2-\frac{\sumz}{q}\right)\Big(Z_1-Z_2\Big)+R_1+R_2,\quad\textnormal{where}\\
     &R_1:=\frac{1}{q^3}\E \left(Z_1-\frac{\sumz}{q}\right)\left(Z_2-\frac{\sumz}{q}\right)\Big(Z_1-Z_2\Big)f(Z),\\
     &R_2:=\E\left(\frac{Z_1}{\sumz}-\frac{1}{q}\right)\left(\frac{Z_2}{\sumz}-\frac{1}{q}\right)\left(\frac{Z_1-Z_2}{\sumz}\right)\left(\frac{\sumz-q}{q}\right)^{4}g(Z)
\end{split}
\end{equation*}
Observe that $\E\left(Z_1-\frac{\sumz}{q}\right)\left(Z_2-\frac{\sumz}{q}\right)\Big(Z_1-Z_2\Big)\left(f_0\right)^{M}= O_q(x_n^3)$ for $M=1,2,3$, by Lemma \ref{lem:useful}. Thus, it follows that $|R_1|=O_q(x_n^3)$. Moreover, $g(Z)>0$ a.s., since $15+24x+10x^2>0$, so we can use the bound $\left|\left(\frac{Z_1}{\sumz}-\frac{1}{q}\right)\left(\frac{Z_2}{\sumz}-\frac{1}{q}\right)\left(\frac{Z_1-Z_2}{\sumz}\right)\right|\leq 1$ to have that
\begin{equation*}
    |R_2|\leq \E\left(\frac{\sumz-q}{q}\right)^{4}g(Z)=O_q(x_n^3),
\end{equation*}
where we used $\E(\sumz-q)^{M}= O_q(x_n^3)$ for $M=4,5,6,$ by Lemma \ref{lem:useful}. Therefore, we have that
\begin{equation*}
    w_{n+1}
    =\frac{1}{q^3}\E\left(Z_1-\frac{\sumz}{q}\right)\left(Z_2-\frac{\sumz}{q}\right)\Big(Z_1-Z_2\Big)+O_q(x_n^3)
\end{equation*}
Note that we can express $Z_i-\frac{\sumz}{q}=Z_i-1-\left(\frac{\sumz -q}{q}\right)$ for $i=1,2$, which gives
\begin{equation}\label{eq:recur:tech:2}
\begin{split}
    w_{n+1}
    &=\frac{1}{q^3}\bigg(\E\Big(Z_1-1\Big)\Big(Z_2-1\Big)\Big(Z_1-Z_2\Big)-\E\Big(Z_1+Z_2-2\Big)\left(\frac{\sumz-q}{q}\right)\Big(Z_1-Z_2\Big)\\
    &\qquad\qquad\qquad\qquad+\E\left(\frac{\sumz-q}{q}\right)^2 \Big(Z_1-Z_2\Big)\bigg)+O_q(x_n^3)\\
    &=\frac{1}{q^3}\E\Big(Z_1-1\Big)\Big(Z_2-1\Big)\Big(Z_1-Z_2\Big)+O_q(x_n^3),
\end{split}
\end{equation}
where we used Lemma \ref{lem:useful} and Lemma \ref{lem:useful:2} in the last equality.

We now es\rev{t}imate $\E(Z_1-1)(Z_2-1)(Z_1-Z_2)$:
\begin{equation}\label{eq:recur:tech:3}
\begin{split}
    \E\Big(Z_1-1\Big)\Big(Z_2-1\Big)\Big(Z_1-Z_2\Big)
    &=\E Z_1^2 Z_2-\E Z_1 Z_2^2 -\E Z_1^2+\E Z_2^2+\E Z_1-\E Z_2\\
    &=d\la^4 q^3 w_n-\frac{2q^3(q+1)}{(q-1)^2}\E \binom{\ga}{2}\la^4 x_n^2 +O_q(x_n^3),
\end{split}
\end{equation}
where the second equality holds because of Lemma \ref{lem:est:Z:mono}, Lemma \ref{lem:Y:moment}, and the estimate $|u_n|\vee|v_n|\vee|w_n|=O_q(x_n^2)$ from Lemma \ref{lem:u:v:w:apriori}. Therefore, plugging in \eqref{eq:recur:tech:3} into \eqref{eq:recur:tech:2} finishes the proof.
\end{proof}
Having Lemmas \ref{lem:u:second:order} and \ref{lem:w:second:order} in hand, we now estimate $u_n$ and $w_n$ solely in terms of $x_n$.
\begin{lemma}\label{lem:exact:u:w}
There exists a constant $C_q>0$ such that the following holds: for any $\eps>0$, there exists a positive constant $\delta(q,\eps)>0$ and a positive integer $N(q,\eps)$, which only depend on $q$ and $\eps$, such that if \rev{$x_{n_0}\leq \delta(q,\eps)$ for some $n_0\geq 1$ and $n$ is large enough so that $n\geq n_0+5$ and $n\geq N(q,\eps)$ holds,} then
\begin{equation}\label{eq:u:exact}
    \left|u_n-\frac{6(q-2)}{q-1}\E \binom{\ga}{2}\frac{\la}{d(d\la-1)} x_n^2\right|\leq C_q x_n^3+\eps x_n^2,
\end{equation}
and
\begin{equation}\label{eq:w:exact}
    \left|w_n+\frac{2(q+1)}{(q-1)^2}\E \binom{\ga}{2}\frac{1}{d(d-1)} x_n^2\right|\leq C_q x_n^3+\eps x_n^2
\end{equation}
\end{lemma}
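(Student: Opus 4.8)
The plan is to recognize that the two target coefficients are exactly the attracting fixed points of the recursions established in Lemmas~\ref{lem:u:second:order} and~\ref{lem:w:second:order}, and then to quantify the rate of convergence to them. Set
\[
\alpha:=\frac{6(q-2)}{q-1}\E\binom{\gamma}{2}\frac{\la}{d(d\la-1)},\qquad \beta:=-\frac{2(q+1)}{(q-1)^2}\E\binom{\gamma}{2}\frac{1}{d(d-1)},
\]
so that $\alpha$ solves $\alpha\, d\la^3(d\la-1)=\frac{6(q-2)}{q-1}\E\binom{\gamma}{2}\la^4$ and $\beta$ solves $\beta\, d\la^4(d-1)=-\frac{2(q+1)}{(q-1)^2}\E\binom{\gamma}{2}\la^4$; these are precisely the identities one gets by substituting $u_n=\alpha x_n^2$, $w_n=\beta x_n^2$, $x_{n+1}=d\la^2 x_n$ into those recursions. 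The first thing I would check is that $|\alpha|,|\beta|\le C_q$. Here one uses $\E\binom{\gamma}{2}\le C_q d^2$ (Assumption~\ref{assumption:unif:tail}) together with \eqref{eq:dlambda:lower}, which forces $(1+c_0)^{1/3}<d|\la|\le\sqrt d$ and hence $d>(1+c_0)^{2/3}>1$: in the ferromagnetic case $d\la>(1+c_0)^{1/3}$ keeps $\tfrac{d\la}{d\la-1}$ bounded, in the antiferromagnetic case $|d\la-1|=1+d|\la|>d|\la|$, and $\tfrac{d}{d-1}$ is bounded since $d$ is bounded away from $1$; this gives $|\alpha|\le C_q\tfrac{d|\la|}{|d\la-1|}\le C_q$ and similarly $|\beta|\le C_q$.

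Next, put $\tilde u_n:=u_n-\alpha x_n^2$ and $\tilde w_n:=w_n-\beta x_n^2$, so that the goal becomes $|\tilde u_n|,|\tilde w_n|\le C_q x_n^3+\eps x_n^2$. From Lemma~\ref{lem:recurse:first:order} in the form $x_{n+1}=d\la^2 x_n+O_q((d\la^2x_n)^2)$ and $x_n\le\tfrac{q-1}{q}$ one gets $x_{n+1}^2=d^2\la^4 x_n^2+O_q(x_n^3)$; feeding this and the fixed-point identities into Lemmas~\ref{lem:u:second:order}, \ref{lem:w:second:order} the affine-in-$x_n^2$ terms cancel, and using $|\alpha|,|\beta|\le C_q$ one is left with
\[
\tilde u_{n+1}=d\la^3\,\tilde u_n+O_q(x_n^3),\qquad \tilde w_{n+1}=d\la^4\,\tilde w_n+O_q(x_n^3).
\]
Both recursions are contractive, since $|d\la^3|\le d^{-1/2}$ and $|d\la^4|\le d^{-1}$. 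Fix $n_1$ with $x_m\le 2\delta$ for all $m\ge n_1$ (this exists because $\limsup x_n\le\delta$). Iterating gives $\tilde u_n=(d\la^3)^{n-n_1}\tilde u_{n_1}+\sum_{i=1}^{n-n_1}(d\la^3)^{n-n_1-i}g_{n_1+i-1}$ with $|g_m|\le C_q x_m^3$ and, by Lemma~\ref{lem:u:v:w:apriori}, $|\tilde u_{n_1}|\le C_q x_{n_1}^2$. Using the two-sided estimate $[d\la^2(1-C_q\delta)]^k x_m\le x_{m+k}\le[d\la^2(1+C_q\delta)]^k x_m$ (valid while all intermediate magnetizations stay $\le 2\delta$), the transient term is at most $C_q x_n^2\big(\tfrac{1}{d|\la|(1-C_q\delta)^2}\big)^{n-n_1}$; since $d|\la|>(1+c_0)^{1/3}>1$, for $\delta$ small the base is some $\theta_0<1$, so this is $\le\eps x_n^2$ once $n-n_1\ge N_1(q,\eps)$. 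The forced sum is a geometric series with ratio $\tfrac{d|\la|^3}{(d\la^2)^3(1-C_q\delta)^3}=\tfrac{1}{d^2|\la|^3(1-C_q\delta)^3}<\tfrac{1}{1+c_0/2}$ by \eqref{eq:dlambda:lower}, hence bounded by $O_{q,c_0}\big((d\la^2)^{-3}x_n^3\big)$. The identical computation with $d\la^3$ replaced by $d\la^4$ and $\alpha$ by $\beta$ disposes of $\tilde w_n$, the contraction there being even stronger.

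The delicate point is closing the bound on the forced sum: under \eqref{eq:dlambda:lower} alone $(d\la^2)^{-3}$ can be as large as $\Theta(d)$, so $(d\la^2)^{-3}x_n^3$ is not obviously $O_q(x_n^3)$. The way around it is to write $(d\la^2)^{-3}x_n^3=\big((d\la^2)^{-3}x_n\big)\,x_n^2$ and invoke near-geometric decay once more: $(d\la^2)^{-3}x_n\le(1+C_q\delta)^3 x_{n-3}\le 2(1+C_q\delta)^3\delta$ as soon as $n-3\ge n_1$, so the forced term is $O_{q,c_0}(\delta x_n^2)$, which is $\le\eps x_n^2$ once $\delta=\delta(q,\eps)$ is small enough; the finitely many indices $n_1\le n<n_1+3$ are handled by hand (there the sum has $O(1)$ terms and is $\le C_q x_n^3$ after one application of the two-sided estimate), and the explicit $C_q x_n^3$ in the statement absorbs exactly these. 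I would also flag that the threshold is genuinely on $n-n_1$, with $n_1$ depending on the sequence; in the application this is harmless because the CLT argument of Section~\ref{sec:clt} first drives the magnetization below $\delta$, after which one re-indexes so that $n_1=0$ and $N(q,\eps)=N_1(q,\eps)$.
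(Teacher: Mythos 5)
Your proof is correct, and its skeleton coincides with the paper's: both arguments iterate the one-step recursions of Lemmas~\ref{lem:u:second:order} and~\ref{lem:w:second:order}, use $x_{m+1}=(1+O_q(x_m))\,d\la^2 x_m$ to compare $x_{m}$ at different times, control the initial condition via Lemma~\ref{lem:u:v:w:apriori}, and exploit $d^2|\la|^3>1+c_0$ and $d|\la|>1$ to make the resulting series geometric. The difference is bookkeeping: the paper keeps $u_n,w_n$ themselves and identifies the limiting coefficient by evaluating $\sum_{\ell}\frac{(d\la^3)^{\ell-1}}{(d\la^2)^{2\ell}}=\frac{1}{d\la^3(d\la-1)}+o_n(1)$ (and its analogue with $d\la^4$), whereas you pass to the deviation variables $\tilde u_n=u_n-\alpha x_n^2$, $\tilde w_n=w_n-\beta x_n^2$, determine $\alpha,\beta$ from the fixed-point identities, and then only have to run a contraction estimate; this makes the $\eps x_n^2$ term transparently the transient $(d\la^3)^{n-n_1}\tilde u_{n_1}$ and avoids the $\eta\mapsto g(\eta)$ comparison of partial sums in \eqref{eq:u:onestep:2}. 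One genuine added value in your write-up is the treatment of the forced sum: the paper's bound \eqref{eq:u:onestep:0:5}--\eqref{eq:u:onestep:1} silently drops a prefactor of order $(d\la^2)^{-3}$ (which under \eqref{eq:dlambda:lower} alone can be as large as $\Theta(d)$, so the constant there is not manifestly independent of $d$ unless $d\la^2$ is bounded away from $0$, as it is at the KS bound), and your device of writing $(d\la^2)^{-3}x_n^3=\big((d\la^2)^{-3}x_n\big)x_n^2\le (1+C_q\delta)^3 x_{n-3}\,x_n^2\lesssim \delta x_n^2$ cleanly absorbs it into the $\eps x_n^2$ slack, restoring uniformity in $d$. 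Your closing caveat about the threshold being on $n-n_1$ with $n_1$ sequence-dependent applies equally to the paper's own proof (its $N(q,\eps)$ plays the same role), so it is not a defect of your argument.
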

\begin{proof}
Throughout, we fix $\eps >0$. To start with, we have by Lemma \ref{lem:recurse:first:order} that
\begin{equation*}
    \left|\frac{x_{n+1}}{d\la^2 x_n}-1\right| \leq C_q x_n.
\end{equation*}
Thus, if $x_n$ is small enough, $x_{n+1}\approx d\la^2 x_n$. More precisely, \rev{fix} $\eta \equiv \eta(q,\eps)>0$ to be determined later, and let $\delta\equiv \delta(q,\eps)$ \rev{be small enough so that $\delta\leq \eta/C_q$. Since $(x_{n})_{n\geq 0}$ is non-increasing by Lemma~\ref{lem:equiv:xn:nonreconstruct}, w.l.o.g., we can take
$n_0=\inf\{n\geq 0:x_n\leq \delta\}$. Then, for $n\geq n_0$, we have 
}
\begin{equation}\label{eq:x:onestep}
    \left|\frac{x_{n+1}}{d\la^2 x_n}-1\right|\leq \eta.
\end{equation}
We consider $\delta$ small enough so that the results of Lemmas \ref{lem:u:second:order} and \ref{lem:w:second:order} hold. \rev{In particular, if we denote $n_0^\prime =n_0+5$, then for $n\geq n_0^\prime$, the estimate \eqref{eq:u:iter} holds.}

We first start with the estimate for $u_n$: we have from Lemma \ref{lem:u:second:order} that for \rev{$n\geq 1$},
\begin{equation}\label{eq:u:onestep}
    \left|u_{n+n_0\rev{^\prime}}-\left(d\la^3 u_{n+n_0\rev{^\prime}-1}+ \frac{6(q-2)}{q-1}\E\binom{\gamma}{2}\la^4 x_{n+n_0\rev{^\prime}-1}^2\right)\right|\leq C_q x_{n+n_0\rev{^\prime}-1}^3.
\end{equation}
Note that we can use Lemma \ref{lem:u:second:order} once more to estimate $u_{n+n_0\rev{^\prime}-1}$ in \eqref{eq:u:onestep}. By iterating,
\begin{equation}\label{eq:u:onestep:0}
\begin{split}
     &\left|u_{n+n_0\rev{^\prime}}-\left((d\la^3)^{n} u_{n_0\rev{^\prime}}+ \frac{6(q-2)}{q-1}\E\binom{\gamma}{2}\la^4 \sum_{\ell=1}^{n} (d\la^3)^{\ell-1} \cdot x_{n+n_0\rev{^\prime}-\ell}^2\right)\right|\\
     &\leq C_q \sum_{\ell=1}^{n}(d|\la|^3)^{\ell-1}\cdot x_{n+n_0\rev{^\prime}-\ell}^3.
\end{split}
\end{equation}
\rev{
Note that by iterating \eqref{eq:x:onestep}, $x_{n+n_0^\prime-\ell}\leq \big(d\la^2(1-\eta)\big)^{-\ell}x_{n+n_0^\prime}$ holds for $1\leq \ell \leq n$. Thus, 
\begin{equation}\label{eq:u:onestep:0:5}
    \sum_{\ell=1}^{n}(d|\la|^3)^{\ell-1}\cdot x_{n+n_0^\prime-\ell}^3\leq \left(d\la^2(1-\eta)\right)^{-3}\sum_{\ell=1}^{n}\left(\frac{1}{(1-\eta)^3 d^2|\la|^3}\right)^{\ell-1}x_{n+n_0^\prime}^3.
\end{equation}
}
Recall that we assumed $d^2|\la|^3\geq 1+c_0$ \rev{and $d\la^2\geq 1-c_0$} for a universal constant in \eqref{eq:dlambda:lower}. Thus, taking $\eta$ small enough so that \rev{$(1-\eta)^3(1+c_0)\geq 1+\frac{c_0}{2}$}, it follows that
\begin{equation}\label{eq:u:onestep:1}
     \sum_{\ell=1}^{n}(d|\la|^3)^{\ell-1}\cdot x_{n+n_0\rev{^\prime}-\ell}^3 \lesssim  x_{n+n_0\rev{^\prime}}^3.
\end{equation}
For the first sum \rev{in \eqref{eq:u:onestep:0}}, it follows from a \rev{triangle} inequality that 
\begin{equation*}
\begin{split}
    \left|\sum_{\ell=1}^{n} (d\la^3)^{\ell-1} \frac{x_{n+n_0\rev{^\prime}-\ell}^2}{x_{n+n_0\rev{^\prime}}^2}-\sum_{\ell=1}^{n}\frac{(d\la^3)^{\ell-1}}{(d\la^2)^{2\ell}}\right|
    \leq \sum_{\ell=1}^{n} (d|\la|^3)^{\ell-1}\left|\frac{x_{n+n_0\rev{^\prime}-\ell}^2}{x_{n+n_0\rev{^\prime}}^2}-\frac{1}{(d\la^2)^{2\ell}}\right|.
\end{split}
\end{equation*}
By \eqref{eq:x:onestep}, we can further bound
\begin{equation}\label{eq:u:onestep:1:5}
    \sum_{\ell=1}^{n} (d|\la|^3)^{\ell-1}\left|\frac{x_{n+n_0\rev{^\prime}-\ell}^2}{x_{n+n_0\rev{^\prime}}^2}-\frac{1}{(d\la^2)^{2\ell}}\right|\leq \sum_{\ell=1}^{n} \frac{(d|\la|^3)^{\ell-1}}{(d\la^2)^{2\ell}}\left((1+\eta)^{\ell}-1\right)\leq \frac{1}{d^2\la^4}\sum_{\ell=1}^{\infty} \frac{(1+\eta)^{\ell}-1}{(1+c_0)^{\ell-1}},
\end{equation}
where the last inequality holds because $d|\la|\geq 1+c_0$ by \eqref{eq:dlambda:lower}. Thus, taking $\eta$ small enough so that $\eta <c_0$, we have that
\begin{equation}\label{eq:u:onestep:2}
 \left|\sum_{\ell=1}^{n} (d\la^3)^{\ell-1} \frac{x_{n+n_0\rev{^\prime}-\ell}^2}{x_{n+n_0\rev{^\prime}}^2}-\sum_{\ell=1}^{n}\frac{(d\la^3)^{\ell-1}}{(d\la^2)^{2\ell}}\right|\leq \frac{1}{d^2\la^4}g(\eta),~~\textnormal{where}~~ g(\eta):=(1+c_0)\left(\frac{1+\eta}{c_0-\eta}-\frac{1}{c_0}\right).
\end{equation}
Note that $\eta \to g(\eta)$ is continuous for $\eta <c_0$ and $g(0)=0$. Hence, we can take $\eta\equiv \eta(q,\eps)$ to be small enough so that
\begin{equation*}
    \frac{6(q-2)}{q-1}\frac{1}{d^2}\E\binom{\gamma}{2}g(\eta)\leq \frac{\eps}{2},
\end{equation*}
since $\frac{1}{d^2}\E \binom{\gamma}{2}\lesssim 1$ by Assumption \ref{assumption:unif:tail}. Now, observe that $\sum_{\ell=1}^{n}\frac{(d\la^3)^{\ell-1}}{(d\la^2)^{2\ell}}=\frac{1}{d\la^3(d\la-1)}+o_n(1)$ holds in the left hand side of \eqref{eq:u:onestep:2}. Moreover, \rev{
since $n_0^\prime=n_0+5$, Lemma~\ref{lem:u:v:w:apriori} shows that the term $(d\la^3)^{n}u_{n_0^\prime}$ in \eqref{eq:u:onestep:0} is bounded by
\begin{equation}\label{eq:power:u:neg}
    |(d\la^3)^{n}u_{n_0^\prime}|\leq C_q (d|\la|^3)^{n}x_{n_0^\prime}^2\leq C_q \left(\frac{1}{d|\la|(1-\eta)^2}\right)^{n}x_{n+n_0^\prime}^2\,,
\end{equation}
where the last inequality holds by \eqref{eq:x:onestep}. Since $d|\la|\geq 1+c_0$, by taking $\eta$ small enough, we have $|(d\la^3)^{n}u_{n_0^\prime}|=o_n(1)x_{n+n_0^\prime}^2$.
}
Therefore, \eqref{eq:u:onestep:0}, \eqref{eq:u:onestep:1}, and \eqref{eq:u:onestep:2} conclude the proof of \eqref{eq:u:exact}.

We proceed in a similar manner to show the estimate \eqref{eq:w:exact} for $w_n$: iterating the equation \eqref{eq:w:iter} in Lemma \ref{lem:w:second:order} \rev{from $n_0^\prime \equiv n_0+5$} gives
\begin{equation}\label{eq:w:onestep:0}
\begin{split}
      &\left|w_{n+n_0\rev{^\prime}}-\left((d\la^4)^{n} w_{n_0\rev{^\prime}}- \frac{2(q+1)}{(q-1)^2}\E\binom{\gamma}{2}\la^4 \sum_{\ell=1}^{n} (d\la^4)^{\ell-1} \cdot x_{n+n_0\rev{^\prime}-\ell}^2\right)\right|\\
      &\leq C_q \sum_{\ell=1}^{n}(d\la^4)^{\ell-1}\cdot x_{n+n_0\rev{^\prime}-\ell}^3.
\end{split}
\end{equation}
For the sum in the right hand side, we can use the previous estimates \eqref{eq:u:onestep:0:5} and \eqref{eq:u:onestep:1} since $d\la^4\leq d|\la|^3$. For the term $(d\la^4)^{n} w_{n_0\rev{^\prime}}$, we have that $(d\la^4)^{n} w_{n_0\rev{^\prime}}=o_n(1)\cdot x_{n+n_0\rev{^\prime}}^2$ \rev{by the same argument as done in~\eqref{eq:power:u:neg}.} For the sum in the left hand side of \eqref{eq:w:onestep:0}, we have
\begin{equation*}
\left|\sum_{\ell=1}^{n} (d\la^4)^{\ell-1} \frac{x_{n+n_0\rev{^\prime}-\ell}^2}{x_{n+n_0\rev{^\prime}}^2}-\sum_{\ell=1}^{n}\frac{(d\la^4)^{\ell-1}}{(d\la^2)^{2\ell}}\right|
    \leq \sum_{\ell=1}^{n} (d\la^4)^{\ell-1}\left|\frac{x_{n+n_0\rev{^\prime}-\ell}^2}{x_{n+n_0}^2}-\frac{1}{(d\la^2)^{2\ell}}\right|.
\end{equation*}
Since $d\la^4\leq d|\la|^3$, we can use the previous estimate \eqref{eq:u:onestep:1:5} and \eqref{eq:u:onestep:2} to have
\begin{equation}\label{eq:w:onestep:2}
    \left|\sum_{\ell=1}^{n} (d\la^4)^{\ell-1} \frac{x_{n+n_0-\ell}^2}{x_{n+n_0}^2}-\sum_{\ell=1}^{n}\frac{(d\la^4)^{\ell-1}}{(d\la^2)^{2\ell}}\right|\leq \frac{1}{d^2\la^4}g(\eta).
\end{equation}
Note that $\sum_{\ell=1}^{n}\frac{(d\la^3)^{\ell-1}}{(d\la^2)^{2\ell}}=\frac{1}{d(d-1)\la^4}+o_n(1)$ holds. Therefore, having \eqref{eq:w:onestep:0} and \eqref{eq:w:onestep:2} in hand, we can repeat the previous argument to conclude the proof of \eqref{eq:w:exact}.
\end{proof}
\subsection{Proof of Theorem \ref{thm:four:antiferro}}
By the estimates of $\Xi_1, \Xi_2$ in Lemma \ref{lem:xi1} and Lemma \ref{lem:xi2}, and the estimates of $u_n$ and $w_n$ in Lemma \ref{lem:exact:u:w}, the following proposition is immediate.
\begin{prop}\label{prop:final}
For any $\eps>0$, there exists a positive constant $\delta(q,\eps)>0$ and a positive integer $N(q,\eps)$, which only depend on $q$ and $\eps$, such that if \rev{$x_{n_0}\leq \delta(q,\eps)$ for some $n_0\geq 1$ and $n$ is large enough so that $n\geq n_0+5$ and $n\geq N(q,\eps)$ holds,} then
\begin{equation}\label{eq:f:q}
\begin{split}
   &\left|x_{n+1}-f_q(x_n,d,\la)\right|\leq \eps x_n^3\quad\textnormal{where}\\
   &f_q(x_n,d,\la):= d\la^2 x_n+\E\binom{\gamma}{2}\la^4 \frac{q(q-4)}{q-1}x_n^2+\E\binom{\gamma}{3}\la^6 \frac{q^2(q^2-18q+42)}{(q-1)^2}x_n^3\\
   &\qquad\qquad\qquad\quad\quad+\left(\E\binom{\gamma}{2}\right)^2 \la^6\left(\frac{4q^2(q+1)}{(q-1)^2}\frac{1}{d(d-1)}-\frac{24q^2(q-2)}{(q-1)^2}\frac{1}{d(d\la-1)}\right)x_n^3
\end{split}
\end{equation}
\end{prop}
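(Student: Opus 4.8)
The proof is a direct assembly of the three preceding lemmas, so the plan is short. Fix $\eps>0$. First I would recall from \eqref{eq:def:xi} that $x_{n+1}=\Xi_1+\Xi_2$ and add the expansions of Lemma~\ref{lem:xi1} and Lemma~\ref{lem:xi2}. The two $\E\binom{\gamma}{3}\la^6 x_n^3$ contributions merge through the elementary identity $(q^2-21q+33)+(3q+9)=q^2-18q+42$, producing the coefficient $\tfrac{q^2(q^2-18q+42)}{(q-1)^2}$ that appears in $f_q$. After this step the only terms still involving $u_n$ or $w_n$ are $-\tfrac{4q^2}{q-1}\E\binom{\gamma}{2}\la^5\, x_n u_n$ and $-2q^2\E\binom{\gamma}{2}\la^6\, x_n w_n$, while everything else is already in the stated form up to an error $O_q(x_n^4)$; here I would also invoke the a priori bounds $|u_n|\vee|w_n|=O_q(x_n^2)$ from Lemma~\ref{lem:u:v:w:apriori} to confirm that no further $x_n^3$ contributions were dropped.

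Second, I would substitute the second order forms of $u_n$ and $w_n$ from Lemma~\ref{lem:exact:u:w}, namely $u_n=\tfrac{6(q-2)}{q-1}\E\binom{\gamma}{2}\tfrac{\la}{d(d\la-1)}x_n^2+O_q(x_n^3)+O_q(\eps x_n^2)$ and $w_n=-\tfrac{2(q+1)}{(q-1)^2}\E\binom{\gamma}{2}\tfrac{1}{d(d-1)}x_n^2+O_q(x_n^3)+O_q(\eps x_n^2)$, valid once $\limsup_n x_n$ is small and $n\ge N(q,\eps)$. Multiplying by $x_n$ and by the prefactors above turns the leading pieces into the two $\big(\E\binom{\gamma}{2}\big)^2\la^6 x_n^3$ terms of $f_q$, the $u_n$ contribution supplying $-\tfrac{24q^2(q-2)}{(q-1)^2}\tfrac{1}{d(d\la-1)}$ and the $w_n$ contribution supplying $+\tfrac{4q^2(q+1)}{(q-1)^2}\tfrac{1}{d(d-1)}$, while the remainders in $u_n,w_n$ are promoted by the extra factor $x_n$ to $O_q(x_n^4)+O_q(\eps x_n^3)$. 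Collecting everything then gives $x_{n+1}=f_q(x_n,d,\la)+O_q(x_n^4)+O_q(\eps x_n^3)$.

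The last step is pure bookkeeping. To see that the prefactors multiplying $u_n$, $w_n$, and the $\big(\E\binom{\gamma}{2}\big)^2\la^6$ terms are all $O_q(1)$ I would use Assumption~\ref{assumption:unif:tail} with $d\la^2\le1$ and $|\la|\le1$ to get $\E\binom{\gamma}{2}\la^4,\ \E\binom{\gamma}{2}\la^5,\ \E\binom{\gamma}{3}\la^6\le K(1)$, and the lower bound \eqref{eq:dlambda:lower} (which forces $d|\la|>1+c_0$, hence $|d\la-1|\ge c_0$) to control the factors $\tfrac1{d(d\la-1)}$. Then, running Lemma~\ref{lem:exact:u:w} with a rescaled accuracy $\eps/C_q$ in place of $\eps$, so that the $O_q(\eps x_n^2)$ pieces contribute at most $\tfrac{\eps}{2}x_n^3$ after the factor $x_n$, and shrinking $\delta(q,\eps)$ so that $x_n\le\delta$ makes every $O_q(x_n^4)$ remainder at most $\tfrac{\eps}{2}x_n^3$ (and also at most the thresholds of Lemmas~\ref{lem:xi1}, \ref{lem:xi2}, \ref{lem:exact:u:w}), with $N(q,\eps)$ taken from Lemma~\ref{lem:exact:u:w}, yields $|x_{n+1}-f_q(x_n,d,\la)|\le\eps x_n^3$. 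I do not expect a genuine obstacle; the one place requiring care is this error accounting, in particular absorbing the $x_n^4$ remainders into $\eps x_n^3$, which is legitimate precisely because $\delta$ is allowed to depend on $\eps$.
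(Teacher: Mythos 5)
Your proposal is correct and matches the paper's own (one-line) proof, which simply declares the proposition immediate from Lemmas~\ref{lem:xi1}, \ref{lem:xi2}, and \ref{lem:exact:u:w}; your coefficient bookkeeping, including the merge $(q^2-21q+33)+(3q+9)=q^2-18q+42$ and the substitutions producing the two $\big(\E\binom{\gamma}{2}\big)^2\la^6$ terms, is exactly the intended assembly, and you correctly read the term in \eqref{eq:lem:xi1} as $-\tfrac{4q^2}{q-1}\E\binom{\gamma}{2}\la^5 x_n u_n$ despite the missing sign in that display. The $\eps$/$\delta$ error accounting you describe (rescaling the accuracy in Lemma~\ref{lem:exact:u:w} and shrinking $\delta$ to absorb $O_q(x_n^4)$ into $\eps x_n^3$, using $d|\la|>1+c_0$ to control $\tfrac1{d(d\la-1)}$) is precisely what makes the statement follow.
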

With Proposition \ref{prop:final} and Lemma \ref{lem:drop:small} in hand, we now prove Theorem \ref{thm:four:antiferro}.
\begin{proof}[Proof of Theorem \ref{thm:four:antiferro}]
We first consider the case where $q=4$. Throughout, we fix $d$ such that $1<d<d^\star$. A direct calculation shows that $f_4(\cdot)$ in \eqref{eq:f:q} equals
\begin{equation}\label{eq:def:g}
\begin{split}
    &f_4(x_n,d, \la)=d\la^2 x_n+g_4(d,\la)x_n^3,\quad\textnormal{where}\\
    &g_4(d,\la):=\frac{16}{9}\la^6\left(\left(\E[\gamma(\gamma-1)]\right)^2\left(\frac{5}{d(d-1)}-\frac{12}{d(d\la-1)}\right)-\frac{7}{3}\E[\ga(\ga-1)(\ga-2)]\right).
\end{split}
\end{equation}
Observe that $d^\star$ in \eqref{eq:def:d:star} is defined so that 
\begin{equation*}
    d<d^\star \implies g_4(d,-d^{-1/2})>0.
\end{equation*}
That is, at $\la= -d^{-1/2}$, the coefficient of $x_n^3$ in front of $f_4(x_n,d,\la)$ is positive.

Now, we show that there exists some $-d^{-1/2}<\la<0$ such that there is reconstruction at $\la$. That is, $\lim_{n\to\infty} x_n>0$ (cf. Lemma \ref{lem:equiv:xn:nonreconstruct}). To this end, fix $\kappa\equiv \kappa(d)<d^{1/2}$ and consider the constant $c\equiv c(4,\kappa,d)$ in Lemma \ref{lem:drop:small}. Then, consider
\begin{equation}\label{eq:drop:small:conseq}
    \eps(d):= \frac{1}{3}g_4(d,-d^{-1/2})>0,\qquad \eta(d):= \rev{\min\left(\frac{3}{4}c^{N\left(4,\eps(d)\right)}, c^{5}\delta\left(4,\eps(d)\right)\right)}>0,
\end{equation}
where \rev{$\delta(q,\eps)$ and }$N(q,\eps)$ are from Proposition \ref{prop:final}. Then, because $x_0=\frac{q-1}{q}=\frac{3}{4}$, it follows from Lemma \ref{lem:drop:small} that
\begin{equation}\label{eq:x:n:lower:1}
    x_n\geq \eta(d)\quad\textnormal{for all}\quad n\leq N\left(4, \eps(d)\right).
\end{equation}
Now, take $\la_0\equiv\la_0(d)$ so that $-d^{-1/2}<\la_0<-\kappa$ and it also satisfies
\begin{equation}\label{eq:def:lambda:0}
    d\la_0^2 +\left(g(d,\la_0)-\eps(d)\right)\eta(d)^2\geq 1,\quad\textnormal{and}\quad g(d,\la_0)>\eps(d).
\end{equation}
Such $\la_0$ exists by continuity \rev{since $\eps(d)\equiv g_4(d,-d^{-1/2})/3>0$}. Suppose by contradiction that there is nonreconstruction at $\la_0$, i.e. $\lim_{n\to\infty} x_n=0$. \rev{Then, let $n_0\equiv \inf\{n\geq 0:x_n\leq \delta\}$ is well-defined. Note that since $(x_n)_{n\geq 0}$ is non-increasing by Lemma~\ref{lem:equiv:xn:nonreconstruct} and $x_{n+1}\geq cx_n$ holds by Lemma~\ref{lem:drop:small}, we have that
\begin{equation}\label{eq:x:n:lower:2}
   x_n\geq \eta(d)\quad\textnormal{for all}\quad n\leq n_0+5.
\end{equation}
}
Furthermore, it follows from Proposition \ref{prop:final} that
\begin{equation*}
    x_{n+1}\geq \Big(d\la_0^2+\big(g(d,\la_0)-\eps(d)\big)x_n^2\Big) x_n\quad\textnormal{for all}\quad n\geq \rev{\max\Big(n_0+5, N\left(4, \eps(d)\right)\Big)}.
\end{equation*}
\rev{
Meanwhile, by \eqref{eq:x:n:lower:1} and \eqref{eq:x:n:lower:2}, $x_n\geq \eta(d)$ holds for $n\leq \max\big(n_0+5, N(4,\eps(d))\big)$. Thus, combining the inequality above with~\eqref{eq:def:lambda:0} shows that $x_n\geq \eta(d)$ for all $n\geq 0$.} This contradicts $\lim_{n\to\infty} x_n=0$, thus we conclude that there is reconstruction at $\la_0>-d^{-1/2}$.

Next, we consider the case $q\geq 5$. Note that Proposition \ref{prop:final} implies the following. For any $\eps>0$, there exist $\delta(q,\eps)>0$ and $N(q,\eps)$ such that if \rev{$x_{n_0}\leq \delta(q,\eps)$ and $n$ is large enough so that $n\geq n_0+5$ }and $n\geq N(q,\eps)$ hold, then
\begin{equation}\label{eq:q:geq:5}
\begin{split}
    \left|x_{n+1}-\left(d\la^2 x_n+ g_q(d,\la) x_n^2\right) \right|\leq \eps x_n^2,\quad\textnormal{where}\quad g_q(d,\la):=\E \binom{\ga}{2}\la^4 \frac{q(q-4)}{q-1}.
\end{split}
\end{equation}
Observe that $g_q(d,\la)>0$ holds for $q\geq 5$ and $\la \neq 0$ regardless of $d>1$:
\begin{equation}\label{eq:q:geq:5:1}
    g_q(d,\la)\geq \frac{q(q-4)}{q-1}\binom{d}{2}\la^4>0,
\end{equation}
where the inequality is due to Cauchy-Schwartz and $\E \ga =d$. Having \eqref{eq:q:geq:5} and \eqref{eq:q:geq:5:1} in hand, the previous argument for $q=4$ case can be repeated to show that $\la^{+}(q,d)<d^{1/2}$ and $\la^{-}(q,d)>-d^{-1/2}$ when $q\ge 5$.
\end{proof}

\section{Normal approximation for large degrees}
\label{sec:clt}
In this section, we prove Theorem \ref{thm:KS:tight}. Throughout, we assume that $\{\mu_d\}_{d>1}$ satisfies Assumptions \ref{assumption:unif:tail} and \ref{assumption:tight}. We begin with the necessary notations: \rev{let} $\psi: \R^q \to \R$ \rev{be} 
\begin{equation*}
    \psi(w_1,...,w_q)=\frac{e^{w_1}}{\sum_{i=1}^{q}e^{w_i}}.
\end{equation*}
\rev{Let} $U_{ij}, i\leq q, j \leq \ga$ \rev{be} the random variables \rev{defined by}
\begin{equation}\label{eq:def:U}
    U_{ij}:= \log \left(1+\la q \left(Y_{ij}-\frac{1}{q}\right)\right)
\end{equation}
and denote $U_j=(U_{1,j},....,U_{qj})\in \R^q$. Then, by the expression $X^{+}(n+1)=\frac{Z_1}{\sumi Z_i}$ in \eqref{eq:key:recursion}, we have
\begin{equation}\label{eq:x:alter}
    x_{n+1}=\E \bigg[\psi\Big(\sumj U_j\Big)\bigg]-\frac{1}{q}.
\end{equation}
To perform the normal approximation of $\sumj U_j$, we first estimate the means and covariances of $U_{ij}$'s. Recall that we denoted $T_j$ \rev{as} the $j$'th subtree of $T$ for $1\leq j \le \ga$, and $\E_{\sig}$ \rev{as} the conditional expectation w.r.t. $T$.
\begin{lemma}\label{lem:U:mo}
There exists constant $C_q>0$ and $d\equiv d(q)$ such that when $d>d(q)$, the following holds almost surely for $1\leq j \leq \ga$ conditional on the tree $T$. For $i\geq 2$,
\begin{equation}\label{eq:lem:U:mo:1}
    \left|\E_{\sig}U_{1j}-\frac{q}{2}\la^2  \cdot x_n(T_j)\right| \leq C_q d^{-3/2},\quad \big|\E_{\sig}U_{ij}+\left(\frac{q}{2}+\frac{q}{q-1}\right)\la^2 \cdot x_n(T_j)\big| \leq C_q d^{-3/2}.
\end{equation}
For $1\leq i_1<i_2\leq q$, we have
\begin{equation}\label{eq:lem:U:mo:2}
    \left|\Var_{\sig}(U_{i_1 j})-q\la^2  \cdot x_n(T_j)\right| \leq C_q d^{-3/2},\quad \big|\Cov_{\sig}(U_{i_1j}, U_{i_2 j})+\frac{q}{q-1}\la^2 \cdot x_n(T_j)\big| \leq C_q d^{-3/2}.
\end{equation}
\end{lemma}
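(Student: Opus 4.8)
The plan is to compute the first two moments of $U_{ij} = \log(1 + \lambda q(Y_{ij} - 1/q))$ by Taylor expanding the logarithm and then invoking the moment estimates for $Y_{i1}$ already established. First I would fix $j$ and work conditionally on the tree $T$; note that conditional on $T$, the subtree $T_j$ is fixed, so the quantities $x_n(T_j), z_n(T_j)$ etc.\ are deterministic, and the analogue of Proposition \ref{prop:Y:dist}-(c) at the level of conditional expectations $\E_\sig$ lets us express $\E_\sig$ of polynomials in $(Y_{ij} - 1/q)$ in terms of $x_n(T_j), u_n(T_j) := z_n(T_j) - x_n(T_j)/q, v_n(T_j), w_n(T_j)$ exactly as in Lemma \ref{lem:Y:moment}, but with $(x_n, u_n, v_n, w_n)$ replaced by their tree-conditional versions and with no outer expectation over $\gamma$. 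The key point I would use throughout is the a priori bound $|u_n(T_j)| \vee |v_n(T_j)| \vee |w_n(T_j)| \le C_q x_n(T_j)^2$ together with $x_n(T_j) \le \frac{q-1}{q}$, so that all ``$u,v,w$'' contributions and all third-and-higher moments of $Y_{ij}$ are $O_q(x_n(T_j)^2) = O_q(d^{-3})$ once we know $x_n(T_j) = O_q(d^{-1})$ — which itself follows from iterating Lemma \ref{lem:recurse:first:order} (since $d\lambda^2 \le 1$ forces $x_{n+1}(T) \lesssim d\lambda^2 x_n(T) \le x_n(T)$ and in fact $x_n(T_j) \lesssim d^{-1}$ after the first step because $x_1(T_j) = O(\lambda^2) = O(d^{-1})$; I should double-check the precise bookkeeping here, but morally $x_n(T_j)$ is of order $\lambda^2 \asymp d^{-1}$).

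Next I would carry out the Taylor expansion. Write $\varepsilon_{ij} := \lambda q(Y_{ij} - 1/q)$, so $|\varepsilon_{ij}| \le \lambda q \cdot \frac{q-1}{q} = (q-1)|\lambda| \le (q-1)d^{-1/2}$, which is small for large $d$; hence $\log(1+\varepsilon_{ij}) = \varepsilon_{ij} - \tfrac12 \varepsilon_{ij}^2 + O(\varepsilon_{ij}^3)$ with a uniformly bounded error constant. Then
\[
\E_\sig U_{ij} = \lambda q \, \E_\sig(Y_{ij} - \tfrac1q) - \tfrac{\lambda^2 q^2}{2}\E_\sig(Y_{ij}-\tfrac1q)^2 + O_q\!\big(|\lambda|^3 \E_\sig|Y_{ij}-\tfrac1q|^3\big).
\]
For $i = 1$: by Lemma \ref{lem:Y:moment} (conditional version) $\E_\sig(Y_{1j} - 1/q) = \lambda x_n(T_j)$ and $\E_\sig(Y_{1j}-1/q)^2 = x_n(T_j)/q + \lambda u_n(T_j) = x_n(T_j)/q + O_q(d^{-1} x_n(T_j)^2)$, while the third-moment term is $O_q(|\lambda|^3 x_n(T_j)) = O_q(d^{-3/2} x_n(T_j))$. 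Collecting: $\E_\sig U_{1j} = \lambda^2 x_n(T_j) - \frac{\lambda^2 q}{2} x_n(T_j) + O_q(d^{-3/2})$; but $\lambda^2 - \frac{\lambda^2 q}{2}$ is not $\frac{q}{2}\lambda^2$, so I have the sign/coefficient to reconcile — the correct statement has $+\frac{q}{2}\lambda^2 x_n(T_j)$, meaning the leading term should be $-\tfrac{\lambda^2 q^2}{2}\E_\sig(Y_{1j}-1/q)^2$ contributing $-\frac{\lambda^2 q}{2} x_n(T_j)$ and the linear term contributing $+\lambda^2 x_n(T_j)$, total $(1 - q/2)\lambda^2 x_n(T_j)$; I suspect the paper's normalization absorbs a sign and I will need to track it carefully, but the \emph{mechanics} are: linear term gives $O(d^{-1})\cdot x_n$, quadratic term gives the stated $\Theta(\lambda^2 x_n)$ piece, cubic and beyond are swallowed into $O_q(d^{-3/2})$ using $x_n(T_j) = O_q(d^{-1})$ so that even $\lambda^2 x_n(T_j) = O_q(d^{-2})$ but the bare $O_q(d^{-3/2})$ absorbs the genuinely small remainders and, crucially, absorbs the linear term $\lambda^2 x_n(T_j) \le \lambda^2 \cdot \frac{q-1}{q} = O_q(d^{-1})$ — wait, that is only $O_q(d^{-1})$, not $O_q(d^{-3/2})$; so the linear term must be kept and combined with the quadratic term, confirming the coefficient $(1-q/2)$ versus the stated $q/2$ requires resolving a sign convention. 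I will resolve this by rechecking Lemma \ref{lem:Y:moment}'s signs against the definition $U_{ij} = \log(1+\lambda q(Y_{ij}-1/q))$ and the antiferromagnetic sign of $\lambda$; the upshot is that the statement is ``the leading $\Theta(\lambda^2 x_n(T_j))$ term plus $O_q(d^{-3/2})$'', and the numerical coefficients $q/2$, $q/2 + q/(q-1)$, $q$, $q/(q-1)$ come out of the second moments in \eqref{eq:Y:secondmo} after discarding the $u_n$ corrections.

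For the covariances, I would expand similarly: $U_{i_1j} U_{i_2 j} = \varepsilon_{i_1 j}\varepsilon_{i_2 j} + (\text{cubic and higher})$, so $\E_\sig(U_{i_1 j} U_{i_2 j}) = \lambda^2 q^2 \E_\sig(Y_{i_1 j}-\tfrac1q)(Y_{i_2 j}-\tfrac1q) + O_q(d^{-3/2})$, and $\Var_\sig(U_{i_1 j}) = \lambda^2 q^2 \E_\sig(Y_{i_1 j}-\tfrac1q)^2 + O_q(d^{-3/2})$ (the $-(\E_\sig U_{i_1 j})^2$ term is $O_q(d^{-2})$ and absorbed); then plug in \eqref{eq:Y:secondmo}, discard $\lambda u_n(T_j) = O_q(d^{-1} x_n(T_j)^2)$ terms, and read off the coefficients. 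Throughout, the conditional-on-$T$ analogues of Proposition \ref{prop:Y:dist} and Lemmas \ref{lem:basic}, \ref{lem:Y:moment}, \ref{lem:u:v:w:apriori}, \ref{lem:recurse:first:order} hold because those proofs were really statements about $\E_\sig$ on a fixed tree with an extra outer expectation over $\gamma$ and over the subtree laws. \textbf{The main obstacle} I anticipate is twofold: (i) making rigorous and uniform (in $n$, and a.s.\ in $T$) the bound $x_n(T_j) = O_q(d^{-1})$ on which the whole error analysis rests — this needs Lemma \ref{lem:recurse:first:order} plus the observation that $x_0(T_j) = \frac{q-1}{q}$ but $x_1(T_j) \lesssim \lambda^2 \asymp d^{-1}$ and the recursion is contractive at that scale; and (ii) bookkeeping the Taylor remainder uniformly, i.e.\ checking that $|\log(1+\varepsilon) - \varepsilon + \varepsilon^2/2| \le C|\varepsilon|^3$ for $|\varepsilon| \le (q-1)/\sqrt{d}$ with $d > d(q)$ large enough that $(q-1)/\sqrt{d} < 1/2$, and then passing $\E_\sig$ through using $\E_\sig|Y_{ij}-1/q|^3 \le \frac{q-1}{q}\E_\sig(Y_{ij}-1/q)^2 = O_q(x_n(T_j))$. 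Neither is deep, but both require care to get the clean $O_q(d^{-3/2})$ rather than a weaker bound.
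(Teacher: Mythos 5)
Your overall strategy is the paper's: condition on $T$, note that given $\sig_{u_j}$ the variable $Y_{ij}$ is distributed as $X^{+}(n,T_j)$ or $X^{-}(n,T_j)$, compute the conditional analogues of Lemma \ref{lem:Y:moment}, and Taylor expand $\log(1+x)$ to second order with $x=\la q(Y_{ij}-\tfrac1q)$, $|x|\le (q-1)d^{-1/2}$. But there are two genuine problems in your write-up. First, the ``coefficient to reconcile'' is not a sign convention issue; it is an arithmetic slip. The linear term is $\la q\,\E_{\sig}(Y_{1j}-\tfrac1q)=\la q\cdot \la x_n(T_j)=q\la^2 x_n(T_j)$, not $\la^2 x_n(T_j)$: you dropped the factor $q$ coming from the prefactor $\la q$. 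With it, the first two orders give $q\la^2 x_n(T_j)-\tfrac{q}{2}\la^2 x_n(T_j)-\tfrac{q^2}{2}\la^3\bigl(z_n(T_j)-\tfrac{x_n(T_j)}{q}\bigr)=\tfrac{q}{2}\la^2 x_n(T_j)+O_q(d^{-3/2})$, exactly the stated coefficient, and likewise for the other entries of \eqref{eq:lem:U:mo:1}--\eqref{eq:lem:U:mo:2}. Since verifying these constants is the entire quantitative content of the lemma, leaving this unresolved is a real gap, even though the fix is one line.

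Second, your error analysis rests on an almost-sure bound $x_n(T_j)=O_q(d^{-1})$ (and tree-wise bounds $|u_n(T_j)|\vee|v_n(T_j)|\vee|w_n(T_j)|\le C_q x_n(T_j)^2$) which is neither available nor true: Lemmas \ref{lem:recurse:first:order} and \ref{lem:u:v:w:apriori} are statements about the averaged quantities $x_n,u_n,\dots$ (and the latter needs $\limsup_n x_n\le\delta$), not about a fixed realization of $T$, and a Galton--Watson tree has, with positive probability, a huge degree at $u_j$, in which case $x_1(T_j)$ is close to $\tfrac{q-1}{q}$ rather than $O(d^{-1})$. This machinery is also unnecessary. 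Since the target error is only $C_q d^{-3/2}$ and $|\la|^3\le d^{-3/2}$, the crude deterministic bounds $|Y_{ij}-\tfrac1q|\le 1$ and $0\le z_n(T_j)\le x_n(T_j)\le 1$ (Lemma \ref{lem:basic}) already give $\E_{\sig}|\la q(Y_{ij}-\tfrac1q)|^3\le C_q d^{-3/2}$ for the Taylor remainder, $|\la|^3\,|z_n(T_j)-x_n(T_j)/q|\le d^{-3/2}$ for the $u$-type corrections, and $(\E_{\sig}U_{ij})^2=O_q(\la^4)=O_q(d^{-2})$ for the centering term in the variance. That is how the paper closes the argument; if you replace your a.s.\ $O(d^{-1})$ bookkeeping by these trivial bounds and fix the dropped factor of $q$, your proof becomes the paper's proof.
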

\begin{proof}
Note that conditional on $T$ and $\sig_{u_j}=i^\prime$, $Y_{ij}$ is distributed as $X^{+}(n,T_j)$ when $i^\prime = i$ and $X^{-}(n,T_j)$ when $i^\prime \neq i$. Thus, we can calculate
\begin{equation}\label{eq:conditional:mo:1}
\begin{split}
    \E_{\sig}\left(Y_{1j}-\frac{1}{q}\right)
    &=\P_{\sig}(\sig_{u_j}^1=1)\E_{\sig}\left(X^{+}(n,T_j)-\frac{1}{q}\right)+\left(1-\P_{\sig}(\sig_{u_j}^1=1)\right)\E_{\sig}\left(X^{-}(n,T_j)-\frac{1}{q}\right)\\
    &=\la x_n(T_j).
\end{split}
\end{equation}
Similarly, we can calculate the other conditional moments of $Y_{ij}$'s which are the conditional analogues for Lemma \ref{lem:Y:moment}: for all distinct pairs $2\leq i_1, i_2\leq q$, we have
\begin{equation}\label{eq:conditional:mo:2}
    \begin{split}
    &\E_{\sig}\left(Y_{i_1 j}-\frac{1}{q}\right)=\frac{-\la x_n(T_j)}{q-1},\quad\quad\E_{\sig}\left(Y_{1j}-\frac{1}{q}\right)^2=\frac{x_n(T_j)}{q}+\la\left(z_n(T_j)-\frac{x_n(T_j)}{q}\right),\\
    &\E_{\sig}\left(Y_{i_1 j}-\frac{1}{q}\right)^2=\frac{x_n(T_j)}{q}-\frac{\la}{q-1}\left(z_n(T_j)-\frac{x_n(T_j)}{q}\right),\\
    &\E_{\sig}\left(Y_{1j}-\frac{1}{q}\right)\left(Y_{i_1 j}-\frac{1}{q}\right)
    =-\frac{x_n(T_j)}{q(q-1)}-\frac{\la}{q-1}\left(z_n(T_j)-\frac{x_n(T_j)}{q}\right),\\
    &\E_{\sig}\left(Y_{i_1 j}-\frac{1}{q}\right)\left(Y_{i_2 j}-\frac{1}{q} \right)=-\frac{x_n(T_j)}{q(q-1)}+\frac{2\la}{(q-1)(q-2)}\left(z_n(T_j)-\frac{x_n(T_j)}{q}\right).
    \end{split}
\end{equation}
Now, let $x=\la q (Y_{1j}-1/q)$. Since $0\leq Y_{1j}\leq 1$, it follows that $|x|\leq d^{-1/2}(q-1)$. Thus, for large enough $d\geq d(q)$, we can approximate $\left|\log(1+x)-(x-\frac{1}{2}x^2)\right| \leq |x|^3$, which gives
\begin{equation*}
    \left|\E_{\sig}U_{1j}-\left(\la q\E_{\sig}\left(Y_{1j}-\frac{1}{q}\right)-\frac{1}{2}\la^2 q^2\E_{\sig}\left(Y_{1j}-\frac{1}{q}\right)^2\right)\right|\leq \E_{\sig}|\la|^3 q^3\left|Y_{1j}-\frac{1}{q}\right|^3\le C_q d^{-3/2}.
\end{equation*}
Thus, we can use the identities in \eqref{eq:conditional:mo:1} and \eqref{eq:conditional:mo:2} to have that 
\begin{equation*}
    \E_{\sig}U_{1j}=\frac{q}{2}\la^2  \cdot x_n(T_j)-\frac{q^2}{2}\la^3 \left(z_n(T_j)-\frac{x_n(T_j)}{q}\right)+O_q(d^{-3/2})=\frac{q}{2}\la^2  \cdot x_n(T_j)+O_q(d^{-3/2}),
\end{equation*}
where the final equality holds since $|\la|^3\leq d^{-3/2}$ and $0\leq z_n(T_j)\leq x_n(T_j)\leq 1$ by Lemma \ref{lem:basic}. Other identities in \eqref{eq:lem:U:mo:1} and \eqref{eq:lem:U:mo:2} follow similarly.
\end{proof}
We now show that $(x_n(T_j))_{j\le \ga}$ appearing in Lemma \ref{lem:U:mo} is self-averaging under Assumption \ref{assumption:tight}.
\begin{lemma}\label{lem:xn}
For any $\eps>0$, there exists $d^\prime(\eps)>0$ such that if $d>d^\prime (\eps)$, then
\begin{equation*}
    \E\bigg|\frac{1}{d}\sum_{j=1}^{\ga}x_n(T_j)-x_n\bigg|<\eps,
\end{equation*}
where the expectation is with respect to $T\sim \GW(\mu_d)$.
\end{lemma}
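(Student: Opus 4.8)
The plan is to exploit the branching structure of $T\sim\GW(\mu_d)$: conditionally on the root degree $\ga$, the subtrees $T_1,\dots,T_\ga$ are i.i.d.\ copies of $T$, hence $x_n(T_1),\dots,x_n(T_\ga)$ are i.i.d.\ with common mean $\E[x_n(T_1)]=\E[x_n(T)]=x_n$ and, by Lemma~\ref{lem:basic}, with values in $[0,\tfrac{q-1}{q}]$ (this is also a special case of Proposition~\ref{prop:Y:dist}(b) applied to the tree functional $x_n(\cdot)$). Since $\E_{\mu_d}\ga=d$ by Assumption~\ref{assumption:unif:tail}, it follows that $\E\big[\tfrac1d\sum_{j\le\ga} x_n(T_j)\big]=x_n$ exactly, so the lemma asserts that this empirical average concentrates at its mean once $d$ is large, uniformly in $n$.

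I would split
\[
\frac1d\sum_{j=1}^\ga x_n(T_j)-x_n \;=\; \frac1d\sum_{j=1}^\ga\big(x_n(T_j)-x_n\big)\;+\;\Big(\frac{\ga}{d}-1\Big)x_n \;=:\; A+B.
\]
For $A$, conditioning on $\ga$ and using the conditional i.i.d.\ structure, $\E[A^2\mid\ga]=\ga\,d^{-2}\Var(x_n(T_1))\le \ga\,d^{-2}\big(\tfrac{q-1}{q}\big)^2$; taking expectations and using $\E\ga=d$ gives $\E[A^2]\le d^{-1}\big(\tfrac{q-1}{q}\big)^2$, so $\E|A|\le\tfrac{q-1}{q}\,d^{-1/2}$ by Cauchy--Schwarz. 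For $B$, the a priori bound $0\le x_n\le\tfrac{q-1}{q}$ from Lemma~\ref{lem:basic} gives $\E|B|\le\tfrac{q-1}{q}\,\E\big|\tfrac{\ga}{d}-1\big|$. Both bounds are uniform in $n$, so the whole problem reduces to the statement that $\E\big|\tfrac{\ga}{d}-1\big|\to0$ as $d\to\infty$; given $\eps$, choosing $d'(\eps)$ large enough that each of $\tfrac{q-1}{q}d^{-1/2}$ and $\tfrac{q-1}{q}\E\big|\tfrac{\ga}{d}-1\big|$ is below $\eps/2$ for $d>d'(\eps)$ then completes the proof, with $d'(\eps)$ depending only on $\eps$, $q$ and $K(\cdot)$.

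The concentration $\E\big|\tfrac{\ga}{d}-1\big|\to0$ is the heart of the matter, and this is exactly where Assumptions~\ref{assumption:unif:tail} and~\ref{assumption:tight} must be used. A convenient way to package it is the total-variance identity $\Var\big(\tfrac1d\sum_{j\le\ga}x_n(T_j)\big)=d^{-1}\Var(x_n(T_1))+x_n^2\,\Var(\ga/d)$, which shows that it suffices to know $\Var(\ga/d)\to0$ (Chebyshev then gives the $L^1$ bound directly, bypassing the $A/B$ split). To obtain $\Var(\ga/d)\to0$ I would combine the tightness of $\{\ga/d\}$ furnished by Assumption~\ref{assumption:tight} (so that $\ga/d$ lies in a fixed compact interval $[C_1(\eps),C_2(\eps)]$ off an event of probability $\le\eps$) with the uniform exponential moment $K(\theta)<\infty$ from Assumption~\ref{assumption:unif:tail} (which controls the contribution of that rare event to $\Var(\ga/d)$), together with the normalization $\E_{\mu_d}\ga=d$. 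I expect this degree-concentration estimate to be the only genuinely delicate point; the rest — the conditional-i.i.d.\ variance bound for $A$ and the uniform a priori bound $x_n\le\tfrac{q-1}{q}$ — is immediate from Lemma~\ref{lem:basic}, Proposition~\ref{prop:Y:dist}, and $\E_{\mu_d}\ga=d$.
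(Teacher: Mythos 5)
Your decomposition into the fluctuation term $A=\frac1d\sum_{j\le\ga}\big(x_n(T_j)-x_n\big)$ and the bias term $B=\big(\frac{\ga}{d}-1\big)x_n$ is correct, and your treatment of $A$ — conditional i.i.d.\ structure given $\ga$, $\E[A^2\mid\ga]=\frac{\ga}{d^2}\Var(x_n(T_1))$, then $\E\ga=d$ and Cauchy--Schwarz giving $\E|A|\le\frac{q-1}{q}d^{-1/2}$ uniformly in $n$ — is fine. This part is in fact cleaner than the paper's own proof, which conditions on the event $\{C_1d\le\ga\le C_2d\}$ from Assumption~\ref{assumption:tight} and then bounds $\E\big[\,\big|\frac1d\sum_{j\le D}x_n(T_j)-x_n\big|\,\big|\,\ga=D\big]$ by the conditional standard deviation alone: since $\E\big[\frac1d\sum_{j\le D}x_n(T_j)\,\big|\,\ga=D\big]=\frac{D}{d}x_n$, that step silently drops exactly your term $B$, the bias $\big|\frac{D}{d}-1\big|x_n$. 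So you have correctly isolated the real content of the lemma.

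The gap is in the step you yourself flag as the crux: the claim that $\E\big|\frac{\ga}{d}-1\big|\to0$ (equivalently $\Var(\ga/d)\to0$) follows from Assumptions~\ref{assumption:unif:tail} and~\ref{assumption:tight} together with $\E\ga=d$. As literally stated, those hypotheses do not imply any concentration of $\ga/d$ \emph{at the value $1$}: take $\mu_d$ putting mass $1/2$ on each of $d/2$ and $3d/2$; then $\E X_d=d$, $\E e^{\theta X_d/d}\le e^{3\theta/2}$ uniformly in $d$, and $X_d/d\in[1/2,3/2]$ almost surely, yet $\E|X_d/d-1|=1/2$ and $\Var(X_d/d)=1/4$ for every $d$. (For this offspring law the conclusion of Lemma~\ref{lem:xn} itself fails at $n=0$, where $x_0(T)\equiv\frac{q-1}{q}$ and $\frac1d\sum_{j\le\ga}x_0(T_j)=\frac{\ga}{d}\cdot\frac{q-1}{q}$.) Hence no argument "combining tightness, the exponential moment and the normalization" can close this step; what is genuinely needed is $X_d/d\pto1$ as $d\to\infty$, which together with the uniform integrability supplied by $K(\cdot)$ (e.g.\ $\E(X_d/d)^2\le 2K(1)$) yields the $L^1$ convergence and completes your proof. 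This stronger property holds in the Poisson case to which the result is applied, and is evidently the intended force of Assumption~\ref{assumption:tight}; but as written, your final step — like the corresponding step the paper elides — rests on an unproved degree-concentration claim, so the proposal is incomplete precisely there.
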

\begin{proof}
Fix arbitary $\eps>0$. By Assumption \ref{assumption:tight}, there exists $C_i\equiv C_i(\eps/2), i=1,2,$ and $d_0\equiv d_0(\eps/2)$ such that if $d\geq d_0$, then $C_1 d\le \ga \le C_2 d$ holds with probability at least $1-\frac{\eps}{2}$. Thus, we have
\begin{equation*}
    \E\bigg|\frac{1}{d}\sum_{j=1}^{\ga}x_n(T_j)-x_n\bigg|\leq \sup_{C_1 d\le D \le C_2 d}\E\Bigg[\bigg|\frac{1}{d}\sum_{j=1}^{\ga}x_n(T_j)-x_n\bigg|\,\,\Bigg|\,\, \ga =D\Bigg]+\frac{\eps}{2}.
\end{equation*}
Note that by definition of Galton-Watson trees $T\sim \GW(\mu_d)$, conditional on $\gamma=D$, $(T_j)_{j\le D}$ are i.i.d. with $T_j \stackrel{d}{=} T$. Hence, conditional on $\gamma=D$, $(x_n(T_j))_{j\le D}$ are i.i.d. with mean $\E x_n(T_j)= x_n$ and variance $\Var(x_n(T))$, whence by Cauchy-Scwartz inequality,
\begin{equation*}
    \E\Bigg[\bigg|\frac{1}{d}\sum_{j=1}^{\ga}x_n(T_j)-x_n\bigg|\,\,\Bigg|\,\, \ga =D\Bigg]\le \Var\left(\frac{1}{d}\sum_{j=1}^{D}x_n(T_j)\right)=\frac{D}{d^2}\Var(x_n(T)).
\end{equation*}
Since $0\leq x_n(T)\leq 1$ holds a.s. by Lemma \ref{lem:basic}, we have $\Var(x_n(T))\leq 1$. Thus, the two displays above altogether show
\begin{equation*}
    \E\bigg|\frac{1}{d}\sum_{j=1}^{\ga}x_n(T_j)-x_n\bigg|\leq \frac{C_2(\eps)}{d}+\frac{\eps}{2},
\end{equation*}
whence by taking $d^\prime(\eps)$ large enough, the right hand side is less than $\eps$ for any $d>d^\prime(\eps)$.
\end{proof}

The following normal approximation can be established using a classical Lindeberg method \cite{Lindeberg:22, Chatterjee06}. For completeness, we give a quantitative proof of Proposition \ref{prop:clt} in Section 1 of the Supplementary material\rev{.}
\begin{prop}\label{prop:clt}
Let $\phi:\R^{q}\to \R$ be a thrice differentiable and bounded function with bounded derivatives up to third order. Let $V_1,...,V_D\in \R^q$ be independent random vectors. Suppose there exists a deterministic vector $\mu \in \R^q$ and a matrix $\Sigma \in \R^{q\times q}$ such that the following holds: for some constant $C>0$, 
\begin{equation}\label{eq:clt:condition}
\begin{split}
    &\max\left(\Big\|\sum_{j=1}^{D}\E V_j - \mu\Big\|_{\infty},\Big\|\sum_{j=1}^{D}\Cov(V_j) -\Sigma \Big\|_{\infty} \right)\le \frac{C}{\sqrt{D}},\\
    &\max\left(\big\|\mu\big\|_{\infty}, \big\|\Sigma\big\|_{\infty}\right)\leq C,\quad\quad\max_{1\leq j\leq q}\big\|V_j\big\|_{\infty}\leq \frac{C}{\sqrt{D}},~~\textnormal{almost surely.}
\end{split}
\end{equation}
Then, for any $\eps>0$, there exists $D_0\equiv D_0(\eps, q,\phi,C)$ such that if $D>D_0$, then 
\begin{equation*}
    \bigg|\E\phi\bigg(\sum_{j=1}^{D} V_j\bigg)-\E \phi(W)\bigg|\leq \eps,
\end{equation*}
where $W\sim \normal(\mu,\Sigma)$ is a Gaussian vector with mean $\mu$ and covariance $\Sigma$. 
\end{prop}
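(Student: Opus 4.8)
The plan is to prove Proposition~\ref{prop:clt} by the classical Lindeberg replacement trick, carried out in two stages. \textbf{Stage one} swaps the independent summands $V_j$ one at a time for Gaussian vectors with exactly matched first and second moments, reducing the claim to comparing $\E\phi(\sum_j V_j)$ with $\E\phi(\tilde W)$ where $\tilde W\sim\normal(\tilde\mu,\tilde\Sigma)$, $\tilde\mu:=\sum_j\E V_j$ and $\tilde\Sigma:=\sum_j\Cov(V_j)$. \textbf{Stage two} compares $\normal(\tilde\mu,\tilde\Sigma)$ with the target $\normal(\mu,\Sigma)$, using only that the parameters are $O(D^{-1/2})$-close and bounded. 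Constants denoted $C_q$ or $C(q,\phi)$ are allowed to depend on $q$ and on sup-bounds for the derivatives of $\phi$ up to order three; the constant $C$ from the hypotheses is carried explicitly.

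\textbf{Stage one.} Pick $G_j\sim\normal(\E V_j,\Cov(V_j))$, $j\le D$, independent of one another and of $(V_j)_{j\le D}$, so $\sum_j G_j\sim\normal(\tilde\mu,\tilde\Sigma)$. Set $H_j:=\sum_{k\le j}V_k+\sum_{k>j}G_k$, so $H_0=\sum_jG_j$, $H_D=\sum_jV_j$, and write $H_j=W_j+V_j$, $H_{j-1}=W_j+G_j$ with $W_j:=\sum_{k<j}V_k+\sum_{k>j}G_k$ independent of $(V_j,G_j)$. A third-order Taylor expansion of $\phi$ about $W_j$ and conditioning on $W_j$ give
\[
\E\phi(H_j)-\E\phi(H_{j-1})=\E\big[\nabla\phi(W_j)\big]\cdot\big(\E V_j-\E G_j\big)+\tfrac12\,\E\,\tr\!\Big(\nabla^2\phi(W_j)\big(\E[V_jV_j^{\sT}]-\E[G_jG_j^{\sT}]\big)\Big)+\rho_j,
\]
with $|\rho_j|\le C_q\big(\E\norm{V_j}_\infty^3+\E\norm{G_j}_\infty^3\big)$. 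Since $\E G_j=\E V_j$ and $\E[G_jG_j^{\sT}]=\Cov(V_j)+\E V_j\,\E V_j^{\sT}=\E[V_jV_j^{\sT}]$, the first two terms on the right vanish. The hypothesis $\norm{V_j}_\infty\le C/\sqrt D$ a.s.\ gives $\E\norm{V_j}_\infty^3\le C^3D^{-3/2}$; as the entries of $\Cov(V_j)$ are then $O_q(C^2/D)$ and $\norm{\E V_j}_\infty\le C/\sqrt D$, a Gaussian moment bound gives $\E\norm{G_j}_\infty^3\le C_qC^3D^{-3/2}$ as well. Summing the telescoping identity over $j$ yields $\big|\E\phi(\sum_jV_j)-\E\phi(\sum_jG_j)\big|\le C(q,\phi)\,C^3\,D^{-1/2}$.

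\textbf{Stage two.} Both $\tilde\Sigma$ (a sum of covariance matrices) and $\Sigma$ are positive semidefinite, so realize $\tilde W\sim\normal(\tilde\mu,\tilde\Sigma)$ and $W\sim\normal(\mu,\Sigma)$ on a common $\xi\sim\normal(0,I_q)$ via $\tilde W=\tilde\mu+\tilde\Sigma^{1/2}\xi$ and $W=\mu+\Sigma^{1/2}\xi$ with principal square roots. By the mean value bound and the triangle inequality,
\[
\big|\E\phi(\tilde W)-\E\phi(W)\big|\le\|\nabla\phi\|_\infty\Big(\norm{\tilde\mu-\mu}_2+\big\|\tilde\Sigma^{1/2}-\Sigma^{1/2}\big\|_{\mathrm{op}}\,\E\norm{\xi}_2\Big).
\]
Here $\norm{\tilde\mu-\mu}_2\le\sqrt q\,\norm{\tilde\mu-\mu}_\infty\le\sqrt q\,CD^{-1/2}$, $\E\norm{\xi}_2\le\sqrt q$, and from $\norm{\tilde\Sigma-\Sigma}_{\mathrm{op}}\le q\norm{\tilde\Sigma-\Sigma}_\infty\le qCD^{-1/2}$ together with the operator H\"older-$\tfrac12$ inequality $\|A^{1/2}-B^{1/2}\|_{\mathrm{op}}\le\|A-B\|_{\mathrm{op}}^{1/2}$ for positive semidefinite $A,B$ we get $\|\tilde\Sigma^{1/2}-\Sigma^{1/2}\|_{\mathrm{op}}\le(qC)^{1/2}D^{-1/4}$, hence $|\E\phi(\tilde W)-\E\phi(W)|\le C(q,\phi)\,C^{1/2}D^{-1/4}$. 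Combining the two stages, $|\E\phi(\sum_jV_j)-\E\phi(W)|\le C(q,\phi,C)\,D^{-1/4}$, which is below any prescribed $\eps$ once $D>D_0(\eps,q,\phi,C)$.

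I expect the only genuinely delicate point to be Stage two: since $\Sigma-\tilde\Sigma$ need not be positive semidefinite, one cannot fold the parameter mismatch into an extra block of the Lindeberg telescoping and must instead compare the two Gaussians directly; coupling through a shared standard Gaussian and invoking the H\"older-$\tfrac12$ continuity of the matrix square root is what makes this work, at the harmless cost of a $D^{-1/4}$ instead of $D^{-1/2}$ rate. The remainder — vanishing of the first two Taylor terms by moment matching and the uniform third-order remainder bound — is routine given the a.s.\ bound $\norm{V_j}_\infty\le C/\sqrt D$.
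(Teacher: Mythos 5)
Your proof is correct and follows essentially the same route as the paper: a Lindeberg one-at-a-time swap with matched first and second moments compares $\E\phi(\sum_j V_j)$ to $\E\phi$ of the sum of moment-matched Gaussians, and the remaining Gaussian-to-Gaussian comparison is handled by coupling both through a common standard normal. The only difference is minor: where the paper invokes continuity of $A\mapsto A^{1/2}$ on bounded positive semidefinite matrices (a soft, non-quantitative step), you use the bound $\|A^{1/2}-B^{1/2}\|_{\mathrm{op}}\le\|A-B\|_{\mathrm{op}}^{1/2}$, which yields an explicit $D^{-1/4}$ rate but changes nothing essential.
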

Let $\mu^{\dagger}\equiv \mu^{\dagger}(q)\equiv (\mu^{\dagger}_i)_{i\le q} \in \R^q$ be 
\begin{equation}\label{eq:def:mu:dagger}
    \mu^{\dagger}_i=
    \begin{cases}
    \frac{q}{2}& \qquad\qquad i=1,\\
    -\frac{q}{2}-\frac{q}{q-1} &\qquad\qquad i\neq 1, 
    \end{cases}
\end{equation}
and let $\Sigma^{\dagger}\equiv \Sigma^{\dagger}(q)\equiv (\Sigma^{\dagger}_{ij})_{1\leq i,j\leq q}\in \R^{q\times q}$ be
\begin{equation}\label{eq:def:sigma:dagger}
    \Sigma^{\dagger}_{ij}=
    \begin{cases}
    q& \qquad\qquad\qquad i=j,\\
    -\frac{q}{q-1} &\qquad\qquad\qquad i\neq j. 
    \end{cases}
\end{equation}
Then, let $W \sim \normal(0,\Sigma^{\dagger})$. Recalling that $\psi(w_1,...,w_q)=e^{w_1}/\sumi e^{w_i}$, define the function $g_q:\R_{\geq 0} \to\R$ as
\begin{equation*}
    g_q(s):=\E \psi\left(s\mu+\sqrt{s}W\right)-\frac{1}{q}.
\end{equation*}
The following lemma is from \cite{Sly:11}.
\begin{lemma}[Lemma 4.4 in \cite{Sly:11}]
\label{lem:g}
For each $q$, the function $g_q$ is monotonically increasing on $\R_{\geq 0}$ and continuously differentiable on the interval $(0,\frac{q-1}{q}]$.
\end{lemma}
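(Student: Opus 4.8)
The plan is to dispatch the two claims separately; continuous differentiability is routine, and the monotonicity is the real content, which I would obtain by recognizing $s\mu^{\dagger}+\sqrt s\,W$ as the rescaled observation of a symmetric $q$-ary Gaussian channel and then invoking data processing.

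First, for the differentiability: recall $\psi(w)=e^{w_1}/\sum_i e^{w_i}$ is $C^\infty$ on $\R^q$ with $0<\psi<1$ and all partial derivatives uniformly bounded, while for each fixed $w$ the map $s\mapsto s\mu^{\dagger}+\sqrt s\,w$ is $C^\infty$ on $(0,\infty)$ with $s$-derivative $\mu^{\dagger}+\tfrac1{2\sqrt s}w$. On a compact subinterval $[s_0,s_1]\subset(0,\infty)$ the $s$-derivative of $\psi(s\mu^{\dagger}+\sqrt s\,w)$ is thus dominated by $C(1+\|w\|)$, which is integrable against $W\sim\normal(0,\Sigma^{\dagger})$; so I may differentiate under the expectation, and one more application of dominated convergence gives continuity of $g_q'$. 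Hence $g_q\in C^1((0,\infty))$, in particular on $(0,\tfrac{q-1}q]$. (A Gaussian integration by parts in the $W$-term even gives the closed form $g_q'(s)=\sum_i\mu^{\dagger}_i\,\E\,\partial_i\psi+\tfrac12\sum_{i,j}\Sigma^{\dagger}_{ij}\,\E\,\partial_{ij}\psi$, bounded down to $s=0$, but this is not needed.)

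Next, the monotonicity. I would set up the Bayesian model in which $\Theta$ is uniform on $[q]$ and, conditionally on $\Theta=\ell$, one observes $m^{(s)}\sim\normal\big(s\mu^{(\ell)},\,s\Sigma^{\dagger}\big)$ with $\mu^{(\ell)}:=\Sigma^{\dagger}e_\ell-\tfrac q2\mathbf 1$, where $m^{(s)}$ is read modulo $\mathbf 1\R$ (legitimate since $\psi$ is invariant under adding multiples of $\mathbf 1$). The algebraic inputs are: $\Sigma^{\dagger}=\tfrac{q^2}{q-1}I-\tfrac q{q-1}J$ (with $J$ the all-ones matrix), whence $\Sigma^{\dagger}\mathbf 1=0$ and $\Sigma^{\dagger}$ is positive definite on $\mathbf 1^{\perp}$; and $\mu^{(1)}=\Sigma^{\dagger}e_1-\tfrac q2\mathbf 1=\mu^{\dagger}$, so that under $\Theta=1$ one has exactly $m^{(s)}\stackrel{d}{=}s\mu^{\dagger}+\sqrt s\,W$. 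Projecting onto $\mathbf 1^{\perp}$ (where $\Sigma^{\dagger}$ is nondegenerate and $\psi$ is unchanged), a short Bayes computation with Gaussian densities then identifies the posterior: in $\log p_\ell(m)$ the cross term equals $m^{(s)}_\ell$ up to an $\ell$-independent additive constant and the quadratic term equals $-\tfrac{qs}2$ for every $\ell$, so $\P(\Theta=\ell\mid m^{(s)})=e^{m^{(s)}_\ell}/\sum_i e^{m^{(s)}_i}$. By the permutation symmetry of the model this gives the representation $g_q(s)+\tfrac1q=\E\big[\psi(m^{(s)})\mid\Theta=1\big]=\E\big[\sum_{\ell}\P(\Theta=\ell\mid m^{(s)})^2\big]$, together with $g_q(0)=\psi(0)-\tfrac1q=0$.

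The monotonicity is then one line. For $0<s<s'$, since $s(s'-s)/s'>0$ I can write $m^{(s)}=\tfrac s{s'}m^{(s')}+\xi$ with $\xi\sim\normal\big(0,\tfrac{s(s'-s)}{s'}\Sigma^{\dagger}\big)$ independent of $(\Theta,m^{(s')})$; conditionally on $\Theta=\ell$ this has law $\normal(s\mu^{(\ell)},s\Sigma^{\dagger})$ and the map $m^{(s')}\mapsto m^{(s)}$ does not see $\Theta$, so $\Theta\to m^{(s')}\to m^{(s)}$ is a Markov chain. Then $\P(\Theta=\ell\mid m^{(s)})=\E[\P(\Theta=\ell\mid m^{(s')})\mid m^{(s)}]$, conditional Jensen gives $\P(\Theta=\ell\mid m^{(s)})^2\le\E[\P(\Theta=\ell\mid m^{(s')})^2\mid m^{(s)}]$, and summing over $\ell$ and taking expectations yields $g_q(s)\le g_q(s')$; with $g_q(0)=0$ and continuity this is exactly ``monotonically increasing on $\R_{\ge0}$.'' (Equivalently, this is the data-processing inequality for the $\chi^2$-divergences $\chi^2(p^{(s)}_\ell\,\|\,\bar p^{(s)})$, whose $\ell$-average equals $q^2 g_q(s)$.) The one step requiring care — and the main obstacle — is the algebraic identification of $(\mu^{\dagger},\Sigma^{\dagger})$ with a bona fide symmetric Gaussian channel, namely checking $\mu^{\dagger}=\Sigma^{\dagger}e_1-\tfrac q2\mathbf 1$ and that the induced posterior is exactly $\psi$, done cleanly in spite of the rank deficiency of $\Sigma^{\dagger}$; once that is in place, the representation of $g_q$ and its monotonicity via garbling are immediate. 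This is, I expect, essentially the argument behind Lemma 4.4 of \cite{Sly:11}.
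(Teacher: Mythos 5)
The paper itself offers no proof of this statement: Lemma~\ref{lem:g} is imported verbatim as Lemma~4.4 of \cite{Sly:11} and used as a black box, so there is no in-paper argument to compare against and I can only judge your proposal on its own merits — it is correct. The differentiability part is routine domination, as you say. For the monotonicity, the two algebraic facts your reduction hinges on do check out: $\Sigma^{\dagger}=\tfrac{q^2}{q-1}\bigl(I_q-\tfrac1q J_q\bigr)$, so $\Sigma^{\dagger}\mathbf 1=0$ and $\Sigma^{\dagger}$ is a positive multiple of the identity on $\mathbf 1^{\perp}$; and $\Sigma^{\dagger}e_1-\tfrac q2\mathbf 1$ has first coordinate $q-\tfrac q2=\tfrac q2$ and remaining coordinates $-\tfrac{q}{q-1}-\tfrac q2$, i.e.\ it equals $\mu^{\dagger}$ of \eqref{eq:def:mu:dagger}. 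Since $\mathbf 1^{T}m^{(s)}$ is a.s.\ constant and the same under every hypothesis, the posterior is determined by the projection onto $\mathbf 1^{\perp}$, where the Gaussian Bayes computation indeed produces the softmax (the quadratic terms $e_\ell^{T}\Sigma^{\dagger}e_\ell=q$ are label-independent), giving $g_q(s)+\tfrac1q=\E\bigl[\sum_{\ell}\P(\Theta=\ell\mid m^{(s)})^2\bigr]$; the degradation $m^{(s)}=\tfrac{s}{s'}m^{(s')}+\xi$ with $\xi\sim\normal\bigl(0,\tfrac{s(s'-s)}{s'}\Sigma^{\dagger}\bigr)$ (a PSD covariance, so legitimate) together with conditional Jensen then yields $g_q(s)\le g_q(s')$, and $s=0$ is covered by $\sum_{\ell}\P(\Theta=\ell\mid m)^2\ge \tfrac1q$. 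This Gaussian-channel/data-processing route is the natural way to see the claim and, as you suspect, is in the spirit of the original argument in \cite{Sly:11}. Two cosmetic remarks only: the lemma asserts monotonicity in the weak (non-decreasing) sense, which is exactly what your argument gives and all the paper needs; and in your parenthetical $\chi^2$ identification the normalization is off — the average $\tfrac1q\sum_{\ell}\chi^2\bigl(p^{(s)}_\ell\,\big\|\,\bar p^{(s)}\bigr)$ equals $q\,g_q(s)$, not $q^2g_q(s)$ — but nothing in the proof relies on that aside.
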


\begin{prop}\label{prop:large:d}
For any $\eps>0$, there exists $d(\eps)$ such that if $d>d(\eps)$, then 
\begin{equation*}
    \left|x_{n+1}-g_q\left(d\la^2 x_n\right)\right|\leq \eps
\end{equation*}
\end{prop}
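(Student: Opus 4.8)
The plan is to apply the normal approximation in Proposition~\ref{prop:clt} conditionally on the tree $T$, using the moment estimates in Lemma~\ref{lem:U:mo} and the self-averaging statement in Lemma~\ref{lem:xn} to match the limiting Gaussian parameters. Recall from \eqref{eq:x:alter} that $x_{n+1}=\E\big[\psi\big(\sum_{j=1}^{\gamma}U_j\big)\big]-\tfrac1q$, where the outer expectation is over both $T$ and $\sig$. Since $\psi$ is smooth and bounded with bounded derivatives of all orders on $\R^q$ (it is a softmax coordinate), it is a valid test function for Proposition~\ref{prop:clt}. The strategy is: first condition on $T$ (equivalently on $\gamma$ and the subtrees $T_1,\dots,T_\gamma$), apply Proposition~\ref{prop:clt} to the independent summands $V_j=U_j$, and then take expectation over $T$.

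First I would fix $\eps>0$ and use Assumption~\ref{assumption:tight} to restrict to the high-probability event that $\gamma\in[C_1 d, C_2 d]$; on the complementary event of probability $\le \eps$ we bound the integrand trivially by $1$. On the good event, condition on $T$. By Lemma~\ref{lem:U:mo}, conditionally on $T$ the mean vector $\sum_{j=1}^{\gamma}\E_{\sig}U_j$ is within $O_q(\gamma d^{-3/2})=O_q(d^{-1/2})$ of $\big(\sum_{j=1}^{\gamma}x_n(T_j)\big)\la^2\mu^{\dagger}$, and similarly $\sum_{j=1}^{\gamma}\Cov_{\sig}(U_j)$ is within $O_q(d^{-1/2})$ of $\big(\sum_{j=1}^{\gamma}x_n(T_j)\big)\la^2\Sigma^{\dagger}$; also each $\|U_j\|_\infty\le \log(1+|\la|q\cdot\tfrac{q-1}{q})=O_q(d^{-1/2})$ almost surely since $|\la|\le d^{-1/2}$. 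Thus with $D=\gamma\asymp d$, the hypotheses \eqref{eq:clt:condition} of Proposition~\ref{prop:clt} hold (conditionally on $T$) with $\mu=\big(\sum_j x_n(T_j)\big)\la^2\mu^{\dagger}$, $\Sigma=\big(\sum_j x_n(T_j)\big)\la^2\Sigma^{\dagger}$, and a constant $C=C_q$ uniform in $T$ and $n$. Writing $s(T):=\la^2\sum_{j=1}^{\gamma}x_n(T_j)$, Proposition~\ref{prop:clt} gives, for $d$ large (so that $D\ge D_0$),
\[
\Big|\,\E_{\sig}\psi\Big(\textstyle\sum_{j=1}^{\gamma}U_j\Big)-\E\,\psi\big(s(T)\mu^{\dagger}+\sqrt{s(T)}\,W\big)\,\Big|\le \eps,
\]
where $W\sim\normal(0,\Sigma^{\dagger})$; equivalently $\big|\E_{\sig}\psi(\sum_j U_j)-\tfrac1q-g_q(s(T))\big|\le\eps$ by the definition of $g_q$.

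Next I would take expectation over $T$ (on the good event) and replace $g_q(s(T))$ by $g_q(d\la^2 x_n)$. Since $\psi$ is bounded, $g_q$ is bounded on $\R_{\ge 0}$; and by Lemma~\ref{lem:g} (and the fact that $g_q$ is constant-ish/continuous beyond $\tfrac{q-1}{q}$, using $s(T)\le \la^2\gamma\le \gamma/d\le C_2$ on the good event) $g_q$ is uniformly continuous on the relevant compact interval, hence it has a modulus of continuity $\omega_{g_q}$. By Lemma~\ref{lem:xn}, $\E\big|\tfrac1d\sum_j x_n(T_j)-x_n\big|<\eps'$ for $d$ large, so $\E|s(T)-d\la^2 x_n|=\la^2 d\cdot\E\big|\tfrac1d\sum_j x_n(T_j)-x_n\big|\le d\la^2\eps'\le\eps'$ (using $d\la^2\le 1$). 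Combining with Jensen/Markov and uniform continuity of $g_q$ yields $\E\big|g_q(s(T))-g_q(d\la^2 x_n)\big|\to 0$ as $\eps'\to0$. Assembling the three error contributions — the $\le\eps$ from Assumption~\ref{assumption:tight}, the $\le\eps$ from Proposition~\ref{prop:clt}, and the $g_q$-continuity term — and taking $d$ large enough gives $|x_{n+1}-g_q(d\la^2 x_n)|\le\eps$, as claimed.

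\textbf{Main obstacle.} The routine part is the Taylor-expansion bookkeeping already done in Lemma~\ref{lem:U:mo}; the delicate point is that Proposition~\ref{prop:clt}'s threshold $D_0$ depends on the constant $C$ in \eqref{eq:clt:condition} and on $\eps$ but \emph{not} on $n$, so one must check that all the constants ($C_q$ in Lemma~\ref{lem:U:mo}, the a.s.\ bound on $\|U_j\|_\infty$, the bound $s(T)\le C_2$) are genuinely uniform over $n$ and over $T$ in the conditioning — this is where one leans on $0\le x_n(T)\le 1$ from Lemma~\ref{lem:basic} and on $|\la|\le d^{-1/2}$. The second subtlety is handling $g_q$ near $s=0$: Lemma~\ref{lem:g} only asserts continuous differentiability on $(0,\tfrac{q-1}{q}]$, but $g_q$ is continuous and monotone on all of $\R_{\ge0}$ with $g_q(0)=0$, so uniform continuity on $[0,C_2]$ still holds and the argument goes through without needing differentiability at the origin.
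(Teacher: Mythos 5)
Your proposal is correct and follows essentially the same route as the paper: condition on the tree, verify the hypotheses of Proposition~\ref{prop:clt} via Lemma~\ref{lem:U:mo} with $\hat x_n=\frac1d\sum_j x_n(T_j)$ and the a.s.\ bound $\|U_j\|_\infty=O_q(d^{-1/2})$, restrict to $\gamma\asymp d$ by Assumption~\ref{assumption:tight}, and then replace $g_q(d\la^2\hat x_n)$ by $g_q(d\la^2 x_n)$ via Lemma~\ref{lem:xn} and the regularity of $g_q$ from Lemma~\ref{lem:g}. The only cosmetic difference is that the paper uses the Lipschitz bound $\E|g_q(d\la^2\hat x_n)-g_q(d\la^2 x_n)|\le C_q\E|\hat x_n-x_n|$ while you invoke uniform continuity (which also handles the endpoint $s=0$ cleanly); both are fine.
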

\begin{proof}
Recalling the equation $x_{n+1}=\E\psi\big(\sumj U_j\big)-1/q$ from \eqref{eq:x:alter}, we estimate the conditional expectation $\E_{\sig}\psi\big(\sumj U_j\big)$ using Proposition \ref{prop:clt}. In order to use Proposition \ref{prop:clt}, we need to ensure that the tree $T$ has large enough degree at the root. Recall from Assumption \ref{assumption:tight} that we have $\P(\ga\in [C_1d, C_2 d])\leq 1-\frac{\eps}{3}$ for $C_i\equiv C_i(\eps/3), i=1,2$. To this end, let $\TT_{\eps}$ be the set of the rooted trees such that the degree at the root is at least $C_2 d$. Under the law $T\sim \GW(\mu_d)$, we have
\begin{equation}\label{eq:prop:large:d:1}
\P\left(\left(\TT_{\eps}\right)^{\mathsf{c}}\right)\le \frac{\eps}{3}.
\end{equation}
Define $\hat{x}_n:=\frac{1}{d}\sum_{j=1}^{\ga}x_n(T_j)$. Then, conditioned on $T\in \TT_{\eps}$, it follows from Lemma \ref{lem:U:mo} that
\begin{equation*}
\begin{split}
     &\max\left(\Big\|\sum_{j=1}^{\ga}\E_{\sig} U_j - \mu^{\dagger}d\la^2 \hat{x}_n\Big\|_{\infty},\Big\|\sum_{j=1}^{\ga}\Cov_{\sig}(U_j) -\Sigma^{\dagger}d\la^2 \hat{x}_n\Big\|_{\infty} \right)\\
     &\le C_q \ga d^{-3/2}\le C_q C_1^{-3/2}\ga^{-1/2}, 
\end{split}
\end{equation*}
where the last inequality holds becasue $\ga \geq C_1 d$. Note that $\hat{x}_n\leq d^{-1}\ga\le C_2$ holds for $T\in \TT_{\eps}$, so the second condition in \eqref{eq:clt:condition} is also satisfied. Moreover, since $Y_{ij}\in [0,1]$ holds and $|\la|\leq \frac{1}{\sqrt{d}}=O(\gamma^{-1/2})$, it follows from the definition of $U_{ij}$ that $ \norm{U_j}_{\infty}=O(\gamma^{-1/2}).$ Thus, all three conditions in \eqref{eq:clt:condition} is satisfied. Also, it is straight forward to check that $\phi$ has bounded derivatives up to the third order. Therefore, Proposition \ref{prop:clt} implies that if $d\geq d(\eps)$ for some $d(\eps)$ so that $\ga \ge C_1 d$ is large enough, we have that conditioned on $T\in \TT_{\eps}$,
\begin{equation}\label{eq:prop:large:d:2}
    \bigg|\E_{\sig}\psi\Big(\sumj U_j\Big)-\left(g_q\left(d\la^2 \hat{x}_n\right)+\frac{1}{q}\right)\bigg|\leq \frac{\eps}{3}.
\end{equation}
By Lemma \ref{lem:xn} and Lemma \ref{lem:g}, we can approximate $g_q(d\la^2 \hat{x}_n)$ for large enough $d>d(\eps)$ by
\begin{equation}\label{eq:prop:large:d:3}
    \E\left|g_q\left(d\la^2\hat{x}_n\right)-g_q(x_n)\right|\leq C_q \E\left|\hat{x}_n-x_n\right|\leq \frac{\eps}{3}. 
\end{equation}
Therefore, by \eqref{eq:prop:large:d:1}-\eqref{eq:prop:large:d:3}, we can bound for $d>d(\eps)$,
\begin{equation*}
    \left|x_{n+1}-g_q\left(d\la^2 x_n\right)\right|
    \leq \frac{2\eps}{3}+\sup_{T\in \TT_{\eps}} \bigg|\E_{\sig}\psi\Big(\sumj U_j\Big)-\left(g_q\left(d\la^2 \hat{x}_n\right)+\frac{1}{q}\right)\bigg|\leq \eps,
\end{equation*}
where in the second equality, we also used $0\leq \psi(\cdot)\leq 1$ and $0\leq g_q(\cdot)+1/q\leq 1$.
\end{proof}
It was shown in Lemma 4.7 of \cite{Sly:11} that $g_q(s)<s$ for all $0<s\leq \frac{q-1}{q}$ when $q=3$ while there exists $0<s\leq \frac{q-1}{q}$ such that $g_q(s)>s$ for $q\geq 5$. The following lemma whose proof is given in Section 2 of the Supplementary material deals with the critical case $q=4$.
\begin{lemma}\label{lem:g4}
When $q=4$, for all $0<s\leq \frac{q-1}{q}$, we have $g_q(s)<s$.
\end{lemma}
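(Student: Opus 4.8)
## Proof plan for Lemma \ref{lem:g4}

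The plan is to establish the inequality $g_4(s)<s$ on $(0,3/4]$ by combining three ingredients: (i) a local analysis near $s=0$ showing $g_4'(0)=1$ and that the next nonzero term in the Taylor expansion of $g_4$ is strictly negative, (ii) a global bound controlling $g_4$ on the remaining compact interval bounded away from $0$, and (iii) the monotonicity and regularity of $g_4$ from Lemma \ref{lem:g}. Recall $g_4(s)=\E\,\psi(s\mu^\dagger+\sqrt{s}\,W)-\tfrac14$ with $W\sim\normal(0,\Sigma^\dagger)$ and $\psi(w)=e^{w_1}/\sum_i e^{w_i}$. The first step is to record the small-$s$ expansion. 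Write $\sqrt{s}\,W=:G$, a centered Gaussian with covariance $s\Sigma^\dagger$, and Taylor-expand $\psi$ around the origin; using $\E G=0$ and $\Cov(G)=s\Sigma^\dagger$, the constant and odd-order terms in the Gaussian integral vanish or combine, and one obtains an expansion of the form $g_4(s)=s+c_2 s^2+O(s^3)$ where $c_2$ is an explicit rational number built from $\mu^\dagger,\Sigma^\dagger$ and the first four derivatives of $\psi$ at $0$. The key computation is to verify $c_2<0$. In fact this matches the coefficient $\tfrac{q(q-4)}{q-1}\E\binom{\gamma}{2}\la^4/(d\la^2)$-type term appearing in \eqref{eq:exact:third:order} specialized to $q=4$, which vanishes at leading order — so one actually needs the $s^3$ coefficient, and the sign of the relevant combination $q^2(q^2-18q+42)/(q-1)^2$ at $q=4$ together with the $\frac{1}{d(d-1)}$ and $\frac{1}{d(d\la-1)}$ corrections. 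This delicate cancellation at $q=4$ is exactly why the lemma is subtle.

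More concretely, I would proceed as follows. First, fix a small threshold $s_0>0$ and prove $g_4(s)<s$ for $s\in(0,s_0]$ by the Taylor expansion above: since $g_4$ is $C^1$ on $(0,3/4]$ (Lemma \ref{lem:g}) and smooth near the relevant region, one shows $g_4(s)-s$ has a strictly negative leading term, so it is negative on a punctured neighborhood of $0$. Second, for $s\in[s_0,3/4]$ I would use a direct estimate on the Gaussian integral. Here the trick is that $\psi$ is a bounded, smooth function and $g_4(s)+\tfrac14=\E\psi(s\mu^\dagger+\sqrt s\,W)$; at $s=3/4$ one can compute or numerically bound $g_4(3/4)$ and check $g_4(3/4)<3/4$ strictly, and then use a derivative bound $g_4'(s)\le 1-\delta$ on $[s_0,3/4]$ for some $\delta>0$, or alternatively a convexity/concavity argument, to propagate the strict inequality. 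Differentiating under the expectation, $g_4'(s)=\E\big[\nabla\psi\cdot(\mu^\dagger+\tfrac{1}{2\sqrt s}W)\big]$, which by Gaussian integration by parts (Stein's identity, applied to the $W$-term) becomes a pure expectation of second-order derivatives of $\psi$ against $\mu^\dagger$ and $\Sigma^\dagger$; this gives an explicit, analyzable formula for $g_4'$.

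The main obstacle I anticipate is the middle-range estimate on $[s_0,3/4]$: unlike the $q=3$ case where the inequality is comfortably strict everywhere, at $q=4$ the function $g_4(s)-s$ is expected to be very close to $0$ (tangent to second order at the origin and small throughout), so a crude bound will not suffice and one needs either a careful analytic argument exploiting the specific structure of $\Sigma^\dagger$ (which has a one-dimensional kernel spanned by $(1,\dots,1)$, reflecting that $\psi$ only depends on differences $w_i-w_j$) or a rigorous interval-arithmetic / explicit-constant computation. I would reduce the dimension first: since $\psi(w)$ depends only on $w-w_1\mathbf 1$, write everything in terms of the $3$-dimensional Gaussian $(G_i-G_1)_{i=2,3,4}$ with its (nondegenerate) covariance, turning $g_4$ into a genuine $3$-dimensional integral with an explicit density; then the monotonicity from Lemma \ref{lem:g} lets one check the inequality at finitely many points plus a Lipschitz bound on $g_4-\id$. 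The bulk of the work, and the part most likely to require machine-assisted verification or a clever closed-form identity, is pinning down that $g_4(s)-s$ never crosses zero on this compact interval.
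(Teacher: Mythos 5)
Your plan is essentially the paper's proof: split at a small threshold, show near $0$ that $g_4(s)-s$ has the strictly negative cubic leading term (the quadratic term vanishes at $q=4$) via a quantitative Taylor/remainder expansion, and on the remaining compact interval reduce to a three-dimensional Gaussian integral in the differences, truncate the tails, and verify $g_4(s)<s$ at a finite grid using the monotonicity of $g_4$ from Lemma \ref{lem:g} together with rigorous machine-assisted numerical integration. The only cosmetic difference is that the paper needs no Lipschitz or derivative bound between grid points (monotonicity of $g_4$ plus a margin equal to the grid spacing suffices), and it fixes the small-$s$ threshold explicitly by expanding $1/(1+x)$ with a controlled remainder and bounding the resulting polynomial, which is the quantitative content your sketch defers.
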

With Proposition \ref{prop:final}, Proposition \ref{prop:clt}, and Lemma \ref{lem:g4} in hand, we prove Theorem \ref{thm:KS:tight}.
\begin{proof}[Proof of Theorem \ref{thm:KS:tight}]
We first consider the case where $q=4$ in the antiferromagnetic regime. By Lemma \ref{lem:la:nonreconstruct}, it suffices to prove that there is nonreconstruction at $\la=-d^{-1/2}$ for $d\geq d_0$. Recall that the function $f_4(\cdot)$ in \eqref{eq:f:q} can be expressed by $f_4(x_n,d,\la)=d\la^2 x_n+g_4(d,\la) x_n^3$ for $g_4(d,\la)$ defined in \eqref{eq:def:g}. Then, by a direct calculation,
\begin{equation*}
    g_4(d,-d^{1/2})=-\frac{16}{9d^3}\left(\frac{7}{3}\E\ga(\ga-1)(\ga-2)-\left(\E\ga(\ga-1)\right)^2\frac{12\sqrt{d}-7}{d(d-1)}\right).
\end{equation*}
Note that $f(x)=x(x-1)(x-2)$ is convex for $x>1$, so $\E\ga(\ga-1)(\ga-2)\geq d(d-1)(d-2)$ holds by Jensen's inequality. Moreover, $\E \ga(\ga-1)\leq C d^2$ holds for a constant $C$ which only depends on the function $K(\cdot)$ by Assumption \ref{assumption:unif:tail}. Thus, for large enough $d\geq d_0$, it follows that $g_4(d,-d^{1/2})\rev{<-4:}$
\begin{equation*}
    g_4(d,-d^{1/2})\leq -\frac{16}{9}\left(\frac{7(d-1)(d-2)}{3d^2}-C^2 \frac{12\sqrt{d}-7}{d-1}\right)\rev{<-4}.
\end{equation*}
Recalling the constant $\delta(q,\eps)$ from Proposition \ref{prop:final}, let \rev{$\delta\equiv \delta(4,3)>0$. Then, for $d\geq d_0$ and $\la=-d^{1/2}$, Proposition \ref{prop:final} with $q=4$ and $\eps=3$ shows that if $x_{n_0}\leq \delta$ for some $n_0\geq 1$ and $n$ is large enough so that $n\geq n_0+5$ and $n\geq N(4,3)$, then
\begin{equation}\label{eq:x:decrease}
    x_{n+1}\leq x_n-x_n^3.
\end{equation}
Meanwhile, for large enough $d$ and $\la=-d^{1/2}$, $x_{n_0}\leq \delta$ indeed holds for large enough $n_0$ by Proposition \ref{prop:large:d} and Lemma \ref{lem:g4}: let $\eta \equiv \frac{1}{2}\inf_{\frac{\delta}{2}\le s\le \frac{3}{4}}(s-g_4(s))$, which is positive by Lemma \ref{lem:g4} and continuity of $g$. Then, Proposition \ref{prop:large:d} implies that when $d\geq d(\eta)$,
}
\begin{equation*}
    \limsup_{n\to\infty} x_n\leq g_4 (\limsup_{n\to\infty} x_n)+\eta,
\end{equation*}
so we must have $\limsup_{n\to\infty} x_n \leq \rev{\frac{\delta}{2}}$. Therefore, for large enough $d$, \rev{\eqref{eq:x:decrease} yields that $x_n\to 0$ as $n\to\infty$ at $\la=-d^{1/2}$.}

For the case where $q=4$ in the ferromagnetic regime, note that at $\la=d^{1/2}$
\begin{equation*}
    g_4(d,d^{1/2})=-\frac{16}{9d^3}\left(\frac{7}{3}\E\ga(\ga-1)(\ga-2)+\left(\E\ga(\ga-1)\right)^2\frac{12\sqrt{d}+7}{d(d-1)}\right)\rev{<-4},
\end{equation*}
so repeating the previous argument shows that \rev{$x_n\to 0$ as $n\to\infty$ at $\la=d^{1/2}$. Therefore, KS bound is sharp for large enough $d$ when $q=4$. For $q=3$, $g_3(s)<s$ holds for $0<s<2/3$ by Lemma 4.7 of \cite{Sly:11}, thus Propositions~\ref{prop:final} and \ref{prop:large:d} again yields that the KS bound is sharp for large enough $d$.}
\end{proof}
\section{Coupling SBM to broadcast processes}
\label{s:coupling}

In this section we prove Theorem~\ref{t:nonrecon.to.sbm} and Corollary \ref{cor:two.point}.

For an estimator of the states $\hat{\sigma}\equiv \hat{\sigma}(G)$ we define its expected success rate on the graph $G$ to be
\[
S(\hat{\sigma})= \E\bigg[\max_{\Gamma\in S_q}\frac1{n}\sum_u \one(\Gamma(\hat{\sigma}_u)=\sigma_u)\bbgiven G\bigg]
\]
Let $u_1,\ldots,u_m$ be $m\equiv \rev{\lfloor n^{1/5} \rfloor}$ vertices chosen uniformly at random without replacement from $G$.  Let
\[
S_m(\hat{\sigma})=\E\bigg[\max_{\Gamma\in S_q}\frac1{m}\sum_{j=1}^m \one(\Gamma(\hat{\sigma}_{u_j})=\sigma_{u_j})\bbgiven G\bigg]
\]
and note that by random sampling of the $u_j$ and the choice of the permutation we have that
\[
\E \max_{\hat{\sigma}}S_m(\hat{\sigma})\geq \E \max_{\hat{\sigma}} S(\hat{\sigma}),
\]
where $\hat{\sigma}$ is maximized over $G$\rev{-}measurable estimators.
Our plan is to couple the local neighborhoods of the $u_i$ with independent Poisson Galton Watson trees, with spins given by the broadcast process and then use nonreconstruction to show that one cannot correctly assign a better than $\frac1{q}+o(1)$ fraction of the $\sigma_{u_i}$\rev{'s}.  We first give some notation for local neighborhoods.

In the nonreconstruction regime, we can fix a constant $\ell$ large enough such that for $T\sim\GW(\textsf{Poi}(d))$,
\begin{equation}\label{eq:ell.defn}
\E\Big[\max_i\P\big(\sigma_\rho=i\mid T, \sigma(\ell)\big)\Big]\leq \frac1{q} + \epsilon/4,
\end{equation}
where we recall that $\sigma(\ell)$ are the spins at depth $\ell$ neighborhood of the root of $T$. That is, for most trees and most boundary conditions, $\P(\sigma_\rho=i\mid T, \sigma(\ell))\approx \frac1{q}$.  Let $T_\ell$ be $T$ truncated at depth $\ell$ and let $\TT_\ell$ be the set of rooted depth $\ell$ trees.  Let $\TT_{\ell,r}$ be the set of depth $\ell$ trees with maximal degree $r$ which is of course a finite set.  We can choose $r$ large enough such that
\begin{equation}\label{eq:r.defn}
\P(T_\ell \not\in \TT_{\ell,r}) \leq \epsilon/4.
\end{equation}
Define the set of depth $\ell$ trees with spin configurations as
\[
\RR_\ell=\Big\{(t,s,s(\ell)):t\in\TT_\ell,~s\in[q],~s(\ell)\in [q]^{L(t,\ell)},~ \P\big(\sigma_\rho=s,\sigma(\ell)=s(\ell)\given T=t\big)>0\Big\}
\] 
where $L(t,\ell)$ is the set of vertices of $t$ at level $\ell$.  Similarly let $\RR_{\ell,r}=\{(t,s,s(\ell))\in\RR_\ell:t\in\TT_{\ell,r}\}$.
If $\P$ is the measure where $t$ is selected from $\GW(\textsf{Poi}(d))$ and $\sigma$ is generated from the broadcast process then, since the Poisson distribution puts positive mass on every integer we have that for some $\delta=\delta(\ell,r,q,\lambda)$,
\begin{equation}\label{eq:tree.freq}
\inf_{(t,s,s(\ell))\in \RR_{\ell,r}} \P\big(T_\ell=t,~\sigma_\rho=s,~\sigma(\ell) = s(\ell)\big) =\delta>0.
\end{equation}

Next we will establish the coupling of local \rev{neighborhoods}.  We let $\{T^{(j)}_\ell\}_{j\leq m}$ be the \rev{i.i.d.} $\GW(\textsf{Poi}(d))$ depth $\ell$ trees rooted at $\rho^{(j)}$ and let $\{\sigma^{(j)}\}_{j\leq m}$ be spin configurations generated by the broadcast process on the $T^{(j)}_\ell$.  In order to show this we need the following lemma to couple Binomial and Poisson distributions.

\begin{lemma}\label{l:bin.poi}
The total variation distance between a Poisson and Bionomial distribution is bounded by
\begin{equation}\label{eq:poi.bin.dtv}
d_{TV}(\textsf{Poi}(\theta),\textsf{Bin}(r,p))\leq \theta^2/r + |rp-\theta|.    
\end{equation}
\end{lemma}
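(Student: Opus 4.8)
The plan is to bound the total variation distance by splitting the difference $\textsf{Poi}(\theta)-\textsf{Bin}(r,p)$ through an intermediate distribution, namely $\textsf{Bin}(r,\theta/r)$ (assuming $\theta\le r$, which is the only regime we need since eventually $\theta=d$ and $r$ is a large constant times $d$), and using the triangle inequality for $d_{TV}$:
\[
d_{TV}\big(\textsf{Poi}(\theta),\textsf{Bin}(r,p)\big)\le d_{TV}\big(\textsf{Poi}(\theta),\textsf{Bin}(r,\theta/r)\big)+d_{TV}\big(\textsf{Bin}(r,\theta/r),\textsf{Bin}(r,p)\big).
\]
For the first term I would invoke the classical Poisson approximation (Le Cam's inequality): if $S$ is a sum of $r$ independent Bernoulli variables each with parameter $\theta/r$, then $d_{TV}(\textsf{Law}(S),\textsf{Poi}(\theta))\le \sum_{i=1}^r (\theta/r)^2 = \theta^2/r$. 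This gives exactly the $\theta^2/r$ term. For the second term, I would use the standard coupling of $\textsf{Bin}(r,\theta/r)$ and $\textsf{Bin}(r,p)$ by realizing each as a sum of $r$ i.i.d.\ pairs of coupled Bernoullis: two Bernoullis with parameters $a\le b$ can be coupled to disagree with probability $b-a$, so $d_{TV}(\textsf{Bin}(r,a),\textsf{Bin}(r,b))\le r|a-b|$; applying this with $a,b\in\{\theta/r,p\}$ yields $r|p-\theta/r| = |rp-\theta|$.

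Adding the two bounds gives $d_{TV}(\textsf{Poi}(\theta),\textsf{Bin}(r,p))\le \theta^2/r+|rp-\theta|$, as claimed. I should also note that if $\theta>r$ the stated bound is trivial because the right-hand side already exceeds $1$ (indeed $\theta^2/r>\theta>1$ when $\theta>r\ge 1$), so the inequality holds vacuously and the assumption $\theta\le r$ in the argument above is harmless.

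The only mild subtlety — not really an obstacle — is making sure the Le Cam bound is applied in the form with identical success probabilities $\theta/r$ (so that the error sum is $r(\theta/r)^2=\theta^2/r$ rather than $\sum p_i^2$ with heterogeneous $p_i$), and correctly chaining the two triangle-inequality pieces; everything else is a routine coupling argument. No step here interacts with the broadcast or block-model machinery, so the lemma is self-contained.
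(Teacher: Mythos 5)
Your proposal is correct and follows essentially the same route as the paper: a triangle inequality through the intermediate law $\textsf{Bin}(r,\theta/r)$, with the Poisson--Binomial piece bounded by $\theta^2/r$ (Le Cam's bound, which the paper derives directly from $d_{TV}(\textsf{Poi}(p),\textsf{Ber}(p))=p-e^{-p}\le p^2$ and per-coordinate coupling) and the Binomial--Binomial piece bounded by $|rp-\theta|$ via Bernoulli coupling. Your extra remark that the case $\theta>r$ is vacuous is a small point the paper leaves implicit, but otherwise the arguments coincide.
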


\begin{proof}
Since $d_{TV}(\textsf{Ber}(p),\textsf{Ber}(p')) = |p'p|$ we can couple random variables with distributions $\textsf{Ber}(p)$ and $\textsf{Ber}(p')$ except with probability $|p-p'|$.  So if we had a sequence of $r$ independent random variables with $\textsf{Ber}(p)$ distribution and $r$ independent random variables with $\textsf{Ber}(p')$ we could couple the sequence with probability at least $1-r|p-p'|$.  Then by taking the sum we have that
\[
d_{TV}(\textsf{Bin}(r,p),\textsf{Bin}(r,p'))\leq |rp-rp'|.
\]
Also since $d_{TV}(\textsf{Poi}(p),\textsf{Ber}(p)) = p-e^{-p}\leq p^2$ by an analogous argument we have that
\[
d_{TV}(\textsf{Poi}(rp),\textsf{Bin}(r,p))\leq rp^2.
\]
Setting $\theta=rp'$ and combining the last two equations we have that
\begin{align*}
d_{TV}(\textsf{Poi}(\theta),\textsf{Bin}(r,p))&\leq d_{TV}(\textsf{Poi}(\theta),\textsf{Bin}(r,\theta/r)) + d_{TV}(\textsf{Bin}(r,\theta/r),\textsf{Bin}(r,p))\\
&\leq \theta^2/r + |rp-\theta|.   
\end{align*}
\end{proof}

We are now ready to couple the local neighborhoods of the graph with the independent \rev{Galton-Watson} trees.
\begin{lemma} \label{lem:coup_trees_g}
With probability $1-o(1)$ we can couple the local neighborhoods of the $u_j$ such that for all $1\leq j\leq m$
\begin{equation}\label{eq:coupling}
B_\ell(u_j) \cong T^{(j)}_\ell, \qquad \sigma_{B_\ell(u_j)} = \sigma^{(j)}_{T^{(j)}_\ell},
\end{equation}
where $\cong$ is graph isomorphism.  Furthermore the $B_\ell(u_j)$ are disjoint and
\[
\sum_{j} |T^{(j)}_\ell|\leq n^{1/4}.
\]
\end{lemma}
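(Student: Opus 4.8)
The statement to prove is Lemma~\ref{lem:coup_trees_g}: with probability $1-o(1)$ we can couple the depth-$\ell$ neighbourhoods $B_\ell(u_j)$ in $G$ (together with their spins) to $m=n^{1/5}$ independent $\GW(\textsf{Poi}(d))$ trees $T^{(j)}_\ell$ with broadcast spins, the $B_\ell(u_j)$ being disjoint and $\sum_j |T^{(j)}_\ell| \le n^{1/4}$. The plan is to reveal the graph in a breadth-first fashion simultaneously from all $m$ roots, one generation at a time, and at each exposed vertex couple the number of its new neighbours to an independent Poisson variable using Lemma~\ref{l:bin.poi}; the spins are then coupled exactly since the SBM conditioned on the unrevealed part generates children's spins by exactly the broadcast transition matrix~\eqref{eq:transition:matrix} with $\la=\frac{a-b}{a+(q-1)b}$, provided no ``collisions'' occur (i.e. BFS never revisits a previously exposed vertex and the $m$ explored balls stay disjoint).

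\textbf{Key steps.} First I would set up the exploration: pick $u_1,\dots,u_m$ uniformly without replacement, and run BFS to depth $\ell$ from each root, processing vertices in some fixed order, maintaining the set $A$ of already-revealed vertices. When a vertex $v$ at level $k<\ell$ is processed, its set of children among the $\approx n-|A|$ unseen vertices whose spins we haven't yet determined: first the spins of the new neighbours are generated, then conditional on a target spin $i$ the number of neighbours of $v$ with that spin is $\textsf{Bin}(n_i, p_{\text{in}})$ or $\textsf{Bin}(n_i,p_{\text{out}})$ where $n_i$ is the count of unseen vertices with spin $i$; by symmetry $n_i = (1+o(1))\,n/q$ w.h.p. as long as $|A| = O(n^{1/4})$. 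The key point is that the per-spin offspring count, once we sum over target spins with the broadcast weighting, is a mixture of Binomials that Lemma~\ref{l:bin.poi} couples to $\textsf{Poi}(d)$ (with $\lambda$ as above determining the spin distribution of each child) with total-variation error $\theta^2/r + |rp - \theta| = O(1/n) + O(|A|/n)$. Second, I would bound the failure probability of the coupling at a single vertex by this TV distance plus the collision probability: a new neighbour coincides with an already-revealed vertex with probability $O(|A|/n)$ per candidate edge. Third, union-bound over all revealed vertices. Since each tree has expected size $O(1)$ and exponential tails (Poisson offspring), $\sum_j |T^{(j)}_\ell| = O(m) = O(n^{1/5})$ w.h.p., and in particular $\le n^{1/4}$; throughout the exploration $|A| \le n^{1/4}$, so each of the $O(n^{1/4})$ per-vertex couplings fails with probability $O(n^{1/4}/n) = O(n^{-3/4})$, giving total failure probability $O(n^{1/4}\cdot n^{-3/4}) = O(n^{-1/2}) = o(1)$. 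Finally, disjointness of the $B_\ell(u_j)$ is exactly the no-collision event, and the spin coupling is exact on that event because on the unrevealed vertices the SBM's conditional law of a child's spin given its parent's is precisely~\eqref{eq:transition:matrix}.

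\textbf{Details to handle.} A couple of bookkeeping points need care. One must choose the Poisson parameter and the auxiliary ``$r$'' in Lemma~\ref{l:bin.poi} correctly: here $r = n_i$ (number of unseen vertices of a given spin) $\approx n/q$, $p \in \{a/n, b/n\}$, and one wants $\textsf{Poi}(a/n \cdot n_i)$ etc.; summing the independent spin-wise Poissons gives a single $\textsf{Poi}(\theta)$ with $\theta = (a + (q-1)b)\,n_i/n \cdot (q/q) \to d$, and the error terms are as above. I would also note that revealing spins of neighbours and revealing which vertices are neighbours can be done in either order because, conditioned on the spin partition, edges are independent; doing spins first makes the Binomial-to-Poisson step cleanest. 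One should also truncate: if at any point $|A|$ exceeds $n^{1/4}$, declare failure — but this happens with probability $o(1)$ by the size tail bound, so it does not affect the conclusion.

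\textbf{Main obstacle.} The conceptually delicate point — and the one that the paper flags as ``more subtle than it looks'' in the proof-overview — is accounting for the information contained in the \emph{absence} of edges from the revealed ball to the rest of the graph; here, though, since we only need a \emph{coupling} of the local neighbourhood (not the full conditional distribution statement of~\cite{MoNeSl:15}), this is handled automatically: the exploration reveals exactly the edges incident to $B_\ell$, and conditioned on the ball the rest of $G$ is just a smaller SBM on the unrevealed vertices, with no leftover dependence. So the real work is purely the quantitative TV and collision estimates and the union bound, with the size control $\sum_j|T^{(j)}_\ell|\le n^{1/4}$ being the linchpin that keeps all error terms $o(1)$. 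I expect the trickiest computation to be verifying that the mixture-of-Binomials offspring distribution, summed over the $q$ spin classes with the correct conditional-spin weights, couples to $\textsf{Poi}(d)$ with the \emph{broadcast} spin law on children — i.e. checking that the induced parent-to-child spin kernel is exactly $K_\lambda$ with the stated $\lambda$ — but this is a direct computation from $p_{\text{in}}=a/n$, $p_{\text{out}}=b/n$ and~\eqref{eq:ab.dlambda}.
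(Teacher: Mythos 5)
Your proposal follows the same strategy as the paper: BFS exploration of the $m$ balls, coupling the offspring counts at each exposed vertex to the appropriate Poisson laws via Lemma~\ref{l:bin.poi}, a truncation event $\{\sum_j|T_\ell^{(j)}|\le n^{1/4}\}$ controlled by Markov, and a union bound over the $O(n^{1/4})$ revealed vertices. The overall architecture is correct and the conclusion follows.

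However, there is a quantitative slip in your per-vertex estimate. You write that the per-vertex coupling failure probability is $O(|A|/n)=O(n^{-3/4})$, accounting only for the depletion of the unseen population by the $|A|\le n^{1/4}$ already-revealed vertices. But in the Binomial $\textsf{Bin}(n_i,p)$ with $n_i$ being the count of unseen vertices of spin $i$, the term $|n_ip-\theta|$ in \eqref{eq:poi.bin.dtv} is dominated not by the depletion but by the \emph{a priori} binomial fluctuation of the spin-class size $N_i$ around $n/q$, which is of order $\sqrt{n}$ (the paper works on the event $|N_i-n/q|\le n^{3/5}$). This makes the per-vertex TV error $O(n^{-2/5})$, not $O(n^{-3/4})$; the union bound still gives $O(n^{1/4}\cdot n^{-2/5})=O(n^{-3/20})=o(1)$, so the conclusion is unaffected, but you need this observation to get the estimate to close correctly, and it is why the paper explicitly reveals the global counts $N_1,\dots,N_q$ up front (its event $\AA_2$). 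A related point: your write-up wavers between revealing spins lazily (``unseen vertices whose spins we haven't yet determined'') and using the aggregate counts $n_i$ in the Binomial; the paper's approach of fixing the $N_i$ first, then running BFS conditionally on spins, is the cleaner formulation, and is implicitly what your Binomial parametrization requires. Your final remark about only needing a coupling rather than the full conditional-distribution statement of~\cite{MoNeSl:15} is correct and is precisely why the no-edge conditioning causes no trouble here.
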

\begin{proof}
Let $K=\sum_{j} |T^{(j)}_\ell|$ be the total number of vertices revealed in the $m$ neighborhoods and let $\AA_1$ be the event $\{K\leq n^{1/4}\}$.  Since $\E[K]=\rev{\lfloor n^{1/5} \rfloor \cdot } \E|T^{(1)}_\ell|$ and $\E|T^{(1)}_\ell|=\sum_{s=0}^\ell d^r<\infty$ then by Markov's inequality,
\[
\P(\AA_1)=O(md^\ell n^{-1/4})=o(1).
\]
In the SBM graph $G$, we can construct the depth $\ell$ neighborhoods of the $u_j$ by exploring each neighborhood and labeling by breadth first search.  The first step is to assign the spins $\sigma_{u_j}$ independently.  We can couple these so that $\sigma_{u_j} = \sigma^{(j)}_{\rho^{(j)}}$.

Now, we first reveal the total number of vertices of each state $N_i=|\{u:\sigma_u=i\}|$ for each $i$.  Let $\AA_2$ be the event $|N_i-n/q|\leq n^{3/5}$ for all $i$. Since these are binomial
\[
\P(\AA_2)\to 1.
\]
Let $w_k$ be the $k$-th vertex revealed in the process (with $u_j=w_j$ for $1\leq j \leq m$).  Let 
\[
M_{i,k}=|\{k'\leq k: \sigma_{w_{k'}}=i\}|
\]
be the number of state $i$ vertices in $w_1\ldots, w_k$. 

We begin revealing the neighbours of the neighbours of the $u_j$ and then their descendants an so on to depth $\ell$ according to depth first search.
If $\sigma_{w_k}=i$, the number of vertices that $w_k$ connects to of state $i$ in $V\setminus\{w_1,\ldots,w_k\}$ is $\textsf{Bin}(N_i-M_{i,k},\frac{a}{n})$  and   for $i\neq i'$ it connects to $\textsf{Bin}(N_{i'}-M_{i',k},\frac{b}{n})$. On the other hand on the tree, if $\sigma^{(j)}_w=i$ then has $\textsf{Poi}(d(\frac{1+(q-1)\lambda}{q}))$ children of state $i$ and $\textsf{Poi}(d(\frac{1-\lambda}{q}))$ children of state $i'$ for each $i'\neq i$.  We will take the optimal coupling between these Binomial and Poisson random variables, and let $\AA_3$ be the event that the number of children for every vertex in the SBM and the corresponding vertex in the \rev{Galton Watson} tree had the same number of descendants of each spin type for the first $\ell$ levels.  If this holds then~\eqref{eq:coupling} holds.

Recall that
\[
\frac{a}{q}=d\frac{1+(q-1)\lambda}{q},\qquad \frac{b}{q}=d\frac{1-\lambda}{q}.
\]
On the event $\AA_1\cap\AA_2$ we have that  $|N_i-M_{i,k}|\leq 2n^{3/5}$ for all $i$ and $k\leq n^{1/4}$ so by equation~\eqref{eq:poi.bin.dtv}, when $|N_i-M_{i,k}-n/q|\leq 2n^{3/5}$,
\[
d_{TV}(\hbox{Bin}(N_i-M_{i,k},\frac{a}{n}),\textsf{Poi}(d(\frac{1+(q-1)\lambda}{q}))) \leq O(n\cdot n^{-2} + n^{3/5-1})=O(n^{-2/5})
\]
and
\[
d_{TV}(\hbox{Bin}(N_i-M_{i,k},\frac{b}{n}),\textsf{Poi}(d(\frac{1-\lambda}{q}))) =O(n^{-2/5}).
\]
On the event $\AA_1\cap\AA_2$, since there are at most $n^{1/4}$ vertices in the trees, we have
\[
\P(\AA_3^c\given \AA_1\cap\AA_2)\leq O(mn^{-2/5})=o(1).
\]
On the event $\AA=\AA_1\cap\AA_2\cap\AA_3$ the required coupling holds and $\P(\AA)=1-o(1)$ which completes the proof.

\end{proof}

Let $\AA$ be the event from the statement of lemma~\ref{lem:coup_trees_g}.    
For each  $(t,s,s(\ell))\in \RR_{\ell}$ let
\[
\NN\big(t,s,s(\ell)\big)=\Big\{j\leq m:T^{(j)}_\ell=t,\sigma^{(j)}_{\rho^{(j)}}=s,\sigma^{(j)}(\ell)=s(\ell)\Big\},
\]
and $\NN\big(t,s(\ell)\big)=\bigcup_{s\in[q]} \NN\big(t,s,s(\ell)\big)$. Define the event
\[
\DD=\Bigg\{\sup_{(t,s,s(\ell))\in \RR_{\ell,r}} \bigg|\frac{\big|\NN\big(t,s,s(\ell)\big)\big|}{m} - \P\big(T_\ell=t,~\sigma_\rho=s,~\sigma(\ell) = s(\ell)\big)\bigg| \leq m^{-1/3}\Bigg\}.
\]
Since $|\NN(t,s,s(\ell))|$ has distribution $\textsf{Bin}\Big(m,\P\big(T_\ell=t,\sigma_\rho=s,\sigma(\ell) = s(\ell)\big)\Big)$ we have that
\[
\P(\DD) \to 1.
\]
Similarly to $\NN$, on the SBM we define
\[
\MM(t,s,s(\ell))=\Big\{j\leq m:B_\ell(u_j)=t,~\sigma_{u_j}=s,~\sigma_{L_\ell(u_j)}=s(\ell)\Big\},
\]
and $\MM\big(t,s(\ell)\big)=\bigcup_{s\in[q]} \MM\big(t,s,s(\ell)\big)$, where $L_\ell(u_j)$ is the set of vertices at distance $\ell$ from $u_j$.  On the event $\AA$ we have that $\MM(t,s,s(\ell))=\NN(t,s,s(\ell))$ because of the coupling.  We denote the $\sigma$ algebra generated by $G$, the sizes of $\MM$'s and the boundary spins by $\FF:=\sigma\left(G,\left\{\left|\MM_{t,s,s(\ell)}\right|\right\}_{t,s,s(\ell)},\left(\sigma_{L_\ell(u_j)}\right)_{j\leq m}\right)$.  Let $\breve{\sigma}$ be any estimator that is allowed to use all the information in $\FF$.  Since this includes $G$, \rev{the set of $\breve{\sigma}$'s} is a bigger class of estimators \rev{than the set of $G$-measurable $\hat{\sigma}$'s} so we have that
\[
\E \max_{\breve{\sigma}}S_m(\breve{\sigma}) \geq \E \max_{\hat{\sigma}}S_m(\hat{\sigma})
\]

\begin{claim}\label{claim:exchangeable}
For each $(t,s(\ell))$ the configurations
\[
\Big\{\sigma_{B_{\ell}(u_j)} \Big\}_{j\in \MM(t,s(\ell))}
\]
are conditionally exchangeable given $\FF$.
\end{claim}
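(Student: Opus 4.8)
The plan is to exploit the symmetry between the $q$ colors of the stochastic block model together with the fact that, conditioned on the tree topology $t$ and the boundary spins $s(\ell)$, the only remaining randomness in the interior spin configuration is a relabeling of colors that is identically distributed across all vertices $u_j$ in $\MM(t,s(\ell))$. First I would set up the exchangeability claim precisely: fix $(t,s(\ell)) \in \RR_{\ell}$ and condition on $\FF$, which records the graph $G$, the sizes $|\MM(t,s,s(\ell))|$ for all $s$, and the boundary spins $(\sigma_{L_\ell(u_j)})_{j\le m}$. On the event $\AA$ the local ball $B_\ell(u_j)$ is isomorphic to $t$ for every $j \in \MM(t,s(\ell))$, so the configurations $\{\sigma_{B_\ell(u_j)}\}_{j \in \MM(t,s(\ell))}$ all live in the same finite space $[q]^{V(t)}$, and I want to show their joint law is invariant under permuting the index set $\MM(t,s(\ell))$.

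The key step is a color-symmetry argument on the SBM. Recall that $\GG(n,q,\frac{a}{n},\frac{b}{n})$ is defined by first choosing $\sigma$ uniformly and then inserting edges with probabilities depending only on whether endpoints agree. Hence for any permutation $\pi \in S_q$ acting on colors, the pair $(G,\sigma)$ has the same law as $(G, \pi \circ \sigma)$; more usefully, conditioning on the unlabeled graph $G$ alone, the posterior over $\sigma$ is invariant under the $S_q$ action. The boundary spins $s(\ell)$ break this full symmetry, but the conditional law of the interior of $B_\ell(u_j)$ given $G$, given the topology being $t$, and given the boundary being $s(\ell)$, is a fixed function of $(t,s(\ell))$ that does not depend on which vertex $u_j$ realizes it — this is the Markov random field property of the broadcast-like conditional structure, which combined with the exchangeability of the $u_j$ (chosen uniformly at random without replacement) gives that the tuple indexed by $\MM(t,s(\ell))$ is exchangeable. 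Concretely, I would argue that conditionally on $\FF$, there is a random bijection-equivariant representation: each $\sigma_{B_\ell(u_j)}$ for $j \in \MM(t,s(\ell))$ can be written as a measurable function of $(t, s(\ell))$ and an i.i.d.\ (given $\FF$) sequence of auxiliary randomizations, where the distribution of the randomization depends only on $(t,s(\ell))$ and not on $j$. Permuting $\MM(t,s(\ell))$ permutes these i.i.d.\ randomizations, leaving the joint law unchanged.

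The main obstacle I anticipate is the same subtlety flagged in the introduction for the two-state case in \cite{MoNeSl:15}: one must account for the information contained in the \emph{absence} of edges between the local neighborhoods and the rest of the graph, which a priori could correlate the configurations across different $j$ in a way that depends on $j$. I would handle this by observing that on the event $\AA \subseteq \AA_1 \cap \AA_2 \cap \AA_3$ from Lemma~\ref{lem:coup_trees_g}, the coupling to the independent Galton–Watson trees with broadcast spins is \emph{exact}, so it suffices to prove exchangeability for the coupled objects $\{\sigma^{(j)}_{T^{(j)}_\ell}\}_{j \in \NN(t,s(\ell))}$; but these are literally i.i.d.\ copies (restricted to the subset of indices whose topology-plus-boundary equals $(t,s(\ell))$, which is itself an exchangeable operation since the underlying sequence is i.i.d.), hence trivially exchangeable, and the conditioning on $\FF$ only fixes the count $|\NN(t,s(\ell))|$ and the boundary, under which i.i.d.\ sequences remain exchangeable. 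Thus the argument reduces, via the coupling, to the elementary fact that conditioning an i.i.d.\ sequence on the multiset of its "types" and then looking within one type class yields an exchangeable family — and one only needs to check that the residual discrepancy event $\AA^c$ has probability $o(1)$, which is already established, so it does not affect the exchangeability statement which holds on $\AA$.
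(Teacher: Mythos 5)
Your proposal does not establish the claim as stated, and the gap is precisely at the point you flag and then wave away. The claim is an \emph{exact} statement about the conditional law given $\FF$, and $\FF$ contains the entire graph $G$, not just the class counts and the boundary spins. Your reduction ``on $\AA$ the coupling is exact, so it suffices to prove exchangeability of the i.i.d.\ tree copies $\{\sigma^{(j)}\}_{j\in\NN(t,s(\ell))}$'' silently replaces conditioning on $\FF$ by conditioning on the type counts and boundaries of the tree copies; conditioning on all of $G$ is a much finer conditioning (the rest of the graph was generated using the spins, and the posterior given $G$ couples all vertices through the non-edge factors), and nothing in the coupling of Lemma~\ref{lem:coup_trees_g} lets you transfer a conditional-distribution statement across it. At best this route gives an approximate exchangeability statement ``off the event $\AA^c$,'' which is not the claim and would force you to redo the downstream argument identifying the optimal $\FF$-measurable estimator $\breve{\sigma}^*$. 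Your more ``concrete'' representation is also false as stated: writing each $\sigma_{B_\ell(u_j)}$, $j\in\MM(t,s(\ell))$, as a function of $(t,s(\ell))$ and its own auxiliary randomization, i.i.d.\ across $j$ given $\FF$, would make the family conditionally i.i.d.\ given $\FF$ — but $\FF$ contains the counts $|\MM(t,s,s(\ell))|$ for every $s$, which fix the empirical distribution of the root colours within the class, so conditional independence cannot hold; likewise, because of the non-edges the posterior given $G$ is not a Markov random field on $G$, so the interiors are not conditionally independent given their boundaries. Exchangeability survives these constraints; independence does not, and your argument runs through independence.

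The paper's proof avoids all of this with a one-step symmetry argument carried out directly under the SBM measure: for a permutation $\pi$ of $\MM(t,s(\ell))$, define $\sigma^\pi$ by transporting the interior configurations $\sigma_{B_{\ell-1}(u_{\pi(j)})}$ onto $B_{\ell-1}(u_j)$ (via the identification of each ball with $t$), leaving all other spins fixed, and observe that $\P[(G,\sigma)\mid\FF]=\P[(G,\sigma^\pi)\mid\FF]$. This identity is exact because the SBM likelihood of $(G,\sigma)$ depends on $\sigma$ only through the empirical colour counts on vertices and the monochromatic/dichromatic statistics on edges (the non-edge contribution is a function of the global colour counts alone), and all of these are invariant under the swap, since the balls in one class share the topology $t$, share the boundary condition $s(\ell)$, and have no edges from their interiors to the outside. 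In particular the ``absence of edges'' information is handled exactly, with no appeal to the tree coupling, no independence, and no error event. If you want to salvage your approach, the fix is essentially to abandon the i.i.d./coupling reduction and prove this measure-preservation identity, which is the content of the paper's two-line proof.
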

\begin{proof}
Let $\pi$ be a permutation on $\MM(t,s(\ell))$ and construct $\sigma^\pi$ by setting $\sigma^\pi_{B_{\ell-1}(u_j)}=\sigma_{B_{\ell-1}(u_{\pi(j)})}$ with all spins outside of $\bigcup_{j\in \MM(t,s(\ell))} B_{\ell-1}(u_j)$ remaining the same.  To establish the claim it suffices to observe that under the SBM measure $\P((G,\sigma)\mid\FF)=\P((G,\sigma^\pi)\mid \FF)$.  This holds because the empirical distribution of spins on the vertices and edges remains unchanged by the permutation.
\end{proof}

\rev{\begin{claim}\label{claim:near:optimal}
Let $\breve{\sigma}$ be any estimator measurable with respect to $\FF$.  Then for some $C>0$
\[
\E\bigg[\max_{\Gamma\in S_q} \sum_{j\in \MM(t,s(\ell))} \one(\Gamma(\breve{\sigma}_{u_j})=\sigma_{u_j})\bbgiven \FF\bigg] \leq \max_i |\MM(t,i,s(\ell))| + Cq!\sqrt{|\MM(t,s(\ell))|}
\]
\end{claim}
\begin{proof}
By the conditional exchangeability from the previous claim, given $\FF$, we have no information about which $u_j$ in $\MM(t,s(\ell))$ are which color except according to the frequencies given by the $\MM(t,i,s(\ell))$.  Hence
\[
\E\bigg[\sum_{j\in \MM(t,s(\ell))} \one(\breve{\sigma}_{u_j}=\sigma_{u_j})\bbgiven \FF\bigg] = \sum_{j\in \MM(t,s(\ell))} \frac{|\MM(t,\breve{\sigma}_{u_j},s(\ell))|}{|\MM(t,s(\ell))|} \leq \max_i |\MM(t,i,s(\ell))|.
\]
Moreover, a standard application of the Azuma-Hoeffding inequality gives that
\begin{align*}
&\P\bigg(\sum_{j\in \MM(t,s(\ell))} \one(\breve{\sigma}_{u_j}=\sigma_{u_j}) - \E\bigg[\sum_{j\in \MM(t,s(\ell))} \one(\breve{\sigma}_{u_j}=\sigma_{u_j})\bbgiven \FF\bigg] \geq x \bbgiven \FF\bigg)\\
&\qquad\qquad\leq \exp\big(-x^2/(8|\MM(t,s(\ell))|)\big),
\end{align*}
and hence
\[
\E\bigg[\Big(\sum_{j\in \MM(t,s(\ell))} \one(\breve{\sigma}_{u_j}=\sigma_{u_j}) - \max_i |\MM(t,i,s(\ell))|\Big)^+\bbgiven \FF\bigg] \leq C \sqrt{|\MM(t,s(\ell))|}.
\]
Since $\max\{x_i\} \leq \sum x_i^+$ it follows that
\begin{align*}
&\E\bigg[\max_{\Gamma\in S_q} \sum_{j\in \MM(t,s(\ell))} \one(\Gamma(\breve{\sigma}_{u_j})=\sigma_{u_j}) - \max_i |\MM(t,i,s(\ell))|\bbgiven \FF\bigg]\\ 
&\qquad \leq \E\bigg[\sum_{\Gamma\in S_q} \Big( \sum_{u\in \MM(t,s(\ell))} \one(\Gamma(\breve{\sigma}_{u_j})=\sigma_{u_j}) - \max_i |\MM(t,i,s(\ell))| \Big)^+\bbgiven \FF\bigg]\\
&\qquad \leq C q!\sqrt{|\MM(t,s(\ell))|},
\end{align*}
which completes the proof.
\end{proof}
}
\rev{With Claim~\ref{claim:exchangeable} and Claim \ref{claim:near:optimal} in hand, we now prove Theorem~\ref{t:nonrecon.to.sbm}.}
\begin{proof}[Proof of Theorem~\ref{t:nonrecon.to.sbm}]
By equation~\eqref{eq:tree.freq} on the event $\DD$, for $j\in \MM(t,s(\ell))$ and  $(t,i,s(\ell)) \in \RR_{\ell,r}$,
\begin{equation}\label{eq:graph:to:trees}
\frac{\left|\MM(t,i,s(\ell))\right|}{\left|\MM(t,s(\ell))\right|}= \frac{\P(T_\ell=t,\sigma_\rho=i,\sigma(\ell) = s(\ell))}{\P(T_\ell=t,\sigma(\ell) = s(\ell))} +o(1) = \P(\sigma_\rho = i \mid T_\ell=t, \sigma(\ell)=s(\ell))+o(1).
\end{equation}
If $t\in \TT_{\ell,r}$ but $(t,i,s(\ell)) \not\in \RR_{\ell,r}$ then $|\MM(t,i,s(\ell))|=0$.
Hence we have that
\begin{align*}
&\E [S_m(\breve{\sigma})\one(\AA\cap\DD)]\\
&\rev{\stackrel{(a)}{=}\E\Big[\E[S_m(\breve{\sigma})\given \FF]\one(\AA\cap \DD)\Big]}\\
&\rev{\stackrel{(b)}{\leq}} \E\bigg[ \bigg(1+\frac1{m}\sum_{(t,s(\ell)), t\in \TT_{\ell,r}} |\MM(t,s(\ell))| \Big(\max_i \frac{|\MM(t,i,s(\ell))|}{|\MM(t,s(\ell))|}-1\Big) + C q!\sqrt{|\MM(t,s(\ell))|}\bigg) \one(\AA\cap\DD)\bigg]\\
&\rev{\stackrel{(c)}{\leq}}  1 + \sum_{(t,s(\ell)), t\in \TT_{\ell,r}} \P\big(T_\ell=t, \sigma(\ell)=s(\ell)\big) \Big(\max_i\P\big(\sigma_\rho = i\,\big|\,T_\ell=t, \sigma(\ell)=s(\ell)\big) -1\Big)+o(1)\\
&\leq  \P(T_\ell \not \in \TT_{\ell,r}) +  \E\Big[\max_i\P\big(\sigma_\rho = i\,\big|\,T_\ell=t, \sigma(\ell)=s(\ell)\big)\Big]+o(1)\\
&\rev{\stackrel{(d)}{\leq}} \frac1{q} + \epsilon/2+o(1),
\end{align*}
where \rev{$(a)$ is by tower property, $(b)$ is by Claim~\ref{claim:near:optimal}, and $(c)$ is by the estimate~\eqref{eq:graph:to:trees} and the fact $\sqrt{|\MM(t,s(\ell))|}=o(m)$. Moreover, $(d)$ follows from equations~\eqref{eq:ell.defn} and~\eqref{eq:r.defn}}. Since $\P(\AA\cap\DD)=1-o(1)$ we have that
\[
\E [S_m(\breve{\sigma})]\leq \P\big(\AA\cap\DD)^c\big) + \E [S_m(\breve{\sigma})\one(\AA\cap\DD)]\leq \frac1{q} + \epsilon/2+o(1).
\]
This establishes that $\E[S(\hat{\sigma})]\leq \frac1{q}+o(1)$ which implies Theorem~\ref{t:nonrecon.to.sbm}.
\end{proof}
We now prove Corollary~\ref{cor:two.point}.
\begin{proof}[Proof of Corollary~\ref{cor:two.point}]
We will show that equations~\eqref{eq:SBM.defn} and \eqref{eq:SBM.2.point.defn} are equivalent.  First suppose that~\eqref{eq:SBM.2.point.defn} fails so we have two point correlations
\[
\limsup_n \E\sum_{i=1}^{q}\left|\P(\sig_u=i \given G,\sig_v=1) - \frac{1}{q}\right|>0
\]
and hence for some $\epsilon>0$
\[
\limsup_n \frac1{n}\E\sum_{u}\max_i \P(\sig_u=i \given G,\sig_v=1) > \frac{1}{q}+\epsilon.
\]
Let the estimator $\hat{\sigma}(G)\equiv (\hat{\sigma}_u)_{u\in V}$ be defined by
\[
\hat{\sigma}_u=\argmax_{1\leq i \leq q}~\P(\sig_u=i \given G,\sig_v=1)
\]
with ties broken randomly then, since conditioning on $\sig_v=1$ provides no extra information by symmetry,
\[
\limsup_n \E S(\hat{\sigma})= \limsup_n  \E\left[\E\bigg[\max_{\Gamma\in S_q}\frac1{n}\sum_u \one(\Gamma(\hat{\sigma}_u)=\sigma_u)\bbgiven G\bigg]\right]>\frac1{q}+\epsilon.
\]
Hence, \eqref{eq:SBM.defn} fails and we have detection.

Now suppose that we have detection for some estimator $\hat{\sigma}$.  Let $A_i=\{u:\hat{\sigma}_u=i\}$.  For at least one choice of $i$ and $j$ and some $\epsilon>0$ and infinitely many $n$ we must have
\[
\E[|\sum_{u\in A_j} \one(\sigma_u = i) - |A_j|/q |] \geq \epsilon n
\]
and hence for some $G$-measurable subset of the vertices $A$,
\[
\E [\sum_{u,u'\in A} \one(\sigma_u = \sigma_{u'}) - |A|^2/q ]= \E [\sum_i (\sum_{u\in A} \one(\sigma_u = i) - |A|/q )^2 ] \geq \epsilon^2 n^2.
\]
Then
\begin{align*}
\epsilon^2 &\leq \frac1{n^2}\E [\sum_i (\sum_{u\in A} \one(\sigma_u = i) - |A|/q )^2 ]\\
&= \frac1{n^2}\sum_{u,u' } \E\Big[ \one(u,u'\in A) \big( \one(\sigma_u = \sigma_{u'})-\frac1{q}\big)\Big] \\
&= \frac1{n^2}\sum_{u,u' } \E\Big[ \one(u,u'\in A) \E[\big( \one(\sigma_u = \sigma_{u'})-\frac1{q}\big)\mid G, \sigma_u]\Big] \\
&= \frac1{n^2} \sum_{u,u' } \E\Big[ \one(u,u'\in A) \big(\P(  \sigma_u = \sigma_{u'} | G,\sigma_u)-\frac1{q}\big)  \Big] \\
\end{align*}
Now by symmetry of the colors, $\P(  \sigma_u = \sigma_{u'} | G,\sigma_u)=\P( \sigma_{u'} = 1 | G,\sigma_u=1)$ and so
\[
\epsilon^2 \leq \E\Big[ \one(u,u'\in A) \big(\P(  \sigma_{u'}=1 | G,\sigma_u=1)-\frac1{q}\big)  \Big]
\]
which contradicts $\P( \sigma_{u'} = 1 | G,\sigma_u=1)$ converging to $\frac1{q}$ in probability  and hence ~\eqref{eq:SBM.2.point.defn} fails.
\end{proof}

\bibliographystyle{amsalpha}
\bibliography{all,my}

\newcommand{\etalchar}[1]{$^{#1}$}
\providecommand{\bysame}{\leavevmode\hbox to3em{\hrulefill}\thinspace}
\providecommand{\MR}{\relax\ifhmode\unskip\space\fi MR }
\providecommand{\MRhref}[2]{%
  \href{http://www.ams.org/mathscinet-getitem?mr=#1}{#2}
}
\providecommand{\href}[2]{#2}
\begin{thebibliography}{KMRT{\etalchar{+}}07}

\bibitem[Abb18]{abbe18survey}
Emmanuel Abbe, \emph{Community detection and stochastic block models: Recent
  developments}, Journal of Machine Learning Research \textbf{18} (2018),
  no.~177, 1--86.

\bibitem[AS15]{AbbeSandon:15}
Emmanuel Abbe and Colin Sandon, \emph{Community detection in general stochastic
  block models: Fundamental limits and efficient algorithms for recovery}, 2015
  IEEE 56th Annual Symposium on Foundations of Computer Science, IEEE, 2015,
  pp.~670--688.

\bibitem[AS18]{AbbeSandon:18}
\bysame, \emph{Proof of the achievability conjectures for the general
  stochastic block model}, Communications on Pure and Applied Mathematics
  \textbf{71} (2018), no.~7, 1334--1406.

\bibitem[BC09]{BickelChen:09}
P.J. Bickel and A.~Chen, \emph{A nonparametric view of network models and
  {N}ewman-{G}irvan and other modularities}, Proceedings of the National
  Academy of Sciences \textbf{106} (2009), no.~50, 21068--21073.

\bibitem[BCMR06]{BCMR:06}
C.~Borgs, J.~Chayes, E.~Mossel, and S.~Roch, \emph{The {K}esten-{S}tigum
  reconstruction bound is tight for roughly symmetric binary channels},
  Proceedings of IEEE FOCS 2006, 2006, pp.~518--530.

\bibitem[BJR07]{BoJaRi:07}
B{\'e}la Bollob{\'a}s, Svante Janson, and Oliver Riordan, \emph{The phase
  transition in inhomogeneous random graphs}, Random Structures \& Algorithms
  \textbf{31} (2007), no.~1, 3--122.

\bibitem[BLM15]{BoLeMa:15}
Charles Bordenave, Marc Lelarge, and Laurent Massouli{\'e},
  \emph{Non-backtracking spectrum of random graphs: community detection and
  non-regular ramanujan graphs}, Foundations of Computer Science (FOCS), 2015
  IEEE 56th Annual Symposium on, IEEE, 2015, pp.~1347--1357.

\bibitem[BMNN16]{BMNN:16}
Jess Banks, Cristopher Moore, Joe Neeman, and Praneeth Netrapalli,
  \emph{Information-theoretic thresholds for community detection in sparse
  networks}, Conference on Learning Theory, PMLR, 2016, pp.~383--416.

\bibitem[BRZ95]{BlRuZa:95}
P.~M. Bleher, J.~Ruiz, and V.~A. Zagrebnov, \emph{On the purity of the limiting
  {G}ibbs state for the {I}sing model on the {B}ethe lattice}, J. Statist.
  Phys. \textbf{79} (1995), no.~1-2, 473--482.

\bibitem[CCC{\etalchar{+}}90]{CCCST:90}
J.~M. Carlson, J.~T. Chayes, L.~Chayes, J.~P. Sethna, and D.~J. Thouless,
  \emph{Bethe lattice spin glass: the effects of a ferromagnetic bias and
  external fields. {I}. {B}ifurcation analysis}, J. Statist. Phys. \textbf{61}
  (1990), no.~5-6, 987--1067.

\bibitem[CCST86]{CCST:86}
J.~T. Chayes, L.~Chayes, James~P. Sethna, and D.~J. Thouless, \emph{A mean
  field spin glass with short-range interactions}, Comm. Math. Phys.
  \textbf{106} (1986), no.~1, 41--89.

\bibitem[CCST90]{CCST:90}
J.~M. Carlson, J.~T. Chayes, J.~P. Sethna, and D.~J. Thouless, \emph{Bethe
  lattice spin glass: the effects of a ferromagnetic bias and external fields.
  {II}. {M}agnetized spin-glass phase and the de {A}lmeida-{T}houless line}, J.
  Statist. Phys. \textbf{61} (1990), no.~5-6, 1069--1084.

\bibitem[Cha06]{Chatterjee06}
Sourav Chatterjee, \emph{A generalization of the {L}indeberg principle}, The
  Annals of Probability \textbf{34} (2006), no.~6, 2061--2076.

\bibitem[CK01]{CondonKarp:01}
A.~Condon and R.M. Karp, \emph{Algorithms for graph partitioning on the planted
  partition model}, Random Structures and Algorithms \textbf{18} (2001), no.~2,
  116--140.

\bibitem[CO10]{CojaOghlan:10}
A.~Coja-Oghlan, \emph{Graph partitioning via adaptive spectral techniques},
  Combinatorics, Probability and Computing \textbf{19} (2010), no.~02,
  227--284.

\bibitem[COEJ{\etalchar{+}}18]{CEJKK:18}
Amin Coja-Oghlan, Charilaos Efthymiou, Nor Jaafari, Mihyun Kang, and Tobias
  Kapetanopoulos, \emph{Charting the replica symmetric phase}, Communications
  in Mathematical Physics \textbf{359} (2018), no.~2, 603--698.

\bibitem[COKPZ18]{CKPZ:18}
Amin Coja-Oghlan, Florent Krzakala, Will Perkins, and Lenka Zdeborov{\'a},
  \emph{Information-theoretic thresholds from the cavity method}, Advances in
  Mathematics \textbf{333} (2018), 694--795.

\bibitem[DF89]{DyerFrieze:89}
M.E. Dyer and A.M. Frieze, \emph{The solution of some random {NP}-hard problems
  in polynomial expected time}, Journal of Algorithms \textbf{10} (1989),
  no.~4, 451--489.

\bibitem[DKMZ11]{DKMZ:11}
A.~Decelle, F.~Krzakala, C.~Moore, and L.~Zdeborov\'a, \emph{Asymptotic
  analysis of the stochastic block model for modular networks and its
  algorithmic applications}, Physics Review E \textbf{84} (2011), 066106.

\bibitem[DM22]{DominiguezMourrat22}
Tomas Dominguez and Jean-Christophe Mourrat, \emph{Mutual information for the
  sparse stochastic block model}, arXiv preprint arXiv:2209.04513 (2022).

\bibitem[DMR11]{DaMoRo:11}
C.~Daskalakis, E.~Mossel, and S.~Roch, \emph{Evolutionary trees and the ising
  model on the bethe lattice: a proof of steel's conjecture}, PTRF \textbf{149}
  (2011), no.~1-2, 149---189.

\bibitem[Dur19]{Durrett_2019}
Rick Durrett, \emph{Probability: Theory and examples}, 5 ed., Cambridge Series
  in Statistical and Probabilistic Mathematics, Cambridge University Press,
  2019.

\bibitem[EKYPS00]{EvKePeSc:00}
W.~S. Evans, C.~Kenyon, Yuval Y.~Peres, and L.~J. Schulman, \emph{Broadcasting
  on trees and the {I}sing model}, Ann. Appl. Probab. \textbf{10} (2000),
  no.~2, 410--433.

\bibitem[ER{\etalchar{+}}60]{ErdosRenyi:60}
Paul Erdos, Alfr{\'e}d R{\'e}nyi, et~al., \emph{On the evolution of random
  graphs}, Publ. Math. Inst. Hung. Acad. Sci \textbf{5} (1960), no.~1, 17--60.

\bibitem[HLL83]{HoLaLe:83}
P.W. Holland, K.B. Laskey, and S.~Leinhardt, \emph{Stochastic blockmodels:
  First steps}, Social Networks \textbf{5} (1983), no.~2, 109 -- 137.

\bibitem[Iof96a]{Ioffe:96b}
D.~Ioffe, \emph{Extremality of the disordered state for the {I}sing model on
  general trees}, Trees (Versailles, 1995), Progr. Probab., vol.~40,
  Birkh\"auser, Basel, 1996, pp.~3--14.

\bibitem[Iof96b]{Ioffe:96a}
\bysame, \emph{On the extremality of the disordered state for the {I}sing model
  on the {B}ethe lattice}, Lett. Math. Phys. \textbf{37} (1996), no.~2,
  137--143.

\bibitem[JM04]{JansonMossel:04}
S.~Janson and E.~Mossel, \emph{Robust reconstruction on trees is determined by
  the second eigenvalue}, Ann. Probab. \textbf{32} (2004), 2630--2649.

\bibitem[JS98]{JerrumSorkin:98}
M.~Jerrum and G.B. Sorkin, \emph{The {M}etropolis algorithm for graph
  bisection}, Discrete Applied Mathematics \textbf{82} (1998), no.~1-3,
  155--175.

\bibitem[KM22]{KoehlerMossel:22}
Frederic Koehler and Elchanan Mossel, \emph{Reconstruction on trees and
  low-degree polynomials}, 2022, To Appear in Nuerips 2022.

\bibitem[KMM{\etalchar{+}}13]{Krzakala_etal:13}
F.~Krzakala, C.~Moore, E.~Mossel, J.~Neeman, A.~Sly, Zdeborova L, and P.~Zhang,
  \emph{Spectral redemption: clustering sparse networks}, PNAS \textbf{100}
  (2013), no.~52, 20935--20940.

\bibitem[KMRT{\etalchar{+}}07]{KMRSZ:07}
F.~Krzakala, A.~Montanari, F.~Ricci-Tersenghi, G.~Semerjian, and
  L.~Zdeborov{\'a}, \emph{Gibbs states and the set of solutions of random
  constraint satisfaction problems}, Proceedings of the National Academy of
  Sciences \textbf{104} (2007), no.~25, 10318--10323.

\bibitem[KS66]{KestenStigum:66}
H.~Kesten and B.~P. Stigum, \emph{Additional limit theorems for indecomposable
  multidimensional {G}alton-{W}atson processes}, Ann. Math. Statist.
  \textbf{37} (1966), 1463--1481.

\bibitem[Lin22]{Lindeberg:22}
J.~W. Lindeberg, \emph{Eine neue herleitung des exponential-gesetzes in der
  wahrscheinlichkeit srechnung}, Math. Zeit. \textbf{15} (1922), 211--235.

\bibitem[Lyo89]{Lyons:89}
R.~Lyons, \emph{The {I}sing model and percolation on trees and tree-like
  graphs}, Comm. Math. Phys. \textbf{125} (1989), no.~2, 337--353.

\bibitem[Mas14]{Massoulie:14}
Laurent Massouli{\'e}, \emph{Community detection thresholds and the weak
  ramanujan property}, Proceedings of the forty-sixth annual ACM symposium on
  Theory of computing, ACM, 2014, pp.~694--703.

\bibitem[McS01]{McSherry:01}
Frank McSherry, \emph{Spectral partitioning of random graphs}, Foundations of
  Computer Science, 2001. Proceedings. 42nd IEEE Symposium on, IEEE, 2001,
  pp.~529--537.

\bibitem[MM09]{MezardMontanari:09}
M.~M{\'e}zard and A.~Montanari, \emph{{Information, physics, and computation}},
  Oxford University Press, USA, 2009.

\bibitem[MMS20]{MoMoSa:20}
Ankur Moitra, Elchanan Mossel, and Colin Sandon, \emph{Parallels between phase
  transitions and circuit complexity?}, Conference on Learning Theory, 2020,
  pp.~2910--2946.

\bibitem[MNS15]{MoNeSl:15}
E.~Mossel, J.~Neeman, and A.~Sly, \emph{Reconstruction and estimation in the
  planted partition model}, Probability Theory and Related Fields (2015),
  no.~3-4, 431--461, The Arxiv version of this paper is titled Stochastic Block
  Models and Reconstruction.

\bibitem[MNS18]{MoNeSl:18}
Elchanan Mossel, Joe Neeman, and Allan Sly, \emph{A proof of the block model
  threshold conjecture}, Combinatorica \textbf{38} (2018), no.~3, 665--708.

\bibitem[Moo17]{Moore17}
Christopher Moore, \emph{{The Computer Science and Physics of Community
  Detection: Landscapes, Phase Transitions, and Hardness}}, arXiv e-prints
  (2017), arXiv:1702.00467.

\bibitem[Mos01]{Mossel:01}
E.~Mossel, \emph{Reconstruction on trees: beating the second eigenvalue}, Ann.
  Appl. Probab. \textbf{11} (2001), no.~1, 285--300.

\bibitem[Mos04]{Mossel:04}
E.~Mossel, \emph{Survey: Information flow on trees}, Graphs, Morphisms and
  Statistical Physics. DIMACS series in discrete mathematics and theoretical
  computer science (J.~Nestril and P.~Winkler, eds.), 2004, pp.~155--170.
  \MR{MR2056226}

\bibitem[Mos22]{Mossel:23}
Elchanan Mossel, \emph{Combinatorial statistics and the sciences}, 2022, To
  Appear in Proceedings of ICM 2022.

\bibitem[MP03]{MosselPeres:03}
E.~Mossel and Y.~Peres, \emph{Information flow on trees}, Ann. Appl. Probab.
  \textbf{13} (2003), no.~3, 817--844.

\bibitem[MRS11]{MoRoSl:11}
E.~Mossel, S.~Roch, and A.~Sly, \emph{On the inference of large phylogenies
  with long branches: How long is too long?}, Bull. Math. Bio. \textbf{73}
  (2011), no.~7, 1627--1644.

\bibitem[RCY11]{RoChYu:11}
Karl Rohe, Sourav Chatterjee, and Bin Yu, \emph{Spectral clustering and the
  high-dimensional stochastic blockmodel}, The Annals of Statistics \textbf{39}
  (2011), no.~4, 1878--1915.

\bibitem[RS17]{RochSly:17}
Sebastien Roch and Allan Sly, \emph{Phase transition in the sample complexity
  of likelihood-based phylogeny inference}, Probability Theory and Related
  Fields \textbf{169} (2017), no.~1, 3--62.

\bibitem[RTSZ19]{RiSeZd:19}
Federico Ricci-Tersenghi, Guilhem Semerjian, and Lenka Zdeborov\'a,
  \emph{Typology of phase transitions in {B}ayesian inference problems}, Phys.
  Rev. E \textbf{99} (2019), 042109.

\bibitem[Sly09]{Sly:09}
A.~Sly, \emph{Reconstruction of random colourings}, Comm. Math. Phys.
  \textbf{288} (2009).

\bibitem[Sly11]{Sly:11}
Allan Sly, \emph{Reconstruction for the {P}otts model}, The Annals of
  Probability \textbf{39} (2011), no.~4, 1365--1406.

\bibitem[SN97]{SnijdersNowicki:97}
T.A.B. Snijders and K.~Nowicki, \emph{Estimation and prediction for stochastic
  blockmodels for graphs with latent block structure}, Journal of
  Classification \textbf{14} (1997), no.~1, 75--100.

\bibitem[Vaa98]{Vaart98}
A.~W. van~der Vaart, \emph{Asymptotic statistics}, Cambridge Series in
  Statistical and Probabilistic Mathematics, Cambridge University Press, 1998.

\end{thebibliography}

\newpage

\appendix
\section{Proof of Proposition \ref{prop:clt}}
\label{sec:appendix:clt}

In this section we use the Lindeberg method \cite{Lindeberg:22, Chatterjee06} to prove Proposition \ref{prop:clt}. Although Proposition \ref{prop:clt} holds directly through multi-dimensional analogue of the classical Lindberg-Feller central limit theorem for triangular array (see e.g. Proposition 2.27 in \cite{Vaart98}) even for a continuous and bounded function $\phi$, here we give a quantitative proof of Proposition \ref{prop:clt} by the Lindeberg method.

To this end, let $(Z_j)_{j\leq D}$ be independent Gaussian vectors whose first and second moments match $(V_j)_{j\leq D}$, i.e. $Z_j\sim\normal\big(\E V_j, \Cov(V_j)\big)$, and which are independent of $(V_j)_{j\leq D}$. Then, we first claim that for large enough $D>D_0(\eps,q,\phi,C)$,
\begin{equation}\label{eq:first:claim}
\left|\E\phi\bigg(\sum_{j=1}^{D} Z_j\bigg)-\E\phi(W)\right|\leq \frac{\eps}{2}.
\end{equation}
Indeed, since $\sum_{j}Z_j$ is just a Gaussian vector with mean $\mu_V:=\sum_j \E V_j$ and covariance $\Sigma_V:=\sum_j \Cov(V_j)$, we can couple it with $W\sim \normal(\mu,\Sigma)$ by $\sum_j Z_j\stackrel{d}{=}\Sigma_V^{1/2}Z+\mu_V$ and $W\stackrel{d}{=}\Sigma^{1/2}Z+\mu$, where $Z\sim \normal(0, I_q)$. Thus,
\begin{equation*}
\begin{split}
    \left|\E\phi\bigg(\sum_{j=1}^{D} Z_j\bigg)-\E\phi(W)\right|
    &\leq \sup_{x\in \R^q}\norm{\nabla \phi(x)}_2 \cdot \E\norm{\mu-\mu_V+(\Sigma^{1/2}-\Sigma_V^{1/2})Z}_2\\
    &\leq \sup_{x\in \R^q}\norm{\nabla \phi(x)}_2 \cdot\left(\norm{\mu-\mu_V}_2+ \left(\E\norm{(\Sigma^{1/2}-\Sigma_V^{1/2})Z}_2^2\right)^{1/2}\right)\\
    &=\sup_{x\in \R^q}\norm{\nabla \phi(x)}_2 \cdot\left(\norm{\mu-\mu_V}_2+ \norm{\Sigma^{1/2}-\Sigma_V^{1/2}}_F^2\right), 
\end{split}
\end{equation*}
where $\norm{A}_F:=\left(\tr(A^T A)\right)^{1/2}$ denotes the Frobenius norm of the matrix $A$. Note that by our assumption \eqref{eq:clt:condition}, we have $\norm{\mu-\mu_V}_2\leq \sqrt{q}\frac{C}{\sqrt{D}}$, and for large enough $D$,
\begin{equation*}
    \norm{\Sigma^{1/2}-\Sigma_V^{1/2}}_F\leq \sup_{\substack{\norm{A-B}_{\infty}\leq \frac{C}{\sqrt{D}}\\\max(\norm{A}_{\infty},\norm{B}_{\infty})\leq 2C,~A,B~\succeq 0}}~\norm{A^{1/2}-B^{1/2}}_F.
\end{equation*}
Since $A\to A^{1/2}$ is continuous for $A\succeq 0$, the right hand side above tends to $0$ as $D\to\infty$. Therefore, our claim \eqref{eq:first:claim} holds for $D>D_0(\eps,q,\phi,C)$.

Having \eqref{eq:first:claim} in hand, it suffices to establish by Lindeberg method that for $D>D_0(\eps,q,\phi,C)$,
\begin{equation}\label{eq:second:claim}
    \left|\E\phi\bigg(\sum_{j=1}^{D} V_j\bigg)-\E\phi\bigg(\sum_{j=1}^{D} Z_j\bigg)\right|\leq \frac{\eps}{2}.
\end{equation}
To this end, for $1\leq k \leq D+1$, denote
\begin{equation*}
    S_k=\sum_{j\leq k-1}V_j+\sum_{j\leq k}Z_j,\quad\quad T_k=\sum_{j\leq k-1}V_j+\sum_{j\leq k+1}Z_j.
\end{equation*}
Thus, the left hand side of \eqref{eq:second:claim} equals $\big|\E\phi(S_{D+1})-\E\phi(S_1)\big|$ which can be bounded by 
\begin{equation}\label{eq:sec:claim:2}
      \Big|\E\phi(S_{D+1})-\E\phi(S_1)\Big|\leq \sum_{k=1}^{D}\Big|\E\phi(S_{k+1})-\phi(S_k)\Big|=\sum_{k=1}^{D}\Big|\E\phi\big(T_k+V_k\big)-\phi\big(T_k+Z_k\big)\Big|.
\end{equation}
A crucial observation is that $(T_k,V_k,Z_k)$ is independent. Moreover, since $\phi$ has bounded third derivatives, Taylor approximation shows that
\begin{equation*}
\begin{split}
    &\Big|\E\phi\big(T_k+V_k\big)-\E\big\langle\nabla \phi(T_k), V_k\big\rangle-\frac{1}{2}\E\big\langle\nabla^2\phi(T_k)V_k,V_k\big\rangle\Big|
    \leq C_{q,\phi}\E \norm{V_k}_{\infty}^3,\\
    &\Big|\E\phi\big(T_k+Z_k\big)-\E\big\langle\nabla \phi(T_k), Z_k\big\rangle-\frac{1}{2}\E\big\langle\nabla^2\phi(T_k)Z_k,Z_k\big\rangle\Big|
    \leq C_{q,\phi}\E \norm{Z_k}_{\infty}^3,
\end{split}
\end{equation*}
where $C_{q,\phi}>0$ is a constant that only depends on $q$ and $\phi$. Note that since $Z_k$ and $V_k$ have the same mean and covariance, it follows from independence of $T_k$, $V_k$, and $Z_k$ that $\E\big\langle\nabla \phi(T_k), V_k\big\rangle=\E\big\langle\nabla \phi(T_k), Z_k\big\rangle$ and $\E\big\langle\nabla^2\phi(T_k)V_k,V_k\big\rangle=\E\big\langle\nabla^2\phi(T_k)Z_k,Z_k\big\rangle$. Moreover, since $\norm{V_k}_{\infty} \leq \frac{C}{\sqrt{D}}$ almost surely and $Z_k\sim \normal\big(\E V_k,\Cov(V_k)\big)$, we have that $\max(\E\norm{V_k}_{\infty}^3, \E\norm{Z_k}_{\infty}^3)=O_q\left(\frac{C^3}{D\sqrt{D}}\right)$. Hence, we have that
\begin{equation*}
    |\E\phi\big(T_k+V_k\big)-\phi\big(T_k+Z_k\big)\Big|\leq C^\prime_{q,\phi}\cdot\frac{C^3}{D\sqrt{D}}.
\end{equation*}
Therefore, plugging the above estimate into \eqref{eq:sec:claim:2} shows that
\begin{equation*}
    \Big|\E\phi(S_{D+1})-\E\phi(S_1)\Big|\leq \sum_{k=1}^{D}\Big|\leq C^\prime_{q,\phi}\cdot\frac{C^3}{\sqrt{D}},
\end{equation*}
which implies \eqref{eq:second:claim} for large enough $D>D_0(\eps,q,\phi,C)$.
\section{Proof of Lemma \ref{lem:g4}}
\label{sec:appendix}
It was shown in Lemma 4.7 of \cite{Sly:11} that $g_3(s)<s$ holds by a rigorous method of numerical integration. We use a similar strategy here. Unfortunately, for the critical case $q=4$, $g_4(s)$ is much closer to $s$ especially when $s$ is small. This is expected since Lemma 4.5 of \cite{Sly:11} showed that $g_4(s)=s-\frac{112}{27}s^3+O(s^4)$ holds while $g_3(s)=s-\frac{3}{4}s^2+O(s^3)$. Thus, previous techniques of \cite{Sly:11} do not directly apply, so we use more sophisticated bounds on Gaussian integrals and higher Taylor expansions.

Since $g_4(s)=s-\frac{112}{27}s^3+O(s^4)$, $g_4(s)<s$ holds for $s$ sufficiently small. However, we need that $g_4(s)<s$ for the full range where $0<s\leq \frac{3}{4}$. We proceed by dividing the range of $s$ by $s\leq 0.0612$ and $0.0612\leq s\leq 0.75$. Throughout, we let $W= (W_1,W_2,W_3,W_4) \sim \normal(0,\Sigma)$ for $\Sigma\equiv (\Sigma_{ij})_{i,j\leq 4}\equiv \Sigma^{\dagger}(4)$, defined in \eqref{eq:def:sigma:dagger}, and  denote $\mu\equiv(\mu_i)_{i\leq 4}\equiv \mu^\dagger(4)$, defined in \eqref{eq:def:mu:dagger}.

First, we show $g_4(s)<s$ for $0<s\leq 0.0612$. Note that $g_4(s)$ can be expressed by
\begin{equation*}
\begin{split}
    g_4(s)
    &=\E\psi(s\mu+\sqrt{s}W)-\frac{1}{4}\\
    &=\E\sum_{\ell=1}^{6}(-1)^{i-1}\frac{\Big(\sum_{i=1}^{4}\exp(s\mu_i+\sqrt{s}W_i)-4\Big)^{\ell-1}\exp(s\mu_1+\sqrt{s}W_1)}{4^{\ell}}\\
    &\quad\quad+\E\frac{\Big(\sum_{i=1}^{4}\exp(s\mu_i+\sqrt{s}W_i)-4\Big)^{6}}{4^6}\frac{\exp(s\mu_1+\sqrt{s}W_1)}{\sumi \exp(s\mu_i+\sqrt{s}W_i)}-\frac{1}{4},
\end{split}
\end{equation*}
where we used the identity $\frac{1}{1+x}=\sum_{\ell=1}^{6}(-1)^{\ell-1}x^{\ell-1}+\frac{x^6}{1+x}$ for $x=\frac{\sumi \exp(s\mu_i+\sqrt{s}W_i)-q}{q}$. We bound $\frac{\exp(s\mu_1+\sqrt{s}W_1)}{\sumi \exp(s\mu_i+\sqrt{s}W_i)}\leq 1$ and then use the fact that $\E e^{V}=e^{m+\sig^2/2}$ holds for $V\sim \normal(m,\sig^2)$. With the help of Mathematica, we have that
\begin{equation}\label{eq:g4:expand:zero}
\begin{split}
    g_4(s)
    &\leq \E\sum_{\ell=1}^{6}(-1)^{i-1}\frac{\Big(\sum_{i=1}^{4}\exp(s\mu_i+\sqrt{s}W_i)-4\Big)^{\ell-1}\exp(s\mu_1+\sqrt{s}W_1)}{4^{\ell}}\\
    &\quad\quad+\E\frac{\Big(\sum_{i=1}^{4}\exp(s\mu_i+\sqrt{s}W_i)-4\Big)^{6}}{4^6}-\frac{1}{4}\\
    &\leq \frac{3}{4}+\frac{135}{1024}e^{-12 s}-\frac{351}{256} e^{-28 s/3}+\frac{165}{128}e^{-8 s}+\frac{255}{1024}e^{-20 s/3}+\frac{297}{64}e^{-4 s}-\frac{165}{32} e^{-8 s/3}-\frac{561}{128} e^{-4 s/3}\\
    &\quad+\frac{261}{64} e^{4 s/3}+\frac{75}{128} e^{8 s/3}+\frac{225}{2048}e^{4 s}-\frac{105}{128}e^{8 s}+\frac{105}{1024}e^{28 s/3}-\frac{27 }{64}e^{12 s}-\frac{9}{64} e^{52 s/3}+\frac{45}{128} e^{56 s/3}\\
    &\quad+\frac{15}{1024}e^{20 s}+\frac{33
}{2048}e^{76 s/3}+\frac{15}{2048}e^{92 s/3}-\frac{9}{256} e^{100 s/3}+\frac{3}{2048}e^{52 s}.
\end{split}
\end{equation}
Observe that the largest coefficient (in absolute value) in front of $s$ in the exponential is $52$. By Taylor's theorem, for $|x|\leq 52\cdot 0.0612=3.1824$, we have that
\begin{equation*}
 \left|e^x-\sum_{i=0}^{12}\frac{x^i}{i!}\right|\leq\frac{e^{3.1824}}{13!}|x|^{13}\leq \frac{25}{13!}|x|^{13}.   
\end{equation*}
We can apply this to every exponential term in the right hand side of \eqref{eq:g4:expand:zero}. With the help of Mathematica, we have that
\begin{equation}\label{eq:g4:upper:1}
\begin{split}
    &g_4(s)-s\leq s^3 h(s),
\end{split}
\end{equation}
where 
\begin{equation*}
\begin{split}
    h(s)&=-\frac{112}{27}+\frac{64}{3}s-\frac{91648 }{1215}s^2+\frac{173440}{81}s^3+\frac{18592600576 }{229635}s^4+\frac{10146392576}{10935}s^5\\
    &\quad+\frac{19133607950336
}{2657205}s^6+\frac{6110816204800}{137781}s^{7}+\frac{2137414093270245376 }{9207215325}s^{8}\\
&\quad\quad+\frac{298924336312352768 }{279006525}s^{9}+\frac{2385675002892473434880}{18467043309}s^{10}.
\end{split}
\end{equation*}
Note that all the coefficients of $s^{\ell}$ in $h(s)$ is positive for $\ell \geq 3$. Thus, we have that for $s>0$,
\begin{equation*}
    h^\prime(s)\geq \frac{64}{3}-2\cdot\frac{91648}{1215}s+3\cdot\frac{173440}{81}s^2=\frac{173440}{27} \left(s-\frac{716}{60975} \right)^2+\frac{329498032}{74084625}>0,
\end{equation*}
whence $h(s)$ is increasing. Moreover, $h(0.0612)$ can be computed to arbitrarily high precision (e.g., in Mathematica). With the help of Mathematica, we have that $h(0.0612)<-0.02$, so $h(s)<0$ holds for any $0<s\leq 0.0612$. Therefore, we have that $g_4(s)<s$ for $0<s\leq 0.0612$ by \eqref{eq:g4:upper:1}.

Next, we show $g_4(s)<s$ for $0.0612\le s \leq 0.75$. Note that since $g_4$ is increasing by Lemma \ref{lem:g}, it suffices to show that 
\begin{equation}\label{eq:goal:g4}
\begin{split}
    g_4(s)<s-0.0003\quad\textnormal{for}\quad s\in &\mathcal{S}_1:=\{0.0615, 0.0618,...,0.0687,0.069\}\\
    g_4(s)<s-0.0005\quad\textnormal{for}\quad s\in &\mathcal{S}_2:=\{0.0695, 0.07,...,0.0795,0.08\}\\
    g_4(s)<s-0.0005\quad\textnormal{for}\quad s\in &\mathcal{S}_3:=\{0.0805, 0.081,...,0.0945,0.095\}\\
    g_4(s)<s-0.001\quad\textnormal{for}\quad s\in &\mathcal{S}_4:=\{0.096,0.097,....,0.199,0.2\}\\
    g_4(s)<s-0.01\quad\textnormal{for}\quad s\in &\mathcal{S}_4:=\{0.21,0.22,....,0.49,0.5\}\\
    g_4(s)<0.5\quad\textnormal{for}\quad s\in &\mathcal{S}_6:=\{0.75\}
\end{split}
\end{equation}
We show \eqref{eq:goal:g4} by a rigorous method of numerical integration. Note that we can express $g_4(s)$ by 
\begin{equation*}
\begin{split}
    g_4(s)
    &=\E\left[\frac{1}{1+\sum_{i=2}^{4}\exp\Big(s(\mu_i-\mu_1)+\sqrt{s}(W_i-W_1)\Big)}\right]-\frac{1}{4}\\
    &=\E_{G}\left[\frac{1}{1+e^{-\frac{16s}{3}}\left(e^{\sqrt{\frac{16s}{3}}(G_1+G_2)}+e^{\sqrt{\frac{16s}{3}}(G_1+G_3)}+e^{\sqrt{\frac{16s}{3}}(G_2+G_3)}\right)}\right]-\frac{1}{4},
\end{split}
\end{equation*}
where $\E_G$ denotes the expectation with respect to $3$ dimensional standard Gaussian vector $G\equiv (G_1,G_2,G_3)\sim \normal(0, I_3)$. Note that the function inside of $\E_G$ is bounded above by $1$, so we can truncate the range of $G_i$'s by the standard inequality
\begin{equation*}
    \P(|G_i|\geq x) \leq 2\frac{\exp(-x^2/2)}{x\sqrt{2\pi}}.
\end{equation*}
We use this inequality for $x=5$, so that
\begin{equation*}
    \P\Big(\exists~1\leq i\leq 3\textnormal{ s.t. }|G_i|\geq 5\Big)\leq 6\frac{\exp(-5^2/2)}{5\sqrt{2\pi}}\leq 2\cdot 10^{-6}.
\end{equation*}
Thus, with the inequality above and the fact that $G_i$'s have distribution symmetric around $0$, we can bound 
\begin{equation}\label{eq:g:bound:by:h}
\begin{split}
    g_4(s)
    &\leq -\frac{1}{4}+2\cdot 10^{-6}+\E\left[\sum_{\eps_1,\eps_2,\eps_3\in \{\pm 1\}}\frac{1}{1+e^{-\frac{16s}{3}}\sum_{1\leq i<j\leq 3}e^{\sqrt{\frac{16s}{3}}(\eps_i G_i+\eps_j G_j)}}\one\left(0\leq G_1,G_2,G_3\leq 1\right)\right]\\
    &\qquad\qquad\qquad\qquad\qquad\qquad+\E\left[\frac{1}{1+e^{-\frac{16s}{3}}\sum_{1\leq i<j\leq 3}e^{\sqrt{\frac{16s}{3}}(G_i+ G_j)}}\one\left(1\leq |G_1|,|G_2|,|G_3|\leq 5\right)\right]\\
    &=-\frac{1}{4}+2\cdot 10^{-6}+(2\pi)^{-\frac{3}{2}}\sum_{i=0}^{n-1}\,\sum_{j=0}^{n-1}\,\sum_{k=0}^{n-1}\,\int_{\frac{k}{n}}^{\frac{k+1}{n}}\int_{\frac{j}{n}}^{\frac{j+1}{n}}\int_{\frac{i}{n}}^{\frac{i+1}{n}}p(x_1,x_2,x_3,s)e^{-\frac{x_1^2+x_2^2+x_3^2}{2}}dx_1 dx_2 dx_3\\
    &\quad+(2\pi)^{-\frac{3}{2}}\sum_{\substack{-5n\leq i,j,k\leq 5n-1\\(i,j,k)\notin \{-n,...,n-1\}^3}}\,\int_{\frac{k}{n}}^{\frac{k+1}{n}}\int_{\frac{j}{n}}^{\frac{j+1}{n}}\int_{\frac{i}{n}}^{\frac{i+1}{n}}q(x_1,x_2,x_3,s)e^{-\frac{x_1^2+x_2^2+x_3^2}{2}}dx_1 dx_2 dx_3,
\end{split}
\end{equation}
where $n$ is a positive integer and the functions $p(x_1,x_2,x_3), q(x_1,x_2,x_3)$ are defined by 
\begin{equation*}
\begin{split}
    &p(x_1,x_2,x_3,s)= \sum_{\eps_1,\eps_2,\eps_3\in \{\pm 1\}} \frac{1}{1+e^{-\frac{16s}{3}}\sum_{1\leq i<j\leq 3}e^{\sqrt{\frac{16s}{3}}(\eps_i x_i+\eps_j x_j)}}\\
    &q(x_1,x_2,x_3,s)=\frac{1}{1+e^{-\frac{16s}{3}}\sum_{1\leq i<j\leq 3}e^{\sqrt{\frac{16s}{3}}(x_i+x_j)}}.
\end{split}
\end{equation*}
The reason we split the integrals into parts where $|G_i|\leq 1$ or $|G_i|\geq 1$ in \eqref{eq:g:bound:by:h} is explained by the following lemma.
\begin{lemma}\label{lem:h:monotone}
Fix $s\geq 0$.
For $0\leq x_1,x_2,x_3\leq 1$, $(x_1,x_2,x_3) \to p(x_1,x_2,x_3, s)$ is monotonically decreasing in each coordinate. Moreover, for any $x_1,x_2,x_3\in \R$, $(x_1,x_2,x_3)\to q(x_1,x_2,x_3,s)$ is monotonically decreasing.
\end{lemma}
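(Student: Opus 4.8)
\textbf{Proof plan for Lemma \ref{lem:h:monotone}.}
The plan is to prove the two monotonicity claims by inspecting the functions $p$ and $q$ directly. For the second (easier) claim, note that $q(x_1,x_2,x_3,s) = 1/(1+D(x_1,x_2,x_3))$ where $D = e^{-16s/3}\sum_{1\le i<j\le 3} e^{\sqrt{16s/3}(x_i+x_j)}$. Since $t\mapsto 1/(1+t)$ is strictly decreasing on $(0,\infty)$, it suffices to check that $D$ is increasing in each coordinate; but $\partial D/\partial x_k$ is $\sqrt{16s/3}\,e^{-16s/3}$ times a sum of exponentials (all terms involving $x_k$), hence nonnegative, with strict positivity when $s>0$. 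This disposes of the $q$ case in a few lines.

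For $p$, the issue is that $p$ is a sum over the eight sign patterns $(\eps_1,\eps_2,\eps_3)\in\{\pm1\}^3$, and the terms with $\eps_k = -1$ decrease in $x_k$ while those with $\eps_k = +1$ increase; so we need to pair them up and argue the decrease dominates. First I would fix $k=1$ (the other coordinates are symmetric) and pair the pattern $(+1,\eps_2,\eps_3)$ with $(-1,\eps_2,\eps_3)$. For such a pair, with $a=\sqrt{16s/3}$, the two denominators have the form $1 + e^{-16s/3}\big(e^{a(x_1+\eps_2 x_2)} + e^{a(x_1+\eps_3 x_3)} + e^{a(\eps_2 x_2 + \eps_3 x_3)}\big)$ and the same expression with $x_1\to -x_1$. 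Writing the sum of the two reciprocals and differentiating with respect to $x_1$, the claim reduces to showing that the logarithmic derivative of the ``$+$'' denominator at $x_1$ is at most that of the ``$-$'' denominator at $x_1$ in absolute value, i.e. comparing $e^{ax_1}(\cdots) / (1 + e^{-16s/3}(\cdots))$ terms; the key inequality is that on $0\le x_1\le 1$ we have $e^{ax_1}\ge e^{-ax_1}$ together with the fact that the cross term $e^{a(\eps_2 x_2+\eps_3 x_3)}$ is common to both denominators, which makes the ``$-$'' denominator no larger than the ``$+$'' denominator. A clean way to package this: show that for each fixed $(x_2,x_3)$ and each pair, the function $x_1 \mapsto \frac{1}{1+P e^{ax_1} + Q e^{-ax_1} + R} + \frac{1}{1+P e^{-ax_1} + Q e^{ax_1} + R}$ (with $P,Q,R\ge 0$ depending on $x_2,x_3,s$, where $P$ collects the $e^{a(x_1+\cdot)}$ coefficients and $Q$ the $e^{-(x_1)+\cdot}$ ones for the mirrored term) is decreasing on $[0,\infty)$ — this is a one-variable convexity/symmetrization fact, provable by differentiating and using $e^{ax_1}-e^{-ax_1}\ge 0$ plus the monotonicity of $t\mapsto 1/(1+t)$ and its convexity.

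The main obstacle I anticipate is bookkeeping: making sure the pairing is set up so that the ``increasing'' contributions are genuinely outweighed, which really does use $0\le x_k\le 1$ (the claim is false for $p$ on all of $\mathbb{R}$, which is exactly why the proof of Lemma \ref{lem:g4} splits the Gaussian integral at $|G_i|=1$). Concretely, the delicate point is that within a $\pm$ pair the coefficients $P$ and $Q$ are \emph{not} equal — they differ through the $x_2,x_3$ dependence — so the naive symmetrization lemma does not apply verbatim; one must check that the asymmetry $P\ne Q$ still preserves monotonicity on $[0,1]$. I expect this follows because replacing $x_1$ by $-x_1$ in the mirrored term only swaps which of the two ``$x_1$-linked'' exponentials is large, and on $[0,1]$ the derivative computation yields a quantity proportional to $(e^{ax_1}-e^{-ax_1})$ times a manifestly nonnegative combination of $P,Q,R$ and the squared denominators, hence has a definite sign. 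Once the single-coordinate monotonicity of $p$ is established for $x_1$ with $x_2,x_3\in[0,1]$ held fixed, symmetry in the three coordinates finishes the lemma. After that, the lemma feeds directly into the rigorous numerical integration bounding $g_4(s)$ in \eqref{eq:g:bound:by:h}: on each grid cell one evaluates $p$ (resp. $q$) at the corner that maximizes it, which by Lemma \ref{lem:h:monotone} is the lower-left corner (resp. the corner closest to the origin), giving a rigorous upper Riemann sum for the integral.
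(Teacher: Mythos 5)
Your treatment of $q$ is fine, and for $p$ you take the same first steps as the paper: fix $(\eps_2,\eps_3)$, pair the $\eps_1=+1$ term with the $\eps_1=-1$ term, and differentiate in $x_1$. But the decisive step is missing, and the shortcut you propose is not correct. Writing $\alpha=\sqrt{16s/3}$, each paired sum has the form $\frac{1}{\zeta+\xi e^{\alpha x_1}}+\frac{1}{\zeta+\xi e^{-\alpha x_1}}$ with $\zeta=1+e^{-\alpha^2+\alpha(\eps_2x_2+\eps_3x_3)}$ and $\xi=e^{-\alpha^2}\left(e^{\alpha\eps_2x_2}+e^{\alpha\eps_3x_3}\right)$; in particular the coefficient multiplying the $x_1$-exponential is the \emph{same} $\xi$ in both denominators, so the asymmetry ``$P\neq Q$'' you flag is not where the difficulty lies. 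Differentiating, the sign is governed by
\[
\frac{e^{\alpha x_1}}{(\zeta+\xi e^{\alpha x_1})^2}-\frac{e^{-\alpha x_1}}{(\zeta+\xi e^{-\alpha x_1})^2}
=\frac{\left(e^{\alpha x_1}-e^{-\alpha x_1}\right)(\zeta+\xi)(\zeta-\xi)}{(\zeta+\xi e^{\alpha x_1})^2(\zeta+\xi e^{-\alpha x_1})^2},
\]
so everything hinges on $\zeta\ge\xi$. Neither of your justifications delivers this: the remark that the ``$-$'' denominator is no larger than the ``$+$'' denominator points the wrong way (the ``$+$'' term has both the larger numerator and the larger squared denominator), and the claimed one-variable symmetrization fact, that $x\mapsto\frac{1}{\zeta+\xi e^{\alpha x}}+\frac{1}{\zeta+\xi e^{-\alpha x}}$ is decreasing on $[0,\infty)$ for arbitrary nonnegative coefficients, is false: for $\zeta=1$, $\xi=10$ the sum increases from $2/11$ at $x=0$ to $1$ as $x\to\infty$. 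The derivative is therefore not ``manifestly'' of one sign.

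The actual content of the lemma, absent from your proposal, is the verification that $\zeta\ge\xi$ on the relevant range. One computes $e^{\alpha^2}(\zeta-\xi)=e^{\alpha^2}-1+\left(e^{\alpha\eps_2x_2}-1\right)\left(e^{\alpha\eps_3x_3}-1\right)$, which is nonnegative when $\eps_2=\eps_3$; in the mixed case, say $\eps_2=+1,\eps_3=-1$, it equals $e^{\alpha^2}+e^{\alpha(x_2-x_3)}-e^{\alpha x_2}-e^{-\alpha x_3}$, which is decreasing in $x_2$ and $x_3$, hence, using $x_2,x_3\le1$, at least $e^{\alpha^2}+1-e^{\alpha}-e^{-\alpha}\ge0$ (compare Taylor coefficients). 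This is exactly where the hypothesis enters — note it is $x_2,x_3\le1$ (together with $x_1\ge0$) that is used, not $x_1\le1$ — and it is the step your plan replaces by ``I expect this follows''. Without it the argument is incomplete; with it, your outline coincides with the paper's proof, and your remark about evaluating $p$, resp.\ $q$, at the grid corner nearest the origin in the Riemann-sum bound is consistent with how the lemma is used in \eqref{eq:rigorous:numerical}.
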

\begin{proof}
The claim on $q(x_1,x_2,x_3,s)$ is trivial, so we focus on the case for $p(x_1,x_2,x_3)$. By symmetry, it suffices to show that $\frac{\partial p}{\partial x_1}\leq 0$. By a direct calculation, we have the following. Abbreviate $\alpha=\sqrt{\frac{16s}{3}}\geq 0$, then
\begin{equation}\label{eq:partial:p}
\begin{split}
    \frac{\partial p}{\partial x_1}
    &=-\alpha\sum_{\eps_1,\eps_2,\eps_3\in \{\pm 1\}}\frac{\eps_1 e^{-\alpha^2}\left(e^{\alpha(\eps_1 x_1+\eps_2 x_2)}+e^{\alpha(\eps_1 x_1+\eps_3 x_3)}\right)}{\left(1+e^{-\alpha^2}\sum_{1\leq i<j\leq 3}e^{\alpha(\eps_i x_i+\eps_j x_j)}\right)^2}\\
    &=-\alpha e^{-\alpha^2}\sum_{\eps_2,\eps_3\in \{\pm 1\}}\Big(e^{\alpha \eps_2 x_2}+e^{\alpha \eps_3 x_3}\Big)\left(\frac{e^{\alpha x_1}}{\left(\zeta+\xi e^{\alpha x_1}\right)^2}-\frac{e^{-\alpha x_1}}{\left(\zeta+\xi e^{-\alpha x_1}\right)^2}\right),
\end{split}
\end{equation}
where we abbreviated
\begin{equation*}
\begin{split}
    \zeta 
    &\equiv \zeta(\eps_2,\eps_3,x_2,x_3,\alpha )\equiv 1+e^{-\alpha^2+\alpha(\eps_2x_2+\eps_3x_3)},\\
    \xi&\equiv\xi(\eps_2,\eps_3,x_2,x_3,\alpha)\equiv e^{-\alpha^2}\left(e^{\alpha\eps_2x_2}+e^{\alpha\eps_3x_3}\right).
\end{split}
\end{equation*}
Since $\alpha \geq 0$ in \eqref{eq:partial:p}, it suffices to show that $\frac{e^{\alpha x_1}}{\left(\zeta+\xi e^{\alpha x_1}\right)^2}-\frac{e^{-\alpha x_1}}{\left(\zeta+\xi e^{-\alpha x_1}\right)^2}\geq 0$ for every $\eps_2,\eps_3\in \{\pm 1\}$ and $0\leq x_1,x_2,x_3\leq 1$. By a direct calculation,
\begin{equation*}
    \frac{e^{\alpha x_1}}{\left(\zeta+\xi e^{\alpha x_1}\right)^2}-\frac{e^{-\alpha x_1}}{\left(\zeta+\xi e^{-\alpha x_1}\right)^2}=\frac{e^{\alpha x_1}-e^{-\alpha x_1}}{\left(\zeta+\xi e^{\alpha x_1}\right)^2\left(\zeta+\xi e^{-\alpha x_1}\right)^2}(\zeta+\xi)(\zeta-\xi), 
\end{equation*}
so it suffices to show that $\zeta - \xi \geq 0$. Note that
\begin{equation*}
    e^{\alpha^2}(\zeta-\xi)=e^{\alpha^2}-1+(e^{\alpha \eps_2 x_2}-1)(e^{\alpha \eps_3 x_3}-1), 
\end{equation*}
thus when $\eps_2=\eps_3=+1$ or $\eps_2=\eps_3=-1$, $\zeta-\xi \geq e^{\alpha^2}-1\geq 0$. For the case where $\eps_2=+1$ and $\eps_3=-1$, we have
\begin{equation*}
e^{\alpha^2}(\zeta-\xi)=e^{\alpha^2}+e^{\alpha(x_2-x_3)}-e^{\alpha x_2}-e^{-\alpha x_3}.
\end{equation*}
It is straightforward to see that the right hand side is a decreasing function of $x_2$ and $x_3$, so we have that
\begin{equation*}
    e^{\alpha^2}(\zeta-\xi)\ge e^{\alpha^2}+1-e^{\alpha}-e^{-\alpha}\geq 0,
\end{equation*}
where the last inequality can be seen by Taylor expansion of $e^{x}$ for $x\in \R$. The case where $\eps_2=-1$ and $\eps_3=+1$ follows by symmetry, so for all cases we have that $\zeta-\xi\geq 0$. Therfore, $\frac{\partial p}{\partial x_1}\le 0$.
\end{proof}
To bound the integration of the Gaussina kernel in \eqref{eq:g:bound:by:h}, we use the following lemma.
\begin{lemma}\label{lem:gaussian:kernel:bound}
For a non-negative integer $i$, we have that
\begin{equation*}
    \int_{\frac{i}{n}}^{\frac{i+1}{n}}e^{-\frac{x^2}{2}}dx\leq \phi(i,n):=
    \begin{cases}
    \frac{1}{n} &\qquad ~~i=0\\
    \min\left(\frac{1}{n}\left(1-\frac{i}{2n^2}\right)e^{-\frac{1}{2}\left(\frac{i}{n}\right)^2}, \frac{n}{i}\left(e^{-\frac{1}{2}\left(\frac{i}{n}\right)^2}-e^{-\frac{1}{2}\left(\frac{i+1}{n}\right)^2}\right)\right) &\quad 1\leq i \leq n-1\\
    \min\left(\frac{1}{2n}\left(e^{-\frac{1}{2}\left(\frac{i}{n}\right)^2}+e^{-\frac{1}{2}\left(\frac{i+1}{n}\right)^2}\right), \frac{n}{i}\left(e^{-\frac{1}{2}\left(\frac{i}{n}\right)^2}-e^{-\frac{1}{2}\left(\frac{i+1}{n}\right)^2}\right)\right) &\qquad ~~i\geq n
    \end{cases}
\end{equation*}
\end{lemma}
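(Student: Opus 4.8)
The statement to prove is Lemma~\ref{lem:gaussian:kernel:bound}, giving the explicit bound $\phi(i,n)$ on the Gaussian-kernel integral $\int_{i/n}^{(i+1)/n}e^{-x^2/2}\,dx$ over the dyadic-type interval $[i/n,(i+1)/n]$.

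\medskip

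The plan is to bound $I_i:=\int_{i/n}^{(i+1)/n}e^{-x^2/2}\,dx$ separately in the three regimes $i=0$, $1\le i\le n-1$, and $i\ge n$, by elementary monotonicity and convexity arguments on the integrand $\varphi(x)=e^{-x^2/2}$ and its log-derivative. For $i=0$, I would simply use $\varphi\le 1$ on $[0,1/n]$, giving $I_0\le 1/n$. For each of the two nontrivial regimes I establish \emph{two} upper bounds and take their minimum, exactly as in the claimed formula. The first family of bounds comes from controlling the integrand pointwise on the interval; the second comes from a mean-value/telescoping estimate using the derivative.

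\medskip

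The derivative bound is the cleaner one: since $\varphi'(x)=-x\,e^{-x^2/2}$, for $x\in[i/n,(i+1)/n]$ with $i\ge 1$ we have $|\varphi'(x)|=x\,e^{-x^2/2}\ge (i/n)\,e^{-x^2/2}$, hence $e^{-x^2/2}\le \frac{n}{i}|\varphi'(x)| = -\frac{n}{i}\varphi'(x)$; integrating over $[i/n,(i+1)/n]$ gives $I_i\le \frac{n}{i}\big(e^{-(i/n)^2/2}-e^{-((i+1)/n)^2/2}\big)$, which is the second term inside each $\min$ for both $i\ge1$ ranges. For the first term: when $i\ge n$, I would use that $\varphi$ is convex on $[1,\infty)\supseteq[i/n,(i+1)/n]$ (since $\varphi''(x)=(x^2-1)e^{-x^2/2}\ge0$ there), so the integral is at most the trapezoid value $\frac{1}{2n}\big(\varphi(i/n)+\varphi((i+1)/n)\big)$. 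When $1\le i\le n-1$ the interval may straddle the inflection point $x=1$ so convexity on the whole interval fails; instead I would use the tangent-line-type bound: for $t\in[0,1/n]$, $\varphi(i/n+t)\le \varphi(i/n)\big(1-(i/n)t\big)$ — this follows because $\log\varphi$ is concave and $\frac{d}{dt}\log\varphi(i/n+t)\big|_{t=0}=-i/n$, together with $1+u\ge e^{u}$-type estimates, or more simply from $\varphi(i/n+t)\le \varphi(i/n)e^{-(i/n)t}\le \varphi(i/n)(1-(i/n)t)+\varphi(i/n)\cdot\tfrac12(i/n)^2t^2$ and bounding the quadratic remainder crudely — integrating $t$ over $[0,1/n]$ then yields $I_i\le \frac1n\big(1-\frac{i}{2n^2}\big)e^{-(i/n)^2/2}$ after the $t^2$ term is absorbed. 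I would double-check the constant $\frac{i}{2n^2}$ by direct computation of $\int_0^{1/n}(1-(i/n)t)\,dt = \frac1n-\frac{i}{2n^2}$.

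\medskip

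The main obstacle is getting the first bound in the middle regime $1\le i\le n-1$ exactly right, i.e.\ honestly controlling the second-order remainder in $\varphi(i/n+t)\le \varphi(i/n)(1-(i/n)t)$ so that it is genuinely an upper bound and not merely a heuristic; this requires checking that on $[i/n,(i+1)/n]$ with $i\le n-1$ the quantity $(i/n)^2 t^2/2$ is dominated appropriately, or alternatively replacing the argument with a direct verification that $h(t):=\varphi(i/n)(1-(i/n)t)-\varphi(i/n+t)\ge0$ on $[0,1/n]$ via $h(0)=0$, $h'(0)=0$ and $h''(t)=-(i/n+t)^2\varphi(i/n+t)+\varphi(i/n+t)=\big(1-(i/n+t)^2\big)\varphi(i/n+t)$, which is $\ge0$ precisely when $i/n+t\le1$, i.e.\ throughout $[0,1/n]$ since $i\le n-1$ forces $i/n+t\le (i+1)/n\le1$. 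That last observation makes the whole middle-regime estimate rigorous and is, I expect, the cleanest route; the rest is bookkeeping. Everything else (the $i=0$ case, the $i\ge n$ convexity/trapezoid bound, and the universal derivative bound) is routine one-line calculus.
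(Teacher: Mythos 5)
Your proposal is correct and takes essentially the same route as the paper: the trivial bound for $i=0$, the estimate $e^{-x^2/2}\le \frac{n}{i}\,x e^{-x^2/2}$ integrated over the interval for the $\frac{n}{i}$ term, the tangent-line bound at $i/n$ for $1\le i\le n-1$ (your $h(0)=h'(0)=0$, $h''\ge 0$ verification is precisely the concavity of $e^{-x^2/2}$ on $[0,1]$ that the paper invokes), and the chord/trapezoid bound from convexity on $[1,\infty)$ for $i\ge n$. No gaps; just make sure to present the rigorous $h$-argument rather than the earlier heuristic absorption of the quadratic remainder.
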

\begin{proof}
The case $i=0$ follows from $e^{-\frac{x^2}{2}}\leq 1$. For $i\geq 1$, note that
\begin{equation*}
    \int_{\frac{i}{n}}^{\frac{i+1}{n}}e^{-\frac{x^2}{2}}dx\leq \frac{n}{i}\int_{\frac{i}{n}}^{\frac{i+1}{n}}xe^{-\frac{x^2}{2}}dx=\frac{n}{i}\left(e^{-\frac{1}{2}\left(\frac{i}{n}\right)^2}-e^{-\frac{1}{2}\left(\frac{i+1}{n}\right)^2}\right).
\end{equation*}
Moreover, note that for $f(x)=e^{-\frac{x^2}{2}}$, we have $f^\prime(x)=-x e^{-\frac{x^2}{2}}$ and $f^{\prime\prime}(x)=(x^2-1)e^{-\frac{x^2}{2}}$. Thus, $f(x)$ is concave for $0\leq x\leq 1$, so $f(x) \leq f(a)+(x-a)f^\prime(a)$ holds for $a\leq x \leq 1$. For $1\leq i \leq n-1$, letting $a=\frac{i}{n}$ and integrating both sides give
\begin{equation*}
    \int_{\frac{i}{n}}^{\frac{i+1}{n}}e^{-\frac{x^2}{2}}dx\leq \frac{1}{n}\left(1-\frac{i}{2n^2}\right)e^{-\frac{1}{2}\left(\frac{i}{n}\right)^2}.
\end{equation*}
On the other hand, $f(x)$ is convex for $x\geq 1$, so for $a\leq x\leq b$, where $a\geq 1$, we have $f(x)\leq \frac{b-x}{b-a}f(a)+\frac{x-a}{b-a}f(b)$. For $i\geq n$, letting $a=\frac{i}{n}$ and $b=\frac{i+1}{n}$, and integrating both sides give
\begin{equation*}
     \int_{\frac{i}{n}}^{\frac{i+1}{n}}e^{-\frac{x^2}{2}}dx\leq \frac{1}{2n}\left(e^{-\frac{1}{2}\left(\frac{i}{n}\right)^2}+e^{-\frac{1}{2}\left(\frac{i+1}{n}\right)^2}\right).
\end{equation*}
Therefore, combining the three inequalities above completes the proof.
\end{proof}
Note that since $x\to e^{-\frac{x^2}{2}}$ is even, we have by Lemma \ref{lem:gaussian:kernel:bound} that $\int_{\frac{i}{n}}^{\frac{i+1}{n}}e^{-\frac{x^2}{2}}\leq \phi(-i-1,n)$ for negative integer $i$. Thus, by \eqref{eq:g:bound:by:h}, Lemma \ref{lem:h:monotone} and Lemma \ref{lem:gaussian:kernel:bound}, we can bound 
\begin{equation}\label{eq:rigorous:numerical}
    g_4(s)\leq -\frac{1}{4}+2\cdot 10^{-6}+\sum_{i,j,k=0}^{n-1}S_n(i,j,k,s)+\sum_{\substack{-5n\leq i,j,k\leq 5n-1\\(i,j,k)\notin \{-n,...,n-1\}^3}}T_n(i,j,k,s)
\end{equation}
where $S_n(i,j,k,n,s)$ and $T_n(i,j,k,n,s)$ are defined by
\begin{equation*}
\begin{split}
    &S_n(i,j,k,n,s):=p\left(\frac{i}{n},\frac{j}{n},\frac{k}{n},s\right)\cdot \phi(i,n)\cdot \phi(j,n)\cdot \phi(k,n),\\
    &T_n(i,j,k,n,s):=q\left(\frac{i}{n},\frac{j}{n},\frac{k}{n},s\right)\cdot \phi\Big(\min(|i|,|i+1|),n\Big
    )\cdot  \phi\Big(\min(|j|,|j+1|),n\Big
    )\cdot  \phi\Big(\min(|k|,|k+1|),n\Big
    ).
\end{split}
\end{equation*}
The right hand side of \eqref{eq:rigorous:numerical} is a combination of basic arithmetic operations and exponentials, so it can be rigorously computed to aribtrarily high preicion (e.g. in R or Mathematica). Recalling our goal \eqref{eq:goal:g4}, using $n=200$ for $s\in \mathcal{S}_1\cup \mathcal{S}_2$ and using $n=100$ for $s\in \mathcal{S}_3\cup \mathcal{S}_4\cup\mathcal{S}_5\cup \mathcal{S}_6$ establishes \eqref{eq:goal:g4}.
\end{document}